\numberwithin{equation}{section}
\theoremstyle{plain}
\newtheorem{theorem}{Theorem}[section]
\newtheorem{lemma}[theorem]{Lemma}
\newtheorem{proposition}[theorem]{Proposition}
\newtheorem{corollary}[theorem]{Corollary}
\theoremstyle{definition}
\newtheorem{definition}[theorem]{Definition}
\newtheorem{example}[theorem]{Example}
\theoremstyle{remark}
\newtheorem{remark}[theorem]{Remark}
\newcommand{\Z}{\mathbb{Z}}
\newcommand{\N}{\mathbb{N}}
\DeclareMathOperator{\sgn}{sgn}
\DeclareMathOperator{\emp}{emp}
\DeclareMathOperator{\bd}{bd}
\DeclareMathOperator{\tp}{tp}
\DeclareMathOperator{\bt}{bt}
\DeclareMathOperator{\Ms}{Ms}
\DeclareMathOperator{\ms}{ms}
\newcommand{\List}[2]{#1_1,#1_2,\dots ,#1_{#2}}
\newcommand{\dbal}{$d$-balanced }
\newcommand{\dunb}{$d$-skewed }
\newcommand{\dsbal}{$d$-shift balanced }
\newcommand{\dsunb}{$d$-shift skewed }
\tikzstyle{Excluded}=[fill={rgb,255: red,191; green,191; blue,191}, draw=none, shape=rectangle, minimum width=0.875cm, minimum height=0.875cm]
\tikzstyle{Small excluded}=[fill={rgb,255: red,191; green,191; blue,191}, draw=none, shape=rectangle, minimum width=0.375cm, minimum height=0.375cm]
\tikzstyle{Big excluded}=[fill={rgb,255: red,191; green,191; blue,191}, draw=none, shape=rectangle, minimum width=1.75cm, minimum height=1.75cm]
\tikzstyle{Without excluded}=[fill={rgb,255: red,191; green,191; blue,191}, draw=none, shape=rectangle, minimum width=0.5cm, minimum height=0.5cm]
\tikzstyle{hdomino}=[fill=none, draw=black, shape=rectangle, minimum width=1cm, minimum height=0.5cm]
\tikzstyle{vdomino}=[fill=none, draw=black, shape=rectangle, minimum width=1cm, minimum height=0.5cm, rotate=90]
\tikzstyle{hnew}=[fill={rgb,255: red,197; green,197; blue,197}, draw=black, shape=rectangle, minimum width=1cm, minimum height=0.5cm]
\tikzstyle{vnew}=[fill={rgb,255: red,197; green,197; blue,197}, draw=black, shape=rectangle, minimum width=1cm, minimum height=0.5cm, rotate=90]
\tikzstyle{Rotated none}=[fill=none, draw=none, shape=circle, rotate=90]
\tikzstyle{Numbers}=[fill=white, draw=black, shape=circle]
\tikzstyle{Empty node}=[fill=black, draw=black, shape=circle, inner sep=0cm, minimum height=0.2cm]
\tikzstyle{Bead}=[fill=white, draw=black, shape=circle, minimum height=0.4cm, inner sep=0cm]
\tikzstyle{rot45}=[shape=circle, rotate=51]
\tikzstyle{rot-45}=[shape=circle, rotate=-51]
\tikzstyle{unlabelled beads}=[fill=black, draw=black, shape=circle]
\tikzstyle{Selected}=[fill={rgb,255: red,197; green,197; blue,197}, draw=black, shape=circle]
\tikzstyle{Small Empty}=[fill=white, draw=black, shape=circle, minimum height=0.1cm, inner sep=0cm]
\tikzstyle{Lead}=[fill={rgb,255: red,197; green,197; blue,197}, draw=black, shape=circle, inner sep=0cm, minimum height=0.2cm]
\tikzstyle{mse}=[fill={rgb,255: red,197; green,197; blue,197}, draw=black, shape=rectangle]
\tikzstyle{vertical}=[fill=none, draw=none, shape=circle, rotate=90]
\tikzstyle{square}=[fill=black, draw=black, shape=rectangle]
\tikzstyle{empty square}=[fill=white, draw=black, shape=rectangle]
\tikzstyle{midsize}=[fill=none, draw=black, shape=circle, minimum height=0.2cm, inner sep=0cm]
\tikzstyle{Border edge}=[-, thick, fill=none]
\tikzstyle{measuredots}=[<->]
\tikzstyle{Grey diagram}=[-, fill={rgb,255: red,122; green,122; blue,122}, thick]
\tikzstyle{Light grey column}=[-, fill={rgb,255: red,197; green,197; blue,197}]
\tikzstyle{Extra box}=[-, fill=none, dashed]
\tikzstyle{normal green}=[-, fill={rgb,255: red,132; green,255; blue,65}]
\tikzstyle{Move it}=[->]
\tikzstyle{special column}=[-, fill={rgb,255: red,8; green,243; blue,255}]
\tikzstyle{Temp Gray}=[-, fill={rgb,255: red,191; green,191; blue,191}]
\tikzstyle{bodrer gray}=[-, fill={rgb,255: red,197; green,197; blue,197}, thick]
\tikzstyle{Finish diagram}=[->, dashed]
\tikzstyle{Background}=[-, draw=none, fill={rgb,255: red,238; green,238; blue,238}]
\title{Mullineux map: $d$-balanced partitions and $d$-runner matrices}
\author[Pavel Turek]{Pavel Turek \orcidlink{0000-0002-6190-0745}}
\subjclass[2020]{Primary: 05E10, Secondary: 20C30}
\keywords{Mullineux map, Abacus, Partitions, Symmetric groups}
\address{Okinawa Institute of Science and Technology, Onna, Okinawa, 904-0495, Japan}
\email{pavel.turek@oist.jp}
\date{\today}
\begin{document}	 
\begin{abstract}
	Let $d,e>1$ be two integers. For $e$ prime, the Mullineux map $m_e$ describes tensor products of the irreducible modules of symmetric groups with the sign in characteristic $e$ as well as certain entries of decomposition matrices. Motivated by understanding new columns of decomposition matrices, we prove that if $\lambda$ is an $e$-regular partition such that $d$ divides the arm length of any rim hook of $\lambda$ of size divisible by $e$, then $m_e(\lambda)'$ is a partition such that the arm length of any of its rim hooks of size divisible by $e$ is congruent to $-1$ modulo $d$. We introduce a new parameter for partitions called the $d$-runner matrix and show that if $\lambda$ is as above, then the $d$-runner matrices of $\lambda$ and $m_e(\lambda)'$ agree. This determines $m_e(\lambda)'$ uniquely. We approach the whole problem combinatorially and take advantage of a new Abacus Mullineux Algorithm introduced in this paper. We also establish equivalent descriptions of the above partitions which provide an alternative version of the main result about the Mullineux map that becomes particularly strong when $d=2$. 
\end{abstract}
\maketitle
	
\thispagestyle{empty}

\section{Introduction}\label{se:intro}

Let $e$ be a positive integer greater than one. If $e$ is prime, then the irreducible modules of symmetric groups over a field of characteristic $e$ are labelled by $e$-regular partitions. The \textit{Mullineux map} $m_e$ describes the effect of the tensor product with the sign on these irreducible modules. More precisely, if $\lambda$ is an $e$-regular partition, then $m_e(\lambda)$ is defined by
\[
D^{\lambda}\otimes \sgn \cong D^{m_e(\lambda)},
\]
where $D^{\mu}$ is the irreducible module labelled by an $e$-regular partition $\mu$. Using an involution on Hecke algebras in quantum characteristic $e$, this algebraic definition can be generalised to all integers $e>1$; see \cite{BrundanMullineuxHecke98}.

In this paper, we establish a new, surprising behaviour of the Mullineux map applied to particular families of partitions, revealing more of its combinatorial properties. Our results are motivated by the connection between the Mullineux map and the decomposition matrices and the results of Giannelli and Wildon \cite{GiannelliWildonFoulkesandDecomposition15}; see the background section for more details. Throughout the paper, we use the following terminology.

\begin{definition}\label{de:partitions}
	We say that a rim hook of a partition is an \textit{$e$-divisible hook} if its size is divisible by $e$. For a positive integer $d$, we say that a partition $\lambda$ is
	\begin{enumerate}[label=\textnormal{(\roman*)}]
		\item \textit{\dbal}if the arm length of any $e$-divisible hook of $\lambda$ is congruent to $0$ modulo $d$;
		\item \textit{\dsbal}if the arm length of any $e$-divisible hook of $\lambda$ is congruent to $-1$ modulo $d$;
		\item \textit{\dunb}if the arm length of any $e$-divisible hook of $\lambda$ is \emph{not} congruent to $0$ modulo $d$;
		\item \textit{\dsunb}if the arm length of any $e$-divisible hook of $\lambda$ is \emph{not} congruent to $-1$ modulo $d$.  
	\end{enumerate}
\end{definition}

It is clear that any \dbal partition is \dsunb and any \dsbal partition is $d$-skewed. A surprising equivalent description of \dsunb partitions appears in \Cref{pr:equivalence max}. \Cref{pr:equivalence min} provides a similar result (with a small caveat) for \dunb partitions; see also \Cref{fig:BigPicture} for a diagrammatic summary of all the results. These two propositions are the motivation behind introducing the partitions in \Cref{de:partitions}; they relate these partitions to the sets $\mathcal{E}_{\mathcal{R}}(\gamma)$ defined below, which are `finer' versions of the sets $\mathcal{E}_k(\gamma)$ from \cite{GiannelliWildonFoulkesandDecomposition15} (see \Cref{ex:finerSets}). To define the sets $\mathcal{E}_{\mathcal{R}}(\gamma)$ and state our first main result we need the following new parameter for partitions called the $d$-runner matrix.

\begin{definition}\label{de:runner matrix}
	Let $d>1$ be an integer. The \textit{$d$-runner matrix} $\mathcal{R}_d(\lambda)$ of a partition $\lambda$ is a $(d-1)\times e$ matrix indexed by $1\leq x\leq d-1$ and $0\leq y\leq e-1$ where $\mathcal{R}_d(\lambda)_{x,y}$ counts the number of indices $i$ such that $\lambda_i \equiv x\, (\textnormal{mod } d)$ and $\lambda_i\equiv i+ y\, (\textnormal{mod } e)$. 
\end{definition}

Examples and an equivalent definition of $d$-runner matrices are presented later in \Cref{fig:Overview}, \Cref{fig:runner} and \Cref{de:runner matrix B}, respectively. Our first main result shows that the above-introduced concepts arise naturally in the context of the Mullineux map.

\begin{theorem}\label{th:Mullineuxbalanced}
	Let $d,e>1$ be two integers and $\lambda$ be an $e$-regular \dbal partition. Then $m_e(\lambda)'$ is an $e$-restricted \dsbal partition with the same $d$-runner matrix and $e$-core as $\lambda$. Moreover, this determines $m_e(\lambda)'$ uniquely.
\end{theorem}

\begin{remark}\label{re:e-regular}
	The statements that $m_e(\lambda)'$ is $e$-restricted and that $\lambda$ and $m_e(\lambda)'$ have the same $e$-core are well-known. They are included in the statement, so the `moreover' part holds. One can replace `$e$-restricted' by `$l$-restricted' where $l$ is the least common multiple of $d$ and $e$ and omit it entirely if $d$ and $e$ are coprime; see \Cref{le:e-restricted} and its preceding paragraph.
\end{remark}

The family of partitions $\lambda$ to which \Cref{th:Mullineuxbalanced} applies is rich even when we restrict to $d=2$ and odd $e$ --- such partitions are in bijections with pairs of an $e$-core partition and a $1\times e$ matrix of non-negative integers with at least one $0$; see \Cref{pr:equivalence max} (and use that $2$-balanced is the same as $2$-shift skewed), or \Cref{co:d=2} (which is a reformulation of \Cref{th:Mullineuxbalanced} for $d=2$) and its following paragraph. Computations done in GAP \cite{GAP4} suggest that the family of $e$-regular $d$-balanced partitions is rich whenever $d<e$.

The proof of \Cref{th:Mullineuxbalanced} splits into two pieces. The `moreover' part will be justified shortly using other main results of this paper. The main part of the statement is established in \Cref{se:pairs} using a new algorithm which computes $m_e(\lambda)'$ for an $e$-regular partition $\lambda$ --- the Abacus Mullineux Algorithm, which is introduced in \Cref{se:mullineux}. Unlike the existing algorithms for computing the Mullineux map, our Abacus Mullineux Algorithm is performed on the James abacus; this is advantageous for studying how $e$-divisible hooks change when the Mullineux map is applied. As it is the case for \Cref{th:Mullineuxbalanced} and the Abacus Mullineux Algorithm, rather than considering the Mullineux map $m_e$ itself, we focus on the composition of $m_e$ followed by the conjugation. This composition arises naturally in the decomposition matrices; see the second paragraph of the background section. A short example of the Abacus Mullineux Algorithm is in \Cref{fig:SmallAbacusAlgorithm}. A further example appears in \Cref{fig:MA}.

\begin{figure}[h]
	\centering

		\caption{The Abacus Mullineux Algorithm applied to $\lambda = (3,1^2)$ with $e=5$. It starts with two abaci, the first one (called $S$) displays a $\beta$-set of our partition $\lambda$, while the second (called $T$) is empty. Following the algorithm, explained in detail after \Cref{le:Ms is J}, in each step, we move all the leading beads of $S$ (coloured grey and defined in \Cref{de:leading}) up by one place and add a new bead $\ms_e(S)$ (denoted by a square and defined in \Cref{de:shift}) to $T$. Note that in all but the first step, this new bead is the bead `removed' from $S$. On the second to last line $S$ becomes a $\beta$-set of the empty partition, at which point we move all beads from $S$ to $T$ and terminate the algorithm. The resulting set $T$ is a $\beta$-set of $m_e(\lambda)'$. In our example, we see that $m_e(\lambda)' = (2,1^3)$, which is an instance of the formula $m_e(\lambda)' = (a-1,1^{e+1-a})$ for $e$-hook partitions $\lambda = (a,1^{e-a})$ with $2\leq a\leq e$.}
		\label{fig:SmallAbacusAlgorithm}
	\end{figure}

Suppose that $d,e>1$ are two integers. Given an $e$-core partition $\gamma$ and a $(d-1)\times e$ matrix of non-negative integers $\mathcal{R}$, we say that $\mathcal{R}$ is \textit{$\gamma$-realisable} if there is a partition with $e$-core $\gamma$ and $d$-runner matrix $\mathcal{R}$; see \Cref{cor:existence} and \eqref{eq:realisable} for the (slightly technical) characterisation of $\gamma$-realisable matrices which, in particular, shows that \emph{all} $\mathcal{R}$ are $\gamma$-realisable if $d$ and $e$ are coprime (which is the case in our examples and figures). If $\mathcal{R}$ is $\gamma$-realisable, we let $w_{\mathcal{R}}(\gamma)$ be the minimal $e$-weight of a partition with $e$-core $\gamma$ and $d$-runner matrix $\mathcal{R}$. We then let $\mathcal{E}_{\mathcal{R}}(\gamma)$ be the set of partitions with $e$-core $\gamma$, $d$-runner matrix $\mathcal{R}$ and $e$-weight $w_{\mathcal{R}}(\gamma)$. An example is presented later in \Cref{ex:summary}. As mentioned earlier, the motivation of this definition comes from \cite{GiannelliWildonFoulkesandDecomposition15} by Giannelli and Wildon, as explained in detail in the background section. Our second main result is as follows.

\begin{theorem}\label{th:max to min}
	Let $d,e>1$ be two integers, $\gamma$ be an $e$-core partition and $\mathcal{R}$ be a $\gamma$-realisable $(d-1)\times e$ matrix of non-negative integers with at least one $0$ in each row. The set $\mathcal{E}_{\mathcal{R}}(\gamma)$ has a unique maximal element $\lambda$ in the dominance order. Moreover, if $\lambda$ is $d$-balanced, then $m_e(\lambda)'$ is a unique minimal partition of $\mathcal{E}_{\mathcal{R}}(\gamma)$ in the dominance order. 
\end{theorem}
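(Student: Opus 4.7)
My plan is to split the proof into three parts: establishing that $E_{\mathcal{R}}(\gamma)$ admits a unique dominance-maximum, verifying that $m_e(\lambda)'$ belongs to $E_{\mathcal{R}}(\gamma)$ when this maximum $\lambda$ is $d$-balanced, and identifying $m_e(\lambda)'$ as the unique dominance-minimum.

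For the first part I would work on the James $e$-abacus for partitions with $e$-core $\gamma$. Since $\gcd(d,e)=1$, the positions on each runner cycle through all residues modulo $d$, so fixing the $d$-runner matrix is equivalent to prescribing, runner by runner, how many beads sit at positions of each residue class modulo $d$. Minimising the $e$-weight then forces the beads on each runner to occupy the lowest admissible collection of positions compatible with those residue counts. Among all such configurations I expect the dominance-maximum to arise from a canonical greedy assignment: on each runner, the highest available bead position is given to the mod-$d$ class that produces the largest part, and so on downwards. A bead-exchange argument should then show that this configuration strictly dominates every other member of $E_{\mathcal{R}}(\gamma)$.

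Next, assume $\lambda$ is $d$-balanced. Since its $d$-runner matrix $\mathcal{R}$ contains a $0$ in the first row by hypothesis, \Cref{th:Mullineuxbalanced} applies directly and yields that $m_e(\lambda)'$ is $d$-shift balanced with the same $e$-core $\gamma$ and $d$-runner matrix $\mathcal{R}$ as $\lambda$. Because the Mullineux map and conjugation both preserve size, $m_e(\lambda)'$ has the same $e$-weight $w_{\mathcal{R}}(\gamma)$ as $\lambda$, so $m_e(\lambda)' \in E_{\mathcal{R}}(\gamma)$.

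For the final identification, I would carry out the dual of the abacus analysis from the first step to produce a unique dominance-minimum of $E_{\mathcal{R}}(\gamma)$, corresponding to the opposite greedy assignment where the highest bead positions on each runner are given to the mod-$d$ classes producing the smallest parts. Using the equivalent description in \Cref{pr:equivalence min}, this minimum should be characterised among members of $E_{\mathcal{R}}(\gamma)$ by being $d$-skewed; since $m_e(\lambda)'$ is $d$-shift balanced and therefore $d$-skewed, it must coincide with this minimum. The main obstacle is making the greedy/bead-exchange arguments in the first and third steps fully rigorous: one must track carefully how moves of beads between residue classes, both within a single runner and between different runners, affect the partition and its position in the dominance order. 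Once this bookkeeping is in place, \Cref{th:Mullineuxbalanced} supplies the bridge from the max to the min.
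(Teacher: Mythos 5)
Your handling of the ``moreover'' part is essentially the paper's argument: apply \Cref{th:Mullineuxbalanced} to see that $m_e(\lambda)'$ is \dsbal with the same $e$-core and $d$-runner matrix, hence \dunb, and then invoke \Cref{pr:equivalence min} to identify it with the minimum of $E_{\mathcal{R}}(\gamma)$. One loose end remains there: \Cref{pr:equivalence min} requires a $0$ in \emph{every} row of $\mathcal{R}$, whereas the theorem only assumes a $0$ in the first row. You need to dispose of the case where some row of $\mathcal{R}$ is entirely positive; the paper does this via \Cref{le:nonexistence}, which shows that no \dunb partition with such a matrix exists, so no \dbal $\lambda$ as in the hypothesis can exist either and the claim is vacuous in that case.

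The genuine gap is in your first step. Your reduction ``fixing the $d$-runner matrix is equivalent to prescribing, runner by runner, how many beads sit at positions of each residue class modulo $d$'' is false. The entry $\mathcal{R}_d(B)_{x,y}$ counts beads on runner $y$ whose \emph{$d$-emptiness} is $x$, and the $d$-emptiness of a bead $b$ is $\emp_B(b) \bmod d$, i.e.\ the number of empty spaces below $b$ across the \emph{entire} abacus (equivalently, the corresponding part $\lambda_i$ modulo $d$). It is not the residue of the position $b$ modulo $d$: two beads at positions congruent modulo $de$ can have different $d$-emptinesses, and sliding a bead on one runner changes the $d$-emptinesses of beads on all other runners. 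Consequently the runner-by-runner greedy assignment and the bead-exchange argument built on it do not get off the ground, and the subsequent claims (that the greedy configuration dominates all others, and its dual is the minimum) are asserted rather than proved. The paper instead shows that any dominance-maximal element of $E_{\mathcal{R}}(\gamma)$ must be \dsunb (\Cref{cor:max implies dsunb}, via the algorithm A1, which strictly increases the partition in dominance order or decreases its size while preserving the $e$-core and the $d$-runner matrix), and separately that there is at most one \dsunb partition with a given $e$-core and $d$-runner matrix (\Cref{le:dsunb unique}); together these give uniqueness of the maximum. Both ingredients involve nontrivial global bookkeeping on the abacus that your sketch does not supply, and the analogous statements for the minimum (\Cref{cor:min implies dunb} and \Cref{le:dunb unique}) are harder still. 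If you are permitted to cite \Cref{pr:equivalence max}, \Cref{pr:equivalence min} and \Cref{le:dominance} as established, the proof collapses to a few lines; as a self-contained argument, the extremality step is missing.
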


Again, the proof of \Cref{th:max to min} is divided into two parts. In \Cref{le:dominance} we see that the set $\mathcal{E}_{\mathcal{R}}(\gamma)$ has, in fact, a unique maximal element in the dominance order even when $\mathcal{R}$ has a row with no $0$, and, under the minor assumption that $\mathcal{R}$ contains $0$ in each of its rows, it also has a unique minimal element in the dominance order. The `moreover' part of \Cref{th:max to min} (which is, despite the label `moreover', the more substantial part of \Cref{th:max to min}) is proved below. It follows easily from \Cref{th:Mullineuxbalanced} after applying the next two surprising combinatorial results which describe the maximal and the minimal element of $\mathcal{E}_{\mathcal{R}}(\gamma)$ using partitions from \Cref{de:partitions}. The first result also justifies that the assumption in \Cref{th:max to min} that $\lambda$ is $d$-balanced is sensible (and shows that it is \emph{not} needed if $d=2$), and both results provide algorithms to find these extremal elements of $\mathcal{E}_{\mathcal{R}}(\gamma)$; see the paragraph after \Cref{le:dominance}. 

\begin{proposition}\label{pr:equivalence max}
	Let $d,e>1$ be two integers and $\lambda$ be a partition with $e$-core $\gamma$ and $d$-runner matrix $\mathcal{R}$. Then $\lambda$ is \dsunb if and only if it is the maximal element of $\mathcal{E}_{\mathcal{R}}(\gamma)$. In that case, it is $e$-regular if and only if each row of $\mathcal{R}$ contains $0$.
\end{proposition}

\begin{proposition}\label{pr:equivalence min}
	Let $d,e>1$ be two integers with least common multiple $l$ and $\lambda$ be a partition with $e$-core $\gamma$ and $d$-runner matrix $\mathcal{R}$ such that each row of $\mathcal{R}$ contains $0$. Then $\lambda$ is $l$-restricted and \dunb if and only if it is the minimal element of $\mathcal{E}_{\mathcal{R}}(\gamma)$.   
\end{proposition}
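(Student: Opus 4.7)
My strategy is to adapt the abacus argument for \Cref{pr:equivalence max} to the minimum case. I would work on the James $e$-abacus with a large number $n$ of beads divisible by $e$ and translate membership in $E_{\mathcal{R}}(\gamma)$ into combinatorial constraints on abacus configurations.

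First, I would re-express the $d$-runner matrix in abacus coordinates: a bead at position $p$ contributes to the entry $\mathcal{R}_{x,y}$ with $y\equiv p-n \pmod{e}$ (its runner, offset by $n$) and $x\equiv p-b(p) \pmod{d}$, where $b(p)$ is the number of beads strictly below position $p$. Fixing the $e$-core $\gamma$ determines the number of beads on each runner, and fixing $\mathcal{R}$ constrains the $x$-values of these beads.

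Next, I would identify the elementary dominance-decreasing moves within $E_{\mathcal{R}}(\gamma)$. A basic example is to slide a bead from position $p$ down to $p-ke$ on its runner (removing an $e$-divisible rim hook of size $ke$) while compensating by sliding another bead up by $ke$ on some other runner, so that the total $e$-weight is unchanged. Using the identity that the arm length of the removed hook equals $ke-1-m$, where $m$ is its leg length (the number of beads strictly between $p-ke$ and $p$), together with the formula for $x$, a direct computation should show that the $d$-runner matrix is preserved by this move exactly when the arm length is divisible by $d$. The hypothesis that each row of $\mathcal{R}$ contains a zero is used here to guarantee the existence of a valid compensating move whenever such a hook is present.

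Both directions of the proposition then follow from this local criterion. If $\lambda$ is $d$-skewed, no such decreasing move exists, so $\lambda$ is minimal in $E_{\mathcal{R}}(\gamma)$, and the uniqueness provided by \Cref{le:dominance} upgrades this to being the minimum. Conversely, if $\lambda$ has an $e$-divisible rim hook with arm length divisible by $d$, the corresponding elementary move produces some $\mu\in E_{\mathcal{R}}(\gamma)$ strictly below $\lambda$ in dominance, so $\lambda$ cannot be minimal. The main obstacle will be the combinatorial bookkeeping for the compensating move: the shifts in $x$-values arising from the two slid beads, together with the induced shifts on all beads they pass over, must conspire to leave $\mathcal{R}$ invariant, and isolating the arm-length congruence from this tangle will likely require careful case analysis, mirroring the analysis carried out for \Cref{pr:equivalence max}.
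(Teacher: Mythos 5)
There are two genuine gaps here, and together they mean the proposal does not prove the proposition.

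First, your central local claim is false: a single ``remove one $e$-divisible hook, add a compensating one'' move does not preserve the $d$-runner matrix when the removed hook's arm length is divisible by $d$. On the abacus, sliding a bead from $b$ up to $b-ke$ decreases the emptiness of \emph{every} bead strictly between $b-ke$ and $b$ by $1$, and decreases the emptiness of the moved bead itself by $\emp_B(b-ke+1,b)+1$, i.e.\ by the arm length plus one. So even when the arm length is $\equiv 0 \pmod d$, the moved bead and every bead it passes over change their row in the $d$-runner matrix; a single compensating down-move elsewhere shifts a different set of beads by $+1$ and cannot in general restore the multiset of (runner, $d$-emptiness) pairs. This is precisely why the paper introduces the cascading algorithm A2, which performs swaps of $b-i-ae$ and $b-i$ for $i=0,1,2,\dots$ until the numbers of up- and down-moves agree modulo $d$; its net effect is a permutation of three whole intervals of the abacus, and the invariance of the $d$-runner matrix (\Cref{le:a2-runner}) is proved for that global rearrangement, not for a single pair of hook moves. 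Note also that A2 may terminate with $d$ more up-moves than down-moves, producing a partition of strictly smaller $e$-weight --- a case your weight-preserving move cannot produce, yet it is needed to show that a non-\dunb partition of non-minimal weight is excluded. The zero-in-each-row hypothesis enters in \Cref{le:a2-order} to force termination of this cascade (when $b,b-1,\dots,b-ae+1$ are all beads, their common $d$-emptiness must be $0$, guaranteeing at least $d$ up-moves), not to supply a ``compensating move''.

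Second, the forward direction does not follow from your local criterion. Showing that no elementary decreasing move applies to a \dunb partition $\lambda$ does not show that no $\mu\in E_{\mathcal{R}}(\gamma)$ lies strictly below $\lambda$ in the dominance order, unless you also prove that every such $\mu$ is reachable from $\lambda$ by a chain of your elementary moves --- a connectivity statement you do not address. The appeal to \Cref{le:dominance} cannot close this gap: that lemma is itself deduced from \Cref{cor:min implies dunb} together with the uniqueness result \Cref{le:dunb unique}, and in any case it only helps once you already know $\lambda$ is \emph{a} minimal element. The paper's route is different: it proves directly that there is \emph{at most one} \dunb partition with a given $e$-core and $d$-runner matrix (\Cref{le:dunb unique}, via the delicate bead-matching argument of \Cref{le:same runner}), and combines this with the fact that the minimal element is \dunb. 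Your proposal contains no substitute for that uniqueness argument, which is the genuinely hard half of the proposition.
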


\begin{remark}\label{re:min}
	At the end of \Cref{se:unbalances}, we discuss the assumption in \Cref{pr:equivalence min}. For example, in \Cref{le:e-restricted} we show that if $d$ and $e$ are coprime, then one can omit `$l$-restricted' from the statement. We also state the complete description of all $d$-skewed partitions. 
\end{remark}

We are now ready to prove the `moreover' parts of \Cref{th:Mullineuxbalanced} and \Cref{th:max to min}. We write $l$ for the least common multiple of $d$ and $e$.

\begin{proof}[Proof of the `moreover' part of \Cref{th:max to min}]
	The `moreover' part of \Cref{pr:equivalence max} shows that $\lambda$ is $e$-regular. Using the main part of \Cref{th:Mullineuxbalanced}, $m_e(\lambda)'$ is an $e$-restricted \dsbal partition with $e$-core $\gamma$ and $d$-runner matrix $\mathcal{R}$. In particular, it is $l$-restricted and \dunb and thus it is the minimal element of $\mathcal{E}_{\mathcal{R}}(\gamma)$ by \Cref{pr:equivalence min}, as required
\end{proof}

This argument also shows that $m_e(\lambda)'$ in \Cref{th:Mullineuxbalanced} is the minimal element of $\mathcal{E}_{\mathcal{R}}(\gamma)$ (where $\gamma$ and $\mathcal{R}$ are the $e$-core and the $d$-runner matrix of $\lambda$, respectively), proving the `moreover' part of \Cref{th:Mullineuxbalanced}. We can further conclude that \Cref{th:Mullineuxbalanced} and \Cref{th:max to min} concern the same partitions: any \dbal partition is also \dsunb and hence, by \Cref{pr:equivalence max}, the maximal element in the corresponding set $\mathcal{E}_{\mathcal{R}}(\gamma)$, and it is $e$-regular if and only if $\mathcal{R}$ contains $0$ in each row.

In general there does not have to be a \dbal partition with a given $e$-core $\gamma$ and $\gamma$-realisable $d$-runner matrix $\mathcal{R}$. However, if $d=2$, then \dbal partitions and \dsunb partitions coincide (and similarly, \dsbal partitions and \dunb partitions coincide). Therefore, by \Cref{pr:equivalence max}, the maximal partition of a set $\mathcal{E}_{\mathcal{R}}(\gamma)$ is \dbal and hence \Cref{th:max to min} gives the following.

\begin{corollary}\label{co:d=2}
	Let $d=2$ and $e>1$. If $\gamma$ is an $e$-core partition, $\mathcal{R}$ is a $\gamma$-realisable $1\times e$ matrix of non-negative integers with at least one $0$ and $\lambda$ denotes the maximal element of $\mathcal{E}_{\mathcal{R}}(\gamma)$, then $m_e(\lambda)'$ is the minimal element of $\mathcal{E}_{\mathcal{R}}(\gamma)$.
\end{corollary}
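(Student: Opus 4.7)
The plan is essentially to assemble the proof from the paragraph immediately preceding the corollary and make its logical structure explicit. The key numerical observation is that, when $d=2$, an integer is congruent to $0$ modulo $2$ if and only if it is not congruent to $-1$ modulo $2$; equivalently, an integer is congruent to $-1$ modulo $2$ if and only if it is not congruent to $0$ modulo $2$. Comparing the four conditions in \Cref{de:partitions}, this immediately gives that in the $d=2$ case, being \dbal is the same as being \dsunb (and similarly, being \dsbal is the same as being $d$-skewed).

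Granted this equivalence, I would proceed in three short steps. First, I apply \Cref{pr:equivalence max} to the hypothesised maximal element $\lambda$ of $E_{\mathcal{R}}(\gamma)$: since $\mathcal{R}$ has $0$ in its only row (so the hypothesis of \Cref{pr:equivalence max} is trivially satisfied, and in fact the assumption before \Cref{pr:equivalence min} is also automatic), $\lambda$ is \dsunb. Second, by the $d=2$ equivalence recalled above, $\lambda$ is therefore \dbal. Third, the `moreover' part of \Cref{th:max to min} (whose proof has already been given in the excerpt as an application of \Cref{th:Mullineuxbalanced} combined with \Cref{pr:equivalence min}) then yields that $m_e(\lambda)'$ is the unique minimal element of $E_{\mathcal{R}}(\gamma)$ in the dominance order, which is exactly what the corollary asserts.

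There is no real obstacle here: the corollary is a direct specialisation of \Cref{th:max to min} once one notices that the gap between the two a priori distinct notions ``\dbal'' and ``\dsunb''—which was the reason \Cref{th:max to min} only gave the conclusion under the extra assumption that the maximal element is \dbal—collapses when $d=2$. The only point worth emphasising in the written proof is this collapse of \Cref{de:partitions}(i) and \Cref{de:partitions}(iv) for $d=2$, so that the hypothesis of the ``moreover'' part of \Cref{th:max to min} is automatically fulfilled and therefore the maximal element of $E_{\mathcal{R}}(\gamma)$ always has the desired property. Consequently, the whole argument takes only a few lines and contains no nontrivial computation.
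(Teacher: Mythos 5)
Your proposal is correct and follows essentially the same route as the paper: the paper's own justification (in the paragraph preceding the corollary) likewise observes that for $d=2$ the notions \dbal and \dsunb coincide, invokes \Cref{pr:equivalence max} to conclude the maximal element of $E_{\mathcal{R}}(\gamma)$ is \dbal, and then applies the `moreover' part of \Cref{th:max to min}.
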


We recall that if $e$ is odd, then $\gamma$-realisability condition on $\mathcal{R}$ can be removed from the statement. That is, \Cref{co:d=2} applies to any $e$-core partition $\gamma$ and $1\times e$ matrix $\mathcal{R}$ of non-negative integers with at least one $0$. 

The hierarchy of the introduced partitions is captured in \Cref{fig:BigPicture}.

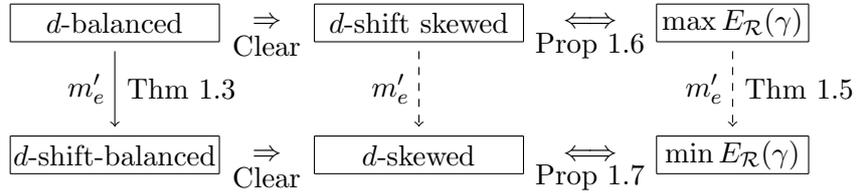
\begin{figure}[h]
	\centering
	\begin{tikzpicture}[x=0.5cm, y=0.5cm]
		\begin{pgfonlayer}{nodelayer}
			\node [style=none] (0) at (-5.75, 2.5) {$d$-balanced};
			\node [style=none] (1) at (-1.75, 2.5) {$\Rightarrow$};
			\node [style=none] (2) at (2.25, 2.5) {$d$-shift skewed};
			\node [style=none] (3) at (6.75, 2.5) {$\iff$};
			\node [style=none] (4) at (10.5, 2.5) {$\max \mathcal{E}_{\mathcal{R}}(\gamma)$};
			\node [style=none] (5) at (-5.75, -1) {$d$-shift balanced};
			\node [style=none] (6) at (-1.75, -1) {$\Rightarrow$};
			\node [style=none] (7) at (2.25, -1) {$d$-skewed};
			\node [style=none] (8) at (6.75, -1) {$\iff$};
			\node [style=none] (9) at (10.5, -1) {$\min \mathcal{E}_{\mathcal{R}}(\gamma)$};
			\node [style=none] (10) at (-5.75, 1.75) {};
			\node [style=none] (11) at (-5.75, -0.25) {};
			\node [style=none] (12) at (-6.5, 0.75) {$m_e'$};
			\node [style=none] (13) at (-1.75, 1.9) {Clear};
			\node [style=none] (14) at (-1.75, -1.6) {Clear};
			\node [style=none] (15) at (6.75, -1.6) {Prop \ref{pr:equivalence min}};
			\node [style=none] (16) at (6.75, 1.9) {Prop \ref{pr:equivalence max}};
			\node [style=none] (17) at (-4, 0.75) {Thm \ref{th:Mullineuxbalanced}};
			\node [style=none] (18) at (10.5, 1.75) {};
			\node [style=none] (19) at (10.5, -0.25) {};
			\node [style=none] (20) at (9.75, 0.75) {$m_e'$};
			\node [style=none] (21) at (12.25, 0.75) {Thm \ref{th:max to min}};
			\node [style=none] (22) at (-8.5, 2) {};
			\node [style=none] (23) at (-3, 2) {};
			\node [style=none] (24) at (-3, 3) {};
			\node [style=none] (25) at (-8.5, 3) {};
			\node [style=none] (26) at (-8.5, -0.5) {};
			\node [style=none] (27) at (-8.5, -1.5) {};
			\node [style=none] (28) at (-3, -0.5) {};
			\node [style=none] (29) at (-3, -1.5) {};
			\node [style=none] (30) at (-0.5, 3) {};
			\node [style=none] (31) at (5, 3) {};
			\node [style=none] (32) at (5, 2) {};
			\node [style=none] (33) at (-0.5, 2) {};
			\node [style=none] (34) at (-0.5, -0.5) {};
			\node [style=none] (35) at (-0.5, -1.5) {};
			\node [style=none] (36) at (5, -0.5) {};
			\node [style=none] (37) at (5, -1.5) {};
			\node [style=none] (38) at (8.5, 3) {};
			\node [style=none] (39) at (12.5, 3) {};
			\node [style=none] (40) at (12.5, 2) {};
			\node [style=none] (41) at (8.5, 2) {};
			\node [style=none] (42) at (8.5, -0.5) {};
			\node [style=none] (43) at (8.5, -1.5) {};
			\node [style=none] (44) at (12.5, -1.5) {};
			\node [style=none] (45) at (12.5, -0.5) {};
			\node [style=none] (46) at (2.25, 1.75) {};
			\node [style=none] (47) at (2.25, -0.25) {};
			\node [style=none] (48) at (1.5, 0.75) {$m_e'$};
		\end{pgfonlayer}
		\begin{pgfonlayer}{edgelayer}
			\draw [style=Move it] (10.center) to (11.center);
			\draw [style=Finish diagram] (18.center) to (19.center);
			\draw (25.center) to (24.center);
			\draw (24.center) to (23.center);
			\draw (23.center) to (22.center);
			\draw (22.center) to (25.center);
			\draw (26.center) to (28.center);
			\draw (28.center) to (29.center);
			\draw (29.center) to (27.center);
			\draw (27.center) to (26.center);
			\draw (30.center) to (31.center);
			\draw (31.center) to (32.center);
			\draw (32.center) to (33.center);
			\draw (33.center) to (30.center);
			\draw (34.center) to (35.center);
			\draw (34.center) to (36.center);
			\draw (36.center) to (37.center);
			\draw (37.center) to (35.center);
			\draw (38.center) to (39.center);
			\draw (39.center) to (40.center);
			\draw (40.center) to (41.center);
			\draw (41.center) to (38.center);
			\draw (42.center) to (45.center);
			\draw (45.center) to (44.center);
			\draw (44.center) to (43.center);
			\draw (43.center) to (42.center);
			\draw [style=Finish diagram] (46.center) to (47.center);
		\end{pgfonlayer}
	\end{tikzpicture}
	\caption{The hierarchy of the introduced partitions, where $m_e'$ is the composition of the Mullineux map $m_e$ followed by the conjugation, that is $m_e'(\lambda) = m_e(\lambda)'$. The dashed arrows show the effect of $m_e'$ if we apply it to a \dbal partition. If $d=2$, then both implications become equivalences, the dashed arrows become solid and in turn we obtain \Cref{co:d=2}. Full details are presented in our main results (and their accompanying remarks).}
	\label{fig:BigPicture}
\end{figure}

Our results are demonstrated in the following running example.

\begin{example}\label{ex:summary}
	Let $d=3$ and $e=5$.
	\begin{enumerate}[label=\textnormal{(\roman*)}]
		\item Looking at the top partition of \Cref{fig:Overview} we observe that $\lambda=(6,4,2)$ is $e$-regular and $d$-balanced. We also see that its $e$-core is $\gamma=(1^2)$ and its $d$-runner matrix is $\mathcal{R}=\big(\begin{smallmatrix}
			0 & 0 & 1 & 0 & 0\\ 
			0 & 0 & 0 & 0 & 1
		\end{smallmatrix}\big)$; see also \Cref{fig:core} and \Cref{fig:runner}. Looking at the bottom partition of \Cref{fig:Overview}, one similarly finds out that $\mu=(5,3^2,1)$ is $e$-restricted, $d$-shift balanced and has the same $e$-core and $d$-runner matrix as $\lambda$. By \Cref{th:Mullineuxbalanced}, we have $m_e(\lambda)' = \mu$.
		\item We can reach the same conclusion by applying \Cref{th:max to min}, since, by \Cref{pr:equivalence max}, we know that $\lambda$ is the maximal element of $\mathcal{E}_{\mathcal{R}}(\gamma)$. Indeed, any partition with $d$-runner matrix $\mathcal{R}$ is of size divisible by $3$. In turn any such partition with $e$-core $\gamma$ has $e$-weight congruent to $2$ modulo $3$, so $w_{\mathcal{R}}(\gamma)\geq 2$. The $e$-weight $2$ is attainable and one computationally verifies that
		\[
		\mathcal{E}_{\mathcal{R}}(\gamma) = \left\lbrace (6,4,2), (6,3,2,1), (5,4,3), (5,3^2,1) \right\rbrace; 
		\]
		see \Cref{fig:Overview} for the display of all partitions in $\mathcal{E}_{\mathcal{R}}(\gamma)$. In the dominance order, the maximal element is $\lambda=(6,4,2)$ and the minimal element is $\mu=(5,3^2,1)$. As $\lambda$ is $d$-balanced, \Cref{th:max to min} gives $m_e(\lambda)' = \mu$ (without verifying that $\mu$ is $d$-shift balanced). 
	\end{enumerate}
\end{example}

\begin{figure}[h]
	\centering
	\begin{tikzpicture}[x=0.5cm, y=0.5cm]
		\begin{pgfonlayer}{nodelayer}
			\node [style=none] (418) at (-1, 10) {};
			\node [style=none] (419) at (5, 10) {};
			\node [style=none] (420) at (5, 9) {};
			\node [style=none] (421) at (3, 9) {};
			\node [style=none] (422) at (3, 8) {};
			\node [style=none] (423) at (1, 8) {};
			\node [style=none] (424) at (1, 7) {};
			\node [style=none] (425) at (-1, 7) {};
			\node [style=none] (426) at (0, 7) {};
			\node [style=none] (427) at (-1, 8) {};
			\node [style=none] (428) at (-1, 9) {};
			\node [style=none] (429) at (0, 10) {};
			\node [style=none] (430) at (1, 10) {};
			\node [style=none] (431) at (2, 10) {};
			\node [style=none] (432) at (2, 8) {};
			\node [style=none] (433) at (4, 10) {};
			\node [style=none] (434) at (4, 9) {};
			\node [style=none] (435) at (3, 10) {};
			\node [style=none] (436) at (-1, 0) {};
			\node [style=none] (439) at (3, -1) {};
			\node [style=none] (440) at (-1, -4) {};
			\node [style=none] (441) at (2, -3) {};
			\node [style=none] (442) at (1, -3) {};
			\node [style=none] (443) at (-1, -3) {};
			\node [style=none] (444) at (0, -4) {};
			\node [style=none] (445) at (-1, -2) {};
			\node [style=none] (446) at (-1, -1) {};
			\node [style=none] (447) at (0, 0) {};
			\node [style=none] (448) at (1, 0) {};
			\node [style=none] (449) at (2, 0) {};
			\node [style=none] (450) at (2, -1) {};
			\node [style=none] (451) at (4, 0) {};
			\node [style=none] (452) at (4, -1) {};
			\node [style=none] (453) at (3, 0) {};
			\node [style=none] (472) at (2, -2) {};
			\node [style=none] (474) at (0, -3) {};
			\node [style=none] (484) at (-0.5, 7.5) {};
			\node [style=none] (485) at (0.5, 7.5) {};
			\node [style=none] (486) at (0.5, 8.25) {};
			\node [style=none] (487) at (2.5, 8.25) {};
			\node [style=none] (488) at (1.5, 8.75) {};
			\node [style=none] (489) at (2.5, 8.75) {};
			\node [style=none] (490) at (2.5, 9.5) {};
			\node [style=none] (491) at (4.5, 9.5) {};
			\node [style=none] (492) at (-0.5, -3.5) {};
			\node [style=none] (493) at (-0.5, -2.75) {};
			\node [style=none] (494) at (1.75, -2.75) {};
			\node [style=none] (495) at (1.75, -1.5) {};
			\node [style=none] (496) at (1.25, -2.25) {};
			\node [style=none] (497) at (1.25, -0.5) {};
			\node [style=none] (498) at (3.5, -0.5) {};
			\node [style=none] (499) at (-9, 4.75) {};
			\node [style=none] (500) at (-3, 4.75) {};
			\node [style=none] (501) at (-3, 3.75) {};
			\node [style=none] (502) at (-5, 3.75) {};
			\node [style=none] (504) at (-7, 2.75) {};
			\node [style=none] (505) at (-7, 1.75) {};
			\node [style=none] (506) at (-9, 1.75) {};
			\node [style=none] (507) at (-8, 1.75) {};
			\node [style=none] (508) at (-9, 2.75) {};
			\node [style=none] (509) at (-9, 3.75) {};
			\node [style=none] (510) at (-8, 4.75) {};
			\node [style=none] (511) at (-7, 4.75) {};
			\node [style=none] (512) at (-6, 4.75) {};
			\node [style=none] (513) at (-6, 2.75) {};
			\node [style=none] (514) at (-4, 4.75) {};
			\node [style=none] (515) at (-4, 3.75) {};
			\node [style=none] (516) at (-5, 4.75) {};
			\node [style=none] (517) at (-8.5, 2.25) {};
			\node [style=none] (518) at (-7.5, 2.25) {};
			\node [style=none] (519) at (-7.5, 3) {};
			\node [style=none] (521) at (-6.5, 3.5) {};
			\node [style=none] (523) at (-6.5, 4.25) {};
			\node [style=none] (524) at (-3.5, 4.25) {};
			\node [style=none] (525) at (-6, 3.75) {};
			\node [style=none] (526) at (-9, 0.75) {};
			\node [style=none] (527) at (-8, 0.75) {};
			\node [style=none] (528) at (-6.5, 3) {};
			\node [style=none] (529) at (-8.5, 1.25) {};
			\node [style=none] (530) at (7.25, 4.75) {};
			\node [style=none] (533) at (11.25, 3.75) {};
			\node [style=none] (534) at (11.25, 2.75) {};
			\node [style=none] (535) at (9.25, 2.75) {};
			\node [style=none] (536) at (9.25, 1.75) {};
			\node [style=none] (537) at (7.25, 1.75) {};
			\node [style=none] (538) at (8.25, 1.75) {};
			\node [style=none] (539) at (7.25, 2.75) {};
			\node [style=none] (540) at (7.25, 3.75) {};
			\node [style=none] (541) at (8.25, 4.75) {};
			\node [style=none] (542) at (9.25, 4.75) {};
			\node [style=none] (543) at (10.25, 4.75) {};
			\node [style=none] (544) at (10.25, 2.75) {};
			\node [style=none] (545) at (12.25, 4.75) {};
			\node [style=none] (546) at (12.25, 3.75) {};
			\node [style=none] (547) at (11.25, 4.75) {};
			\node [style=none] (548) at (7.75, 2) {};
			\node [style=none] (549) at (10, 2) {};
			\node [style=none] (550) at (10, 3) {};
			\node [style=none] (551) at (10.75, 3) {};
			\node [style=none] (552) at (9.5, 3.5) {};
			\node [style=none] (553) at (10.75, 3.5) {};
			\node [style=none] (554) at (10.75, 4.25) {};
			\node [style=none] (555) at (11.75, 4.25) {};
			\node [style=none] (556) at (10.25, 1.75) {};
			\node [style=none] (557) at (9.5, 2.5) {};
			\node [style=none] (558) at (-2.75, 8) {};
			\node [style=none] (559) at (-5.5, 5.75) {};
			\node [style=none] (560) at (-5.5, 0.5) {};
			\node [style=none] (561) at (-2.75, -1.5) {};
			\node [style=none] (562) at (5.75, -1.5) {};
			\node [style=none] (563) at (9.25, 0.75) {};
			\node [style=none] (564) at (9.25, 5.75) {};
			\node [style=none] (565) at (5.75, 7.75) {};
			\node [style=none] (566) at (-1.5, 8.5) {$\scriptstyle 1,2$};
			\node [style=none] (567) at (-1.5, 7.5) {$\scriptstyle 2,4$};
			\node [style=none] (568) at (-9.5, 2.25) {$\scriptstyle 2,4$};
			\node [style=none] (569) at (-1.5, -0.5) {$\scriptstyle 2,4$};
			\node [style=none] (570) at (6.75, 4.25) {$\scriptstyle 2,4$};
			\node [style=none] (571) at (-9.5, 1.25) {$\scriptstyle 1,2$};
			\node [style=none] (572) at (-1.5, -3.5) {$\scriptstyle 1,2$};
			\node [style=none] (573) at (6.75, 3.25) {$\scriptstyle 1,2$};
		\end{pgfonlayer}
		\begin{pgfonlayer}{edgelayer}
			\draw (418.center) to (419.center);
			\draw (428.center) to (420.center);
			\draw (427.center) to (422.center);
			\draw (425.center) to (424.center);
			\draw (418.center) to (425.center);
			\draw (429.center) to (426.center);
			\draw (430.center) to (424.center);
			\draw (431.center) to (432.center);
			\draw (435.center) to (422.center);
			\draw (433.center) to (434.center);
			\draw (419.center) to (420.center);
			\draw (436.center) to (451.center);
			\draw (446.center) to (452.center);
			\draw (445.center) to (472.center);
			\draw (443.center) to (441.center);
			\draw (440.center) to (444.center);
			\draw (436.center) to (440.center);
			\draw (447.center) to (444.center);
			\draw (448.center) to (442.center);
			\draw (449.center) to (441.center);
			\draw (453.center) to (439.center);
			\draw (451.center) to (452.center);
			\draw [style=Extra box] (484.center) to (485.center);
			\draw [style=Extra box] (485.center) to (486.center);
			\draw [style=Extra box] (486.center) to (487.center);
			\draw [style=Extra box] (488.center) to (489.center);
			\draw [style=Extra box] (489.center) to (490.center);
			\draw [style=Extra box] (490.center) to (491.center);
			\draw [style=Extra box] (492.center) to (493.center);
			\draw [style=Extra box] (493.center) to (494.center);
			\draw [style=Extra box] (494.center) to (495.center);
			\draw [style=Extra box] (496.center) to (497.center);
			\draw [style=Extra box] (497.center) to (498.center);
			\draw (499.center) to (500.center);
			\draw (509.center) to (501.center);
			\draw (508.center) to (513.center);
			\draw (506.center) to (505.center);
			\draw (526.center) to (527.center);
			\draw (499.center) to (526.center);
			\draw (510.center) to (527.center);
			\draw (511.center) to (505.center);
			\draw (512.center) to (513.center);
			\draw (516.center) to (502.center);
			\draw (514.center) to (515.center);
			\draw (500.center) to (501.center);
			\draw [style=Extra box] (524.center) to (523.center);
			\draw [style=Extra box] (523.center) to (521.center);
			\draw [style=Extra box] (528.center) to (519.center);
			\draw [style=Extra box] (519.center) to (518.center);
			\draw [style=Extra box] (518.center) to (517.center);
			\draw [style=Extra box] (517.center) to (529.center);
			\draw (539.center) to (534.center);
			\draw (530.center) to (537.center);
			\draw (541.center) to (538.center);
			\draw (542.center) to (536.center);
			\draw (547.center) to (534.center);
			\draw (545.center) to (546.center);
			\draw [style=Extra box] (548.center) to (549.center);
			\draw [style=Extra box] (549.center) to (550.center);
			\draw [style=Extra box] (550.center) to (551.center);
			\draw [style=Extra box] (552.center) to (553.center);
			\draw [style=Extra box] (553.center) to (554.center);
			\draw [style=Extra box] (554.center) to (555.center);
			\draw (530.center) to (545.center);
			\draw (540.center) to (546.center);
			\draw (543.center) to (556.center);
			\draw (537.center) to (556.center);
			\draw [style=Extra box] (552.center) to (557.center);
			\draw [style=Border edge] (559.center) to (558.center);
			\draw [style=Border edge] (560.center) to (561.center);
			\draw [style=Border edge] (562.center) to (563.center);
			\draw [style=Border edge] (565.center) to (564.center);
		\end{pgfonlayer}
	\end{tikzpicture}
	\caption{The Hasse diagram of partitions in $\mathcal{E}_{\mathcal{R}}(\gamma)$ with $\gamma=(1^2)$ and $\mathcal{R}=\big(\begin{smallmatrix}
			0 & 0 & 1 & 0 & 0\\ 
			0 & 0 & 0 & 0 & 1
	\end{smallmatrix}\big)$ from \Cref{ex:summary}. Their $e$-divisible hooks are denoted with dashed lines. For each row, we wrote numbers $x,y$ in front of it if the row contributes by $1$ to the $(x,y)$-entry of the $d$-runner matrix of the corresponding partition. Thus we see that all displayed partitions have $d$-runner matrix equal to $\mathcal{R}$. One easily checks that all displayed partitions have $e$-core equal to $\gamma$.}
	\label{fig:Overview}
\end{figure}
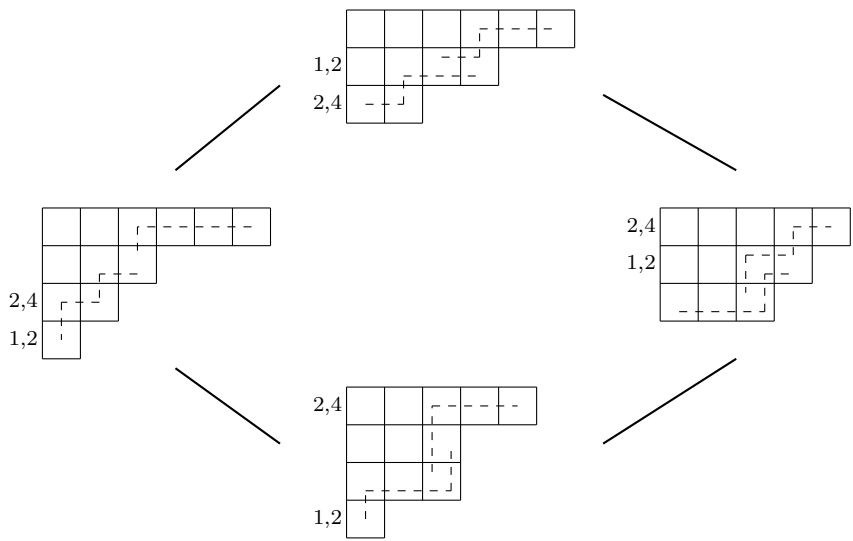

\subsection*{Background}\label{su:background}

In 1979 Mullineux \cite{MullineuxMap79} introduced a combinatorial map $m_e$ from the set of $e$-regular partitions of size $n$ to itself. He conjectured that if $e$ is a prime and $\lambda$ is an $e$-regular partition of $n$, then over a field of characteristic $e$, $m_e(\lambda)$ labels the irreducible module of the symmetric group $S_n$ isomorphic to $D^{\lambda}\otimes\sgn$. This long-standing conjecture was finally proved by Ford and Kleshchev \cite{FordKleschevMullineux97} in 1997, using Kleshchev's modular branching rules \cite{KleshchevBranchingI95, KleshchevBranchingII95, KleshchevBranchingIII96}. A slight reformulation of the Mullineux map $m_e$ was then used to find irreducible modules of the alternating group in odd characteristic \cite{FordAlternating97}. Since then, different combinatorial algorithms for computing $m_e(\lambda)$ have been found. Among others, there are algorithms by Xu \cite{XuMullineux97}, Brundan and Kujawa \cite{BrundanKujawaMullineux03}, Fayers \cite{FayersMullineux22} and Jacon \cite{JaconMullineux23}. The algorithm we use in this paper is based on the map $J$ defined by Xu \cite{XuMullineux97} (a shorter proof that the algorithm of Xu coincides with the composition of the Mullineux map followed by the conjugation is in \cite{XupSeries99}). In 1998 Brundan \cite{BrundanMullineuxHecke98} found an algebraic interpretation of the Mullineux map for any integer $e>1$ using an involution of the Hecke algebra in quantum characteristic $e$.

The Mullineux map is closely tied to the decomposition numbers of symmetric groups and Hecke algebras --- if the rows on the decomposition matrix are ordered by any linear extension of the dominance order, then the partition $m_e(\lambda)'$ labels the bottommost row with a non-zero entry in the column labelled by $\lambda$, and this entry is $1$ --- and has been used as a tool to finding decomposition numbers; for instance, the combinatorial formula for the decomposition numbers in blocks of weight $2$ due to Richards \cite{RichardsDecomposition96} is proved with an aid of the Mullineux map.

Our results are motivated by this connection to decomposition numbers. Namely, the definition of $\mathcal{E}_{\mathcal{R}}(\gamma)$ is inspired by the sets $\mathcal{E}_{k}(\gamma)$ introduced by Giannelli and Wildon \cite{GiannelliWildonFoulkesandDecomposition15} (defined for $d=2$), which can be partitioned by our `finer' sets $\mathcal{E}_{\mathcal{R}}(\gamma)$. In the paper, Giannelli and Wildon showed that under mild conditions there are columns of the decomposition matrix of a symmetric group whose sum is a vector whose non-zero entries are one and lie in the rows labelled by the partitions in $\mathcal{E}_{k}(\gamma)$. They further analyse the set $\mathcal{E}_{0}(\gamma)$ in \cite{GiannelliWildonIndecomposable16} with particular focus on the cases when the partitions in $\mathcal{E}_{0}(\gamma)$ have $e$-weight at most $2$.

The results of Giannelli and Wildon are already impressive, however, the columns of the decomposition matrix that their results concern and their explicit descriptions are not found. The sets $\mathcal{E}_{\mathcal{R}}(\gamma)$ introduced in this paper aim to both complete and vastly generalise the results by Giannelli and Wildon by describing these (and other) columns --- this is a current work in progress of the author and David Hemmer (for $d=2$), and Bim Gustavsson, Stacey Law, Lorenzo Putignano and Liron Speyer (for $d>2$), that builds on the results of this paper. We demonstrate the potential of sets $\mathcal{E}_{\mathcal{R}}(\gamma)$ in the following example.

\begin{example}\label{ex:finerSets}
	Let $e>1$ be odd. Analogously to our definition of $w_{\mathcal{R}}(\gamma)$ and $\mathcal{E}_{\mathcal{R}}(\gamma)$ (for $d=2$ and odd $e>1$), in \cite{GiannelliWildonFoulkesandDecomposition15} the number $w_k(\gamma)$ is defined as the minimal $e$-weight of a partition with $e$-core $\gamma$ and $k$ odd parts and the set $\mathcal{E}_k(\gamma)$ is defined as the set of partitions with $e$-core $\gamma$, $k$ odd parts and $e$-weight $w_k(\gamma)$. When $\mathcal{R}$ is the one-row $2$-runner matrix consisting of only zeros, a partition has $2$-runner matrix $\mathcal{R}$ if and only if all its parts are even; thus, $w_{\mathcal{R}}(\gamma) = w_0(\gamma)$ and $\mathcal{E}_{\mathcal{R}}(\gamma)=\mathcal{E}_0(\gamma)$. By \Cref{co:d=2} and the relation between the Mullineux map and decomposition matrices, the column of the decomposition matrix labelled by the maximal element of $\mathcal{E}_{\mathcal{R}}(\gamma)$ has the bottommost non-zero entry in the row labelled by the minimal element of $\mathcal{E}_{\mathcal{R}}(\gamma)$, and this entry is one.
	
	Looking at the first part of \cite[Example~6.2]{GiannelliWildonFoulkesandDecomposition15}, for $e=3$ and $\gamma=(3,1^2)$ we have
	\[
	\mathcal{E}_{\mathcal{R}}(\gamma)=\mathcal{E}_0(\gamma) = \left\lbrace (8, 4, 2), (6^2, 2), (6, 4^2),(6, 4, 2^2)\right\rbrace. 
	\]
	Furthermore, in the same example it is concluded that the column of the decomposition matrix labelled by $(8,4,2)$, the maximal partition of the set $\mathcal{E}_{\mathcal{R}}(\gamma)$, contains ones in rows labelled by the partitions in $\mathcal{E}_{\mathcal{R}}(\gamma)$ and zeros elsewhere; this agrees with \Cref{co:d=2}.
	
	More generally, a partition has $k$ odd parts if and only if the entries of its $2$-runner matrix add up to $k$. Untangling the definitions, we conclude that for any $2$-runner matrix $\mathcal{R}$ with the sum of entries equal to $k$, we have $w_{\mathcal{R}}(\gamma) \geq w_k(\gamma)$ and
	\[
	\mathcal{E}_k(\gamma) = \bigsqcup_{\mathcal{R}} \mathcal{E}_{\mathcal{R}}(\gamma)
	\]
	where the disjoint union is taken over all such $\mathcal{R}$ with $w_{\mathcal{R}}(\gamma) = w_k(\gamma)$.
	
	Moving to the second part of \cite[Example~6.2]{GiannelliWildonFoulkesandDecomposition15}, the set $\mathcal{E}_6(\gamma)$ with $e=7$ and $\gamma = (4^3)$ is the union of sets
	\[
	\mathcal{X} = \left\lbrace (11, 4^2, 3, 1^4), (11, 4^2, 2, 1^5), (10, 5, 4, 3, 1^4), (10, 5, 4, 2, 1^5) \right\rbrace, 
	\]
	and
	\[
	\mathcal{X}' = \left\lbrace (9, 5^3, 1^2),(9, 5^2, 4, 1^3),(8, 5^3, 1^3) \right\rbrace.
	\]
	One can verify that these sets coincide with $\mathcal{E}_{\mathcal{R}}(\gamma)$ for suitable $\mathcal{R}$ (with $d=2$). Namely, $\mathcal{R} = (1\; 1\; 1\; 2\; 0\; 0\; 1)$ for $\mathcal{X}$ and $\mathcal{R} = (0\; 2\; 2\; 2\; 0\; 0\; 0)$ for $\mathcal{X}'$. Using the conclusion of the second part of \cite[Example~6.2]{GiannelliWildonFoulkesandDecomposition15}, the columns of the decomposition matrix labelled by the maximal partitions of these two sets $\mathcal{E}_{\mathcal{R}}(\gamma)$ contain ones in rows labelled by the partitions in the corresponding set $\mathcal{E}_{\mathcal{R}}(\gamma)$ and zeros elsewhere. Notice that this is again in accordance with \Cref{co:d=2}, which tells us that the bottommost ones in these columns lie in the rows labelled by the minimal element of the corresponding set $\mathcal{E}_{\mathcal{R}}(\gamma)$. 
\end{example}

Results like those presented in this paper are rare --- most known results about the Mullineux map concern its general properties rather than its behaviour when applied to particular partitions. An exception to this is the result by Paget \cite{PagetMullineux06} who proves a simple behaviour of the Mullineux map when applied to partitions in RoCK blocks. Paget's algebraic proof was later accompanied by a combinatorial proof by Fayers \cite{FayersMullineux22} who established a more general runner removal result for the Mullineux map applicable to partitions in the RoCK blocks.

\subsection*{Outline}

We recall the notation, terminology and results about partitions and the James abacus in \Cref{se:pre}. We end the section by describing $\gamma$-realisable matrices. In \Cref{se:mullineux} we describe the Abacus Mullineux Algorithm and prove that it has the same effect as the algorithm from \cite{XuMullineux97}. Thus for an $e$-regular partition $\lambda$ as an input it returns $m_e(\lambda)'$. Several results about \dbal partitions are proved in \Cref{se:balanced}. In \Cref{se:pairs} we introduce $d$-combined pairs and use them to prove \Cref{th:Mullineuxbalanced}. Finally, in \Cref{se:unbalances} we prove \Cref{pr:equivalence max} and \Cref{pr:equivalence min}. This is done by using two new algorithms A1 and A2 of independent interest performed on the James abacus. Note that \Cref{se:unbalances} can be read immediately after \Cref{se:pre}. 

\section{Preliminaries}\label{se:pre}

Throughout, $e>1$ is a fixed positive integer. Let $\lambda=(\List{\lambda}{t})$ be a partition. We denote by $|\lambda|$ its \textit{size} given by $\sum_{i=1}^t \lambda_i$. We say that $\lambda$ is \textit{$e$-regular} if there are no $e$ equal parts of $\lambda$. The \textit{Young diagram} of $\lambda$ is the set $Y(\lambda) = \left\lbrace (i,j)\in \N^2 : i\leq t,\: j\leq \lambda_i \right\rbrace $. The \textit{conjugate partition} $\lambda'$ is the partition such that $Y(\lambda') = \left\lbrace (i,j) : (j,i)\in Y(\lambda) \right\rbrace$. We say that $\lambda$ is \textit{$e$-restricted} if $\lambda'$ is $e$-regular. Equivalently, $\lambda$ is $e$-restricted if for all $i\leq t$ we have $\lambda_i - \lambda_{i+1}\leq e-1$ (where we let $\lambda_{t+1}=0$).

Given any $(i,j)\in Y(\lambda)$ the \textit{rim hook $R_{i,j}$} of $\lambda$ is defined as the set $\left\lbrace (i', j')\in Y(\lambda) : i'\geq i,\: j'\geq j,\: (i'+1,j'+1)\notin Y(\lambda) \right\rbrace$. We refer to the rim hook $R_{1,1}$ as the \textit{rim} of $\lambda$. A rim hook is called \textit{$e$-hook} if its size is $e$ and, as defined in \Cref{de:partitions}, is called $e$-divisible hook if its size is divisible by $e$. We define the \textit{arm length} and \textit{leg length} of a rim hook $R_{i,j}$ by $\lambda_i-j$ and $\lambda'_j - i$, respectively. It follows that the sum of the arm length and the leg length of a rim hook equals its size decreased by $1$. We also define the \emph{top} and \emph{bottom} of a rim hook $R$ as the least and the greatest $i$ such that $(i,j)\in R$ for some $j$, respectively. We denote them by $\tp(R)$ and $\bt(R)$, respectively. See \Cref{fig:Young} for an example.

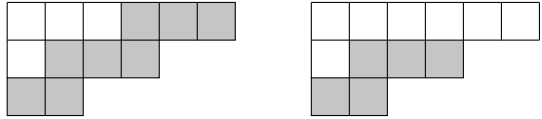
\begin{figure}[h]
	\centering
	\begin{tikzpicture}[x=0.5cm, y=0.5cm]
		\begin{pgfonlayer}{nodelayer}
			\node [style=none] (0) at (-6, 3) {};
			\node [style=none] (1) at (0, 3) {};
			\node [style=none] (2) at (0, 2) {};
			\node [style=none] (3) at (-2, 2) {};
			\node [style=none] (4) at (-2, 1) {};
			\node [style=none] (5) at (-4, 1) {};
			\node [style=none] (6) at (-4, 0) {};
			\node [style=none] (7) at (-6, 0) {};
			\node [style=none] (8) at (-6, 1) {};
			\node [style=none] (9) at (-6, 2) {};
			\node [style=none] (10) at (-5, 0) {};
			\node [style=none] (11) at (-5, 3) {};
			\node [style=none] (12) at (-4, 3) {};
			\node [style=none] (13) at (-3, 3) {};
			\node [style=none] (14) at (-2, 3) {};
			\node [style=none] (15) at (-1, 3) {};
			\node [style=none] (16) at (-1, 2) {};
			\node [style=none] (17) at (-3, 1) {};
			\node [style=none] (18) at (-3, 2) {};
			\node [style=none] (19) at (-5, 2) {};
			\node [style=none] (20) at (-5, 1) {};
			\node [style=none] (21) at (2, 3) {};
			\node [style=none] (22) at (8, 3) {};
			\node [style=none] (23) at (8, 2) {};
			\node [style=none] (24) at (6, 2) {};
			\node [style=none] (25) at (6, 1) {};
			\node [style=none] (26) at (4, 1) {};
			\node [style=none] (27) at (4, 0) {};
			\node [style=none] (28) at (2, 0) {};
			\node [style=none] (29) at (2, 1) {};
			\node [style=none] (30) at (2, 2) {};
			\node [style=none] (31) at (3, 0) {};
			\node [style=none] (32) at (3, 3) {};
			\node [style=none] (33) at (4, 3) {};
			\node [style=none] (34) at (5, 3) {};
			\node [style=none] (35) at (6, 3) {};
			\node [style=none] (36) at (7, 3) {};
			\node [style=none] (37) at (7, 2) {};
			\node [style=none] (38) at (5, 1) {};
			\node [style=none] (39) at (5, 2) {};
			\node [style=none] (40) at (3, 2) {};
			\node [style=none] (41) at (3, 1) {};
			\node [style=none] (42) at (-1, 2) {};
		\end{pgfonlayer}
		\begin{pgfonlayer}{edgelayer}
			\draw [style=Light grey column] (2.center)
			to (3.center)
			to (4.center)
			to (5.center)
			to (6.center)
			to (7.center)
			to (8.center)
			to (20.center)
			to (19.center)
			to (18.center)
			to (13.center)
			to (1.center)
			to cycle;
			\draw (0.center) to (13.center);
			\draw (0.center) to (8.center);
			\draw (11.center) to (19.center);
			\draw (20.center) to (10.center);
			\draw (20.center) to (5.center);
			\draw (5.center) to (12.center);
			\draw (18.center) to (3.center);
			\draw (9.center) to (19.center);
			\draw (14.center) to (3.center);
			\draw (18.center) to (17.center);
			\draw (15.center) to (42.center);
			\draw [style=Light grey column] (27.center)
			to (28.center)
			to (29.center)
			to (41.center)
			to (40.center)
			to (24.center)
			to (25.center)
			to (26.center)
			to cycle;
			\draw (29.center) to (21.center);
			\draw (21.center) to (22.center);
			\draw (22.center) to (23.center);
			\draw (23.center) to (24.center);
			\draw (36.center) to (37.center);
			\draw (35.center) to (24.center);
			\draw (34.center) to (38.center);
			\draw (33.center) to (26.center);
			\draw (32.center) to (40.center);
			\draw (41.center) to (31.center);
			\draw (30.center) to (40.center);
			\draw (41.center) to (26.center);
		\end{pgfonlayer}
	\end{tikzpicture}
	\caption{These are two copies of the Young diagram of partition $\lambda = (6,4,2)$. The highlighted boxes in the left diagram form the rim of $\lambda$, while in the right diagram they form the $5$-hook $R_{2,1}$ with top $2$, bottom $3$, arm length $3$ and leg length $1$. Note that $\lambda$ has one more $5$-hook, namely, $R_{1,3}$. It has top $1$, bottom $2$, arm length $3$ and leg length $1$.}
	\label{fig:Young}
\end{figure}

We say a partition is an \textit{$e$-core} if it does not have any $e$-hooks. Given a partition $\lambda$ and its $e$-hook $R_{i,j}$, one can \textit{remove} $R_{i,j}$ to obtain a new partition with Young diagram $Y(\lambda)\setminus R_{i,j}$. Repeating this procedure yields a partition $\gamma$ with no $e$-hooks, the \textit{$e$-core of $\lambda$} and we define the \textit{$e$-weight of $\lambda$} to be the number of $e$-hooks removed from $\lambda$ during this procedure. As we remark after \Cref{ex:empty}, the $e$-core of $\lambda$ and, consequently, the $e$-weight of $\lambda$ are well-defined. An example of the $e$-core and the $e$-weight of a partition is in \Cref{fig:core}.

The length of a partition $\lambda$, denoted by $\ell(\lambda)$ equals the number of parts of $\lambda$. The \textit{dominance order} on the set of partitions of size $n$ is defined as $\mu \unlhd \lambda$ if $\ell(\lambda)\leq \ell(\mu)$ and for all $i\leq \ell(\lambda)$ we have $\sum_{j=1}^i \mu_j \leq \sum_{j=1}^i \lambda_j$. We will later need a simple fact that if $\lambda$ and $\mu$ have equal sizes and $R$ and $R'$ are rim hooks of $\lambda$ and $\mu$, respectively, such that the removal of $R$ from $\lambda$ results in the same partition as does the removal of $R'$ from $\mu$, then $\mu\unlhd \lambda$ if $\bt(R) < \bt(R')$. The dominance order can be refined to the usual lexicographical order, which is a total order on the set of partitions.

\begin{example}\label{ex:dominance}
	In the dominance order, the partition $(5,4,3)$ is less than $(6,4,2)$ and is incomparable with $(6,3,2,1)$. However, in the lexicographic order $(5,4,3)$ is less than $(6,3,2,1)$.
\end{example}

Following \cite[Section~2.7]{JamesKerberSymmetric81}, one can think about partitions using the James abacus: a \textit{set of $\beta$-numbers} (or a \textit{$\beta$-set}, for short) of a partition $\lambda = (\List{\lambda}{t})$ is a set $B\subset \Z$ of the form $B_s(\lambda):=\left\lbrace \lambda_i-i+s : i\in \N \right\rbrace $ for some $s\in\Z$ (where $\lambda_i$ is taken to be $0$ for $i>t$). We call the set $B_0(\lambda)$ the \textit{canonical $\beta$-set} of $\lambda$. We note that all our work can be carried out only with the canonical $\beta$-sets but it is often convenient to pick a suitable $s$ such that $B_s(\lambda)$, for instance, contains all non-positive integers.

We refer to the elements of a $\beta$-set $B$ of a partition (and more generally, of any subset $B\subseteq \Z$) as \textit{beads} and to the integers outside $B$ as \textit{empty spaces}. If a bead $b$ equals $ke+r$ with $k\in \Z$ and $r\in \left\lbrace 0,1,\dots, e-1 \right\rbrace $ we say that $b$ lies \textit{in row $k$ and on runner $r$} (\textit{of the James abacus with $e$ runners}). See \Cref{fig:abacus} for an intuitive description of these terms. This definition trivially adapts to James abaci with any given number of runners. However, in this paper we will only need $e$ runners and occasionally $ae$ runners with $a$ a positive integer. Thus, if not said otherwise, we work on the James abacus with $e$ runners.

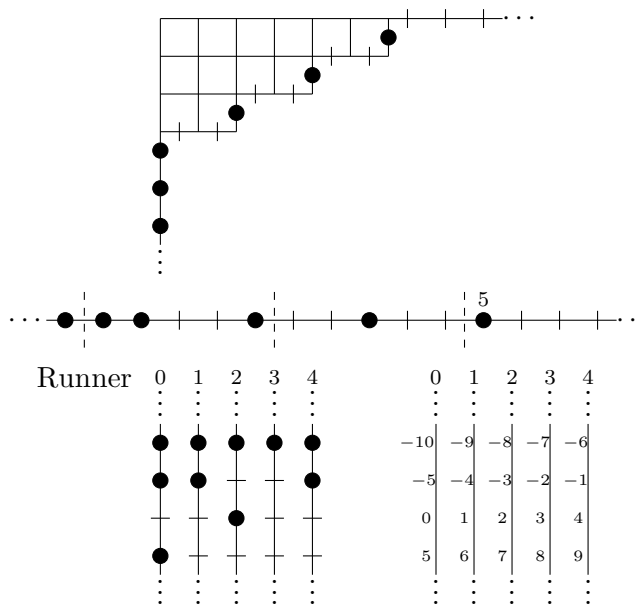
\begin{figure}[h]
	\centering
	\begin{tikzpicture}[x=0.5cm, y=0.5cm]
		\begin{pgfonlayer}{nodelayer}
			\node [style=none] (0) at (0.5, 3) {};
			\node [style=none] (9) at (0.5, 2) {};
			\node [style=none] (10) at (1.5, 0) {};
			\node [style=none] (11) at (1.5, 3) {};
			\node [style=none] (12) at (2.5, 3) {};
			\node [style=none] (14) at (4.5, 3) {};
			\node [style=none] (15) at (5.5, 3) {};
			\node [style=none] (16) at (5.5, 2) {};
			\node [style=none] (17) at (3.5, 1) {};
			\node [style=none] (21) at (5.5, 2) {};
			\node [style=none] (22) at (2.5, 0) {};
			\node [style=none] (23) at (0.5, 0) {};
			\node [style=none] (24) at (2.5, 1) {};
			\node [style=none] (25) at (4.5, 1) {};
			\node [style=none] (26) at (4.5, 2) {};
			\node [style=none] (27) at (6.5, 2) {};
			\node [style=none] (28) at (6.5, 3) {};
			\node [style=none] (29) at (3.5, 3) {};
			\node [style=none] (30) at (0.5, 1) {};
			\node [style=none] (31) at (9.5, 3) {};
			\node [style=none] (32) at (10, 3) {$\cdots$};
			\node [style=none] (33) at (0.5, -3) {};
			\node [style=none] (34) at (0.5, -3.25) {$\vdots$};
			\node [style=Empty node] (35) at (6.5, 2.5) {};
			\node [style=Empty node] (36) at (4.5, 1.5) {};
			\node [style=Empty node] (37) at (2.5, 0.5) {};
			\node [style=Empty node] (38) at (0.5, -0.5) {};
			\node [style=Empty node] (39) at (0.5, -1.5) {};
			\node [style=Empty node] (40) at (0.5, -2.5) {};
			\node [style=none] (41) at (-2.5, -5) {};
			\node [style=none] (42) at (12.5, -5) {};
			\node [style=Empty node] (43) at (-2, -5) {};
			\node [style=Empty node] (44) at (-1, -5) {};
			\node [style=Empty node] (45) at (0, -5) {};
			\node [style=Empty node] (46) at (3, -5) {};
			\node [style=Empty node] (47) at (6, -5) {};
			\node [style=Empty node] (48) at (9, -5) {};
			\node [style=none] (49) at (1, 0.25) {};
			\node [style=none] (50) at (1, -0.25) {};
			\node [style=none] (51) at (2, 0.25) {};
			\node [style=none] (52) at (2, -0.25) {};
			\node [style=none] (53) at (3, 1.25) {};
			\node [style=none] (54) at (3, 0.75) {};
			\node [style=none] (55) at (4, 1.25) {};
			\node [style=none] (56) at (4, 0.75) {};
			\node [style=none] (57) at (6, 2.25) {};
			\node [style=none] (58) at (6, 1.75) {};
			\node [style=none] (59) at (5, 2.25) {};
			\node [style=none] (60) at (5, 1.75) {};
			\node [style=none] (61) at (7, 3.25) {};
			\node [style=none] (62) at (7, 2.75) {};
			\node [style=none] (63) at (8, 3.25) {};
			\node [style=none] (64) at (8, 2.75) {};
			\node [style=none] (65) at (9, 3.25) {};
			\node [style=none] (66) at (9, 2.75) {};
			\node [style=none] (67) at (1, -4.75) {};
			\node [style=none] (68) at (1, -5.25) {};
			\node [style=none] (69) at (2, -4.75) {};
			\node [style=none] (70) at (2, -5.25) {};
			\node [style=none] (73) at (4, -4.75) {};
			\node [style=none] (74) at (4, -5.25) {};
			\node [style=none] (75) at (5, -4.75) {};
			\node [style=none] (76) at (5, -5.25) {};
			\node [style=none] (77) at (7, -4.75) {};
			\node [style=none] (78) at (7, -5.25) {};
			\node [style=none] (79) at (8, -4.75) {};
			\node [style=none] (80) at (8, -5.25) {};
			\node [style=none] (81) at (10, -4.75) {};
			\node [style=none] (82) at (10, -5.25) {};
			\node [style=none] (83) at (11, -4.75) {};
			\node [style=none] (84) at (11, -5.25) {};
			\node [style=none] (85) at (12, -4.75) {};
			\node [style=none] (86) at (12, -5.25) {};
			\node [style=none] (87) at (-3, -5) {$\cdots$};
			\node [style=none] (88) at (13, -5) {$\cdots$};
			\node [style=none] (89) at (0.5, -7.75) {};
			\node [style=none] (90) at (0.5, -7) {$\vdots$};
			\node [style=none] (91) at (1.5, -7.75) {};
			\node [style=none] (92) at (1.5, -7) {$\vdots$};
			\node [style=none] (93) at (2.5, -7.75) {};
			\node [style=none] (94) at (2.5, -7) {$\vdots$};
			\node [style=none] (95) at (3.5, -7.75) {};
			\node [style=none] (96) at (3.5, -7) {$\vdots$};
			\node [style=none] (97) at (4.5, -7.75) {};
			\node [style=none] (98) at (4.5, -7) {$\vdots$};
			\node [style=none] (99) at (0.5, -12) {$\vdots$};
			\node [style=none] (100) at (1.5, -12) {$\vdots$};
			\node [style=none] (101) at (0.5, -11.75) {};
			\node [style=none] (102) at (1.5, -11.75) {};
			\node [style=none] (103) at (2.5, -11.75) {};
			\node [style=none] (104) at (2.5, -12) {$\vdots$};
			\node [style=none] (105) at (3.5, -12) {$\vdots$};
			\node [style=none] (106) at (3.5, -11.75) {};
			\node [style=none] (107) at (4.5, -11.75) {};
			\node [style=none] (108) at (4.5, -12) {$\vdots$};
			\node [style=Empty node] (109) at (0.5, -8.25) {};
			\node [style=Empty node] (110) at (1.5, -8.25) {};
			\node [style=Empty node] (111) at (2.5, -8.25) {};
			\node [style=Empty node] (112) at (3.5, -8.25) {};
			\node [style=Empty node] (113) at (4.5, -8.25) {};
			\node [style=Empty node] (114) at (0.5, -9.25) {};
			\node [style=Empty node] (115) at (1.5, -9.25) {};
			\node [style=Empty node] (116) at (4.5, -9.25) {};
			\node [style=Empty node] (117) at (2.5, -10.25) {};
			\node [style=Empty node] (118) at (0.5, -11.25) {};
			\node [style=none] (119) at (2.25, -9.25) {};
			\node [style=none] (120) at (2.75, -9.25) {};
			\node [style=none] (121) at (3.25, -9.25) {};
			\node [style=none] (122) at (3.75, -9.25) {};
			\node [style=none] (123) at (3.25, -10.25) {};
			\node [style=none] (124) at (3.75, -10.25) {};
			\node [style=none] (125) at (4.25, -10.25) {};
			\node [style=none] (126) at (4.75, -10.25) {};
			\node [style=none] (127) at (4.75, -11.25) {};
			\node [style=none] (128) at (4.25, -11.25) {};
			\node [style=none] (129) at (3.75, -11.25) {};
			\node [style=none] (130) at (3.25, -11.25) {};
			\node [style=none] (131) at (2.75, -11.25) {};
			\node [style=none] (132) at (2.25, -11.25) {};
			\node [style=none] (133) at (1.75, -11.25) {};
			\node [style=none] (134) at (1.25, -11.25) {};
			\node [style=none] (135) at (1.75, -10.25) {};
			\node [style=none] (136) at (1.25, -10.25) {};
			\node [style=none] (137) at (0.75, -10.25) {};
			\node [style=none] (138) at (0.25, -10.25) {};
			\node [style=none] (139) at (0.5, -6.5) {$\scriptstyle 0$};
			\node [style=none] (140) at (1.5, -6.5) {$\scriptstyle 1$};
			\node [style=none] (141) at (2.5, -6.5) {$\scriptstyle 2$};
			\node [style=none] (142) at (3.5, -6.5) {$\scriptstyle 3$};
			\node [style=none] (143) at (4.5, -6.5) {$\scriptstyle 4$};
			\node [style=none] (144) at (-1.5, -4.25) {};
			\node [style=none] (145) at (-1.5, -5.75) {};
			\node [style=none] (146) at (3.5, -4.25) {};
			\node [style=none] (147) at (3.5, -5.75) {};
			\node [style=none] (148) at (8.5, -4.25) {};
			\node [style=none] (149) at (8.5, -5.75) {};
			\node [style=none] (150) at (9, -4.45) {$\scriptstyle 5$};
			\node [style=none] (151) at (-1.5, -6.5) {Runner};
			\node [style=none] (152) at (7.75, -7.75) {};
			\node [style=none] (153) at (7.75, -7) {$\vdots$};
			\node [style=none] (154) at (8.75, -7.75) {};
			\node [style=none] (155) at (8.75, -7) {$\vdots$};
			\node [style=none] (156) at (9.75, -7.75) {};
			\node [style=none] (157) at (9.75, -7) {$\vdots$};
			\node [style=none] (158) at (10.75, -7.75) {};
			\node [style=none] (159) at (10.75, -7) {$\vdots$};
			\node [style=none] (160) at (11.75, -7.75) {};
			\node [style=none] (161) at (11.75, -7) {$\vdots$};
			\node [style=none] (162) at (7.75, -12) {$\vdots$};
			\node [style=none] (163) at (8.75, -12) {$\vdots$};
			\node [style=none] (164) at (7.75, -11.75) {};
			\node [style=none] (165) at (8.75, -11.75) {};
			\node [style=none] (166) at (9.75, -11.75) {};
			\node [style=none] (167) at (9.75, -12) {$\vdots$};
			\node [style=none] (168) at (10.75, -12) {$\vdots$};
			\node [style=none] (169) at (10.75, -11.75) {};
			\node [style=none] (170) at (11.75, -11.75) {};
			\node [style=none] (171) at (11.75, -12) {$\vdots$};
			\node [style=none] (188) at (11.5, -10.25) {$\scriptscriptstyle 4$};
			\node [style=none] (191) at (11.5, -11.25) {$\scriptscriptstyle 9$};
			\node [style=none] (193) at (10.5, -11.25) {$\scriptscriptstyle 8$};
			\node [style=none] (195) at (9.5, -11.25) {$\scriptscriptstyle 7$};
			\node [style=none] (197) at (8.5, -11.25) {$\scriptscriptstyle 6$};
			\node [style=none] (202) at (7.75, -6.5) {$\scriptstyle 0$};
			\node [style=none] (203) at (8.75, -6.5) {$\scriptstyle 1$};
			\node [style=none] (204) at (9.75, -6.5) {$\scriptstyle 2$};
			\node [style=none] (205) at (10.75, -6.5) {$\scriptstyle 3$};
			\node [style=none] (206) at (11.75, -6.5) {$\scriptstyle 4$};
			\node [style=none] (207) at (10.5, -10.25) {$\scriptscriptstyle 3$};
			\node [style=none] (208) at (11.325, -9.25) {$\scriptscriptstyle -1$};
			\node [style=none] (209) at (11.3, -8.25) {$\scriptscriptstyle -6$};
			\node [style=none] (210) at (10.3, -8.25) {$\scriptscriptstyle -7$};
			\node [style=none] (211) at (10.3, -9.25) {$\scriptscriptstyle -2$};
			\node [style=none] (212) at (9.3, -8.25) {$\scriptscriptstyle -8$};
			\node [style=none] (213) at (9.3, -9.25) {$\scriptscriptstyle -3$};
			\node [style=none] (214) at (9.5, -10.25) {$\scriptscriptstyle 2$};
			\node [style=none] (215) at (8.5, -10.25) {$\scriptscriptstyle 1$};
			\node [style=none] (216) at (8.3, -9.25) {$\scriptscriptstyle -4$};
			\node [style=none] (218) at (7.25, -8.25) {$\scriptscriptstyle -10$};
			\node [style=none] (219) at (7.3, -9.25) {$\scriptscriptstyle -5$};
			\node [style=none] (220) at (7.5, -10.25) {$\scriptscriptstyle 0$};
			\node [style=none] (221) at (7.5, -11.25) {$\scriptscriptstyle 5$};
			\node [style=none] (222) at (8.3, -8.25) {$\scriptscriptstyle -9$};
		\end{pgfonlayer}
		\begin{pgfonlayer}{edgelayer}
			\draw (9.center) to (27.center);
			\draw (0.center) to (31.center);
			\draw (30.center) to (25.center);
			\draw (23.center) to (22.center);
			\draw (0.center) to (33.center);
			\draw (11.center) to (10.center);
			\draw (12.center) to (22.center);
			\draw (29.center) to (17.center);
			\draw (14.center) to (25.center);
			\draw (15.center) to (21.center);
			\draw (28.center) to (27.center);
			\draw (41.center) to (42.center);
			\draw (49.center) to (50.center);
			\draw (67.center) to (68.center);
			\draw (69.center) to (70.center);
			\draw (73.center) to (74.center);
			\draw (75.center) to (76.center);
			\draw (77.center) to (78.center);
			\draw (79.center) to (80.center);
			\draw (81.center) to (82.center);
			\draw (83.center) to (84.center);
			\draw (85.center) to (86.center);
			\draw (51.center) to (52.center);
			\draw (53.center) to (54.center);
			\draw (55.center) to (56.center);
			\draw (59.center) to (60.center);
			\draw (57.center) to (58.center);
			\draw (61.center) to (62.center);
			\draw (63.center) to (64.center);
			\draw (65.center) to (66.center);
			\draw (89.center) to (101.center);
			\draw (91.center) to (102.center);
			\draw (95.center) to (106.center);
			\draw (97.center) to (107.center);
			\draw (119.center) to (120.center);
			\draw (121.center) to (122.center);
			\draw (123.center) to (124.center);
			\draw (125.center) to (126.center);
			\draw (128.center) to (127.center);
			\draw (130.center) to (129.center);
			\draw (132.center) to (131.center);
			\draw (134.center) to (133.center);
			\draw (136.center) to (135.center);
			\draw (138.center) to (137.center);
			\draw (93.center) to (103.center);
			\draw [style=Extra box] (144.center) to (145.center);
			\draw [style=Extra box] (146.center) to (147.center);
			\draw [style=Extra box] (148.center) to (149.center);
			\draw (152.center) to (164.center);
			\draw (154.center) to (165.center);
			\draw (158.center) to (169.center);
			\draw (160.center) to (170.center);
			\draw (156.center) to (166.center);
		\end{pgfonlayer}
	\end{tikzpicture}
	\caption{Ignoring the dashed lines, the diagram in the middle is $\lambda=(6,4,2)$ displayed on the James abacus. It is created from $\lambda$ by extending the left and upper borderlines of $Y(\lambda)$, placing beads on every vertical unit interval of its inner border and unbending the inner border into a straight line. We obtain a $\beta$-set of $\lambda$ by identifying the James abacus with the real line such that its positions lie at integers. If we identify the rightmost bead with $5$ as in the figure, we get the canonical $\beta$-set $\left\lbrace 5,2,-1,-4,-5,-6, \dots \right\rbrace $. The bottom left picture is a James abacus display of $\lambda$ with $5$ runners, obtained by dividing the middle picture into intervals of length $5$ (separated by the dashed lines) and placing them on top of each other. We obtain the canonical $\beta$-set by labelling the last displayed row as row $1$, so the bead on this row, which lies on runner $0$, is identified with $1*e + 0 =5$. The identifications of all shown positions in the canonical $\beta$-set with integers are displayed in the bottom right diagram. In the rest of the figures we will omit the dots indicating that all runners are infinite.}
	\label{fig:abacus}
\end{figure}

Note that any $\beta$-set of a partition is bounded above and it contains all integers less or equal to some integer $L$. In fact, these are the necessary and sufficient conditions for a set $B\subseteq \Z$ to arise as a $\beta$-set of a partition. To recover the unique partition from such $B$ we need some more notation.

\begin{definition}\label{de:emptyness}
	Let $B$ be a subset of integers and $u$ and $v$ be two integers such that $u\leq v+1$. We define
	\begin{enumerate}[label=\textnormal{(\roman*)}]
		\item $\bd_B(u,v)$ to be the number of beads $b$ of $B$ such that $u\leq b\leq v$, that is $\bd_B(u,v) = |B\cap \left\lbrace u, u+1, \dots, v \right\rbrace |$,
		\item $\emp_B(u,v)$ to be the number of empty spaces $f$ of $B$ such that $u\leq f \leq v$, that is $\emp_B(u,v)=1+v-u-\bd_B(u,v)$,
		\item provided that $B$ contains all integers less or equal to some $L$, the \textit{emptiness} of $v$, denoted by $\emp_B(v)$ to be the number of empty spaces $f$ of $B$ such that $f\leq v$, that is $\emp_B(v)=\emp_B(l,v) = \emp_B(l-1,v)=\dots$. 
	\end{enumerate}
\end{definition}

Note that for $u=v+1$ we have $\bd_B(u,v)=\emp_B(u,v)=0$. With this convention, the expected equality $\bd_B(u,v) + \bd_B(v+1,w)=\bd_B(u,w)$ holds whenever all terms are defined (that is, $u\leq v+1$ and $v\leq w$). The same remains true if $\bd_B$ is replaced by $\emp_B$. In turn we obtain $\emp_B(v+1,w) + \emp_B(v) = \emp_B(w)$ if $B$ contains all integers less or equal to some $L$ and $v\leq w$. 

If $B$ is bounded above and contains all integers less or equal to some integer $L$, then $B$ is a $\beta$-set of the unique partition $\lambda$ obtained from
\begin{equation}\label{eq:recover partition}
(\emp_B(b^{(1)}), \emp_B(b^{(2)}),\dots)
\end{equation}
by omitting the trailing zeros, where $b^{(i)}$ is the $i$'th greatest element of $B$. In turn if $B$ is a $\beta$-set of a partition $\lambda$, which contains all non-positive integers, and $0$ is the $N$'th largest bead of $B$, then
\begin{equation}\label{eq:size}
|\lambda| = \sum_{i=1}^N \emp_B(b^{(i)}) =\sum_{i=1}^N b^{(i)} - (N-i) = \sum_{0\leq b \in B}b - \binom{N}{2}.
\end{equation}

\begin{example}\label{ex:empty}
	If we label the rightmost displayed position of the middle James abacus in \Cref{fig:abacus} by $8$ (so the leftmost position is $-6$), then, for the corresponding $\beta$-set $B$, we have $\bd_B(-6,8) = 6$ and $\emp_B(-6,8)=9$. One also computes $\emp_B(5)=6, \emp_B(2) = 4, \emp_B(-1)=2$ and $\emp_B(b) = 0$ for the remaining beads of $B$, so by \eqref{eq:recover partition}, $B$ is a $\beta$-set of $(6,4,2)$.
\end{example}

One can use a $\beta$-set $B$ of a partition $\lambda$ to find its $e$-core using the well-known correspondence between $e$-hooks of $\lambda$ and beads $b\in B$ such that $b-e$ is an empty space of $B$; see \cite[Lemma~2.7.13]{JamesKerberSymmetric81}. This correspondence assigns to an $e$-hook $R$ of $\lambda$ the $\tp(R)$'th largest bead $b$ of $B$ and it is easy to see that the effect of removing $R$ from $\lambda$ corresponds to \textit{sliding bead $b$ up by one place on its runner}, that is by replacing $b$ in $B$ by $b-e$. Thus the $e$-core of $\lambda$ is unique and one obtains its $\beta$-set by sliding all beads up on their runners on a $\beta$-set of $\lambda$; see \Cref{fig:core} for details.

In fact if $B=B_s(\lambda)$, then the procedure of sliding all beads up on their runners yields $B_s(\gamma)$, where $\gamma$ is the $e$-core of $\lambda$. In particular for any partitions $\lambda$ and $\mu$ if $B_s(\lambda)$ and $B_s(\mu)$ contain all non-positive integers, then $\lambda$ and $\mu$ have the same $e$-core if and only if $B_s(\lambda)$ and $B_s(\mu)$ have the same number of positive beads on each runner.

\begin{figure}[h]
	\centering
	\begin{tikzpicture}[x=0.5cm, y=0.5cm]
		\begin{pgfonlayer}{nodelayer}
			\node [style=none] (0) at (2, 3) {};
			\node [style=none] (1) at (8, 3) {};
			\node [style=none] (2) at (8, 2) {};
			\node [style=none] (3) at (6, 2) {};
			\node [style=none] (4) at (6, 1) {};
			\node [style=none] (5) at (4, 1) {};
			\node [style=none] (6) at (4, 0) {};
			\node [style=none] (7) at (2, 0) {};
			\node [style=none] (8) at (2, 1) {};
			\node [style=none] (9) at (2, 2) {};
			\node [style=none] (10) at (3, 0) {};
			\node [style=none] (11) at (3, 3) {};
			\node [style=none] (12) at (4, 3) {};
			\node [style=none] (13) at (5, 3) {};
			\node [style=none] (14) at (6, 3) {};
			\node [style=none] (15) at (7, 3) {};
			\node [style=none] (16) at (7, 2) {};
			\node [style=none] (17) at (5, 1) {};
			\node [style=none] (18) at (5, 2) {};
			\node [style=none] (19) at (3, 2) {};
			\node [style=none] (20) at (3, 1) {};
			\node [style=none] (21) at (9.75, 3.5) {};
			\node [style=none] (22) at (10.75, 3.5) {};
			\node [style=none] (23) at (11.75, 3.5) {};
			\node [style=none] (24) at (12.75, 3.5) {};
			\node [style=none] (25) at (13.75, 3.5) {};
			\node [style=none] (26) at (9.75, -0.5) {};
			\node [style=none] (27) at (10.75, -0.5) {};
			\node [style=none] (28) at (11.75, -0.5) {};
			\node [style=none] (29) at (12.75, -0.5) {};
			\node [style=none] (30) at (13.75, -0.5) {};
			\node [style=Empty node] (31) at (9.75, 3) {};
			\node [style=Empty node] (32) at (10.75, 3) {};
			\node [style=Empty node] (33) at (11.75, 3) {};
			\node [style=Empty node] (34) at (12.75, 3) {};
			\node [style=Empty node] (35) at (13.75, 3) {};
			\node [style=Empty node] (36) at (9.75, 2) {};
			\node [style=Empty node] (37) at (10.75, 2) {};
			\node [style=Empty node] (38) at (13.75, 2) {};
			\node [style=Empty node] (39) at (9.75, 0) {};
			\node [style=none] (41) at (11.5, 2) {};
			\node [style=none] (42) at (12, 2) {};
			\node [style=none] (43) at (12.5, 2) {};
			\node [style=none] (44) at (13, 2) {};
			\node [style=none] (45) at (12.5, 1) {};
			\node [style=none] (46) at (13, 1) {};
			\node [style=none] (47) at (13.5, 1) {};
			\node [style=none] (48) at (14, 1) {};
			\node [style=none] (49) at (14, 0) {};
			\node [style=none] (50) at (13.5, 0) {};
			\node [style=none] (51) at (13, 0) {};
			\node [style=none] (52) at (12.5, 0) {};
			\node [style=none] (53) at (12, 0) {};
			\node [style=none] (54) at (11.5, 0) {};
			\node [style=none] (55) at (11, 0) {};
			\node [style=none] (56) at (10.5, 0) {};
			\node [style=none] (57) at (11, 1) {};
			\node [style=none] (58) at (10.5, 1) {};
			\node [style=none] (59) at (10, 1) {};
			\node [style=none] (60) at (9.5, 1) {};
			\node [style=none] (61) at (2, -2.75) {};
			\node [style=none] (62) at (8, -2.75) {};
			\node [style=none] (63) at (8, -3.75) {};
			\node [style=none] (64) at (6, -3.75) {};
			\node [style=none] (69) at (2, -4.75) {};
			\node [style=none] (70) at (2, -3.75) {};
			\node [style=none] (72) at (3, -2.75) {};
			\node [style=none] (73) at (4, -2.75) {};
			\node [style=none] (74) at (5, -2.75) {};
			\node [style=none] (75) at (6, -2.75) {};
			\node [style=none] (76) at (7, -2.75) {};
			\node [style=none] (77) at (7, -3.75) {};
			\node [style=none] (79) at (5, -3.75) {};
			\node [style=none] (80) at (3, -3.75) {};
			\node [style=none] (82) at (9.75, -2.25) {};
			\node [style=none] (83) at (10.75, -2.25) {};
			\node [style=none] (84) at (11.75, -2.25) {};
			\node [style=none] (85) at (12.75, -2.25) {};
			\node [style=none] (86) at (13.75, -2.25) {};
			\node [style=none] (87) at (9.75, -6.25) {};
			\node [style=none] (88) at (10.75, -6.25) {};
			\node [style=none] (89) at (11.75, -6.25) {};
			\node [style=none] (90) at (12.75, -6.25) {};
			\node [style=none] (91) at (13.75, -6.25) {};
			\node [style=Empty node] (92) at (9.75, -2.75) {};
			\node [style=Empty node] (93) at (10.75, -2.75) {};
			\node [style=Empty node] (94) at (11.75, -2.75) {};
			\node [style=Empty node] (95) at (12.75, -2.75) {};
			\node [style=Empty node] (96) at (13.75, -2.75) {};
			\node [style=Empty node] (97) at (9.75, -3.75) {};
			\node [style=Empty node] (98) at (10.75, -3.75) {};
			\node [style=Empty node] (99) at (13.75, -3.75) {};
			\node [style=Empty node] (101) at (11.75, -3.75) {};
			\node [style=none] (102) at (11.5, -4.75) {};
			\node [style=none] (103) at (12, -4.75) {};
			\node [style=none] (104) at (12.5, -3.75) {};
			\node [style=none] (105) at (13, -3.75) {};
			\node [style=none] (106) at (12.5, -4.75) {};
			\node [style=none] (107) at (13, -4.75) {};
			\node [style=none] (108) at (13.5, -4.75) {};
			\node [style=none] (109) at (14, -4.75) {};
			\node [style=none] (110) at (14, -5.75) {};
			\node [style=none] (111) at (13.5, -5.75) {};
			\node [style=none] (112) at (13, -5.75) {};
			\node [style=none] (113) at (12.5, -5.75) {};
			\node [style=none] (114) at (12, -5.75) {};
			\node [style=none] (115) at (11.5, -5.75) {};
			\node [style=none] (116) at (11, -5.75) {};
			\node [style=none] (117) at (10.5, -5.75) {};
			\node [style=none] (118) at (11, -4.75) {};
			\node [style=none] (119) at (10.5, -4.75) {};
			\node [style=none] (120) at (10, -4.75) {};
			\node [style=none] (121) at (9.5, -4.75) {};
			\node [style=none] (122) at (3, -4.75) {};
			\node [style=Lead] (123) at (11.75, 1) {};
			\node [style=Lead] (124) at (9.75, -5.75) {};
			\node [style=none] (125) at (2, -8.5) {};
			\node [style=none] (129) at (2, -10.5) {};
			\node [style=none] (130) at (2, -9.5) {};
			\node [style=none] (131) at (3, -8.5) {};
			\node [style=none] (138) at (3, -9.5) {};
			\node [style=none] (139) at (9.75, -8) {};
			\node [style=none] (140) at (10.75, -8) {};
			\node [style=none] (141) at (11.75, -8) {};
			\node [style=none] (142) at (12.75, -8) {};
			\node [style=none] (143) at (13.75, -8) {};
			\node [style=none] (144) at (9.75, -12) {};
			\node [style=none] (145) at (10.75, -12) {};
			\node [style=none] (146) at (11.75, -12) {};
			\node [style=none] (147) at (12.75, -12) {};
			\node [style=none] (148) at (13.75, -12) {};
			\node [style=Empty node] (149) at (9.75, -8.5) {};
			\node [style=Empty node] (150) at (10.75, -8.5) {};
			\node [style=Empty node] (151) at (11.75, -8.5) {};
			\node [style=Empty node] (152) at (12.75, -8.5) {};
			\node [style=Empty node] (153) at (13.75, -8.5) {};
			\node [style=Empty node] (154) at (9.75, -9.5) {};
			\node [style=Empty node] (155) at (10.75, -9.5) {};
			\node [style=Empty node] (156) at (13.75, -9.5) {};
			\node [style=Empty node] (157) at (9.75, -10.5) {};
			\node [style=none] (160) at (12.5, -9.5) {};
			\node [style=none] (161) at (13, -9.5) {};
			\node [style=none] (162) at (12.5, -10.5) {};
			\node [style=none] (163) at (13, -10.5) {};
			\node [style=none] (164) at (13.5, -10.5) {};
			\node [style=none] (165) at (14, -10.5) {};
			\node [style=none] (166) at (14, -11.5) {};
			\node [style=none] (167) at (13.5, -11.5) {};
			\node [style=none] (168) at (13, -11.5) {};
			\node [style=none] (169) at (12.5, -11.5) {};
			\node [style=none] (170) at (12, -11.5) {};
			\node [style=none] (171) at (11.5, -11.5) {};
			\node [style=none] (172) at (11, -11.5) {};
			\node [style=none] (173) at (10.5, -11.5) {};
			\node [style=none] (174) at (11, -10.5) {};
			\node [style=none] (175) at (10.5, -10.5) {};
			\node [style=none] (176) at (10, -11.5) {};
			\node [style=none] (177) at (9.5, -11.5) {};
			\node [style=none] (178) at (3, -10.5) {};
			\node [style=none] (179) at (11.5, -10.5) {};
			\node [style=none] (180) at (12, -10.5) {};
			\node [style=Empty node] (181) at (11.75, -9.5) {};
			\node [style=none] (182) at (4, -3.75) {};
		\end{pgfonlayer}
		\begin{pgfonlayer}{edgelayer}
			\draw [style=Light grey column] (6.center)
			to (7.center)
			to (8.center)
			to (20.center)
			to (19.center)
			to (3.center)
			to (4.center)
			to (5.center)
			to cycle;
			\draw (8.center) to (0.center);
			\draw (0.center) to (1.center);
			\draw (1.center) to (2.center);
			\draw (2.center) to (3.center);
			\draw (15.center) to (16.center);
			\draw (14.center) to (3.center);
			\draw (13.center) to (17.center);
			\draw (12.center) to (5.center);
			\draw (11.center) to (19.center);
			\draw (20.center) to (10.center);
			\draw (9.center) to (19.center);
			\draw (20.center) to (5.center);
			\draw (21.center) to (26.center);
			\draw (22.center) to (27.center);
			\draw (24.center) to (29.center);
			\draw (25.center) to (30.center);
			\draw (41.center) to (42.center);
			\draw (43.center) to (44.center);
			\draw (45.center) to (46.center);
			\draw (47.center) to (48.center);
			\draw (50.center) to (49.center);
			\draw (52.center) to (51.center);
			\draw (54.center) to (53.center);
			\draw (56.center) to (55.center);
			\draw (58.center) to (57.center);
			\draw (60.center) to (59.center);
			\draw (23.center) to (28.center);
			\draw (69.center) to (61.center);
			\draw (76.center) to (77.center);
			\draw (75.center) to (64.center);
			\draw (70.center) to (80.center);
			\draw (82.center) to (87.center);
			\draw (83.center) to (88.center);
			\draw (85.center) to (90.center);
			\draw (86.center) to (91.center);
			\draw (102.center) to (103.center);
			\draw (104.center) to (105.center);
			\draw (106.center) to (107.center);
			\draw (108.center) to (109.center);
			\draw (111.center) to (110.center);
			\draw (113.center) to (112.center);
			\draw (115.center) to (114.center);
			\draw (117.center) to (116.center);
			\draw (119.center) to (118.center);
			\draw (121.center) to (120.center);
			\draw (84.center) to (89.center);
			\draw [style=Light grey column] (62.center)
			to (63.center)
			to (80.center)
			to (72.center)
			to cycle;
			\draw [style=Light grey column] (72.center) to (61.center);
			\draw (80.center) to (122.center);
			\draw (122.center) to (69.center);
			\draw (129.center) to (125.center);
			\draw (130.center) to (138.center);
			\draw (139.center) to (144.center);
			\draw (140.center) to (145.center);
			\draw (142.center) to (147.center);
			\draw (143.center) to (148.center);
			\draw (160.center) to (161.center);
			\draw (162.center) to (163.center);
			\draw (164.center) to (165.center);
			\draw (167.center) to (166.center);
			\draw (169.center) to (168.center);
			\draw (171.center) to (170.center);
			\draw (173.center) to (172.center);
			\draw (175.center) to (174.center);
			\draw (177.center) to (176.center);
			\draw (141.center) to (146.center);
			\draw [style=Light grey column] (138.center) to (131.center);
			\draw [style=Light grey column] (131.center) to (125.center);
			\draw (138.center) to (178.center);
			\draw (178.center) to (129.center);
			\draw (179.center) to (180.center);
			\draw (73.center) to (182.center);
			\draw (74.center) to (79.center);
			\draw (75.center) to (64.center);
			\draw (76.center) to (77.center);
		\end{pgfonlayer}
	\end{tikzpicture}
	\caption{The $5$-core of partition $(6,4,2)$ is $(1^2)$. On the left-hand side, this is found by removing two $5$-hooks, while on the right-hand side this is done by sliding the corresponding beads up by one place on their runners. The $e$-weight of $(6,4,2)$ is $2$.}
	\label{fig:core}
\end{figure}
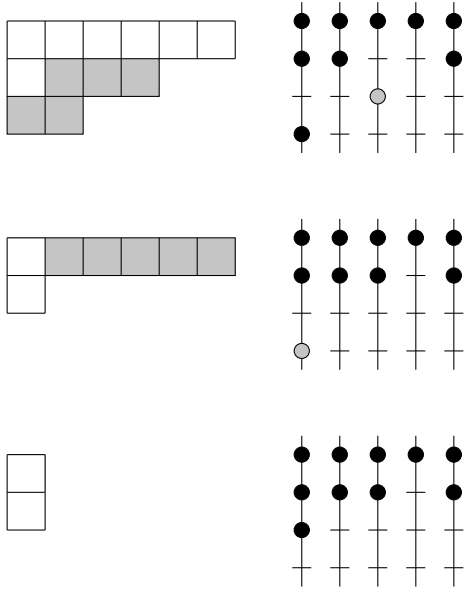

For the rest of the section we also fix an integer $d>1$. Recall the definition of the $d$-runner matrix of a partition from \Cref{de:runner matrix}. Our next (simple) step is to translate this definition to $\beta$-sets. Before doing so, we remark that the $d$-runner matrix of a partition $\lambda$ can be found using $e$-residues of the boxes of its Young diagram. The \textit{$e$-residue} of box $(i,j)\in Y(\lambda)$ is the remainder of $j-i$ modulo $e$ (between $0$ and $e-1$). From \Cref{de:runner matrix}, the entry $\mathcal{R}_d(\lambda)_{x,y}$ of the $d$-runner matrix of $\lambda$ is then the number of rows of $Y(\lambda)$ of length congruent to $x$ modulo $d$ such that their rightmost box has $e$-residue $y$. See \Cref{fig:runner} for an example.

As promised, we now move to $\beta$-sets. The \textit{$d$-emptiness} of an integer $v$ with respect to a $\beta$-set $B$ of a partition is the remainder of $\emp_B(v)$ modulo $d$ (between $0$ and $d-1$). Since there are only finitely many beads with a non-zero $d$-emptiness, the following definition of a $d$-runner matrix of $B$ makes sense. An example of the definition is in \Cref{fig:runner}.

\begin{definition}\label{de:runner matrix B}
	The \textit{$d$-runner matrix} $\mathcal{R}_d(B)$ of $B$ is a $(d-1)\times e$ matrix indexed by $1\leq x\leq d-1$ and $0\leq y\leq e-1$ where $\mathcal{R}_d(B)_{x,y}$ counts the number of beads with $d$-emptiness $x$ on runner $y$.
\end{definition}

\begin{remark}\label{re:runner matrix}
	The choice of the name `$d$-runner matrix' comes from the fact that the rows of a $d$-runner matrix correspond to $d$-emptinesses, which contribute `$d$' to the name, and the columns correspond to runners, which contribute `runner' to the name.
\end{remark}

\begin{figure}[h]
	\centering
	\begin{tikzpicture}[x=0.5cm, y=0.5cm]
		\begin{pgfonlayer}{nodelayer}
			\node [style=none] (55) at (15, 3) {};
			\node [style=none] (56) at (21, 3) {};
			\node [style=none] (57) at (21, 2) {};
			\node [style=none] (58) at (19, 2) {};
			\node [style=none] (59) at (19, 1) {};
			\node [style=none] (60) at (17, 1) {};
			\node [style=none] (61) at (17, 0) {};
			\node [style=none] (62) at (15, 0) {};
			\node [style=none] (63) at (15, 1) {};
			\node [style=none] (64) at (15, 2) {};
			\node [style=none] (65) at (16, 0) {};
			\node [style=none] (66) at (16, 3) {};
			\node [style=none] (67) at (17, 3) {};
			\node [style=none] (68) at (18, 3) {};
			\node [style=none] (69) at (19, 3) {};
			\node [style=none] (70) at (20, 3) {};
			\node [style=none] (71) at (20, 2) {};
			\node [style=none] (72) at (18, 1) {};
			\node [style=none] (73) at (18, 2) {};
			\node [style=none] (74) at (16, 2) {};
			\node [style=none] (75) at (16, 1) {};
			\node [style=none] (76) at (22.75, 3.5) {};
			\node [style=none] (77) at (23.75, 3.5) {};
			\node [style=none] (78) at (24.75, 3.5) {};
			\node [style=none] (79) at (25.75, 3.5) {};
			\node [style=none] (80) at (26.75, 3.5) {};
			\node [style=none] (81) at (22.75, -0.5) {};
			\node [style=none] (82) at (23.75, -0.5) {};
			\node [style=none] (83) at (24.75, -0.5) {};
			\node [style=none] (84) at (25.75, -0.5) {};
			\node [style=none] (85) at (26.75, -0.5) {};
			\node [style=Empty node] (86) at (22.75, 3) {};
			\node [style=Empty node] (87) at (23.75, 3) {};
			\node [style=Empty node] (88) at (24.75, 3) {};
			\node [style=Empty node] (89) at (25.75, 3) {};
			\node [style=Empty node] (90) at (26.75, 3) {};
			\node [style=Empty node] (91) at (22.75, 2) {};
			\node [style=Empty node] (92) at (23.75, 2) {};
			\node [style=Empty node] (93) at (26.75, 2) {};
			\node [style=Empty node] (94) at (24.75, 1) {};
			\node [style=none] (95) at (24.5, 2) {};
			\node [style=none] (96) at (25, 2) {};
			\node [style=none] (97) at (25.5, 2) {};
			\node [style=none] (98) at (26, 2) {};
			\node [style=none] (99) at (25.5, 1) {};
			\node [style=none] (100) at (26, 1) {};
			\node [style=none] (101) at (26.5, 1) {};
			\node [style=none] (102) at (27, 1) {};
			\node [style=none] (103) at (27, 0) {};
			\node [style=none] (104) at (26.5, 0) {};
			\node [style=none] (105) at (26, 0) {};
			\node [style=none] (106) at (25.5, 0) {};
			\node [style=none] (107) at (25, 0) {};
			\node [style=none] (108) at (24.5, 0) {};
			\node [style=none] (109) at (24, 0) {};
			\node [style=none] (110) at (23.5, 0) {};
			\node [style=none] (111) at (24, 1) {};
			\node [style=none] (112) at (23.5, 1) {};
			\node [style=none] (113) at (23, 1) {};
			\node [style=none] (114) at (22.5, 1) {};
			\node [style=Empty node] (115) at (22.75, 0) {};
			\node [style=none] (116) at (15.5, 2.5) {$0$};
			\node [style=none] (117) at (16.5, 2.5) {$1$};
			\node [style=none] (118) at (17.5, 2.5) {$2$};
			\node [style=none] (119) at (18.5, 2.5) {$3$};
			\node [style=none] (120) at (19.5, 2.5) {$4$};
			\node [style=none] (121) at (20.5, 2.5) {$0$};
			\node [style=none] (122) at (18.5, 1.5) {$2$};
			\node [style=none] (123) at (17.5, 1.5) {$1$};
			\node [style=none] (124) at (16.5, 1.5) {$0$};
			\node [style=none] (125) at (15.5, 1.5) {$4$};
			\node [style=none] (126) at (15.5, 0.5) {$3$};
			\node [style=none] (127) at (16.5, 0.5) {$4$};
			\node [style=none] (128) at (22.4, 3.25) {$\scriptstyle 0$};
			\node [style=none] (129) at (23.4, 3.25) {$\scriptstyle 0$};
			\node [style=none] (130) at (24.4, 3.25) {$\scriptstyle 0$};
			\node [style=none] (131) at (25.4, 3.25) {$\scriptstyle 0$};
			\node [style=none] (132) at (26.4, 3.25) {$\scriptstyle 0$};
			\node [style=none] (133) at (22.4, 2.25) {$\scriptstyle 0$};
			\node [style=none] (134) at (23.4, 2.25) {$\scriptstyle 0$};
			\node [style=none] (135) at (26.4, 2.25) {$\scriptstyle 2$};
			\node [style=none] (136) at (24.4, 1.25) {$\scriptstyle 1$};
			\node [style=none] (137) at (22.4, 0.25) {$\scriptstyle 0$};
			\node [style=none] (138) at (18, 2) {};
			\node [style=none] (139) at (19, 2) {};
			\node [style=none] (140) at (18, 1) {};
			\node [style=none] (141) at (19, 1) {};
			\node [style=none] (142) at (16, 1) {};
			\node [style=none] (143) at (17, 1) {};
			\node [style=none] (144) at (16, 0) {};
			\node [style=none] (145) at (17, 0) {};
		\end{pgfonlayer}
		\begin{pgfonlayer}{edgelayer}
			\draw [style=Light grey column] (61.center) to (62.center);
			\draw [style=Light grey column] (62.center) to (63.center);
			\draw [style=Light grey column] (63.center) to (75.center);
			\draw [style=Light grey column] (75.center) to (74.center);
			\draw [style=Light grey column] (74.center) to (58.center);
			\draw [style=Light grey column] (58.center) to (59.center);
			\draw [style=Light grey column] (59.center) to (60.center);
			\draw [style=Light grey column] (60.center) to (61.center);
			\draw (63.center) to (55.center);
			\draw (55.center) to (56.center);
			\draw (56.center) to (57.center);
			\draw (57.center) to (58.center);
			\draw (70.center) to (71.center);
			\draw (69.center) to (58.center);
			\draw (68.center) to (72.center);
			\draw (67.center) to (60.center);
			\draw (66.center) to (74.center);
			\draw (75.center) to (65.center);
			\draw (64.center) to (74.center);
			\draw (75.center) to (60.center);
			\draw (76.center) to (81.center);
			\draw (77.center) to (82.center);
			\draw (79.center) to (84.center);
			\draw (80.center) to (85.center);
			\draw (95.center) to (96.center);
			\draw (97.center) to (98.center);
			\draw (99.center) to (100.center);
			\draw (101.center) to (102.center);
			\draw (104.center) to (103.center);
			\draw (106.center) to (105.center);
			\draw (108.center) to (107.center);
			\draw (110.center) to (109.center);
			\draw (112.center) to (111.center);
			\draw (114.center) to (113.center);
			\draw (78.center) to (83.center);
			\draw [style=Light grey column] (141.center)
			to (140.center)
			to (138.center)
			to (139.center)
			to cycle;
			\draw [style=Light grey column] (145.center)
			to (144.center)
			to (142.center)
			to (143.center)
			to cycle;
		\end{pgfonlayer}
	\end{tikzpicture}
	\caption{Let $d=3$ and $e=5$. The first diagram captures the $e$-residues of the boxes of the Young diagram of $(6,4,2)$. Since only the second and the third part are not divisible by $d$, their reminders modulo $d$ are $1$ and $2$, respectively, and the (highlighted) rightmost boxes in the respective rows have $e$-residues $2$ and $4$, the $d$-runner matrix of $\lambda$ is $\big(\begin{smallmatrix}
		0 & 0 & 1 & 0 & 0\\ 
		0 & 0 & 0 & 0 & 1
		\end{smallmatrix}\big)$. We obtain the same $d$-runner matrix for its canonical $\beta$-set, which is drawn along the $d$-emptinesses of its beads in the second diagram.}
	\label{fig:runner}
\end{figure}
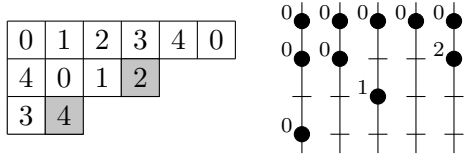

\begin{lemma}\label{le:runner matrix eq}
	Let $\lambda$ be a partition and $B=B_s(\lambda)$ be its $\beta$-set. Then for $1\leq x\leq d-1$ and $0\leq y\leq e-1$ we have $\mathcal{R}_d(\lambda)_{x,y}=\mathcal{R}_d(B)_{x,y+s}$, where $y+s$ is taken modulo $e$. In particular, the $d$-runner matrices of a partition and its canonical $\beta$-set coincide.
\end{lemma}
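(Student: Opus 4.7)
The plan is to compare the two definitions directly by enumerating the beads of $B$ in decreasing order and reading off both the runner and the $d$-emptiness of each bead. Write the beads as $b_i = \lambda_i - i + s$ for $i \in \N$, where $\lambda_i = 0$ for $i > \ell(\lambda)$. Since $b_i - b_{i+1} = \lambda_i - \lambda_{i+1} + 1 \geq 1$, the sequence $(b_i)_{i\geq 1}$ is strictly decreasing, so $b_i$ is the $i$th largest element of $B$.

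Next, I would apply formula \eqref{eq:recover partition}, which recovers $\lambda$ from the sequence of emptinesses of its beads listed in decreasing order. This gives $\emp_B(b_i) = \lambda_i$ for every $i$. Consequently, the $d$-emptiness of $b_i$ equals $\lambda_i \pmod{d}$, while by definition $b_i$ lies on runner $(\lambda_i - i + s) \pmod{e}$.

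Combining these, the bead $b_i$ contributes to $\mathcal{R}_d(B)_{x, y+s}$ precisely when
\[
\lambda_i \equiv x \!\pmod{d} \qquad \text{and} \qquad \lambda_i - i + s \equiv y + s \!\pmod{e},
\]
that is, when $\lambda_i \equiv x \pmod{d}$ and $\lambda_i \equiv i + y \pmod{e}$. Note that $x \in \{1,\dots,d-1\}$ forces $\lambda_i \neq 0$, so only indices $i \leq \ell(\lambda)$ can contribute, matching the situation in \Cref{de:runner matrix}. These are exactly the conditions for the index $i$ to contribute to $\mathcal{R}_d(\lambda)_{x,y}$. Summing over $i$ yields the claimed entry-wise equality, and the `in particular' statement follows by specialising to $s = 0$.

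This argument is essentially bookkeeping rather than substantive mathematics; the only thing to be careful about is the shift by $s$ in the runner index, which arises because adding $s$ to every bead value translates the runner on which it sits by $s$ modulo $e$.
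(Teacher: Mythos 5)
Your proposal is correct and follows essentially the same route as the paper: identify the $i$'th largest bead of $B$ with $\lambda_i-i+s$, use \eqref{eq:recover partition} to get $\emp_B(b_i)=\lambda_i$, and then match the runner and $d$-emptiness conditions entry by entry. The observation that $x\neq 0$ forces $i\leq\ell(\lambda)$ is the same point the paper makes implicitly, so nothing is missing.
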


\begin{proof}
	Let $1\leq x\leq d-1$ and $0\leq y\leq e-1$ be integers. Firstly, the $i$'th largest bead $b^{(i)}$ of $B$ has $d$-emptiness $x$ if and only if $i\leq \ell(\lambda)$ and $\lambda_i=\emp_B(b^{(i)})$ has remainder $x$ modulo $d$, and secondly, it lies on the runner $y+s$ modulo $e$ if and only if $\lambda_i-i+s$ is congruent to $y+s$ modulo $e$, that is $\lambda_i-i$ is congruent to $y$ modulo $e$. Thus $\mathcal{R}_d(\lambda)_{x,y}$ and  $\mathcal{R}_d(B)_{x,y+s}$ count the same thing.
\end{proof}

We can use the definition of the $d$-runner matrix of $\beta$-sets to characterise the $\gamma$-realisable matrices --- the $(d-1)\times e$-matrices of non-negative integers $\mathcal{R}$ such that there is a partition with $e$-core $\gamma$ and $d$-runner matrix $\mathcal{R}$ --- and in particular show that any $(d-1)\times e$-matrix of non-negative integers is $\gamma$-realisable when $d$ and $e$ are coprime (and $\gamma$ is an arbitrary $e$-core partition). In the characterisation and the rest of the paper, we index any $(d-1)\times e$ matrix by $1\leq x\leq d-1$ and $0\leq y\leq e-1$.

For our integers $d,e>1$, let $g$ be their greatest common divisor. For an $e$-core partition $\gamma$ and $(d-1)\times e$-matrix of non-negative integers $\mathcal{R}$, let $\mathcal{D}=\mathcal{R} - \mathcal{R}_d(\gamma)$. We show that $\mathcal{R}$ is $\gamma$-realisable if and only if for all $0\leq j\leq g-1$

\begin{equation}\label{eq:realisable}
	\sum_{x=1}^{d-1}\sum_{\substack{k=0 \\ k\equiv j (\textnormal{mod } g)}}^{e-1} \mathcal{D}_{x,k} = \sum_{x=1}^{d-1}\sum_{\substack{k=0 \\ k\equiv j (\textnormal{mod } g)}}^{e-1} \mathcal{D}_{x,x+k},
\end{equation} 
where the second index is taken modulo $e$; this is a convention we use throughout the rest of this section. 

We make a couple of observations. Firstly, the sum of the left-hand side of \eqref{eq:realisable} over all $0\leq j\leq g-1$ is the sum of all entries of $D$, and the same holds for the right-hand side; therefore one can verify \eqref{eq:realisable} only for $g-1$ choices of $j$. In particular, if $g=1$ (that is, if $d$ are $e$ are coprime), then $\mathcal{R}$ is always $\gamma$-realisable. Secondly, if $\mathcal{D}$ is extended by row indexed by $0$, then \eqref{eq:realisable} can be replaced by an equivalent condition when the sums run from $x=0$ rather than from $x=1$; indeed, the new equality condition differs only by the sum of $\mathcal{D}_{0,k}$ for $0\leq k\leq e-1$ with $k\equiv j\, (\textnormal{mod } g)$ added to both sides. Finally, letting $\widehat{\mathcal{D}}_{x',y'} = \sum \mathcal{D}_{x,y}$ where the sum runs over all $0\leq x\leq d-1$ and $0\leq y\leq e-1$ such that $x\equiv x'\, (\textnormal{mod } g)$ and $y\equiv y'\, (\textnormal{mod } g)$, allows one to replace \eqref{eq:realisable} by $\sum_{x'=1}^{g-1} \widehat{\mathcal{D}}_{x',j}=\sum_{x'=1}^{g-1} \widehat{\mathcal{D}}_{x',x'+j}$. Both \eqref{eq:realisable} and this new equation are displayed graphically in Figure~\ref{fig:realisable}.

\begin{figure}[h]
	\centering
	\begin{tikzpicture}[x=0.5cm, y=0.5cm]
		\begin{pgfonlayer}{nodelayer}
			\node [style=Lead] (14) at (0, 3) {};
			\node [style=midsize] (21) at (-4, 6) {};
			\node [style=midsize] (22) at (-4, 5) {};
			\node [style=midsize] (23) at (-4, 4) {};
			\node [style=Lead] (34) at (-4, 3) {};
			\node [style=Lead] (35) at (4, 3) {};
			\node [style=Empty node] (36) at (-3, 6) {};
			\node [style=Empty node] (37) at (-2, 5) {};
			\node [style=Empty node] (38) at (-1, 4) {};
			\node [style=none] (54) at (-4.5, 6.5) {};
			\node [style=none] (55) at (-4.5, -0.5) {};
			\node [style=none] (56) at (7.5, 6.5) {};
			\node [style=none] (57) at (7.5, -0.5) {};
			\node [style=none] (58) at (-4.25, 6.25) {};
			\node [style=none] (59) at (-0.75, 6.25) {};
			\node [style=none] (60) at (-0.75, 3.75) {};
			\node [style=none] (61) at (-4.25, 3.75) {};
			\node [style=midsize] (62) at (0, 6) {};
			\node [style=midsize] (63) at (0, 5) {};
			\node [style=midsize] (64) at (0, 4) {};
			\node [style=Empty node] (65) at (1, 6) {};
			\node [style=Empty node] (66) at (2, 5) {};
			\node [style=Empty node] (67) at (3, 4) {};
			\node [style=none] (68) at (-0.25, 6.25) {};
			\node [style=none] (69) at (3.25, 6.25) {};
			\node [style=none] (70) at (3.25, 3.75) {};
			\node [style=none] (71) at (-0.25, 3.75) {};
			\node [style=midsize] (72) at (4, 6) {};
			\node [style=midsize] (73) at (4, 5) {};
			\node [style=midsize] (74) at (4, 4) {};
			\node [style=Empty node] (75) at (5, 6) {};
			\node [style=Empty node] (76) at (6, 5) {};
			\node [style=Empty node] (77) at (7, 4) {};
			\node [style=none] (78) at (3.75, 6.25) {};
			\node [style=none] (79) at (7.25, 6.25) {};
			\node [style=none] (80) at (7.25, 3.75) {};
			\node [style=none] (81) at (3.75, 3.75) {};
			\node [style=midsize] (82) at (-4, 2) {};
			\node [style=midsize] (83) at (-4, 1) {};
			\node [style=midsize] (84) at (-4, 0) {};
			\node [style=Empty node] (85) at (-3, 2) {};
			\node [style=Empty node] (86) at (-2, 1) {};
			\node [style=Empty node] (87) at (-1, 0) {};
			\node [style=none] (88) at (-4.25, 2.25) {};
			\node [style=none] (89) at (-0.75, 2.25) {};
			\node [style=none] (90) at (-0.75, -0.25) {};
			\node [style=none] (91) at (-4.25, -0.25) {};
			\node [style=midsize] (92) at (0, 2) {};
			\node [style=midsize] (93) at (0, 1) {};
			\node [style=midsize] (94) at (0, 0) {};
			\node [style=Empty node] (95) at (1, 2) {};
			\node [style=Empty node] (96) at (2, 1) {};
			\node [style=Empty node] (97) at (3, 0) {};
			\node [style=none] (98) at (-0.25, 2.25) {};
			\node [style=none] (99) at (3.25, 2.25) {};
			\node [style=none] (100) at (3.25, -0.25) {};
			\node [style=none] (101) at (-0.25, -0.25) {};
			\node [style=midsize] (102) at (4, 2) {};
			\node [style=midsize] (103) at (4, 1) {};
			\node [style=midsize] (104) at (4, 0) {};
			\node [style=Empty node] (105) at (5, 2) {};
			\node [style=Empty node] (106) at (6, 1) {};
			\node [style=Empty node] (107) at (7, 0) {};
			\node [style=none] (108) at (3.75, 2.25) {};
			\node [style=none] (109) at (7.25, 2.25) {};
			\node [style=none] (110) at (7.25, -0.25) {};
			\node [style=none] (111) at (3.75, -0.25) {};
			\node [style=none] (112) at (9, 3) {};
			\node [style=none] (113) at (10.5, 3) {};
			\node [style=midsize] (114) at (12, 4) {};
			\node [style=midsize] (115) at (12, 3) {};
			\node [style=midsize] (116) at (12, 2) {};
			\node [style=Empty node] (117) at (13, 4) {};
			\node [style=Empty node] (118) at (14, 3) {};
			\node [style=Empty node] (119) at (15, 2) {};
			\node [style=none] (120) at (11.5, 4.5) {};
			\node [style=none] (121) at (11.5, 1.5) {};
			\node [style=none] (122) at (15.5, 1.5) {};
			\node [style=none] (123) at (15.5, 4.5) {};
		\end{pgfonlayer}
		\begin{pgfonlayer}{edgelayer}
			\draw [bend right=15] (54.center) to (55.center);
			\draw [bend left=15] (56.center) to (57.center);
			\draw [style=Background] (60.center)
			to (61.center)
			to (58.center)
			to (59.center)
			to cycle;
			\draw [style=Background] (70.center)
			to (71.center)
			to (68.center)
			to (69.center)
			to cycle;
			\draw [style=Background] (80.center)
			to (81.center)
			to (78.center)
			to (79.center)
			to cycle;
			\draw [style=Background] (90.center)
			to (91.center)
			to (88.center)
			to (89.center)
			to cycle;
			\draw [style=Background] (100.center)
			to (101.center)
			to (98.center)
			to (99.center)
			to cycle;
			\draw [style=Background] (110.center)
			to (111.center)
			to (108.center)
			to (109.center)
			to cycle;
			\draw [style=Move it] (112.center) to (113.center);
			\draw [bend right=15] (120.center) to (121.center);
			\draw [bend left=15] (123.center) to (122.center);
		\end{pgfonlayer}
	\end{tikzpicture}
	\caption{Let $e=12$ and $d=8$, so then $g=4$. One obtains the left-hand side of \eqref{eq:realisable} for $j=0$ and matrix $\mathcal{D}$ on the left by adding its $18$ white and $3$ grey entries, while the right-hand side is obtained by adding its $18$ black and $3$ grey entries. The equations for other values of $j$ are obtained by moving the highlighting circles right by $j$ positions (working modulo $e$). Alternatively, one can add the $6$ light grey blocks to obtain matrix $\widehat{\mathcal{D}}$, the matrix on the right. For $j=0$, \eqref{eq:realisable} then states that the sum of white entries equals the sum of black entries.}
	\label{fig:realisable}
\end{figure}

The fact that \eqref{eq:realisable} is a necessary condition for $\mathcal{R}$ to be $\gamma$-realisable follows from the following lemma.

\begin{lemma}\label{le:realisable nec}
	Suppose that $\lambda$ and $\mu$ are partitions with the same $e$-core and let $\mathcal{D} = \mathcal{R}_d(\lambda) - \mathcal{R}_d(\mu)$. Then \eqref{eq:realisable} holds for all $0\leq j\leq g-1$, where $g$ is the greatest common divisor of $d$ and $e$.
\end{lemma}

\begin{proof}
	Since $\mu$ can be obtained from $\lambda$ by removing and adding a finite number of $e$-hooks, it suffices to consider the case when $\mu$ is obtained from $\lambda$ by adding a single $e$-hook. Pick integer $s$ divisible by $e$ such that $B=B_s(\lambda)$ and $B'=B_s(\mu)$ contain all non-positive integers and let $b\in B$ be such that $B'$ is obtained from $B$ by replacing $b$ by $b+e$ (so $b > 0$). By \Cref{le:runner matrix eq}, $\mathcal{R}_d(B) = \mathcal{R}_d(\lambda)$ and $\mathcal{R}_d(B') = \mathcal{R}_d(\mu)$. We extend these two matrices and, in turn, matrix $\mathcal{D}$ by an additional row $0$ by letting $\mathcal{R}_d(B)_{0, y}$ be the number of non-negative beads of $B$ with $d$-emptiness $0$ on runner $y$, and analogously for $\mathcal{R}_d(B')_{0, y}$. We further adjust the sums in \eqref{eq:realisable} to run from $x=0$ (see the second observation after \eqref{eq:realisable}). With this change the left-hand side of \eqref{eq:realisable} equals $0$ for all $0\leq j\leq g-1$, since $\lambda$ and $\mu$ have the same $e$-core. We now fix $0\leq j\leq g-1$ and show that the same holds for the right-hand side.
	
	For $b\in B$ we write $r_B(b)$ for the remainder of the difference of $\emp_B(b)$ and the runner of $b$ modulo $g$ (with $0\leq r_B(b)\leq g-1$) and we analogously define $r_{B'}(b')$ for $b'\in B'$. Then the right-hand side of (modified) \eqref{eq:realisable} equals the difference of the number of beads $b\in B$ with $r_B(b)=j$ and the number of beads $b'\in B'$ with $r_{B'}(b')=j$. If $b^{[1]}>b^{[2]}>\dots>b^{[t]}$ are the beads of $B$ (and $B'$) strictly between $b+e$ and $b$, $b^{[0]}=b+e$ and $b^{[t+1]} = b$, in this difference, we can count only beads $b\in B$ of the form $b^{[i+1]}$ and beads $b'\in B'$ of the form $b^{[i]}$ with $0\leq i\leq t$ (for other beads $b\in B\cap B'$, clearly, $r_B(b)=r_{B'}(b)$).  
	
	Using that $B'$ is obtained from $B$ by replacing $b$ by $b+e$ once more, we have $\emp_{B'}(b^{[i]}) - \emp_B(b^{[i+1]}) = b^{[i]}-b^{[i+1]}$ for all $0\leq i\leq t$. Working modulo $g$, we conclude that $r_B(b^{[i+1]}) = r_{B'}(b^{[i]})$ for all $0\leq i\leq t$, which proves that the right-hand side of \eqref{eq:realisable} is $0$. 
\end{proof}

Before the proof of the other direction, we sketch the construction of partition $\lambda$ with $e$-core $\gamma$ and $d$-runner matrix $\mathcal{R}$ satisfying \eqref{eq:realisable} for all $0\leq j\leq g-1$ in the following example.

\begin{example}\label{ex:existence}
	Let $e=5$, $d=3$, $\gamma$ be the $e$-core $(1^2)$ and $\mathcal{R}$ be the $2\times 5$ matrix $\big(\begin{smallmatrix}
		0 & 0 & 1 & 0 & 0\\ 
		0 & 0 & 0 & 0 & 1
	\end{smallmatrix}\big)$. The canonical $\beta$-set of $\gamma$ is $\left\lbrace 0, -1,-3, -4, \dots \right\rbrace $. We obtain the canonical $\beta$-set of a partition with $e$-core $\gamma$ and $d$-runner matrix $\mathcal{R}$ by adding a bead of $d$-emptiness $0$ on runner $0$, a bead of $d$-emptiness $1$ on runner $2$ and a bead of $d$-emptiness $2$ on runner $4$ to $S=\left\lbrace -4,-5,\dots \right\rbrace $; see the first diagram in \Cref{fig:construct}. We will try to add beads $-3,-2,\dots$ one by one; see the accompanying second diagram of \Cref{fig:construct}. We cannot add $-3$ as it would have $d$-emptiness $0$. We cannot add $-2$ as it does not lie on an available runner but we can add $-1$; its $d$-emptiness becomes $2$. Then $0$ and $1$ cannot be added for a wrong $d$-emptiness and a wrong runner reason, respectively. However, we can add $2$ and, eventually, $5$. We obtain $\left\lbrace 5,2,-1,-4,-5, \dots \right\rbrace $ the canonical $\beta$-set of $(6,4,2)$, which has $e$-core $\gamma$ and $d$-runner matrix $\mathcal{R}$.
\end{example}

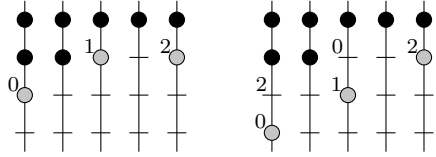
\begin{figure}[h]
	\centering
	\begin{tikzpicture}[x=0.5cm, y=0.5cm]
		\begin{pgfonlayer}{nodelayer}
			\node [style=none] (0) at (1.25, 3.5) {};
			\node [style=none] (1) at (2.25, 3.5) {};
			\node [style=none] (2) at (3.25, 3.5) {};
			\node [style=none] (3) at (4.25, 3.5) {};
			\node [style=none] (4) at (5.25, 3.5) {};
			\node [style=none] (5) at (1.25, -0.5) {};
			\node [style=none] (6) at (2.25, -0.5) {};
			\node [style=none] (7) at (3.25, -0.5) {};
			\node [style=none] (8) at (4.25, -0.5) {};
			\node [style=none] (9) at (5.25, -0.5) {};
			\node [style=Empty node] (10) at (1.25, 3) {};
			\node [style=Empty node] (11) at (2.25, 3) {};
			\node [style=Empty node] (12) at (3.25, 3) {};
			\node [style=Empty node] (13) at (4.25, 3) {};
			\node [style=Empty node] (15) at (1.25, 2) {};
			\node [style=Empty node] (16) at (2.25, 2) {};
			\node [style=none] (19) at (1, 0) {};
			\node [style=none] (20) at (1.5, 0) {};
			\node [style=none] (21) at (4, 2) {};
			\node [style=none] (22) at (4.5, 2) {};
			\node [style=none] (23) at (4, 1) {};
			\node [style=none] (24) at (4.5, 1) {};
			\node [style=none] (25) at (5, 1) {};
			\node [style=none] (26) at (5.5, 1) {};
			\node [style=none] (27) at (5.5, 0) {};
			\node [style=none] (28) at (5, 0) {};
			\node [style=none] (29) at (4.5, 0) {};
			\node [style=none] (30) at (4, 0) {};
			\node [style=none] (31) at (3.5, 0) {};
			\node [style=none] (32) at (3, 0) {};
			\node [style=none] (33) at (2.5, 0) {};
			\node [style=none] (34) at (2, 0) {};
			\node [style=none] (35) at (2.5, 1) {};
			\node [style=none] (36) at (2, 1) {};
			\node [style=none] (37) at (3.5, 1) {};
			\node [style=none] (38) at (3, 1) {};
			\node [style=Lead] (44) at (3.25, 2) {};
			\node [style=Lead] (45) at (5.25, 2) {};
			\node [style=Lead] (46) at (1.25, 1) {};
			\node [style=none] (47) at (7.75, 3.5) {};
			\node [style=none] (48) at (8.75, 3.5) {};
			\node [style=none] (49) at (9.75, 3.5) {};
			\node [style=none] (50) at (10.75, 3.5) {};
			\node [style=none] (51) at (11.75, 3.5) {};
			\node [style=none] (52) at (7.75, -0.5) {};
			\node [style=none] (53) at (8.75, -0.5) {};
			\node [style=none] (54) at (9.75, -0.5) {};
			\node [style=none] (55) at (10.75, -0.5) {};
			\node [style=none] (56) at (11.75, -0.5) {};
			\node [style=Empty node] (57) at (7.75, 3) {};
			\node [style=Empty node] (58) at (8.75, 3) {};
			\node [style=Empty node] (59) at (9.75, 3) {};
			\node [style=Empty node] (60) at (10.75, 3) {};
			\node [style=Empty node] (61) at (7.75, 2) {};
			\node [style=Empty node] (62) at (8.75, 2) {};
			\node [style=none] (63) at (7.5, 1) {};
			\node [style=none] (64) at (8, 1) {};
			\node [style=none] (65) at (10.5, 2) {};
			\node [style=none] (66) at (11, 2) {};
			\node [style=none] (67) at (10.5, 1) {};
			\node [style=none] (68) at (11, 1) {};
			\node [style=none] (69) at (11.5, 1) {};
			\node [style=none] (70) at (12, 1) {};
			\node [style=none] (71) at (12, 0) {};
			\node [style=none] (72) at (11.5, 0) {};
			\node [style=none] (73) at (11, 0) {};
			\node [style=none] (74) at (10.5, 0) {};
			\node [style=none] (75) at (10, 0) {};
			\node [style=none] (76) at (9.5, 0) {};
			\node [style=none] (77) at (9, 0) {};
			\node [style=none] (78) at (8.5, 0) {};
			\node [style=none] (79) at (9, 1) {};
			\node [style=none] (80) at (8.5, 1) {};
			\node [style=none] (81) at (10, 2) {};
			\node [style=none] (82) at (9.5, 2) {};
			\node [style=Lead] (84) at (11.75, 2) {};
			\node [style=Lead] (85) at (9.75, 1) {};
			\node [style=Lead] (86) at (7.75, 0) {};
			\node [style=none] (87) at (9.475, 2.3) {$\scriptstyle 0$};
			\node [style=none] (88) at (11.45, 2.3) {$\scriptstyle 2$};
			\node [style=none] (89) at (7.475, 1.3) {$\scriptstyle 2$};
			\node [style=none] (90) at (9.45, 1.3) {$\scriptstyle 1$};
			\node [style=none] (91) at (7.425, 0.3) {$\scriptstyle 0$};
			\node [style=none] (92) at (2.95, 2.3) {$\scriptstyle 1$};
			\node [style=none] (93) at (0.95, 1.3) {$\scriptstyle 0$};
			\node [style=none] (94) at (4.95, 2.3) {$\scriptstyle 2$};
			\node [style=Empty node] (95) at (5.25, 3) {};
			\node [style=Empty node] (96) at (11.75, 3) {};
		\end{pgfonlayer}
		\begin{pgfonlayer}{edgelayer}
			\draw (0.center) to (5.center);
			\draw (1.center) to (6.center);
			\draw (3.center) to (8.center);
			\draw (4.center) to (9.center);
			\draw (19.center) to (20.center);
			\draw (21.center) to (22.center);
			\draw (23.center) to (24.center);
			\draw (25.center) to (26.center);
			\draw (28.center) to (27.center);
			\draw (30.center) to (29.center);
			\draw (32.center) to (31.center);
			\draw (34.center) to (33.center);
			\draw (36.center) to (35.center);
			\draw (38.center) to (37.center);
			\draw (2.center) to (7.center);
			\draw (47.center) to (52.center);
			\draw (48.center) to (53.center);
			\draw (50.center) to (55.center);
			\draw (51.center) to (56.center);
			\draw (63.center) to (64.center);
			\draw (65.center) to (66.center);
			\draw (67.center) to (68.center);
			\draw (69.center) to (70.center);
			\draw (72.center) to (71.center);
			\draw (74.center) to (73.center);
			\draw (76.center) to (75.center);
			\draw (78.center) to (77.center);
			\draw (80.center) to (79.center);
			\draw (82.center) to (81.center);
			\draw (49.center) to (54.center);
		\end{pgfonlayer}
	\end{tikzpicture}
	\caption{The construction of the partition with prescribed $e$-core and $d$-runner matrix from \Cref{ex:existence}. In the first diagram, the highlighted beads are those which need to move on their runners to reach the denoted desired $d$-emptinesses. In the second diagram, we traverse the empty spaces in increasing order and move there a highlighted bead if it assigns the bead the correct $d$-emptiness. These to-be-$d$-emptinesses are computed in the diagram. They are $0$ on runner $2$ --- does not match, $2$ or runner $4$ --- matches, $2$ on runner $0$ --- does not match, $1$ on runner $2$ --- matches, and $0$ on runner $0$ --- matches.}
	\label{fig:construct}
\end{figure}

The general case follows the same idea. The key step is to use \eqref{eq:realisable} to show that we are able to place all the beads as in \Cref{ex:existence}.

\begin{lemma}\label{le:realisable suf}
	Let $\gamma$ be an $e$-core partition. Suppose that $\mathcal{R}$ is a $(d-1)\times e$ matrix of non-negative integers and that $\mathcal{D} = \mathcal{R} - \mathcal{R}_d(\gamma)$ satisfies \eqref{eq:realisable} for all $0\leq j\leq g-1$, where $g$ is the greatest common divisor of $d$ and $e$. Then there exists a partition with $e$-core $\gamma$ and $d$-runner matrix $\mathcal{R}$.
\end{lemma}

\begin{proof}
	Pick an integer $s$ divisible by $e$ such that $B=B_s(\gamma)$ contains all non-positive integers and for all $0\leq y\leq e-1$, the number of non-negative beads of $B$ on runner $y$ is at least the sum of the entries of $\mathcal{R}$ in column $y$. In this prove we extend all $d$-runner matrices of $\beta$-sets $C$ by an additional row $0$ by letting $\mathcal{R}_d(C)_{0,y}$ be the number of non-negative beads of $C$ with $d$-emptiness $0$ on runner $y$. By the choice of $s$, we can extend $\mathcal{R}$ by an additional row $0$ of non-negative integers such that the column sums of $\mathcal{R}$ and $\mathcal{R}_d(B)$ agree. In turn, we extend $\mathcal{D}$ by row $0$ and replace \eqref{eq:realisable} by the equivalent version
	  
	\begin{equation}\label{eq:modrealisable}
		\sum_{x=0}^{d-1}\sum_{\substack{k=0 \\ k\equiv j (\textnormal{mod } g)}}^{e-1} \mathcal{D}_{x,x+k}=0. 
	\end{equation}
	
	We claim that we can fix the negative beads of $B$ and move the remaining beads to obtain set $B'$ such that for any $0\leq x\leq d-1$ and $0\leq y\leq e-1$ there are $\mathcal{R}_{x,y}$ non-negative beads $b$ of $B'$ on runner $y$ with $d$-emptiness $x$. As such $B'$ is obtained by finitely many bead moves from $B$, it is the $\beta$-set $B_s(\lambda)$ of some partition $\lambda$ with $e$-core $\gamma$ and as the emptinesses of negative beads of $B'$ are $0$, the $d$-runner matrix of $B'$ (which is also the $d$-runner matrix of $\lambda$ by \Cref{le:runner matrix eq}) is $\mathcal{R}$; thus the proof is finished once such set $B'$ is constructed.
	
	We construct $B'$ as follows. We let $S = \left\lbrace -1, -2, \dots \right\rbrace $ and $\mathcal{M} = \mathcal{R}$. For $i=0,1,\dots$ we repeat the following: if $\mathcal{M}$ is a zero matrix, we return $B'=S$; otherwise, we set $y_i$ to be the runner of $i$ and $x_i$ to be the $d$-emptiness of $i-1$ in $S$ and if $\mathcal{M}_{x_i,y_i}>0$, we add $i$ to $S$ and decrease $\mathcal{M}_{x_i,y_i}$ by $1$. A simple induction on $i$ shows that after step $i$, we have $\mathcal{R} = \mathcal{M} + \mathcal{R}_d(S)$. Henceforth, if the algorithm terminates, then the set $B'$ has the desired property.
	
	Suppose that the algorithm does not terminate. Then there is $i_0\geq 0$ such that no $i\geq i_0$ is added to $S$. Let $\mathcal{M}'$ be the (non-zero) matrix $\mathcal{M}$ during step $i_0$ (and afterwards) and $S'$ be the set $S$ during step $i_0$ (and afterwards). We choose set $B'' \supset S'$ such that $B''$ and $B$ have the same number of non-negative beads on each runner and all the entries of $B''\setminus S'$ are at least $i_0$. Then $B''$ is the $\beta$-set $B_s(\mu)$ of some partition $\mu$ with $e$-core $\gamma$, and $\mathcal{R}_d(B'') = \mathcal{R}_d(\mu)$ and $\mathcal{R}_d(B)=\mathcal{R}_d(\gamma)$ have equal column sums. We conclude from \Cref{le:realisable nec} that 
	
	\begin{equation*}
		\sum_{x=0}^{d-1}\sum_{\substack{k=0 \\ k\equiv j (\textnormal{mod } g)}}^{e-1} \mathcal{R}_d(B)_{x,x+k}=\sum_{x=0}^{d-1}\sum_{\substack{k=0 \\ k\equiv j (\textnormal{mod } g)}}^{e-1} \mathcal{R}_d(B'')_{x,x+k} 
	\end{equation*}
	for all $0\leq j\leq g-1$. Using \eqref{eq:modrealisable} we have
	\begin{equation*}
		\sum_{x=0}^{d-1}\sum_{\substack{k=0 \\ k\equiv j (\textnormal{mod } g)}}^{e-1} \mathcal{R}_{x,x+k}=\sum_{x=0}^{d-1}\sum_{\substack{k=0 \\ k\equiv j (\textnormal{mod } g)}}^{e-1} \mathcal{R}_d(B'')_{x,x+k}, 
	\end{equation*}
	and, in turn,
	\begin{equation}\label{eq:enhanced real}
		\sum_{x=0}^{d-1}\sum_{\substack{k=0 \\ k\equiv j (\textnormal{mod } g)}}^{e-1} \mathcal{M}'_{x,x+k}=\sum_{x=0}^{d-1}\sum_{\substack{k=0 \\ k\equiv j (\textnormal{mod } g)}}^{e-1} \left( \mathcal{R}_d(B'')_{x,x+k} - \mathcal{R}_d(S')_{x,x+k}\right) 
	\end{equation}
	for all $0\leq j\leq g-1$.
	
	Let $b=\min \left( B''\setminus S'\right) \geq i_0$. We conclude from \eqref{eq:enhanced real} that there is $0\leq x\leq d-1$ and $0\leq y\leq e-1$ such that $\mathcal{M}'_{x,y}>0$ and $y-x$ is congruent to $b-\emp_{B''}(b)$ modulo $g$. By the Chinese remainder theorem, there is a non-negative integer $t$ such that $y\equiv b+t\, (\textnormal{mod } e)$ and $x\equiv \emp_{B''}(b)+t\, (\textnormal{mod } d)$. But then our algorithm would add $i=b+t\geq i_0$ to $S$ as it lies on runner $y$, the $d$-emptiness in $S'$ of $b+t-1$ is $x$ since $\emp_{S'}(b+t-1) = \emp_{S'}(b-1) + t = \emp_{B''}(b) + t$, and $\mathcal{M}'_{x,y}>0$, a contradiction. Thus our algorithm terminates, finishing the proof.
\end{proof}

\begin{remark}\label{re:greedy}
	The greedy algorithm in the proof of \Cref{le:realisable suf} guarantees that the constructed partition is a \dsunb partition. By \Cref{pr:equivalence max}, it is also the unique maximal element of $\mathcal{E}_{\mathcal{R}}(\gamma)$ in the dominance order. 
\end{remark}

\begin{corollary}\label{cor:existence}
	Let $\gamma$ be an $e$-core partition and $\mathcal{R}$ be a $(d-1)\times e$ matrix of non-negative integers. Then $\mathcal{R}$ is $\gamma$-realisable if and only if \eqref{eq:realisable} holds for $\mathcal{D} = \mathcal{R} - \mathcal{R}_d(\gamma)$ and all $0\leq j\leq g-1$, where $g$ is the greatest common divisor of $d$ and $e$. In particular, $\mathcal{R}$ is always $\gamma$-realisable if $d$ and $e$ are coprime.
\end{corollary}

\begin{proof}
	This follows from \Cref{le:realisable nec} and \Cref{le:realisable suf}.
\end{proof}

\section{Abacus Mullineux Algorithm}\label{se:mullineux}

We need the following definition to describe the new Abacus Mullineux Algorithm presented here.

\begin{definition}\label{de:leading}
	Let $B$ be a $\beta$-set of a partition. The \textit{leading beads} of $B$ are beads $b_0, b_1, \dots$ such that:
	\begin{enumerate}[label=\textnormal{(\roman*)}]
		\item $b_0$ is the greatest bead of $B$,
		\item for $i\geq 0$ the bead $b_{i+1}$ is the greatest bead which is less or equal to $b_i-e$.
	\end{enumerate}
\end{definition}  

One can summarise the definition of leading beads as beads which satisfy

\begin{equation}\label{eq:leading}
b_0 > b_0 -e \geq b_1 > b_1-e \geq \dots
\end{equation}
and there is no bead $b$ such that $b_i<b \leq b_{i-1} - e$ (for $i=0$ the right inequality is discarded).  Since $B$ contains all integers which are less than some integer $L$, for some index $i\geq 0$ we have $b_i = b_{i+1}+e = b_{i+2}+2e=\dots$. We refer to the least such index as the \textit{stable index} of $B$. Equivalently, the stable index of $B$ is the least index from which the sequence $(b_i+ie)_{i\geq 0}$ becomes constant.

\begin{definition}\label{de:shift}
	Let $B$ be a $\beta$-set of a partition. Let $b_0, b_1,\dots$ be its leading beads and $z$ be its stable index. We define the set $\Ms_e(B)$ to be $\left( B \setminus \left\lbrace b_i : i\geq 0 \right\rbrace\right)  \cup \left\lbrace b_i - e : i\geq 0 \right\rbrace $ and the integer $\ms_e(B)$ to be $b_z+ze$.
\end{definition}

From the definition of the leading beads, $\left\lbrace b_i : i\geq 0 \right\rbrace$ is a subset of $B$ and the union in the definition of $\Ms_e(B)$ is disjoint (if $b_i-e\in B$ for some $i\geq 0$, then $b_i-e=b_{i+1}$). An example of the leading beads and the maps $\Ms_e$ and $\ms_e$ is in \Cref{fig:lead}. 

\begin{figure}[h]
	\centering
	\begin{tikzpicture}[x=0.5cm, y=0.5cm]
		\begin{pgfonlayer}{nodelayer}
			\node [style=none] (0) at (1.25, 3.5) {};
			\node [style=none] (1) at (2.25, 3.5) {};
			\node [style=none] (2) at (3.25, 3.5) {};
			\node [style=none] (3) at (4.25, 3.5) {};
			\node [style=none] (4) at (5.25, 3.5) {};
			\node [style=none] (5) at (1.25, -0.5) {};
			\node [style=none] (6) at (2.25, -0.5) {};
			\node [style=none] (7) at (3.25, -0.5) {};
			\node [style=none] (8) at (4.25, -0.5) {};
			\node [style=none] (9) at (5.25, -0.5) {};
			\node [style=Empty node] (10) at (1.25, 3) {};
			\node [style=Empty node] (11) at (2.25, 3) {};
			\node [style=Empty node] (12) at (3.25, 3) {};
			\node [style=Empty node] (13) at (4.25, 3) {};
			\node [style=Empty node] (15) at (1.25, 2) {};
			\node [style=Empty node] (16) at (2.25, 2) {};
			\node [style=Empty node] (18) at (3.25, 1) {};
			\node [style=none] (19) at (3, 2) {};
			\node [style=none] (20) at (3.5, 2) {};
			\node [style=none] (21) at (4, 2) {};
			\node [style=none] (22) at (4.5, 2) {};
			\node [style=none] (23) at (4, 1) {};
			\node [style=none] (24) at (4.5, 1) {};
			\node [style=none] (25) at (5, 1) {};
			\node [style=none] (26) at (5.5, 1) {};
			\node [style=none] (27) at (5.5, 0) {};
			\node [style=none] (28) at (5, 0) {};
			\node [style=none] (29) at (4.5, 0) {};
			\node [style=none] (30) at (4, 0) {};
			\node [style=none] (31) at (3.5, 0) {};
			\node [style=none] (32) at (3, 0) {};
			\node [style=none] (33) at (2.5, 0) {};
			\node [style=none] (34) at (2, 0) {};
			\node [style=none] (35) at (2.5, 1) {};
			\node [style=none] (36) at (2, 1) {};
			\node [style=none] (37) at (1.5, 1) {};
			\node [style=none] (38) at (1, 1) {};
			\node [style=Lead] (39) at (1.25, 0) {};
			\node [style=Lead] (40) at (5.25, 2) {};
			\node [style=Lead] (41) at (5.25, 3) {};
			\node [style=none] (42) at (8.5, 3.5) {};
			\node [style=none] (43) at (9.5, 3.5) {};
			\node [style=none] (44) at (10.5, 3.5) {};
			\node [style=none] (45) at (11.5, 3.5) {};
			\node [style=none] (46) at (12.5, 3.5) {};
			\node [style=none] (47) at (8.5, -0.5) {};
			\node [style=none] (48) at (9.5, -0.5) {};
			\node [style=none] (49) at (10.5, -0.5) {};
			\node [style=none] (50) at (11.5, -0.5) {};
			\node [style=none] (51) at (12.5, -0.5) {};
			\node [style=Empty node] (52) at (8.5, 3) {};
			\node [style=Empty node] (53) at (9.5, 3) {};
			\node [style=Empty node] (54) at (10.5, 3) {};
			\node [style=Empty node] (55) at (11.5, 3) {};
			\node [style=Empty node] (56) at (8.5, 2) {};
			\node [style=Empty node] (57) at (9.5, 2) {};
			\node [style=Empty node] (58) at (10.5, 1) {};
			\node [style=none] (59) at (10.25, 2) {};
			\node [style=none] (60) at (10.75, 2) {};
			\node [style=none] (61) at (11.25, 2) {};
			\node [style=none] (62) at (11.75, 2) {};
			\node [style=none] (63) at (11.25, 1) {};
			\node [style=none] (64) at (11.75, 1) {};
			\node [style=none] (67) at (12.75, 0) {};
			\node [style=none] (68) at (12.25, 0) {};
			\node [style=none] (69) at (11.75, 0) {};
			\node [style=none] (70) at (11.25, 0) {};
			\node [style=none] (71) at (10.75, 0) {};
			\node [style=none] (72) at (10.25, 0) {};
			\node [style=none] (73) at (9.75, 0) {};
			\node [style=none] (74) at (9.25, 0) {};
			\node [style=none] (75) at (9.75, 1) {};
			\node [style=none] (76) at (9.25, 1) {};
			\node [style=Empty node] (77) at (12.5, 3) {};
			\node [style=Empty node] (78) at (8.5, 1) {};
			\node [style=none] (79) at (12.25, 2) {};
			\node [style=none] (80) at (12.75, 2) {};
			\node [style=none] (81) at (8.25, 0) {};
			\node [style=none] (82) at (8.75, 0) {};
			\node [style=square] (83) at (12.5, 1) {};
		\end{pgfonlayer}
		\begin{pgfonlayer}{edgelayer}
			\draw (0.center) to (5.center);
			\draw (1.center) to (6.center);
			\draw (3.center) to (8.center);
			\draw (4.center) to (9.center);
			\draw (19.center) to (20.center);
			\draw (21.center) to (22.center);
			\draw (23.center) to (24.center);
			\draw (25.center) to (26.center);
			\draw (28.center) to (27.center);
			\draw (30.center) to (29.center);
			\draw (32.center) to (31.center);
			\draw (34.center) to (33.center);
			\draw (36.center) to (35.center);
			\draw (38.center) to (37.center);
			\draw (2.center) to (7.center);
			\draw (42.center) to (47.center);
			\draw (43.center) to (48.center);
			\draw (45.center) to (50.center);
			\draw (46.center) to (51.center);
			\draw (59.center) to (60.center);
			\draw (61.center) to (62.center);
			\draw (63.center) to (64.center);
			\draw (68.center) to (67.center);
			\draw (70.center) to (69.center);
			\draw (72.center) to (71.center);
			\draw (74.center) to (73.center);
			\draw (76.center) to (75.center);
			\draw (44.center) to (49.center);
			\draw (79.center) to (80.center);
			\draw (81.center) to (82.center);
		\end{pgfonlayer}
	\end{tikzpicture}
	\caption{The leading beads of the set $B$ on the left are drawn grey. Its stable index is $1$. The square bead on the right is the bead $\ms_e(B)$ and the remaining beads are the beads of $\Ms_e(B)$.}
	\label{fig:lead}
\end{figure}
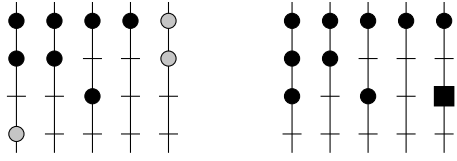

Throughout the paper we simplify the notation and write $\hat{b}$ for $\ms_e(B)$ whenever the set $B$ is clear from the context (the integer $e$ is always our fixed integer $e>1$). Similarly, the leading beads of a set called $B$ will always be denoted by $b_0,b_1,\dots$. The following two simple lemmas show some initial properties of the leading beads, the set $\Ms_e(B)$ and $\hat{b}$ which will be used throughout the paper.

In the next lemma, (i) can be replaced by the statement that the sequence $(b_i+ie)_{i\geq 0}$ is non-increasing.

\begin{lemma}\label{le:new bead}
	Let $B$ be a $\beta$-set of a partition with stable index $z$.
	\begin{enumerate}[label=\textnormal{(\roman*)}]
		\item For any $0\leq i\leq j$, we have $b_j +je \leq b_i +ie$.
		\item For any $i\geq 0$, $\hat{b}\leq b_i+ie$ with equality if and only if $i\geq z$. In particular, $\hat{b}\leq b_0$. 
	\end{enumerate}
\end{lemma}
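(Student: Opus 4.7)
The plan is to prove part (i) by a short induction on $j-i$, using the defining inequality $b_{i+1} \leq b_i - e$ from \eqref{eq:leading}. The base case $j=i$ is trivial. For the inductive step, if $b_j + je \leq b_i + ie$, then adding $e$ to the inequality $b_{j+1} \leq b_j - e$ gives $b_{j+1} + (j+1)e \leq b_j + je \leq b_i + ie$. Equivalently, as the remark preceding the lemma indicates, this amounts to observing that the sequence $(b_i + ie)_{i \geq 0}$ is non-increasing.

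For part (ii), I would combine (i) with the definition of the stable index. When $i \leq z$, applying (i) to the pair $(i, z)$ immediately yields $\hat{b} = b_z + ze \leq b_i + ie$. When $i \geq z$, the very definition of stable index gives $b_i + ie = b_z + ze = \hat{b}$, establishing the equality case.

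The remaining point is the \emph{strict} inequality for $i < z$. Here I would argue by contradiction: suppose $b_i + ie = \hat{b} = b_z + ze$ for some $i < z$. By part (i), the sequence $(b_j + je)_{j \geq 0}$ is non-increasing, so on the range $i \leq j \leq z$ it is squeezed between two equal values and must be constant. Combined with the fact that it is already constant from index $z$ onwards (by definition of $z$), the sequence is constant from index $i$ onwards. This contradicts the minimality of the stable index $z$. Hence the inequality is strict for $i < z$, completing the proof of (ii). The `in particular' statement is just the $i = 0$ instance.

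The argument is mostly bookkeeping; the only mildly delicate point is invoking the minimality of $z$ to obtain strictness, and this is handled cleanly via the non-increasing monotonicity from (i).
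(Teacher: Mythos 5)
Your proposal is correct and follows essentially the same route as the paper: part (i) reduces to consecutive indices via the defining inequality of leading beads, and the strict inequality in (ii) for $i<z$ comes from combining the monotonicity of $(b_j+je)$ with the minimality of the stable index. The paper phrases the strictness slightly more directly (observing $b_z<b_{z-1}-e$ and chaining inequalities) rather than by contradiction, but the underlying argument is identical.
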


\begin{proof}
	It is sufficient to consider $j=i+1$ in (i) which follows directly from the definition of leading beads. Moving to (ii), since $b_z\neq b_{z-1}-e$, we have $b_{z}< b_{z-1}-e$ (provided that $z>0$). Hence, for $0\leq i\leq z-1$, we have $\hat{b} = b_z +ze<b_{z-1}+(z-1)e\leq b_i+ie$, where the last inequality follows from (i). On the other hand if $i\geq z$, then $\hat{b} = b_z +ze=b_i+ie$, finishing the proof.
\end{proof}

The next lemma shows different descriptions of the set $\Ms_e(B)$.

\begin{lemma}\label{le:Me observation}
	Let $B$ be a $\beta$-set of a partition and $z$ be its stable index.
	\begin{enumerate}[label=\textnormal{(\roman*)}]
		\item For $j\geq z$, $\Ms_e(B)=\left( B \setminus \left\lbrace b_i :  0\leq i \leq j \right\rbrace\right)  \cup \left\lbrace b_i - e : 0\leq i< j \right\rbrace $.
		\item For any integer $i\geq 0$, the leading bead $b_i$ lies in $\Ms_e(B)$ if and only if $i>0$ and $b_i+e=b_{i-1}$.
		\item $b_z\notin\Ms_e(B)$.
		\item Let $0=i_0<i_1<\dots<i_t=z$ be the indices $i$ such that $b_i\notin \Ms_e(B)$. Then
		\[ B \setminus \left\lbrace b_{i_j} : 0\leq j \leq t \right\rbrace = B\cap \Ms_e(B) = \Ms_e(B)  \setminus \left\lbrace b_{i_j-1} - e : 1\leq j \leq t \right\rbrace.
		\]
		\item In the setting of (iv) we have $b_i = b_{i_j} - (i-i_j)e$ for any $0\leq j\leq t$ and $i_j\leq i< i_{j+1}$ (where the right inequality is omitted if $j=t$).  
	\end{enumerate}
\end{lemma}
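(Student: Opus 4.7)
My plan is to take part (ii) as the foundational step, since parts (iii)--(v) and (i) will all flow from it together with the definition of the stable index. The key preliminary observation for (ii) is that the union $(B\setminus\{b_k\}_{k\geq 0})\cup\{b_k-e\}_{k\geq 0}$ defining $\Ms_e(B)$ is in fact disjoint: if $b_k-e$ lies in $B$, then the maximality of $b_{k+1}$ among beads $\leq b_k-e$ forces $b_{k+1}=b_k-e$, making $b_k-e$ itself a leading bead. Consequently $b_i\in \Ms_e(B)$ iff $b_i=b_k-e$ for some $k$; pairwise distinctness of leading beads makes such $k$ unique, and the greatest-bead characterization of $b_{k+1}$ combined with $b_i\leq b_k-e$ pins down $k=i-1$.

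Parts (iii) and (v) are then immediate. For (iii), if $b_z\in \Ms_e(B)$ then $b_{z-1}=b_z+e$, hence $b_{z-1}+(z-1)e=b_z+ze$, contradicting the minimality of the stable index. For (v), each intermediate $b_i$ with $i_j<i<i_{j+1}$ lies in $\Ms_e(B)$ by definition of the $i_k$, so $b_i=b_{i-1}-e$ by (ii), and the claimed formula follows by induction on $i-i_j$.

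For (iv), I would first note that stability gives $b_i=b_{i-1}-e$ for all $i>z$, hence $b_i\in \Ms_e(B)$ by (ii); together with (iii) this shows $i_t=z$ and that the list is finite. The first equality is then the tautology $B\cap \Ms_e(B)=B\setminus\{b_i:b_i\notin \Ms_e(B)\}$, using that $\{b_i\}_{i\geq 0}\subseteq B$. For the second equality I would identify $\Ms_e(B)\setminus B$: a newly added bead $b_k-e$ lies in $B$ iff $b_{k+1}=b_k-e$, i.e.\ iff $b_{k+1}\in \Ms_e(B)$ by (ii), so $b_k-e\notin B$ iff $k+1\in\{i_1,\ldots,i_t\}$, yielding $\Ms_e(B)\setminus B=\{b_{i_j-1}-e:1\leq j\leq t\}$.

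Finally, (i) rests on the stability identity $b_i-e=b_{i+1}$ for $i\geq z$, which makes $\{b_i-e:i\geq z\}$ coincide with $\{b_i:i\geq z+1\}$. Substituting into the definition of $\Ms_e(B)$ and using $\{b_i:i>z\}\subseteq B$ immediately gives the case $j=z$; the extension to $j>z$ is a short induction in which the extra removal of $b_{j+1}$ is cancelled by the extra addition $b_j-e=b_{j+1}$. The main obstacle I anticipate is the bookkeeping in (iv), where one must carefully track membership and disjointness between $B$, the leading-bead set, the shifted leading-bead set and $\Ms_e(B)$; once (ii) is in hand, however, each remaining part reduces to a compact set manipulation.
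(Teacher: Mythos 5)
Your proposal is correct and follows essentially the same route as the paper: part (ii) is derived from the defining maximality of the leading beads, (iii) and (v) follow from (ii) and the stable index, (iv) reduces to identifying $\Ms_e(B)\setminus B$ via (ii), and (i) rests on the identity $b_{i+1}=b_i-e$ for $i\geq z$. The only difference is the order of the parts, which does not affect the substance.
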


\begin{proof}
	In (i), for any $i> j\geq z$, we have $b_i = b_{i-1}-e$. Thus $\left\lbrace b_i :  i>j \right\rbrace = \left\lbrace b_i - e : i\geq j \right\rbrace$. We immediately deduce (i) from the definition of $\Ms_e(B)$. For (ii), the leading bead $b_i$ lies in $\Ms_e(B)$ if and only if $b_i = b_{i'}-e$ for some integer $i'\geq 0$. From \eqref{eq:leading} if such $i'$ exists it is $i-1$, which recovers (ii). From the definition of the stable index $z$, either $z=0$ or $b_z < b_{z-1}-e$. So (iii) follows from (ii).
	
	The setting of (iv) is displayed in \Cref{fig:invariance}. Note that the equality $i_0=0$ is clear and the equality $i_t=z$ follows from (ii) --- it shows that $i_t\leq z$ --- and (iii). Since the non-leading beads of $B$ lie in $\Ms_e(B)$, we obtain the left equality. The only elements of $\Ms_e(B)$ which may not lie in $B$ are of the form $b_i-e$. From the definition of leading beads, $b_i-e$ does not lie in $B$ if and only if $b_i-e\neq b_{i+1}$, which by (ii) happens if and only if $b_{i+1}\notin \Ms_e(B)$. This proves the right equality. Finally, fix $j$ as in (v). We prove the statement by induction on $i$. If $i=i_j$, the statement is clear. If $i_j< i< i_{j+1}$ (where the right inequality is omitted if $j=t$), then by (ii) and the induction hypothesis, $b_i = b_{i-1}-e=b_{i_j} - (i -1-i_j)e -e=b_{i_j} - (i-i_j)e$, as required.
\end{proof}

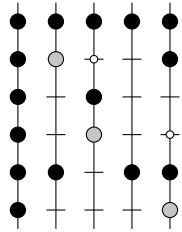
\begin{figure}[h]
	\centering
	\begin{tikzpicture}[x=0.5cm, y=0.5cm]
		\begin{pgfonlayer}{nodelayer}
			\node [style=none] (0) at (1.25, 3.5) {};
			\node [style=none] (1) at (2.25, 3.5) {};
			\node [style=none] (2) at (3.25, 3.5) {};
			\node [style=none] (3) at (4.25, 3.5) {};
			\node [style=none] (4) at (5.25, 3.5) {};
			\node [style=none] (5) at (1.25, -2.5) {};
			\node [style=none] (6) at (2.25, -2.5) {};
			\node [style=none] (7) at (3.25, -2.5) {};
			\node [style=none] (8) at (4.25, -2.5) {};
			\node [style=none] (9) at (5.25, -2.5) {};
			\node [style=Empty node] (10) at (1.25, 3) {};
			\node [style=Empty node] (11) at (2.25, 3) {};
			\node [style=Empty node] (12) at (3.25, 3) {};
			\node [style=Empty node] (13) at (4.25, 3) {};
			\node [style=Empty node] (15) at (1.25, 2) {};
			\node [style=Empty node] (18) at (3.25, 1) {};
			\node [style=none] (19) at (3, 2) {};
			\node [style=none] (20) at (3.5, 2) {};
			\node [style=none] (21) at (4, 2) {};
			\node [style=none] (22) at (4.5, 2) {};
			\node [style=none] (23) at (4, 1) {};
			\node [style=none] (24) at (4.5, 1) {};
			\node [style=none] (25) at (4, -2) {};
			\node [style=none] (26) at (4.5, -2) {};
			\node [style=none] (27) at (5.5, 0) {};
			\node [style=none] (28) at (5, 0) {};
			\node [style=none] (29) at (4.5, 0) {};
			\node [style=none] (30) at (4, 0) {};
			\node [style=none] (31) at (3.5, -2) {};
			\node [style=none] (32) at (3, -2) {};
			\node [style=none] (33) at (2.5, 0) {};
			\node [style=none] (34) at (2, 0) {};
			\node [style=none] (35) at (2.5, 1) {};
			\node [style=none] (36) at (2, 1) {};
			\node [style=none] (37) at (3.5, -1) {};
			\node [style=none] (38) at (3, -1) {};
			\node [style=Empty node] (40) at (5.25, -1) {};
			\node [style=Empty node] (41) at (1.25, 1) {};
			\node [style=Empty node] (42) at (1.25, 0) {};
			\node [style=Empty node] (43) at (1.25, -1) {};
			\node [style=Empty node] (44) at (1.25, -2) {};
			\node [style=Empty node] (45) at (2.25, -1) {};
			\node [style=Empty node] (46) at (5.25, 2) {};
			\node [style=Empty node] (47) at (5.25, 3) {};
			\node [style=Empty node] (49) at (4.25, -1) {};
			\node [style=Lead] (50) at (5.25, -2) {};
			\node [style=Lead] (51) at (3.25, 0) {};
			\node [style=Lead] (52) at (2.25, 2) {};
			\node [style=Small Empty] (53) at (5.25, 0) {};
			\node [style=Small Empty] (54) at (3.25, 2) {};
			\node [style=none] (55) at (2, -2) {};
			\node [style=none] (56) at (2.5, -2) {};
			\node [style=none] (57) at (5, 1) {};
			\node [style=none] (58) at (5.5, 1) {};
		\end{pgfonlayer}
		\begin{pgfonlayer}{edgelayer}
			\draw (0.center) to (5.center);
			\draw (1.center) to (6.center);
			\draw (3.center) to (8.center);
			\draw (4.center) to (9.center);
			\draw (19.center) to (20.center);
			\draw (21.center) to (22.center);
			\draw (23.center) to (24.center);
			\draw (25.center) to (26.center);
			\draw (28.center) to (27.center);
			\draw (30.center) to (29.center);
			\draw (32.center) to (31.center);
			\draw (34.center) to (33.center);
			\draw (36.center) to (35.center);
			\draw (38.center) to (37.center);
			\draw (2.center) to (7.center);
			\draw (55.center) to (56.center);
			\draw (57.center) to (58.center);
		\end{pgfonlayer}
	\end{tikzpicture}
	\caption{In the above diagram of a $\beta$-set $B$ the indices $i_j$ from \Cref{le:Me observation}(iv) are $0,2$ and $4$. The corresponding beads are highlighted. We have also highlighted (with a small white dot), the empty spaces $b_{i_j-1} - e$. As asserted by (iv), one obtains $\Ms_e(B)$ by removing the highlighted beads and inserting beads in the highlighted empty spaces.}
	\label{fig:invariance}
\end{figure}

The following are more technical observations needed for the proof of the main result of this section.

\begin{lemma}\label{le:shift}
	Let $B$ be a $\beta$-set of a partition such that $\Ms_e(B)$ contains all non-positive integers and let $z$ be the stable index of $B$.
	\begin{enumerate}[label=\textnormal{(\roman*)}]
		\item If $j$ is the greatest index such that $b_j\geq 0$, then $j\geq z$ and $\hat{b}=b_j+je$.
		\item If $r$ is the runner of $\hat{b}$, then the number of non-negative beads of $B$ on runner $r$ is one more that the number of non-negative beads of $\Ms_e(B)$ on runner $r$.
		\item Let $0\leq r\leq e-1$ be such that $r$ is \emph{not} the runner of $\hat{b}$. The numbers of non-negative beads of $B$ and of $\Ms_e(B)$ on runner $r$ agree. 
	\end{enumerate}
\end{lemma}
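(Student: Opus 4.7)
The plan is to handle the three parts in order.

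For (i), the plan is to argue by contradiction: suppose $j < z$. By the definition of $j$, $b_{j+1} < 0$, and since the leading beads are strictly decreasing (each step drops by at least $e$), $b_z \leq b_{j+1} < 0$, so $b_z$ is a non-positive integer. By hypothesis $b_z \in \Ms_e(B)$, contradicting Lemma \ref{le:Me observation}(iii). Hence $j \geq z$, at which point $\hat{b} = b_j + je$ is immediate from Lemma \ref{le:new bead}(ii).

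For (ii) and (iii), I plan to describe the symmetric difference $B \bigtriangleup \Ms_e(B)$ precisely using Lemma \ref{le:Me observation}(iv). Write $R_j = b_{i_j}$ for $0 \leq j \leq t$ (the beads lying in $B \setminus \Ms_e(B)$) and $A_j = b_{i_j - 1} - e$ for $1 \leq j \leq t$ (the beads lying in $\Ms_e(B) \setminus B$). Lemma \ref{le:Me observation}(v) gives $A_j = R_{j-1} - (i_j - i_{j-1})e$, so $R_{j-1}$ and $A_j$ share a runner. I pair them for $j = 1, \ldots, t$, leaving $R_t = b_z$ unpaired. Since $\hat{b} = b_z + ze$, the unpaired $R_t$ lies on runner $r$. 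The difference of non-negative bead counts on any runner $r'$ then decomposes into the contributions of these pairs (each contributing $-1$ to $N_{\Ms}(r') - N_B(r')$ precisely when $R_{j-1} \geq 0 > A_j$, which I call a ``bad pair''), plus a contribution of $-1$ on runner $r$ from the unpaired $R_t$, noting that $R_t = b_z \geq b_j \geq 0$ since $z \leq j$ and the leading beads are strictly decreasing.

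The main technical step, which I expect to be the main obstacle, is ruling out bad pairs. For $1 \leq j \leq t$, the definition of the next leading bead gives $b_{i_j} \leq b_{i_j - 1} - e = A_j$, while $b_{i_j} \notin \Ms_e(B)$ and $A_j \in \Ms_e(B)$ force $b_{i_j} \neq A_j$, so $b_{i_j} < A_j$. If $A_j \leq 0$, then $b_{i_j} < 0$ would be a non-positive integer not lying in $\Ms_e(B)$, contradicting the hypothesis. Therefore $A_j > 0$ for every $j$, so no bad pair exists; combining this with the earlier decomposition yields (iii) for $r' \neq r$ and (ii) for $r' = r$.
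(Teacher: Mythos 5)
Your proof is correct. Part (i) is essentially the paper's argument: the paper gets $b_z>0$ directly from $b_z\notin\Ms_e(B)$ (\Cref{le:Me observation}(iii)) plus the hypothesis, which simultaneously gives the existence of $j$ and $j\geq z$; you reach the same point by contradiction, and then both invoke \Cref{le:new bead}(ii). For (ii) and (iii) your decomposition is genuinely different. You work with the reduced symmetric difference from \Cref{le:Me observation}(iv), pair each removed bead $b_{i_{j-1}}$ with the added bead $b_{i_j-1}-e$ on the same runner via \Cref{le:Me observation}(v), and rule out sign changes within a pair by showing every added bead is positive (otherwise $b_{i_j}$ would be a negative integer missing from $\Ms_e(B)$); the whole deficit of one is then charged to the unpaired $b_z$, which lies on the runner of $\hat{b}=b_z+ze$. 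The paper instead uses the unreduced description of $\Ms_e(B)$ (remove every $b_i$, insert every $b_i-e$), pairs $b_i$ with $b_i-e$ on the same runner for free, and observes from the chain $b_j-e=b_{j+1}<0\leq b_j\leq b_{j-1}-e$ that $b_j$ is the unique leading bead whose slide crosses zero. The two accountings blame different beads ($b_z$ versus $b_j$), which is consistent because $b_j+je=\hat{b}=b_z+ze$ puts them on the same runner; the paper's version is shorter since its pairing needs no appeal to \Cref{le:Me observation}(iv)--(v), while yours pinpoints exactly where $B$ and $\Ms_e(B)$ differ and makes the role of the hypothesis (no non-positive integer may be expelled from $\Ms_e(B)$) more transparent.
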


\begin{proof}
	We have $b_z\notin \Ms_e(B)$ by \Cref{le:Me observation}(iii). Thus $b_z>0$ and consequently $j$ in (i) is well-defined and $j\geq z$, which, by \Cref{le:new bead}(ii), implies $\hat{b}=b_j+je$, as required for (i). As $b_j-e=b_{j+1}<0\leq b_j\leq b_{j-1}-e$ (for $j=0$ the rightmost inequality is omitted), $b_j$ is the single non-negative leading bead $b$ such that $b-e$ is negative. Since $b_j$ lies on the same runner as $\hat{b}$, we deduce (ii) and (iii) from the definition of $\Ms_e(B)$ at once.
\end{proof}

We now connect the leading beads of a $\beta$-set of a partition $\lambda$ to operations on the Young diagram of $\lambda$. This connection will allow us to translate the Mullineux map on an abacus (but will not be used beyond this section). The \textit{$e$-rim} of partition $\lambda$ is a subset of the rim of $\lambda$ obtained by traversing the rim of $\lambda$ from $(1,\lambda_1)$ and repeating the following: we include the next $e$ boxes in the $e$-rim (or all the boxes, if there are less than $e$ boxes remaining) and then immediately move to the rightmost box of the next row (or end the procedure if there is no next row).

Alternatively, the repetitive procedure can be replaced by: include the first several boxes which form an $e$-divisible hook (or all boxes if no such $e$-divisible hook exists) and then immediately move to the rightmost box of the next row (or end the procedure if there is no next row). We can use this definition to introduce the \textit{proper $e$-rim} which is obtained in the same way but we do not include the final rim hook if it is not an $e$-divisible hook. See \Cref{fig:e-rim} for an example.

\begin{figure}[h]
	\centering
	\begin{tikzpicture}[x=0.5cm, y=0.5cm]
		\begin{pgfonlayer}{nodelayer}
			\node [style=none] (0) at (3.5, 3) {};
			\node [style=none] (1) at (9.5, 3) {};
			\node [style=none] (2) at (9.5, 2) {};
			\node [style=none] (3) at (7.5, 2) {};
			\node [style=none] (4) at (7.5, 1) {};
			\node [style=none] (5) at (5.5, 1) {};
			\node [style=none] (6) at (5.5, 0) {};
			\node [style=none] (7) at (3.5, 0) {};
			\node [style=none] (8) at (3.5, 1) {};
			\node [style=none] (9) at (3.5, 2) {};
			\node [style=none] (10) at (4.5, 0) {};
			\node [style=none] (11) at (4.5, 3) {};
			\node [style=none] (12) at (5.5, 3) {};
			\node [style=none] (13) at (6.5, 3) {};
			\node [style=none] (14) at (7.5, 3) {};
			\node [style=none] (15) at (8.5, 3) {};
			\node [style=none] (16) at (8.5, 2) {};
			\node [style=none] (17) at (6.5, 1) {};
			\node [style=none] (18) at (6.5, 2) {};
			\node [style=none] (19) at (4.5, 2) {};
			\node [style=none] (20) at (4.5, 1) {};
			\node [style=none] (21) at (13, 3) {};
			\node [style=none] (22) at (19, 3) {};
			\node [style=none] (23) at (19, 2) {};
			\node [style=none] (24) at (17, 2) {};
			\node [style=none] (25) at (17, 1) {};
			\node [style=none] (26) at (15, 1) {};
			\node [style=none] (27) at (15, 0) {};
			\node [style=none] (28) at (13, 0) {};
			\node [style=none] (29) at (13, 1) {};
			\node [style=none] (30) at (13, 2) {};
			\node [style=none] (31) at (14, 0) {};
			\node [style=none] (32) at (14, 3) {};
			\node [style=none] (33) at (15, 3) {};
			\node [style=none] (34) at (16, 3) {};
			\node [style=none] (35) at (17, 3) {};
			\node [style=none] (36) at (18, 3) {};
			\node [style=none] (37) at (18, 2) {};
			\node [style=none] (38) at (16, 1) {};
			\node [style=none] (39) at (16, 2) {};
			\node [style=none] (40) at (14, 2) {};
			\node [style=none] (41) at (14, 1) {};
			\node [style=none] (42) at (5.5, 2) {};
			\node [style=none] (43) at (15, 2) {};
		\end{pgfonlayer}
		\begin{pgfonlayer}{edgelayer}
			\draw [style=Light grey column] (13.center)
			to (1.center)
			to (2.center)
			to (3.center)
			to (4.center)
			to (5.center)
			to (42.center)
			to (18.center)
			to cycle;
			\draw [style=Light grey column] (8.center)
			to (5.center)
			to (6.center)
			to (7.center)
			to cycle;
			\draw [style=Light grey column] (24.center)
			to (25.center)
			to (26.center)
			to (43.center)
			to (39.center)
			to (34.center)
			to (22.center)
			to (23.center)
			to cycle;
			\draw (8.center) to (0.center);
			\draw (0.center) to (13.center);
			\draw (11.center) to (10.center);
			\draw (12.center) to (42.center);
			\draw (14.center) to (3.center);
			\draw (15.center) to (16.center);
			\draw (18.center) to (3.center);
			\draw (9.center) to (42.center);
			\draw (18.center) to (17.center);
			\draw (36.center) to (37.center);
			\draw (35.center) to (24.center);
			\draw (39.center) to (38.center);
			\draw (33.center) to (43.center);
			\draw (26.center) to (27.center);
			\draw (32.center) to (31.center);
			\draw (21.center) to (34.center);
			\draw (30.center) to (43.center);
			\draw (29.center) to (26.center);
			\draw (28.center) to (27.center);
			\draw (21.center) to (28.center);
			\draw (39.center) to (24.center);
		\end{pgfonlayer}
	\end{tikzpicture}
	\caption{Let $e=5$. The highlighted boxes in the left diagram form the $e$-rim of $(6,4,2)$ and in the right diagram they form the proper $e$-rim of the same partition. }
	\label{fig:e-rim}
\end{figure}
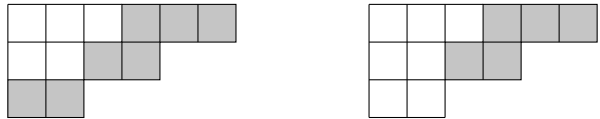

For $a$ a positive integer, we recall the correspondence (previously mentioned with $a=1$) between $ae$-hooks $R$ of a partition $\lambda$ and beads $b$ of its $\beta$-set $B$ such that $b-ae$ is an empty space of $B$. It assigns to $R$ the $\tp(R)$'th largest bead $b$ of $B$ and the removal of $R$ corresponds to replacing bead $b$ with $b-ae$. It is well-known and easy to check that the number of beads of $B$ greater than $b-ae$ equals $\bt(R)$. The next lemma describes the removal of the proper $e$-rim of $\lambda$ on a $\beta$-set of $\lambda$. The idea of the proof is sketched in \Cref{fig:proper e-rim}. In the statement we take $t'=-1$ if the proper $e$-rim is empty.

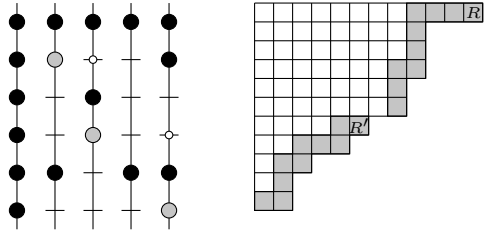
\begin{figure}[h]
 	\centering
 	\begin{tikzpicture}[x=0.5cm, y=0.5cm]
 		\begin{pgfonlayer}{nodelayer}
 			\node [style=none] (0) at (1.25, 3.5) {};
 			\node [style=none] (1) at (2.25, 3.5) {};
 			\node [style=none] (2) at (3.25, 3.5) {};
 			\node [style=none] (3) at (4.25, 3.5) {};
 			\node [style=none] (4) at (5.25, 3.5) {};
 			\node [style=none] (5) at (1.25, -2.5) {};
 			\node [style=none] (6) at (2.25, -2.5) {};
 			\node [style=none] (7) at (3.25, -2.5) {};
 			\node [style=none] (8) at (4.25, -2.5) {};
 			\node [style=none] (9) at (5.25, -2.5) {};
 			\node [style=Empty node] (10) at (1.25, 3) {};
 			\node [style=Empty node] (11) at (2.25, 3) {};
 			\node [style=Empty node] (12) at (3.25, 3) {};
 			\node [style=Empty node] (13) at (4.25, 3) {};
 			\node [style=Empty node] (15) at (1.25, 2) {};
 			\node [style=Empty node] (18) at (3.25, 1) {};
 			\node [style=none] (19) at (3, 2) {};
 			\node [style=none] (20) at (3.5, 2) {};
 			\node [style=none] (21) at (4, 2) {};
 			\node [style=none] (22) at (4.5, 2) {};
 			\node [style=none] (23) at (4, 1) {};
 			\node [style=none] (24) at (4.5, 1) {};
 			\node [style=none] (25) at (4, -2) {};
 			\node [style=none] (26) at (4.5, -2) {};
 			\node [style=none] (27) at (5.5, 0) {};
 			\node [style=none] (28) at (5, 0) {};
 			\node [style=none] (29) at (4.5, 0) {};
 			\node [style=none] (30) at (4, 0) {};
 			\node [style=none] (31) at (3.5, -2) {};
 			\node [style=none] (32) at (3, -2) {};
 			\node [style=none] (33) at (2.5, 0) {};
 			\node [style=none] (34) at (2, 0) {};
 			\node [style=none] (35) at (2.5, 1) {};
 			\node [style=none] (36) at (2, 1) {};
 			\node [style=none] (37) at (3.5, -1) {};
 			\node [style=none] (38) at (3, -1) {};
 			\node [style=Empty node] (40) at (5.25, -1) {};
 			\node [style=Empty node] (41) at (1.25, 1) {};
 			\node [style=Empty node] (42) at (1.25, 0) {};
 			\node [style=Empty node] (43) at (1.25, -1) {};
 			\node [style=Empty node] (44) at (1.25, -2) {};
 			\node [style=Empty node] (45) at (2.25, -1) {};
 			\node [style=Empty node] (46) at (5.25, 2) {};
 			\node [style=Empty node] (47) at (5.25, 3) {};
 			\node [style=Empty node] (49) at (4.25, -1) {};
 			\node [style=Lead] (50) at (5.25, -2) {};
 			\node [style=Lead] (51) at (3.25, 0) {};
 			\node [style=Small Empty] (53) at (5.25, 0) {};
 			\node [style=Small Empty] (54) at (3.25, 2) {};
 			\node [style=none] (55) at (7.5, 3.5) {};
 			\node [style=none] (56) at (13.5, 3.5) {};
 			\node [style=none] (57) at (13.5, 3) {};
 			\node [style=none] (58) at (12, 3) {};
 			\node [style=none] (59) at (12, 1.5) {};
 			\node [style=none] (60) at (11.5, 1.5) {};
 			\node [style=none] (61) at (11.5, 0.5) {};
 			\node [style=none] (71) at (11.5, 3.5) {};
 			\node [style=none] (72) at (11.5, 2) {};
 			\node [style=none] (73) at (11, 2) {};
 			\node [style=none] (74) at (11, 0.5) {};
 			\node [style=none] (82) at (13, 3.5) {};
 			\node [style=none] (83) at (12.5, 3.5) {};
 			\node [style=none] (84) at (12, 3.5) {};
 			\node [style=none] (85) at (11, 3.5) {};
 			\node [style=none] (86) at (10.5, 3.5) {};
 			\node [style=none] (87) at (10, 3.5) {};
 			\node [style=none] (88) at (9.5, 3.5) {};
 			\node [style=none] (89) at (9, 3.5) {};
 			\node [style=none] (90) at (8.5, 3.5) {};
 			\node [style=none] (91) at (7.5, 3) {};
 			\node [style=none] (92) at (7.5, 2.5) {};
 			\node [style=none] (93) at (7.5, 2) {};
 			\node [style=none] (94) at (7.5, 1.5) {};
 			\node [style=none] (95) at (7.5, 1) {};
 			\node [style=none] (96) at (7.5, 0.5) {};
 			\node [style=none] (97) at (7.5, 0) {};
 			\node [style=none] (98) at (7.5, -0.5) {};
 			\node [style=none] (99) at (7.5, -1) {};
 			\node [style=none] (100) at (7.5, -1.5) {};
 			\node [style=none] (102) at (8.5, -2) {};
 			\node [style=none] (103) at (10, -0.5) {};
 			\node [style=none] (104) at (11.5, 1) {};
 			\node [style=none] (105) at (12, 2) {};
 			\node [style=none] (106) at (12, 2.5) {};
 			\node [style=none] (107) at (12.5, 3) {};
 			\node [style=none] (108) at (13, 3) {};
 			\node [style=none] (109) at (2, -2) {};
 			\node [style=none] (110) at (2.5, -2) {};
 			\node [style=none] (112) at (13.25, 3.25) {$\scriptscriptstyle R$};
 			\node [style=none] (113) at (10.25, 0.25) {$\scriptscriptstyle R'$};
 			\node [style=none] (114) at (5, 1) {};
 			\node [style=none] (115) at (5.5, 1) {};
 			\node [style=none] (116) at (10.5, 0) {};
 			\node [style=none] (117) at (10, 0) {};
 			\node [style=none] (118) at (10.5, 0.5) {};
 			\node [style=none] (119) at (9, -1) {};
 			\node [style=none] (120) at (9, -0.5) {};
 			\node [style=none] (121) at (7.5, -2) {};
 			\node [style=none] (122) at (8.5, -1) {};
 			\node [style=none] (123) at (9.5, 0.5) {};
 			\node [style=none] (124) at (9.5, 0) {};
 			\node [style=none] (125) at (8.5, 0) {};
 			\node [style=none] (126) at (8, -1.5) {};
 			\node [style=none] (127) at (8, -0.5) {};
 			\node [style=none] (128) at (8.5, -0.5) {};
 			\node [style=none] (129) at (9.5, -0.5) {};
 			\node [style=none] (130) at (8.5, -1.5) {};
 			\node [style=none] (131) at (8, -2) {};
 			\node [style=none] (132) at (8, 3.5) {};
 			\node [style=Lead] (133) at (2.25, 2) {};
 		\end{pgfonlayer}
 		\begin{pgfonlayer}{edgelayer}
 			\draw (0.center) to (5.center);
 			\draw (1.center) to (6.center);
 			\draw (3.center) to (8.center);
 			\draw (4.center) to (9.center);
 			\draw (19.center) to (20.center);
 			\draw (21.center) to (22.center);
 			\draw (23.center) to (24.center);
 			\draw (25.center) to (26.center);
 			\draw (28.center) to (27.center);
 			\draw (30.center) to (29.center);
 			\draw (32.center) to (31.center);
 			\draw (34.center) to (33.center);
 			\draw (36.center) to (35.center);
 			\draw (38.center) to (37.center);
 			\draw (2.center) to (7.center);
 			\draw [style=Light grey column] (74.center)
 			to (73.center)
 			to (72.center)
 			to (71.center)
 			to (56.center)
 			to (57.center)
 			to (58.center)
 			to (59.center)
 			to (60.center)
 			to (61.center)
 			to cycle;
 			\draw (55.center) to (56.center);
 			\draw (91.center) to (57.center);
 			\draw (92.center) to (106.center);
 			\draw (93.center) to (105.center);
 			\draw (94.center) to (59.center);
 			\draw (95.center) to (104.center);
 			\draw (85.center) to (74.center);
 			\draw (71.center) to (61.center);
 			\draw (84.center) to (59.center);
 			\draw (83.center) to (107.center);
 			\draw (82.center) to (108.center);
 			\draw (56.center) to (57.center);
 			\draw (109.center) to (110.center);
 			\draw (114.center) to (115.center);
 			\draw [style=Light grey column] (125.center)
 			to (124.center)
 			to (123.center)
 			to (118.center)
 			to (116.center)
 			to (117.center)
 			to (103.center)
 			to (120.center)
 			to (119.center)
 			to (122.center)
 			to (102.center)
 			to (121.center)
 			to (100.center)
 			to (126.center)
 			to (127.center)
 			to (128.center)
 			to cycle;
 			\draw (96.center) to (61.center);
 			\draw (97.center) to (116.center);
 			\draw (98.center) to (103.center);
 			\draw (99.center) to (119.center);
 			\draw (100.center) to (130.center);
 			\draw (121.center) to (102.center);
 			\draw (55.center) to (121.center);
 			\draw (132.center) to (131.center);
 			\draw (90.center) to (102.center);
 			\draw (89.center) to (119.center);
 			\draw (88.center) to (129.center);
 			\draw (87.center) to (103.center);
 			\draw (86.center) to (116.center);
 		\end{pgfonlayer}
 	\end{tikzpicture}
 	\caption{Let $e=5$. The first $e$-divisible hook of the proper $e$-rim, called $R$, corresponds to the greatest bead --- the highlighted bead on runner $4$. From the construction of the proper $e$-rim, its removal corresponds to removing this bead and placing it in the first empty space above it --- the small white dot on runner $4$. The next $e$-divisible hook, called $R'$, starts one row below, so it corresponds to the greatest bead less than this empty space --- the highlighted bead on runner $2$. Similarly to $R$, the removal of $R'$ corresponds to moving this bead to the first empty space above it --- the second highlighted empty space. As the greatest bead less than this empty space --- the final highlighted bead --- has no empty spaces above it, the construction of the proper $e$-rim terminates here. Comparing this to \Cref{fig:invariance} (and \Cref{le:Me observation}(iv)) the highlighted beads are precisely the leading beads that disappear after applying $\Ms_e$.}
 	\label{fig:proper e-rim}
\end{figure}

\begin{lemma}\label{le:proper e-rim}
	Let $B$ be a $\beta$-set of a partition $\lambda$ with stable index $z$. Let $0=i_0<i_1<\dots<i_t=z$ be the indices $i$ such that $b_i\notin \Ms_e(B)$. If $R_0, R_1, \dots, R_{t'}$ are the $e$-divisible hooks added to the proper $e$-rim during its construction (in this order), then $t=t'+1$, $R_j$ corresponds to the leading bead $b_{i_j}$ and its removal corresponds to replacing $b_{i_j}$ with $b_{i_{j+1}-1}-e$.
\end{lemma}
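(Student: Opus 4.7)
The plan is to prove the lemma by induction on $j \in \{0, 1, \dots, t-1\}$, showing at step $j$ that the proper $e$-rim construction produces an $e$-divisible hook $R_j$, that $b_{i_j}$ has rank $\tp(R_j)$ in the intermediate $\beta$-set $B^{(j)} := (B\setminus\{b_{i_0},\dots,b_{i_{j-1}}\})\cup\{b_{i_1-1}-e,\dots,b_{i_j-1}-e\}$, and that removing $R_j$ amounts to replacing $b_{i_j}$ by $b_{i_{j+1}-1}-e$. The base case $j=0$ is immediate since $b_0=b_{i_0}$ is the largest bead and $\tp(R_0)=1$; termination at $j=t$ will yield $t=t'+1$. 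The abstract engine throughout is the standard bead--hook correspondence: the smallest $e$-divisible rim hook of a partition whose top lies in row $r$ (equivalently, the first $e$-divisible hook produced by the (proper) $e$-rim construction starting from row $r$) corresponds to the $r$-th largest bead $c$ of its $\beta$-set and is realised on the abacus by replacing $c$ with the first position of the form $c-ke$ ($k\geq 1$) that is empty.

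For the inductive step I need to check, for $B^{(j)}$, that (a) the first empty position of form $b_{i_j}-ke$ ($k\geq 1$) equals $b_{i_{j+1}-1}-e$, and (b) the rank of $b_{i_j}$ in $B^{(j)}$ equals $\tp(R_j)$. For (a), \Cref{le:Me observation}(v) identifies $b_{i_j}-ke = b_{i_j+k}\in B$ for $0\leq k\leq i_{j+1}-i_j-1$, and for $k=i_{j+1}-i_j$ one has $b_{i_j}-ke = b_{i_{j+1}-1}-e$; this last position is empty in $B$ because \Cref{le:Me observation}(ii) applied to $b_{i_{j+1}}\notin\Ms_e(B)$ gives $b_{i_{j+1}}<b_{i_{j+1}-1}-e$, which together with the defining maximality of $b_{i_{j+1}}$ forces $b_{i_{j+1}-1}-e\notin B$. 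Every bead removed or inserted in passing from $B$ to $B^{(j)}$ lies strictly above $b_{i_j}$ in value: the removed leading beads $b_{i_{j'}}$ with $j'<j$ satisfy $b_{i_{j'}}>b_{i_j}$ by strict monotonicity of the leading beads, and each added bead $b_{i_{j'}-1}-e$ with $1\leq j'\leq j$ satisfies $b_{i_{j'}-1}-e\geq b_{i_{j'}}>b_{i_j}$ when $j'<j$, and $b_{i_j-1}-e>b_{i_j}$ when $j'=j$ (the strict inequality coming from \Cref{le:Me observation}(ii)). Hence $B^{(j)}$ coincides with $B$ at all positions $\leq b_{i_j}$, establishing (a).

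For (b), the bead--hook correspondence applied to $R_{j-1}$ inside $B^{(j-1)}$ gives $\bt(R_{j-1}) = |\{c\in B^{(j-1)} : c>b_{i_j-1}-e\}|$, and the same `modifications stay high' argument equates this with the analogous count inside $B$. The maximality of $b_{i_j}$ as the greatest bead of $B$ satisfying $b_{i_j}\leq b_{i_j-1}-e$, together with the strict inequality $b_{i_j}<b_{i_j-1}-e$ from \Cref{le:Me observation}(ii), rules out any bead of $B$ in the half-open interval $(b_{i_j}, b_{i_j-1}-e]$, so this count also equals $|\{c\in B : c>b_{i_j}\}| = |\{c\in B^{(j)} : c>b_{i_j}\}|$; thus $\tp(R_j) = \bt(R_{j-1})+1$ is exactly the rank of $b_{i_j}$ in $B^{(j)}$. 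Finally, at $j=t$ we have $b_{i_t}=b_z$, and by the defining property of the stable index every position $b_z-ke$ with $k\geq 1$ is the leading bead $b_{z+k}\in B$; modifications to form $B^{(t)}$ again stay strictly above $b_z$, so no empty space lies above $b_z$ on its runner in $B^{(t)}$, preventing any $e$-divisible hook from starting at that row and terminating the proper $e$-rim construction with $t'=t-1$. The principal technical obstacle is the rank computation in (b): because non-leading beads of $B$ can interleave with the $b_i$, the leading-bead index $i_j$ does not in general equal the rank of $b_{i_j}$, and one must invoke the maximality inherent in the recursive definition of the $b_i$ to transport the count from the cut-off point $b_{i_j-1}-e$ down to $b_{i_j}$.
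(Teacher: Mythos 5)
Your proof is correct and follows essentially the same route as the paper's: induction on $j$, the bead--hook correspondence, identification of $R_j$ with $b_{i_j}$ by counting beads above $b_{i_j-1}-e$ and invoking the maximality in the definition of leading beads, and \Cref{le:Me observation}(ii),(v) plus the stable index to locate the landing position and to terminate. The only (cosmetic) difference is bookkeeping: you track intermediate $\beta$-sets $B^{(j)}$ and transfer counts back to $B$ via the ``modifications stay high'' observation, whereas the paper keeps every hook and every correspondence relative to the original $\lambda$ and $B$ throughout.
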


\begin{proof}
	Firstly suppose that the proper $e$-rim is non-empty, so $t'\geq 0$. From the construction of the proper $e$-rim of $\lambda$, we have $\tp(R_0)=1$, $\bt(R_{j-1})+1=\tp(R_j)$ for all $1\leq j\leq t'$ and there is no $e$-divisible hook $R$ of $\lambda$ with $\bt(R_{t'})+1=\tp(R)$. We prove by induction on $0\leq j\leq t'$ that $t\geq j+1$, $R_j$ corresponds to the leading bead $b_{i_j}$ and its removal corresponds to moving $b_{i_j}$ to $b_{i_{j+1}-1}-e$.
	
	If $j=0$, then $\tp(R_0)=1$ implies that $R_0$ corresponds to the leading bead $b_{0}=b_{i_0}$. We will artificially use $i_0$ instead of $0$ so the below argument generalises for $j>0$. As $R_0$ is the least $e$-divisible hook of $\lambda$ with $\tp(R_0)=1$, removing $R_0$ corresponds to replacing $b_{i_0}$ with the least empty space of $B$ of the form $b_{i_0}-ae$ with $a>0$. In turn $t\geq 1$ as otherwise, by \Cref{le:Me observation}(v), for all $i\geq i_0$ we have $b_{i_0}-(i-i_0)e=b_i$, which is a bead of $B$. From \Cref{le:Me observation}(ii) and (v), we have $b_i=b_{i_0}-(i-i_0)e$ for all $i_0< i < i_1$ and $b_{i_1}<b_{i_1-1}-e = b_{i_0}-(i_1-i_0)e$. From the definition of leading beads, $b_{i_1-1}-e$ is an empty space of $B$ and thus the removal of $R_0$ corresponds to replacing $b_0$ with $b_{i_1-1}-e$.
	
	If $1\leq j\leq t'$ and the statement holds for $j-1$, then there are $\bt(R_{j-1})$ beads of $B$ greater than $b_{i_j-1}-e$, and so $\bt(R_{j-1})+1=\tp(R_j)$ implies that $R_j$ corresponds to the greatest bead less than the empty space $b_{i_j-1}-e$, that is $b_{i_j}$. We deduce the statement for $j$ using the same argument as in the previous paragraph (with $i_0$ and $i_1$ replaced with $i_j$ and $i_{j+1}$, respectively).
	
	It remains to show that $t=t'+1$. As there is no $e$-divisible hook $R$ of $\lambda$ with $\bt(R_{t'})+1=\tp(R)$ and there are $\bt(R_{t'})$ beads greater than $b_{i_{t'+1}-1}-e$, there is no $a>0$ such that $b_{i_{t'+1}}-ae$ is an empty space of $B$. Hence $i_{t'+1}\geq z=i_t$ which forces $t'+1=t$ as required.
	
	If the proper $e$-rim is empty, then there is no $e$-divisible hook $R$ with $\tp(R)=1$. Hence, $b_0-ae$ is a bead of $B$ for any $a>0$. Hence $z=0$ and $t=0=t'+1$, as required.   
\end{proof}

\begin{corollary}\label{co:proper e-rim}
	Let $B$ be a $\beta$-set of a partition $\lambda$ with stable index $z$. Then $\Ms_e(B)\cup \left\lbrace b_z\right\rbrace  $ is a $\beta$-set of the partition obtained by removing the proper $e$-rim of $\lambda$.
\end{corollary}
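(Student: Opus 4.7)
My plan is to observe that both sides of the claimed equality admit identical descriptions as explicit modifications of $B$, with the bookkeeping already carried out in the preceding two lemmas. The whole proof reduces to matching two formulas.

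First I would invoke the notation from \Cref{le:Me observation}(iv): let $0=i_0<i_1<\dots<i_t=z$ be the indices $i$ with $b_i\notin \Ms_e(B)$. \Cref{le:proper e-rim} then tells me that the proper $e$-rim decomposes into exactly $t$ $e$-divisible hooks $R_0,\dots,R_{t-1}$, and that for each $0\leq j\leq t-1$ the removal of $R_j$ corresponds on the abacus to replacing the bead $b_{i_j}$ by $b_{i_{j+1}-1}-e$. Removing the proper $e$-rim from $\lambda$ therefore corresponds to passing from $B$ to the $\beta$-set
\[
B' \;=\; \bigl(B\setminus\{b_{i_j}:0\leq j\leq t-1\}\bigr)\,\cup\,\{b_{i_j-1}-e:1\leq j\leq t\}.
\]

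Next I would rewrite $\Ms_e(B)\cup\{b_z\}$ using \Cref{le:Me observation}(iv), which gives
\[
\Ms_e(B) \;=\; \bigl(B\setminus\{b_{i_j}:0\leq j\leq t\}\bigr)\,\cup\,\{b_{i_j-1}-e:1\leq j\leq t\}.
\]
By \Cref{le:Me observation}(iii), $b_z=b_{i_t}\notin\Ms_e(B)$, so adjoining $b_z$ restores the bead at index $i_t$ that was removed; the resulting set is exactly the formula for $B'$ above. Hence $\Ms_e(B)\cup\{b_z\}=B'$, which is the desired $\beta$-set.

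I do not anticipate a genuine obstacle: the substantive work has been done in \Cref{le:proper e-rim} and \Cref{le:Me observation}(iv). The only point deserving a sentence of care is the degenerate case $t=0$ (empty proper $e$-rim), where both formulas collapse: $B'=B$, and $z=0$ with $b_z\notin\Ms_e(B)$ forces $\Ms_e(B)=B\setminus\{b_0\}$, so $\Ms_e(B)\cup\{b_0\}=B$ again. Beyond this sanity check, the argument is a direct comparison of two explicit expressions.
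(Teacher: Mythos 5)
Your proof is correct and is essentially the paper's own argument: both translate the removal of the proper $e$-rim into the explicit bead replacements given by \Cref{le:proper e-rim} and then match the resulting set against the description of $\Ms_e(B)$ in \Cref{le:Me observation}(iv), using part (iii) to see that adjoining $b_z=b_{i_t}$ restores exactly the one remaining removed bead. The remark about the degenerate case $t=0$ is a harmless extra check already covered by the general formulas.
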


\begin{proof}
	By \Cref{le:proper e-rim}, if $0=i_0<i_1<\dots<i_t=z$ are the indices $i$ such that $b_i\notin \Ms_e(B)$, then the desired $\beta$-set is $\left( B \setminus \left\lbrace b_{i_j} : 0\leq j < t \right\rbrace\right)  \cup \left\lbrace b_{i_{j+1}-1} - e : 0\leq j<t \right\rbrace $. This is $\Ms_e(B)\cup \left\lbrace b_z\right\rbrace  $ by \Cref{le:Me observation}(iv).
\end{proof}

We now recall the map $J$ introduced in \cite{XuMullineux97}. For an $e$-regular partition $\lambda = (\List{\lambda}{t})$ we define the partition $J(\lambda)$ as the partition obtained by removing the $e$-rim of $\lambda$ and adding a box to each of its $t$ rows, except, possibly, the final one if the size of the $e$-rim is not divisible by $e$ (or equivalently, if the $e$-rim and the proper $e$-rim of $\lambda$ do not coincide). Continuing the example from \Cref{fig:e-rim}, we get $J((6,4,2)) = (4,3)$. A simpler, though, less precise description of $J$ is in \Cref{fig:inf} followed by a precise statement in \Cref{le:alternative J}.

\begin{figure}[h]
	\centering
	\begin{tikzpicture}[x=0.5cm, y=0.5cm]
		\begin{pgfonlayer}{nodelayer}
			\node [style=none] (0) at (3.5, 3) {};
			\node [style=none] (1) at (9.5, 3) {};
			\node [style=none] (2) at (9.5, 2) {};
			\node [style=none] (3) at (7.5, 2) {};
			\node [style=none] (4) at (7.5, 1) {};
			\node [style=none] (5) at (5.5, 1) {};
			\node [style=none] (8) at (3.5, 1) {};
			\node [style=none] (9) at (3.5, 2) {};
			\node [style=none] (11) at (4.5, 3) {};
			\node [style=none] (12) at (5.5, 3) {};
			\node [style=none] (13) at (6.5, 3) {};
			\node [style=none] (14) at (7.5, 3) {};
			\node [style=none] (15) at (8.5, 3) {};
			\node [style=none] (16) at (8.5, 2) {};
			\node [style=none] (17) at (6.5, 1) {};
			\node [style=none] (18) at (6.5, 2) {};
			\node [style=none] (19) at (4.5, 2) {};
			\node [style=none] (20) at (4.5, 1) {};
			\node [style=none] (42) at (5.5, 2) {};
			\node [style=none] (43) at (2.5, 3) {};
			\node [style=none] (44) at (2.5, 2) {};
			\node [style=none] (45) at (2.5, 1) {};
			\node [style=none] (46) at (5.5, 0) {};
			\node [style=none] (47) at (4.5, 0) {};
			\node [style=none] (48) at (3.5, 0) {};
			\node [style=none] (49) at (2.5, 0) {};
			\node [style=none] (50) at (2.5, -1) {};
			\node [style=none] (51) at (3.5, -1) {};
			\node [style=none] (52) at (3.5, -2) {};
			\node [style=none] (53) at (2.5, -2) {};
			\node [style=none] (54) at (3, -2.5) {$\vdots$};
		\end{pgfonlayer}
		\begin{pgfonlayer}{edgelayer}
			\draw [style=Light grey column] (13.center)
			to (1.center)
			to (2.center)
			to (3.center)
			to (4.center)
			to (5.center)
			to (42.center)
			to (18.center)
			to cycle;
			\draw (8.center) to (0.center);
			\draw (0.center) to (13.center);
			\draw (12.center) to (42.center);
			\draw (14.center) to (3.center);
			\draw (15.center) to (16.center);
			\draw (18.center) to (3.center);
			\draw (9.center) to (42.center);
			\draw (18.center) to (17.center);
			\draw [style=Light grey column] (53.center)
			to (45.center)
			to (5.center)
			to (46.center)
			to (48.center)
			to (52.center)
			to cycle;
			\draw (43.center) to (0.center);
			\draw (43.center) to (45.center);
			\draw (44.center) to (9.center);
			\draw (11.center) to (47.center);
			\draw (8.center) to (48.center);
			\draw (49.center) to (48.center);
			\draw (50.center) to (51.center);
		\end{pgfonlayer}
	\end{tikzpicture}
	\caption{The digram shows how to obtain $J(\lambda)$ using \Cref{le:alternative J} for $\lambda=(6,4,2)$. We add an extra infinite column to the Young diagram of $\lambda$ and then remove the `infinite $e$-rim' of this `Young diagram' to get $J(\lambda) = (4,3)$.}
	\label{fig:inf}
\end{figure}
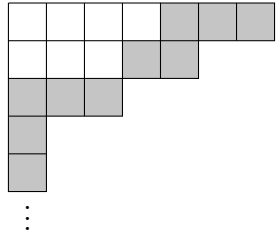

\begin{lemma}\label{le:alternative J}
	Let $\lambda=(\List{\lambda}{t})$ be a partition and $N$ be a non-negative integer such that the proper $e$-rim and the $e$-rim of $\lambda^+=(\List{1+\lambda}{t},1^N)$ coincide. Then $J(\lambda)$ is obtained by removing the (proper) $e$-rim of $\lambda^+$.
\end{lemma}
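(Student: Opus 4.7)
The plan is to use \Cref{co:proper e-rim} to reduce the statement to showing that $\lambda^+$ minus its $e$-rim equals $J(\lambda)$, and then to verify this by a row-by-row comparison between $\lambda$ and $\lambda^+$.

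By \Cref{co:proper e-rim}, for any $\beta$-set $B$ of $\lambda^+$ with stable index $z$, the partition obtained by removing the proper $e$-rim of $\lambda^+$ has $\Ms_e(B)\cup\{b_z\}$ as a $\beta$-set. Under the hypothesis, the proper $e$-rim and the $e$-rim of $\lambda^+$ coincide, so this is the partition obtained by removing the $e$-rim of $\lambda^+$. Hence it suffices to show that $\lambda^+$ minus its $e$-rim equals $J(\lambda)$.

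For this, I would compare the rims of $\lambda$ and $\lambda^+$. Let $\mu$ denote $\lambda$ minus its $e$-rim, and let $r\in\{0,1,\ldots,e-1\}$ be the size of the final piece of the $e$-rim of $\lambda$. For $1\leq i\leq t-1$, the rim of $\lambda^+$ in row $i$ is the rim of $\lambda$ in row $i$ shifted right by one column; for $i=t$, the rim of $\lambda^+$ in row $t$ is the entire row $t$ of $\lambda^+$; for $t<i\leq t+N$, the rim of $\lambda^+$ in row $i$ is the single box $(i,1)$. Using this, an induction on the $e$-hook index establishes that the first $s$ $e$-hooks of the $e$-rim of $\lambda^+$ are the right-shifts (by one column) of the $e$-hooks $H_0,\ldots,H_{s-1}$ of the $e$-rim of $\lambda$, where $s$ is the number of complete $e$-hooks in the $e$-rim of $\lambda$. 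The hypothesis that the $e$-rim of $\lambda^+$ consists of complete $e$-hooks then forces the remaining $e$-rim of $\lambda^+$ to absorb, when $r>0$, the right-shifted final piece of $\lambda$'s $e$-rim together with the box $(t,1)$ of $\lambda^+$ and some of the boxes $(t+1,1),\ldots,(t+N,1)$, and when $r=0$ to consist of $e$-hooks lying entirely among the boxes $(t+1,1),\ldots,(t+N,1)$.

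Denoting by $r_i$ and $r'_i$ the number of $e$-rim boxes of $\lambda$ and $\lambda^+$ in row $i$ respectively, the analysis yields $r'_i=r_i$ for $i<t$; $r'_t=r_t+1$ when $r>0$ and $r'_t=r_t$ when $r=0$; and $r'_i=1$ for $t<i\leq t+N$. Subtracting from $\lambda^+_i$ shows that $\lambda^+$ minus its $e$-rim has row $i$ of length $1+\mu_i$ for $i<t$, row $t$ of length $\mu_t$ when $r>0$ or $\mu_t+1$ when $r=0$, and length $0$ for $i>t$. This is precisely $J(\lambda)$.

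The main obstacle I expect is carefully analysing the transitional $e$-hook at the boundary between $\lambda$'s part and the trailing boxes when $r>0$. This $e$-hook must be shown to consist of the right-shifted final piece (which supplies $r$ boxes ending at $(t,2)$), the box $(t,1)$ of $\lambda^+$, and exactly $e-r-1$ consecutive trailing boxes $(t+1,1),\ldots,(t+e-r-1,1)$, all consecutive along the rim of $\lambda^+$. Verifying this structure precisely is what gives the extra box in row $t$ needed to match $J(\lambda)$.
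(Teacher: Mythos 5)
Your proof is correct and takes essentially the same route as the paper's: both identify the $e$-rim of $\lambda^+$ as the right-shift of the $e$-rim of $\lambda$ together with the first-column boxes $(i,1)$ for $t<i\leq t+N$ (respectively for $t\leq i\leq t+N$) according to whether the final piece of $\lambda$'s $e$-rim is a complete $e$-hook, and then compare row lengths against the definition of $J$. The opening appeal to \Cref{co:proper e-rim} is superfluous, since the reduction to comparing $\lambda^+$ minus its $e$-rim with $J(\lambda)$ is immediate from the hypothesis; otherwise your argument is a more detailed version of the paper's two-case computation, with the induction on pieces and the analysis of the transitional hook spelling out what the paper asserts directly.
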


\begin{proof}
	We need to distinguish two cases. If the $e$-rim and the proper $e$-rim of $\lambda$ coincide, then the $e$-rim of $\lambda^+$ is the $e$-rim of $\lambda$ shifted by one to the right together with boxes $(i,1)$ with $t<i\leq t+N$. Thus its removal from $\lambda^+$ coincides with removing the $e$-rim of $\lambda$ and then adding a box to its first $t$ rows.
	
	If the $e$-rim and the proper $e$-rim of $\lambda$ do not coincide, then the $e$-rim of $\lambda^+$ is the $e$-rim of $\lambda$ shifted by one together with boxes $(i,1)$ with $t\leq i\leq t+N$. Thus its removal from $\lambda^+$ coincides with removing the $e$-rim of $\lambda$ and then adding a box to its first $t-1$ rows. This coincides with $J$.
\end{proof}

The key ingredient for our algorithm now follows from the last two lemmas.

\begin{lemma}\label{le:Ms is J}
	Let $\lambda$ be a partition with $\beta$-set $B$. Then $\Ms_e(B)$ is a $\beta$-set of $J(\lambda)$.
\end{lemma}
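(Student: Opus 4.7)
My plan is to combine \Cref{le:alternative J} with \Cref{co:proper e-rim}. Writing $B = B_s(\lambda)$, I would choose $N$ large and in the unique residue class modulo $e$ for which the $e$-rim of $\lambda^+ := (\lambda_1+1, \ldots, \lambda_t+1, 1^N)$ coincides with its proper $e$-rim. Then \Cref{le:alternative J} identifies $J(\lambda)$ as $\lambda^+$ with its proper $e$-rim removed, and \Cref{co:proper e-rim} provides the $\beta$-set $\Ms_e(B^+) \cup \{\ms_e(B^+)\}$ for $J(\lambda)$, where $B^+$ is any $\beta$-set of $\lambda^+$. A direct comparison of the defining formulas shows that with $B^+ := B_s(\lambda^+)$ one obtains the compact description
\[
B^+ \;=\; (B+1) \setminus \{m\}, \qquad m := s - t - N,
\]
where only a single deep tail bead has been removed. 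The task then reduces to proving the identity
\[
\Ms_e(B^+) \cup \{\ms_e(B^+)\} \;=\; \Ms_e(B) + 1,
\]
from which \Cref{le:Ms is J} follows since $\Ms_e(B)$ is then the $-1$ shift of a $\beta$-set of $J(\lambda)$, hence a $\beta$-set of $J(\lambda)$ itself.

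The key input for the identity is a runner-agreement step: the choice of $N$ making $|e\text{-rim of }\lambda^+| = |\lambda| + t + N - |J(\lambda)|$ divisible by $e$ is, after rewriting, equivalent to the arithmetic congruence $m \equiv b_z + 1 \pmod{e}$, so the missing bead $m$ lies on the same runner as the stable leading bead of $B+1$. Writing $m = (b_z+1) - k_0 e$ with $k_0 \geq 1$ (ensured by taking $N$ large), I would then trace the leading beads of $B^+$: they agree with those of $B+1$ for indices $0 \leq i < z + k_0$, and from $i = z + k_0$ onwards they are $m-1, m-1-e, m-1-2e, \ldots$ on the new runner accessed through the gap at $m$. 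In particular $z^+ = z + k_0$ and $\ms_e(B^+) = m - 1$. Using \Cref{le:Me observation}(ii) to pinpoint the indices of these leading beads lying outside $\Ms_e(B^+)$, I would identify them as $\{0, i_1, \ldots, i_{t-1}, z, z + k_0\}$, where $\{i_j\}$ are the corresponding indices for $B$ from \Cref{le:Me observation}(iv). Expanding both sides of the desired identity via \Cref{le:Me observation}(iv) and comparing element-by-element, the bead $m$ missing on the left (coming from $B^+ = (B+1) \setminus \{m\}$) is cancelled by the added shift $b_{z + k_0 - 1}^+ - e = m$ arising from the extra index $z + k_0$, and the remaining removed and added beads match exactly.

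I expect the main obstacle to be the runner-agreement step itself — verifying that the combinatorial condition ``$e$ divides the size of the $e$-rim of $\lambda^+$'' translates precisely into the abacus-level congruence $m \equiv b_z + 1 \pmod{e}$. This should follow by computing $|J(\lambda)|$ through the $\lambda^+$ construction and comparing residues modulo $e$, but the equivalence deserves a careful derivation. Once it is in place, the remaining set manipulation is routine, guided throughout by the structural descriptions from \Cref{le:Me observation}; one must nevertheless check carefully that the extra removed bead in $B^+$ is exactly balanced by the extra added shift at index $z + k_0$, and that no further discrepancy between the two $\beta$-sets remains.
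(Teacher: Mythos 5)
Your strategy is in essence the paper's own: pass to $\lambda^+=(\lambda_1+1,\dots,\lambda_t+1,1^N)$, combine \Cref{co:proper e-rim} with \Cref{le:alternative J}, and reduce to a set identity that is checked by expanding both sides via \Cref{le:Me observation}. Two differences are worth noting. First, you use $B_s(\lambda^+)=(B+1)\setminus\{m\}$, which forces you to carry a global $+1$ shift, whereas the paper simply deletes a negative leading bead $b_i$ with $i\geq z$ from $B$ itself, so that no shift appears and the identity to check is just $\Ms_e(B\setminus\{b_i\})\cup\{b_i-1\}=\Ms_e(B)$. Second, you propose to pin down $N$ by the congruence $m\equiv b_z+1\pmod e$, which you rightly identify as the hard step; the paper avoids this entirely by observing that the partition obtained from $\lambda^+$ by removing its proper $e$-rim is \emph{independent} of the choice of $i\geq z$, so one may take $i$ (hence $N$) huge, and then the fact that this partition has fewer parts than $\lambda^+$ forces the $e$-rim and proper $e$-rim of $\lambda^+$ to coincide. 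Your formulation of the divisibility condition in terms of $|J(\lambda)|$ is also mildly circular, since \Cref{le:alternative J} identifies the complement of the $e$-rim of $\lambda^+$ with $J(\lambda)$ only once that coincidence is already known.

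There is, however, a concrete error in the identity you set out to prove. \Cref{co:proper e-rim} produces the $\beta$-set $\Ms_e(B^+)\cup\{b^+_{z^+}\}$, where $b^+_{z^+}$ is the stable leading bead of $B^+$ itself, \emph{not} $\Ms_e(B^+)\cup\{\ms_e(B^+)\}$; since $z^+=z+k_0\geq 1$ in your setting, these differ, because $\ms_e(B^+)=b^+_{z^+}+z^+e=(m-1)+(z+k_0)e=b_z+ze=\ms_e(B)$. With the literal reading your displayed identity is false: the element $m-1=b^+_{z^+}$ is excluded from $\Ms_e(B^+)$ by \Cref{le:Me observation}(iii) and is not equal to $\ms_e(B^+)$, so the left-hand side never contains $m-1$, while $\Ms_e(B)+1$ does (for instance $e=3$, $\lambda=(4)$, $N=3$: the left-hand side is $\{1,-2,-3,-4\}\cup\{n\leq-6\}$ and is missing $-5=m-1$). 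Your own computation ``$\ms_e(B^+)=m-1$'' is in fact the value of $b^+_{z^+}$, so the slip is notational rather than structural: the identity you actually need, $\Ms_e(B^+)\cup\{m-1\}=\Ms_e(B)+1$, is correct, and your element-by-element cancellation (the missing bead $m$ of $B^+$ being restored as $b^+_{z+k_0-1}-e=m$, with the remaining removed and added beads matching those of $B$ via \Cref{le:Me observation}(iv)) goes through once the citation is corrected.
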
 

\begin{proof}
	Without loss of generality suppose that $B$ contains all non-positive integers and let $b_0, b_1, \dots$ be its leading beads and $z$ be its stable index. We pick $i\geq z$ such that $b_i < 0$ and let $B^+ = B\setminus\left\lbrace b_i \right\rbrace $. Using the formula \eqref{eq:recover partition} just before \Cref{ex:empty}, $B^+$ is a $\beta$-set of a partition $\lambda^+=(\List{1+\lambda}{t},1^N)$ with $N > 0$. The leading beads of $B^+$ are $b_0,b_1,\dots, b_{i-1}, b_i -1, b_i-e-1,\dots$ and its stable index is $i$; see \Cref{fig:Bead Out}. Hence, by \Cref{co:proper e-rim} and \Cref{le:Me observation}(i) applied twice (with $j=i$), removal of the proper $e$-rim of $\lambda^+$ yields a partition $\mu$ with $\beta$-set
	\begin{align*}
	&\Ms_e(B+)\cup \left\lbrace b_i -1 \right\rbrace  \\ =&\left( B^+ \setminus \left\lbrace b_j : 0\leq j < i \right\rbrace \right) \cup \left\lbrace b_j - e : 0\leq j<i \right\rbrace  \\= &\left( B \setminus \left\lbrace b_j : 0\leq j\leq i \right\rbrace \right)  \cup \left\lbrace b_j - e : 0\leq j < i \right\rbrace \\ = &\Ms_e(B).
	\end{align*}
	In particular, $\mu$ does not depend on $i$, and so by increasing $i$ (and $N$) if necessary, we can assume that $\mu$ has fewer parts than $\lambda^+$. Hence, as $\mu$ is obtained by removing the proper $e$-rim of $\lambda^+$, we deduce that the proper $e$-rim of $\lambda^+$ coincides with its $e$-rim. The result follows from \Cref{le:alternative J}.
\end{proof}

\begin{figure}[h]
	\centering
	\begin{tikzpicture}[x=0.5cm, y=0.5cm]
		\begin{pgfonlayer}{nodelayer}
			\node [style=none] (0) at (1.25, 3.5) {};
			\node [style=none] (1) at (2.25, 3.5) {};
			\node [style=none] (2) at (3.25, 3.5) {};
			\node [style=none] (3) at (4.25, 3.5) {};
			\node [style=none] (4) at (5.25, 3.5) {};
			\node [style=none] (5) at (1.25, -2.5) {};
			\node [style=none] (6) at (2.25, -2.5) {};
			\node [style=none] (7) at (3.25, -2.5) {};
			\node [style=none] (8) at (4.25, -2.5) {};
			\node [style=none] (9) at (5.25, -2.5) {};
			\node [style=Empty node] (10) at (1.25, 3) {};
			\node [style=Empty node] (12) at (3.25, 3) {};
			\node [style=Empty node] (13) at (4.25, 3) {};
			\node [style=Empty node] (15) at (1.25, 2) {};
			\node [style=none] (23) at (4, 1) {};
			\node [style=none] (24) at (4.5, 1) {};
			\node [style=none] (25) at (4, -2) {};
			\node [style=none] (26) at (4.5, -2) {};
			\node [style=none] (27) at (5.5, 0) {};
			\node [style=none] (28) at (5, 0) {};
			\node [style=none] (29) at (4.5, 0) {};
			\node [style=none] (30) at (4, 0) {};
			\node [style=none] (31) at (3.5, -2) {};
			\node [style=none] (32) at (3, -2) {};
			\node [style=none] (33) at (2.5, 0) {};
			\node [style=none] (34) at (2, 0) {};
			\node [style=none] (35) at (3.5, 1) {};
			\node [style=none] (36) at (3, 1) {};
			\node [style=none] (37) at (3.5, -1) {};
			\node [style=none] (38) at (3, -1) {};
			\node [style=Empty node] (41) at (1.25, 1) {};
			\node [style=Empty node] (42) at (1.25, 0) {};
			\node [style=Empty node] (43) at (5.25, 1) {};
			\node [style=Empty node] (44) at (1.25, -2) {};
			\node [style=Empty node] (45) at (2.25, -1) {};
			\node [style=Empty node] (46) at (5.25, 2) {};
			\node [style=Empty node] (47) at (5.25, 3) {};
			\node [style=Empty node] (49) at (4.25, -1) {};
			\node [style=Lead] (50) at (5.25, -2) {};
			\node [style=Lead] (51) at (3.25, 0) {};
			\node [style=Lead] (52) at (2.25, 1) {};
			\node [style=none] (55) at (2, -2) {};
			\node [style=none] (56) at (2.5, -2) {};
			\node [style=none] (57) at (1, -1.25) {};
			\node [style=none] (58) at (1.5, -1.25) {};
			\node [style=Empty node] (59) at (4.25, 2) {};
			\node [style=Empty node] (60) at (3.25, 2) {};
			\node [style=Lead] (61) at (2.25, 2) {};
			\node [style=Lead] (62) at (2.25, 3) {};
			\node [style=none] (63) at (7.5, 3.5) {};
			\node [style=none] (64) at (8.5, 3.5) {};
			\node [style=none] (65) at (9.5, 3.5) {};
			\node [style=none] (66) at (10.5, 3.5) {};
			\node [style=none] (67) at (11.5, 3.5) {};
			\node [style=none] (68) at (7.5, -2.5) {};
			\node [style=none] (69) at (8.5, -2.5) {};
			\node [style=none] (70) at (9.5, -2.5) {};
			\node [style=none] (71) at (10.5, -2.5) {};
			\node [style=none] (72) at (11.5, -2.5) {};
			\node [style=Empty node] (73) at (8.5, 3) {};
			\node [style=Empty node] (74) at (9.5, 3) {};
			\node [style=Empty node] (75) at (10.5, 3) {};
			\node [style=none] (77) at (10.25, 1) {};
			\node [style=none] (78) at (10.75, 1) {};
			\node [style=none] (79) at (10.25, -2) {};
			\node [style=none] (80) at (10.75, -2) {};
			\node [style=none] (81) at (11.75, 0) {};
			\node [style=none] (82) at (11.25, 0) {};
			\node [style=none] (83) at (10.75, 0) {};
			\node [style=none] (84) at (10.25, 0) {};
			\node [style=none] (85) at (9.75, -2) {};
			\node [style=none] (86) at (9.25, -2) {};
			\node [style=none] (87) at (8.75, 0) {};
			\node [style=none] (88) at (8.25, 0) {};
			\node [style=none] (89) at (9.75, 1) {};
			\node [style=none] (90) at (9.25, 1) {};
			\node [style=none] (91) at (9.75, -1) {};
			\node [style=none] (92) at (9.25, -1) {};
			\node [style=Empty node] (94) at (7.5, 1) {};
			\node [style=Empty node] (95) at (7.5, 0) {};
			\node [style=Empty node] (96) at (11.5, 1) {};
			\node [style=Empty node] (97) at (7.5, -2) {};
			\node [style=Empty node] (98) at (8.5, -1) {};
			\node [style=Empty node] (99) at (11.5, 2) {};
			\node [style=Empty node] (100) at (11.5, 3) {};
			\node [style=Empty node] (101) at (10.5, -1) {};
			\node [style=Lead] (102) at (11.5, -2) {};
			\node [style=Lead] (103) at (9.5, 0) {};
			\node [style=Lead] (104) at (8.5, 1) {};
			\node [style=none] (105) at (8.25, -2) {};
			\node [style=none] (106) at (8.75, -2) {};
			\node [style=none] (107) at (7.25, -1.25) {};
			\node [style=none] (108) at (7.75, -1.25) {};
			\node [style=Empty node] (109) at (10.5, 2) {};
			\node [style=Empty node] (110) at (9.5, 2) {};
			\node [style=Lead] (111) at (7.5, 2) {};
			\node [style=Lead] (112) at (7.5, 3) {};
			\node [style=none] (113) at (8.25, 2) {};
			\node [style=none] (114) at (8.75, 2) {};
			\node [style=none] (115) at (2.6, 2.325) {$\scriptstyle b_i$};
			\node [style=none] (116) at (3.25, 4.5) {$B$};
			\node [style=none] (117) at (9.5, 4.5) {$B^+$};
			\node [style=Lead] (118) at (5.25, -1) {};
			\node [style=Lead] (119) at (11.5, -1) {};
		\end{pgfonlayer}
		\begin{pgfonlayer}{edgelayer}
			\draw (0.center) to (5.center);
			\draw (1.center) to (6.center);
			\draw (3.center) to (8.center);
			\draw (4.center) to (9.center);
			\draw (23.center) to (24.center);
			\draw (25.center) to (26.center);
			\draw (28.center) to (27.center);
			\draw (30.center) to (29.center);
			\draw (32.center) to (31.center);
			\draw (34.center) to (33.center);
			\draw (36.center) to (35.center);
			\draw (38.center) to (37.center);
			\draw (2.center) to (7.center);
			\draw (55.center) to (56.center);
			\draw (57.center) to (58.center);
			\draw (63.center) to (68.center);
			\draw (64.center) to (69.center);
			\draw (66.center) to (71.center);
			\draw (67.center) to (72.center);
			\draw (77.center) to (78.center);
			\draw (79.center) to (80.center);
			\draw (82.center) to (81.center);
			\draw (84.center) to (83.center);
			\draw (86.center) to (85.center);
			\draw (88.center) to (87.center);
			\draw (90.center) to (89.center);
			\draw (92.center) to (91.center);
			\draw (65.center) to (70.center);
			\draw (105.center) to (106.center);
			\draw (107.center) to (108.center);
			\draw (113.center) to (114.center);
		\end{pgfonlayer}
	\end{tikzpicture}
	\caption{The change of the leading beads when the bead $b_i$ of $B$ is removed to obtain $B^+$ as done in the proof of \Cref{le:Ms is J}.}
	\label{fig:Bead Out}
\end{figure}
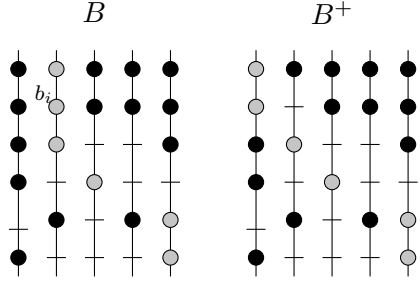

We define the \textit{Abacus Mullineux Algorithm} as follows. Given an $e$-regular partition $\lambda$, let $B$ be its $\beta$-set and let $S=B$ and $T=\varnothing$. We now repeat the following: if $S$ is a $\beta$-set of an empty partition, we replace $T$ by $T\cup S$, return it and terminate the procedure; otherwise, we add $\ms_e(S)$ to $T$ and replace $S$ by $\Ms_e(S)$. Examples of the algorithm are in \Cref{fig:MA} and \Cref{fig:SmallAbacusAlgorithm}.

\begin{figure}[h]
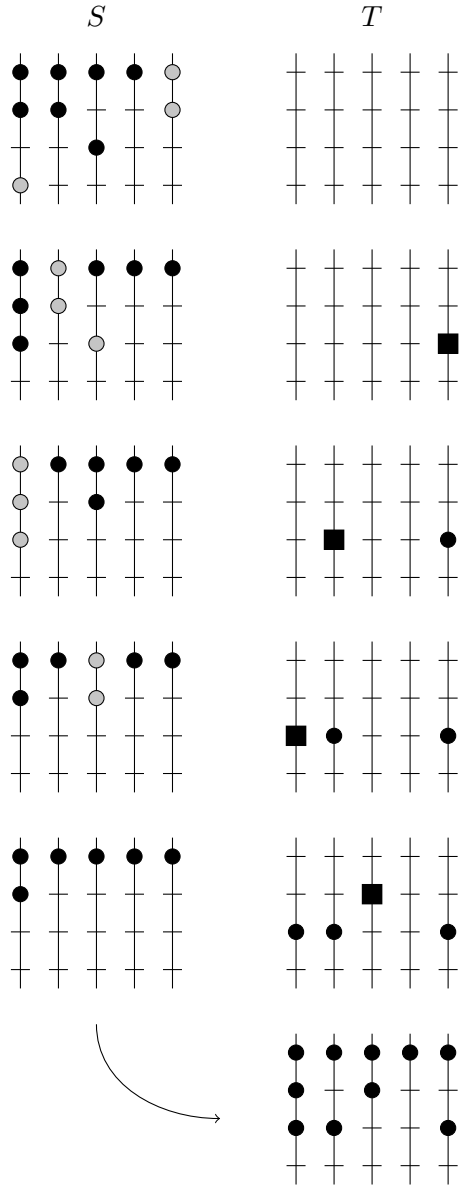

	\centering

	\caption{The Abacus Mullineux Algorithm applied with the canonical $\beta$-set of $\lambda=(6,4,2)$. The grey beads of the first four $\beta$-sets $S$ are their respective leading beads. The beads $\ms_e(S)$ added to $T$ are denoted by squares. On the second to last line $S$ is a $\beta$-set of the empty partition and so the algorithm terminates and returns set $T$ on the final line, which is the canonical $\beta$-set of $m_e(\lambda)' = (5,3^2,1)$.}
	\label{fig:MA}
\end{figure}

If $\lambda$ is $e$-regular and non-empty, then $J(\lambda)$ has smaller size than $\lambda$ and by \cite[Lemma~2]{XuMullineux97}, it is also $e$-regular. Hence there is a least $n\geq 0$ such that $J^n(\lambda)$ is the empty partition. One can then compute $m_e(\lambda)$ recursively using the following result due to Xu \cite[Theorem~1]{XuMullineux97} (reproved in \cite[Theorem~2]{XupSeries99}).

\begin{theorem}\label{th:xu}
	If $\lambda$ is an $e$-regular partition and $\mu=J(\lambda)$, then the first part of $m_e(\lambda)'$ is $|\lambda|-|\mu|$ and its remaining parts are the parts of $m_e(\mu)'$.
\end{theorem}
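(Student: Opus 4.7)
My plan is induction on $|\lambda|$, using Mullineux's original recursive definition of $m_e$ via Mullineux symbols. The base case $\lambda=\emptyset$ is trivial. For the inductive step, write $a$ for the size of the $e$-rim of $\lambda$, $r$ for the number of rows it occupies, and $t=\ell(\lambda)$. The key elementary observation is that the $e$-rim always reaches the bottom row of $\lambda$ (the construction jumps to the rightmost box of the next row after each chunk of $e$, and terminates only when no next row exists), so $r=t$ in the first step of the recursion.

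For the first part of $m_e(\lambda)'$, which equals $\ell(m_e(\lambda))$, I apply the same ``all rows reached'' observation to $m_e(\lambda)$: this first part equals the row count $r^{*}$ of the first $e$-rim of $m_e(\lambda)$. Mullineux's symmetry relation gives $r + r^{*} = a + \epsilon$, where $\epsilon = 1$ if $e\nmid a$ and $\epsilon = 0$ if $e\mid a$, so $\ell(m_e(\lambda)) = a - t + \epsilon$. A direct case analysis of the definition of $J$ then shows $|\lambda|-|\mu| = a - t + \epsilon$: when $e\mid a$, $J$ removes the $e$-rim and adds a box to each of the $t$ rows of $\lambda$, giving $a - t$; when $e\nmid a$, it adds a box only to the first $t-1$ rows, giving $a - t + 1$. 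Hence the first parts match.

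For the remaining parts, the content is that $m_e(\mu)$ equals $m_e(\lambda)$ with $1$ subtracted from each part (and zero parts discarded). I would establish this by comparing Mullineux symbols: the partition $\mu$ differs from the $e$-rim-stripped $\lambda^{\dagger}$ by the addition of boxes to the first $t$ (or $t-1$) rows, and I would track how this addition propagates through the subsequent steps of the recursion so that, on the output side, one obtains the prescribed uniform row-shrinking of $m_e(\lambda^{\dagger})$. The inductive hypothesis applied to $\lambda^{\dagger}$ (of strictly smaller size) provides the formal machinery to conclude.

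The main obstacle is the precise matching in this second step: the interplay between the added boxes of $J$ and the next $e$-rim removal in $\mu$ versus that in $\lambda^{\dagger}$ requires careful bookkeeping across two Mullineux recursions, and the row counts $r_i$ can shift by $1$ in ways that depend on divisibility of the $a_i$ by $e$. A cleaner alternative, suggested by the paper's own framework, is to bypass Mullineux-symbol bookkeeping entirely: since $J$ corresponds to $\Ms_e$ on $\beta$-sets by \Cref{le:Ms is J}, one could match Xu's recursion directly against an established abacus description of the Mullineux map (e.g.\ Brundan--Kujawa or Fayers), reducing the identity to a purely combinatorial statement about how $\ms_e(B)$ records the first part of the output.
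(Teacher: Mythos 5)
The paper does not prove this statement: it is imported from Xu's work (\cite[Theorem~1]{XuMullineux97}, reproved in \cite[Theorem~2]{XupSeries99}), so there is no internal proof to compare against and your attempt has to stand on its own. The first half of your argument is sound. Granting the standard facts about Mullineux symbols --- that the first $e$-rim of any partition meets every row, so the bottom-left entry of the symbol is the number of parts, and that the symbol of $m_e(\lambda)$ has bottom row $a_i - r_i + \epsilon_i$ --- your computation $\ell(m_e(\lambda)) = a - t + \epsilon = |\lambda|-|J(\lambda)|$ is correct in both divisibility cases.

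The second half is a genuine gap, and it is where essentially all the content of the theorem lives. The claim that $m_e(J(\lambda))$ equals $m_e(\lambda)$ with its first column deleted cannot be read off the Mullineux symbols in the way your sketch suggests, because the symbol of $J(\lambda)$ is \emph{not} obtained from that of $\lambda$ by deleting the first column: $J$ puts boxes back after stripping the $e$-rim, and these added boxes interact with the second $e$-rim of $\lambda$ so that the subsequent entries $(a_i,r_i)$ shift nontrivially, in a pattern depending on the divisibility of each $a_i$ by $e$. You acknowledge this yourself (``the main obstacle is the precise matching in this second step''), but an acknowledged obstacle is not an argument; controlling this interaction through the entire recursion is precisely the work Xu's proof carries out over several lemmas, and the inductive hypothesis applied to $\lambda^{\dagger}$ does not do it for you, since $\lambda^{\dagger}$ is not $J(\lambda)$. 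The proposed ``cleaner alternative'' --- matching $J$ against ``an established abacus description of the Mullineux map'' --- does not close the gap either: the statement being proved \emph{is} the assertion that Xu's recursion computes $m_e(\cdot)'$, so invoking another characterisation not proved here merely relocates the burden. The honest options are to cite the result, as the paper does, or to actually carry out the Mullineux-symbol bookkeeping.
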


We can use \Cref{th:xu} to obtain an inductive proof of our main result of this section.

\begin{proposition}\label{pr:Mullineux algorithm}
	The Abacus Mullineux Algorithm applied to an $e$-regular partition $\lambda$ terminates and returns a $\beta$-set of $m_e(\lambda)'$. Moreover, this $\beta$-set is canonical if we start with the canonical $\beta$-set of $\lambda$.
\end{proposition}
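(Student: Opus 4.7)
The plan is induction on $|\lambda|$. The base case $\lambda=\emptyset$ is immediate: $B$ is already a $\beta$-set of the empty partition, so the algorithm terminates on the first check and returns $T=B$, which is a $\beta$-set of $m_e(\emptyset)'=\emptyset$. For the inductive step, set $\mu:=J(\lambda)$; by \cite[Lemma~2]{XuMullineux97} this is $e$-regular and strictly smaller than $\lambda$. The first iteration of the algorithm records $\hat{b}:=\ms_e(B)$ in $T$ and replaces $S$ by $\Ms_e(B)$, which is a $\beta$-set of $\mu$ by \Cref{le:Ms is J}. Since the algorithm's future behaviour depends only on the current value of $S$, the remaining iterations agree with a run of the algorithm started afresh on $\Ms_e(B)$; by induction this terminates and produces a $\beta$-set $T_\mu$ of $m_e(\mu)'$. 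Thus the full algorithm terminates with output $T=\{\hat{b}\}\cup T_\mu$.

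\Cref{th:xu} asserts that $m_e(\lambda)'=(|\lambda|-|\mu|,\,m_e(\mu)'_1,m_e(\mu)'_2,\dots)$, so by the recovery formula \eqref{eq:recover partition} it is enough to prove two claims: (i) $\hat{b}$ lies strictly above every bead of $T_\mu$ (so that $\hat{b}$ is the greatest bead of $T$ and its removal returns $T_\mu$), and (ii) $\emp_T(\hat{b})=|\lambda|-|\mu|$ (so that the first part of the partition attached to $T$ is correct). The key tools are \Cref{le:shift}, which controls the effect of $\Ms_e$ on the number of non-negative beads, and the size formula \eqref{eq:size}. A complication is that \Cref{le:shift} carries the hypothesis that $\Ms_e(B)$ contain all non-positive integers, which need not hold throughout the algorithm. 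One bypasses this by invoking the shift equivariance of the Abacus Mullineux Algorithm: the output on $\{b+s:b\in B\}$ is the $s$-translate of the output on $B$. It therefore suffices to prove (i) and (ii) for $B$ of sufficiently large shift that every iterate $\Ms_e^i(B)$ contains all non-positive integers and every $\hat{b}$ encountered is non-negative; the general case then follows by reversing the shift.

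Under this shift assumption, claim (i) is handled by strengthening the induction to record that the greatest bead of $T_\mu$ is the first $\hat{b}$ produced by the sub-algorithm (when $\mu\ne\emptyset$), reducing matters to the monotonicity $\ms_e(B)>\ms_e(\Ms_e(B))$, which is read off from \Cref{le:shift}(i), \Cref{le:new bead} and the move description of \Cref{le:Me observation}(iv)--(v). Claim (ii) is obtained by using the same move description and \eqref{eq:size} to establish the identity $|\lambda|-|\mu|=\hat{b}-N_B+1$, where $N_B$ is the number of non-negative beads of $B$; a parallel induction gives $N_{T_\mu}=N_B-1$, and a direct count then yields $\emp_T(\hat{b})=\hat{b}-N_{T_\mu}=|\lambda|-|\mu|$. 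The \emph{moreover} assertion on canonicality follows from the same analysis applied uniformly in the shift: the output on $B_s(\lambda)$ is pinned down as $B_s(m_e(\lambda)')$, so specialising to $s=0$ identifies the output on $B_0(\lambda)$ with $B_0(m_e(\lambda)')$. The main obstacle throughout is the bookkeeping for the shifts and non-negative bead counts; once these are in place, (i) and (ii) reduce to short calculations.
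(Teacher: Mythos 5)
Your architecture is essentially the paper's: induction on $|\lambda|$ through $J$ and \Cref{le:Ms is J}, an appeal to \Cref{th:xu}, the size formula \eqref{eq:size}, and the shift/non-negative-bead bookkeeping supplied by \Cref{le:shift}. The step I cannot accept as written is your claim (i), specifically the assertion that the monotonicity $\ms_e(B)>\ms_e(\Ms_e(B))$ ``is read off'' from \Cref{le:shift}(i), \Cref{le:new bead} and \Cref{le:Me observation}(iv)--(v). Those lemmas describe a single application of $\Ms_e$ and say nothing about how the leading-bead descent of $\Ms_e(B)$ compares with that of $B$: the inserted beads $b_{i_j-1}-e$ can become leading beads of $\Ms_e(B)$, the new descent can pass through non-leading beads of $B$, and there is no termwise inequality between the two sequences of leading beads from which the claim would follow formally (the difference $\ms_e(B)-\ms_e(\Ms_e(B))$ can be as small as $1$, so coarse counts of leading beads above a threshold do not decide it). The paper itself never proves this monotonicity directly; it is recorded only afterwards, in \Cref{re:algorithm}(i), as a by-product of first identifying $\hat{b}$ with the bead $b>\max T_1$ whose existence Xu's theorem guarantees, via the telescoping computation with \eqref{eq:size}.

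The gap is repairable inside your own framework, and the repair explains why the paper fuses your (i) and (ii) into a single computation. Your identity $|\lambda|-|\mu|=\hat{b}-N_B+1$ is a statement about $B$ and $\Ms_e(B)$ alone, so it does not presuppose (i); together with $N_{T_\mu}=N_B-1$ and \eqref{eq:recover partition} it gives $\max T_\mu=\bigl(m_e(\mu)'\bigr)_1+N_{T_\mu}-1=|\mu|-|J(\mu)|+N_B-2$, while $\hat{b}=|\lambda|-|\mu|+N_B-1$. Since $m_e(\lambda)'$ is a partition whose first part is $|\lambda|-|\mu|$ and whose second part is $|\mu|-|J(\mu)|$ by \Cref{th:xu}, we get $|\lambda|-|\mu|\geq|\mu|-|J(\mu)|$ and hence $\hat{b}>\max T_\mu$. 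So derive (i) from (ii) together with \Cref{th:xu} (handling $\mu=\varnothing$ separately), rather than from the structural lemmas; as currently phrased, (i) is the one unsupported link in an otherwise sound argument.
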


\begin{proof}
	As $\lambda$ is $e$-regular, there is a least $n\geq 0$ such that $J^n(\lambda)$ is the empty partition. Thus, by \Cref{le:Ms is J}, after $n$ steps $S$ becomes a $\beta$-set of the empty partition and thus the algorithm terminates in the next step. Since $S$ is a $\beta$-set of some partition throughout the whole algorithm, without loss of generality we can start with a $\beta$-set $S_0 = B_s(\lambda)$ such that all sets $S$ throughout the process contain all non-positive integers.
	
	We now prove by induction on $|\lambda|$, that the algorithm terminates with a set $T_0$ (which clearly contains all non-positive integers) such that
	\begin{enumerate}[label=\textnormal{(\roman*)}]
		\item for any $0\leq r\leq e-1$ there are the same number of non-negative beads of $S_0$ and $T_0$ on runner $r$, and
		\item $T_0$ is a $\beta$-set of $m_e(\lambda)'$.
	\end{enumerate}
	 If $\lambda$ is the empty partition, this is clear. Otherwise $S_1=\Ms_e(S_0)$ is a $\beta$-set of a smaller partition $\mu=J(\lambda)$. Let $T_1$ be the outcome of the Abacus Mullineux Algorithm applied to $S_1$, so $T_0 = T_1 \cup \left\lbrace \hat{s}\right\rbrace$, where $\hat{s} = \ms_e(S_0)$. As the sets $S$ which arise during this algorithm are, apart from $S_0$, the same as when the algorithm is applied to $S_0$, they all contain all non-positive integers (and so does $T_1$), and thus the induction hypothesis applies to $S_1$, and so $T_1$ is a $\beta$-set of $m_e(\mu)'$. By \Cref{th:xu}, $m_e(\lambda)'$ equals $m_e(\mu)'$ with an extra first part (equal to $|\lambda|-|\mu|$), and thus there is $b> \max T_1\geq 0$ which can be added to $T_1$ to get a $\beta$-set of $m_e(\lambda)'$. Thus to prove (ii) we need to show that $\hat{s}=b$.
	 
	 Let $N$ be such that the $N$'th greatest element of $S_0$ is $0$. By \Cref{le:shift}(ii) and (iii) applied with $B=S_0$, $0$ is the $(N-1)$'st greatest bead of $S_1$ and thus, by the induction hypothesis, of $T_1$. Let $s_0,s_1,\dots$ be the leading beads of $S_0$ and write $j$ for the greatest index such that $s_j\geq 0$. Then, by \Cref{le:shift}(i), $j$ is at least the stable index of $S_0$ and $\hat{s} = s_j+ej$. Using the formula \eqref{eq:size} applied to $T_1, S_1, S_0$ and $T_1\cup \left\lbrace b\right\rbrace $ (all of which contain all non-positive integers) and the fact that $S_1=\left( S_0 \setminus \left\lbrace s_i : 0\leq i\leq j \right\rbrace\right)  \cup \left\lbrace s_i-e : 0\leq i< j \right\rbrace$ (see \Cref{le:Me observation}(i)), we compute
	 \begin{align*}
	 \hat{s}+ \sum_{0\leq t\in T_1}t - \binom{N-1}{2} &= \hat{s} + |m_e(\mu)'| = \hat{s} + |\mu|\\ &= \hat{s}+ \sum_{0\leq s\in S_1}s - \binom{N-1}{2}\\
	 &= \hat{s} + \sum_{0\leq s\in S_0}s - ej - s_j - \binom{N-1}{2} \\
	 &= \sum_{0\leq s\in S_0}s - \binom{N-1}{2} = |\lambda| + N -1 \\
	 &= |m_e(\lambda)'| + N -1= \sum_{0\leq t\in T_1\cup \left\lbrace b\right\rbrace }t - \binom{N-1}{2}\\
	 &= b + \sum_{0\leq t\in T_1}t - \binom{N-1}{2},
	 \end{align*}
	 and thus $\hat{s} = b$, which establishes (ii).
	 
	 By \Cref{le:shift}(ii) and (iii), $S_0$ and $S_1$ have the same number of non-negative beads on each runner apart from the runner of $\hat{s}$, where $S_0$ has a single extra non-negative bead. As $T_0=T_1 \cup \left\lbrace \hat{s} \right\rbrace $ and $\hat{s}=b >\max T_1\geq 0$, we deduce (i) immediately from the induction hypothesis. The first part of the proposition follows. 
	 
	 The combination of (i) and (ii) implies that $T_0 = B_s(m_e(\lambda)')$. Consequently, for any integer $s'$ if we start with $S=B_{s'}(\lambda)$, the algorithm returns $B_{s'}(m_e(\lambda)')$. Taking $s'=0$ shows the `moreover' part. 
\end{proof}

\begin{remark}\label{re:algorithm}
	There are three things to note.
	\begin{enumerate}[label=\textnormal{(\roman*)}]
		\item A similar (but much simpler) inductive argument on the size of $\lambda$, which uses the fact that $\hat{s} > \max T_1$ from the proof, shows that at every step, before we add $\ms_e(S)$ to $T$, we have $\ms_e(S)<\min T$ (or $T = \varnothing$). In particular, $\ms_e(S)\notin T$.
		\item A similar (but much simpler) inductive argument on the size of $\lambda$ also shows that in the final step the union $S\cup T$ is disjoint.
		\item One can replace $T$ by $T\cup S$ and terminate the process earlier, namely, once $S$ is a $\beta$-set of an $e$-core partition. This is because if $S$ is a $\beta$-set of an $e$-core partition, then its stable index is $0$ and the following step of the algorithm removes the greatest bead of $S$ and places it in $T$, transforming $S$ into a $\beta$-set of some other $e$-core partition, to which the same argument applies. From this observation we immediately see that $m_e$ maps $e$-core partitions to their conjugates, a fact which is not apparent from Mullineux's original algorithm.
	\end{enumerate}
	\end{remark}
	
\section{Balanced partitions}\label{se:balanced}

In the rest of the paper $d,e>1$ are two integers. Recall from \Cref{de:partitions} that a \dbal partition is a partition $\lambda$ such that the arm length of any $e$-divisible hook of $\lambda$ is congruent to $0$ modulo $d$. It is \dsbal if, instead, all such arm lengths are congruent to $-1$ modulo $d$. We can translate these notions to $\beta$-sets.

\begin{definition}\label{de:balanced}
	Let $B$ be a subset of $\Z$. The set $B$ is \textit{\dbal}if $d$ divides $\emp_B(b-ae+1, b)$ for any bead $b$ and any empty space $b-ae$ of $B$ (with $a>0$). It is \textit{\dsbal}if $d$ divides $\emp_B(b-ae, b)$ for any bead $b$ and any empty space $b-ae$ of $B$ (with $a>0$). 
\end{definition}

Using the earlier mentioned correspondence between $ae$-hooks of $\lambda$ and certain beads of its $\beta$-set, it is easy to see that $\lambda$ is \dbal (or $d$-shift balanced) if and only if its $\beta$-set is. An example of these definitions is presented in \Cref{fig:balanced}.

\begin{figure}[h]
	\centering
	\begin{tikzpicture}[x=0.5cm, y=0.5cm]
		\begin{pgfonlayer}{nodelayer}
			\node [style=none] (0) at (0, 3.75) {};
			\node [style=none] (1) at (1, 3.75) {};
			\node [style=none] (2) at (2, 3.75) {};
			\node [style=none] (3) at (3, 3.75) {};
			\node [style=none] (4) at (4, 3.75) {};
			\node [style=none] (5) at (0, -0.25) {};
			\node [style=none] (6) at (1, -0.25) {};
			\node [style=none] (7) at (2, -0.25) {};
			\node [style=none] (8) at (3, -0.25) {};
			\node [style=none] (9) at (4, -0.25) {};
			\node [style=Empty node] (10) at (0, 3.25) {};
			\node [style=Empty node] (11) at (1, 3.25) {};
			\node [style=Empty node] (12) at (2, 3.25) {};
			\node [style=Empty node] (13) at (3, 3.25) {};
			\node [style=Empty node] (15) at (0, 2.25) {};
			\node [style=Empty node] (16) at (1, 2.25) {};
			\node [style=Empty node] (18) at (2, 1.25) {};
			\node [style=none] (19) at (1.75, 2.25) {};
			\node [style=none] (20) at (2.25, 2.25) {};
			\node [style=none] (21) at (2.75, 2.25) {};
			\node [style=none] (22) at (3.25, 2.25) {};
			\node [style=none] (23) at (2.75, 1.25) {};
			\node [style=none] (24) at (3.25, 1.25) {};
			\node [style=none] (25) at (3.75, 1.25) {};
			\node [style=none] (26) at (4.25, 1.25) {};
			\node [style=none] (27) at (4.25, 0.25) {};
			\node [style=none] (28) at (3.75, 0.25) {};
			\node [style=none] (29) at (3.25, 0.25) {};
			\node [style=none] (30) at (2.75, 0.25) {};
			\node [style=none] (31) at (2.25, 0.25) {};
			\node [style=none] (32) at (1.75, 0.25) {};
			\node [style=none] (33) at (1.25, 0.25) {};
			\node [style=none] (34) at (0.75, 0.25) {};
			\node [style=none] (35) at (1.25, 1.25) {};
			\node [style=none] (36) at (0.75, 1.25) {};
			\node [style=none] (37) at (0.25, 1.25) {};
			\node [style=none] (38) at (-0.25, 1.25) {};
			\node [style=Empty node] (83) at (0, 0.25) {};
			\node [style=Empty node] (84) at (4, 3.25) {};
			\node [style=Empty node] (85) at (4, 2.25) {};
			\node [style=Empty node] (298) at (7, 3.25) {};
			\node [style=Empty node] (299) at (8, 1.25) {};
			\node [style=Empty node] (300) at (9, 3.25) {};
			\node [style=Empty node] (301) at (10, 3.25) {};
			\node [style=Empty node] (302) at (7, 2.25) {};
			\node [style=Empty node] (303) at (8, 3.25) {};
			\node [style=Empty node] (304) at (9, 2.25) {};
			\node [style=none] (325) at (7, 3.75) {};
			\node [style=none] (326) at (8, 3.75) {};
			\node [style=none] (327) at (9, 3.75) {};
			\node [style=none] (328) at (10, 3.75) {};
			\node [style=none] (329) at (11, 3.75) {};
			\node [style=none] (330) at (7, -0.25) {};
			\node [style=none] (331) at (8, -0.25) {};
			\node [style=none] (332) at (9, -0.25) {};
			\node [style=none] (333) at (10, -0.25) {};
			\node [style=none] (334) at (11, -0.25) {};
			\node [style=none] (337) at (9.75, 2.25) {};
			\node [style=none] (338) at (10.25, 2.25) {};
			\node [style=none] (339) at (9.75, 1.25) {};
			\node [style=none] (340) at (10.25, 1.25) {};
			\node [style=none] (341) at (11.25, 0.25) {};
			\node [style=none] (342) at (10.75, 0.25) {};
			\node [style=none] (343) at (10.25, 0.25) {};
			\node [style=none] (344) at (9.75, 0.25) {};
			\node [style=none] (345) at (9.25, 0.25) {};
			\node [style=none] (346) at (8.75, 0.25) {};
			\node [style=none] (347) at (8.25, 0.25) {};
			\node [style=none] (348) at (7.75, 0.25) {};
			\node [style=none] (351) at (10.75, 2.25) {};
			\node [style=none] (352) at (11.25, 2.25) {};
			\node [style=none] (353) at (6.75, 0.25) {};
			\node [style=none] (354) at (7.25, 0.25) {};
			\node [style=Empty node] (355) at (11, 1.25) {};
			\node [style=Empty node] (356) at (7, 1.25) {};
			\node [style=Empty node] (357) at (11, 3.25) {};
			\node [style=none] (358) at (7.75, 2.25) {};
			\node [style=none] (359) at (8.25, 2.25) {};
			\node [style=none] (376) at (9.25, 1.25) {};
			\node [style=none] (377) at (8.75, 1.25) {};
			\node [style=Empty node] (378) at (14, 3.25) {};
			\node [style=Empty node] (379) at (18, 2.25) {};
			\node [style=Empty node] (380) at (16, 3.25) {};
			\node [style=Empty node] (381) at (17, 3.25) {};
			\node [style=Empty node] (383) at (15, 3.25) {};
			\node [style=none] (385) at (14, 3.75) {};
			\node [style=none] (386) at (15, 3.75) {};
			\node [style=none] (387) at (16, 3.75) {};
			\node [style=none] (388) at (17, 3.75) {};
			\node [style=none] (389) at (18, 3.75) {};
			\node [style=none] (390) at (14, -0.25) {};
			\node [style=none] (391) at (15, -0.25) {};
			\node [style=none] (392) at (16, -0.25) {};
			\node [style=none] (393) at (17, -0.25) {};
			\node [style=none] (394) at (18, -0.25) {};
			\node [style=none] (395) at (16.75, 2.25) {};
			\node [style=none] (396) at (17.25, 2.25) {};
			\node [style=none] (397) at (16.75, 1.25) {};
			\node [style=none] (398) at (17.25, 1.25) {};
			\node [style=none] (399) at (18.25, 0.25) {};
			\node [style=none] (400) at (17.75, 0.25) {};
			\node [style=none] (401) at (17.25, 0.25) {};
			\node [style=none] (402) at (16.75, 0.25) {};
			\node [style=none] (403) at (16.25, 0.25) {};
			\node [style=none] (404) at (15.75, 0.25) {};
			\node [style=none] (405) at (15.25, 0.25) {};
			\node [style=none] (406) at (14.75, 0.25) {};
			\node [style=none] (407) at (13.75, 1.25) {};
			\node [style=none] (408) at (14.25, 1.25) {};
			\node [style=none] (409) at (13.75, 2.25) {};
			\node [style=none] (410) at (14.25, 2.25) {};
			\node [style=Empty node] (411) at (18, 1.25) {};
			\node [style=Empty node] (412) at (14, 0.25) {};
			\node [style=Empty node] (413) at (18, 3.25) {};
			\node [style=none] (414) at (14.75, 1.25) {};
			\node [style=none] (415) at (15.25, 1.25) {};
			\node [style=none] (416) at (16.25, 1.25) {};
			\node [style=none] (417) at (15.75, 1.25) {};
			\node [style=none] (418) at (-1, 10) {};
			\node [style=none] (419) at (5, 10) {};
			\node [style=none] (420) at (5, 9) {};
			\node [style=none] (421) at (3, 9) {};
			\node [style=none] (422) at (3, 8) {};
			\node [style=none] (423) at (1, 8) {};
			\node [style=none] (424) at (1, 7) {};
			\node [style=none] (425) at (-1, 7) {};
			\node [style=none] (426) at (0, 7) {};
			\node [style=none] (427) at (-1, 8) {};
			\node [style=none] (428) at (-1, 9) {};
			\node [style=none] (429) at (0, 10) {};
			\node [style=none] (430) at (1, 10) {};
			\node [style=none] (431) at (2, 10) {};
			\node [style=none] (432) at (2, 8) {};
			\node [style=none] (433) at (4, 10) {};
			\node [style=none] (434) at (4, 9) {};
			\node [style=none] (435) at (3, 10) {};
			\node [style=none] (436) at (6.5, 10) {};
			\node [style=none] (439) at (10.5, 9) {};
			\node [style=none] (440) at (6.5, 6) {};
			\node [style=none] (441) at (9.5, 7) {};
			\node [style=none] (442) at (8.5, 7) {};
			\node [style=none] (443) at (6.5, 7) {};
			\node [style=none] (444) at (7.5, 6) {};
			\node [style=none] (445) at (6.5, 8) {};
			\node [style=none] (446) at (6.5, 9) {};
			\node [style=none] (447) at (7.5, 10) {};
			\node [style=none] (448) at (8.5, 10) {};
			\node [style=none] (449) at (9.5, 10) {};
			\node [style=none] (450) at (9.5, 9) {};
			\node [style=none] (451) at (11.5, 10) {};
			\node [style=none] (452) at (11.5, 9) {};
			\node [style=none] (453) at (10.5, 10) {};
			\node [style=none] (454) at (13, 10) {};
			\node [style=none] (455) at (19, 10) {};
			\node [style=none] (456) at (19, 9) {};
			\node [style=none] (457) at (19, 8) {};
			\node [style=none] (458) at (17, 8) {};
			\node [style=none] (463) at (13, 8) {};
			\node [style=none] (464) at (13, 9) {};
			\node [style=none] (465) at (14, 10) {};
			\node [style=none] (466) at (15, 10) {};
			\node [style=none] (467) at (16, 10) {};
			\node [style=none] (469) at (18, 10) {};
			\node [style=none] (470) at (18, 8) {};
			\node [style=none] (471) at (17, 10) {};
			\node [style=none] (472) at (9.5, 8) {};
			\node [style=none] (474) at (7.5, 7) {};
			\node [style=Empty node] (475) at (15, 2.25) {};
			\node [style=Empty node] (476) at (16, 2.25) {};
			\node [style=none] (477) at (15, 7) {};
			\node [style=none] (478) at (14, 6) {};
			\node [style=none] (479) at (14, 5) {};
			\node [style=none] (480) at (13, 5) {};
			\node [style=none] (481) at (13, 6) {};
			\node [style=none] (482) at (13, 7) {};
			\node [style=none] (483) at (16, 8) {};
			\node [style=none] (484) at (-0.5, 7.5) {};
			\node [style=none] (485) at (0.5, 7.5) {};
			\node [style=none] (486) at (0.5, 8.25) {};
			\node [style=none] (487) at (2.5, 8.25) {};
			\node [style=none] (488) at (1.5, 8.75) {};
			\node [style=none] (489) at (2.5, 8.75) {};
			\node [style=none] (490) at (2.5, 9.5) {};
			\node [style=none] (491) at (4.5, 9.5) {};
			\node [style=none] (492) at (7, 6.5) {};
			\node [style=none] (493) at (7, 7.25) {};
			\node [style=none] (494) at (9.25, 7.25) {};
			\node [style=none] (495) at (9.25, 8.5) {};
			\node [style=none] (496) at (8.75, 7.75) {};
			\node [style=none] (497) at (8.75, 9.5) {};
			\node [style=none] (498) at (11, 9.5) {};
			\node [style=none] (499) at (18.25, 9.5) {};
			\node [style=none] (500) at (18.25, 8.75) {};
			\node [style=none] (501) at (15.5, 8.75) {};
			\node [style=none] (502) at (18.75, 9.5) {};
			\node [style=none] (503) at (18.75, 8.25) {};
			\node [style=none] (504) at (14.5, 8.25) {};
			\node [style=none] (505) at (14.5, 7.5) {};
			\node [style=none] (506) at (13.5, 7.5) {};
			\node [style=none] (507) at (13.5, 5.5) {};
			\node [style=none] (508) at (-0.35, 0.5) {$\scriptstyle a$};
			\node [style=none] (509) at (1.65, 1.5) {$\scriptstyle \tilde{a}$};
			\node [style=none] (510) at (10.65, 1.5) {$\scriptstyle b$};
			\node [style=none] (511) at (7.625, 1.5) {$\scriptstyle \tilde{b}$};
			\node [style=none] (512) at (13.65, 0.5) {$\scriptstyle c$};
			\node [style=none] (513) at (2, 4.5) {$A$};
			\node [style=none] (514) at (9, 4.5) {$B$};
			\node [style=none] (515) at (16, 4.5) {$C$};
		\end{pgfonlayer}
		\begin{pgfonlayer}{edgelayer}
			\draw (0.center) to (5.center);
			\draw (1.center) to (6.center);
			\draw (3.center) to (8.center);
			\draw (4.center) to (9.center);
			\draw (19.center) to (20.center);
			\draw (21.center) to (22.center);
			\draw (23.center) to (24.center);
			\draw (25.center) to (26.center);
			\draw (28.center) to (27.center);
			\draw (30.center) to (29.center);
			\draw (32.center) to (31.center);
			\draw (34.center) to (33.center);
			\draw (36.center) to (35.center);
			\draw (38.center) to (37.center);
			\draw (2.center) to (7.center);
			\draw (325.center) to (330.center);
			\draw (326.center) to (331.center);
			\draw (328.center) to (333.center);
			\draw (329.center) to (334.center);
			\draw (337.center) to (338.center);
			\draw (339.center) to (340.center);
			\draw (342.center) to (341.center);
			\draw (344.center) to (343.center);
			\draw (346.center) to (345.center);
			\draw (348.center) to (347.center);
			\draw (327.center) to (332.center);
			\draw (351.center) to (352.center);
			\draw (353.center) to (354.center);
			\draw (358.center) to (359.center);
			\draw (377.center) to (376.center);
			\draw (385.center) to (390.center);
			\draw (386.center) to (391.center);
			\draw (388.center) to (393.center);
			\draw (389.center) to (394.center);
			\draw (395.center) to (396.center);
			\draw (397.center) to (398.center);
			\draw (400.center) to (399.center);
			\draw (402.center) to (401.center);
			\draw (404.center) to (403.center);
			\draw (406.center) to (405.center);
			\draw (387.center) to (392.center);
			\draw (407.center) to (408.center);
			\draw (409.center) to (410.center);
			\draw (414.center) to (415.center);
			\draw (417.center) to (416.center);
			\draw (418.center) to (419.center);
			\draw (428.center) to (420.center);
			\draw (427.center) to (422.center);
			\draw (425.center) to (424.center);
			\draw (418.center) to (425.center);
			\draw (429.center) to (426.center);
			\draw (430.center) to (424.center);
			\draw (431.center) to (432.center);
			\draw (435.center) to (422.center);
			\draw (433.center) to (434.center);
			\draw (419.center) to (420.center);
			\draw (436.center) to (451.center);
			\draw (446.center) to (452.center);
			\draw (445.center) to (472.center);
			\draw (443.center) to (441.center);
			\draw (440.center) to (444.center);
			\draw (436.center) to (440.center);
			\draw (447.center) to (444.center);
			\draw (448.center) to (442.center);
			\draw (449.center) to (441.center);
			\draw (453.center) to (439.center);
			\draw (451.center) to (452.center);
			\draw (454.center) to (455.center);
			\draw (464.center) to (456.center);
			\draw (463.center) to (457.center);
			\draw (469.center) to (470.center);
			\draw (455.center) to (457.center);
			\draw (482.center) to (477.center);
			\draw (481.center) to (478.center);
			\draw (480.center) to (479.center);
			\draw (480.center) to (454.center);
			\draw (465.center) to (479.center);
			\draw (466.center) to (477.center);
			\draw (467.center) to (483.center);
			\draw (471.center) to (458.center);
			\draw [style=Extra box] (484.center) to (485.center);
			\draw [style=Extra box] (485.center) to (486.center);
			\draw [style=Extra box] (486.center) to (487.center);
			\draw [style=Extra box] (488.center) to (489.center);
			\draw [style=Extra box] (489.center) to (490.center);
			\draw [style=Extra box] (490.center) to (491.center);
			\draw [style=Extra box] (492.center) to (493.center);
			\draw [style=Extra box] (493.center) to (494.center);
			\draw [style=Extra box] (494.center) to (495.center);
			\draw [style=Extra box] (496.center) to (497.center);
			\draw [style=Extra box] (497.center) to (498.center);
			\draw [style=Extra box] (499.center) to (500.center);
			\draw [style=Extra box] (500.center) to (501.center);
			\draw [style=Extra box] (502.center) to (503.center);
			\draw [style=Extra box] (503.center) to (504.center);
			\draw [style=Extra box] (504.center) to (505.center);
			\draw [style=Extra box] (505.center) to (506.center);
			\draw [style=Extra box] (506.center) to (507.center);
		\end{pgfonlayer}
	\end{tikzpicture}
	\caption{Let $e=5$ and $d=3$. For the above partitions, their $e$-divisible hooks are marked by dashed lines and their canonical $\beta$-sets are displayed below each of them. The first partition and its canonical $\beta$-set are \dbal --- the underlying arm lengths are $3$, or alternatively, we have $\emp_A(a-4,a) = \emp_A(\tilde{a}-4,\tilde{a}) = 3$. The second partition is \dsbal --- the two relevant arm lengths are $2$, or alternatively, we have $\emp_B(b-5,b) = \emp_B(\tilde{b}-5,\tilde{b}) = 3$. The final partition is neither \dbal nor \dsbal --- see the $10$- and the $5$-hook, respectively. Alternatively, one computes $\emp_C(c-9,c) = 5$ and $\emp_C(c-5,c) = 4$.}
	\label{fig:balanced}
\end{figure}
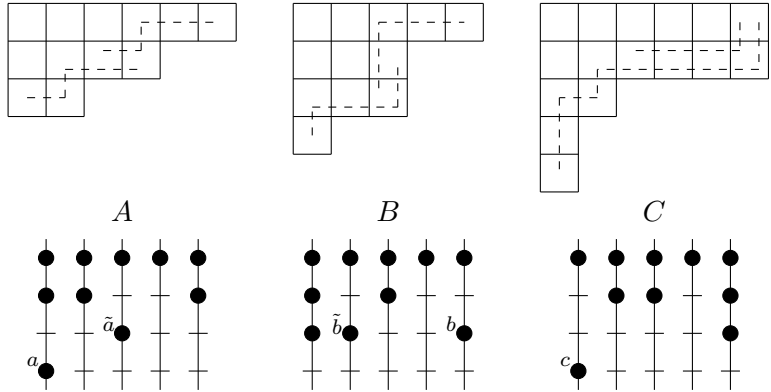

We now collect several lemmas to prove the main result of this section --- the map $\Ms_e$ preserves the property of being $d$-balanced. We start with some extra basic properties of leading beads.  

\begin{lemma}\label{le:auxiliary}
	Let $B$ be a $\beta$-set of a partition and let $0 \leq i < j$ be integers.
	\begin{enumerate}[label=\textnormal{(\roman*)}]
		\item If $b_i - b_j = (j-i)e$, then for any integer $k$ such that $i\leq k \leq j$ we have $b_i - b_k = (k-i)e$ and $b_k - b_j = (j-k)e$.
		\item If $b_j$ does not lie in $\Ms_e(B)$, then $b_i - b_j > (j-i)e$.
	\end{enumerate}
\end{lemma}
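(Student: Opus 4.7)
The plan is to deduce both parts directly from the basic monotonicity of the sequence $(b_i + ie)_{i \geq 0}$ recorded in \Cref{le:new bead}(i), combined with \Cref{le:Me observation}(ii) for part~(ii).

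For (i), I would apply \Cref{le:new bead}(i) to the two pairs $(i,k)$ and $(k,j)$ to obtain
\[
b_i - b_k \geq (k-i)e \quad \text{and} \quad b_k - b_j \geq (j-k)e.
\]
Adding these two inequalities gives $b_i - b_j \geq (j-i)e$, which by hypothesis is an equality. Hence both summand inequalities must individually be equalities, yielding the claim.

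For (ii), first note that $0 \leq i < j$ forces $j \geq 1$, so by \Cref{le:Me observation}(ii) the assumption $b_j \notin \Ms_e(B)$ means $b_{j-1} \neq b_j + e$. Combined with the leading-bead inequality $b_{j-1} \geq b_j + e$ from \eqref{eq:leading}, this upgrades to $b_{j-1} - b_j > e$. Suppose for contradiction that $b_i - b_j = (j-i)e$. Then part~(i), applied with $k = j-1$, would force $b_{j-1} - b_j = e$, contradicting the strict inequality we just established. Therefore $b_i - b_j \neq (j-i)e$, and since \Cref{le:new bead}(i) already ensures $b_i - b_j \geq (j-i)e$, we conclude $b_i - b_j > (j-i)e$.

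No real obstacle is expected here; both parts are essentially bookkeeping with the sequence $(b_i + ie)_{i\geq 0}$, and the only subtlety is recognizing that $b_j \notin \Ms_e(B)$ (with $j \geq 1$) is equivalent to a strict gap $b_{j-1} - b_j > e$ via \Cref{le:Me observation}(ii).
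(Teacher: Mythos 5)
Your proof is correct and follows essentially the same route as the paper: part (i) is the equality case of the monotonicity in Lemma \ref{le:new bead}(i), and part (ii) rests on the observation that $b_j\notin\Ms_e(B)$ with $j>0$ forces the strict gap $b_{j-1}-b_j>e$ via Lemma \ref{le:Me observation}(ii). The only cosmetic difference is that the paper derives (ii) directly by chaining $b_j<b_{j-1}-e\leq b_i-(j-i)e$, whereas you argue by contradiction through part (i); both are valid.
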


\begin{proof}
	By \Cref{le:new bead}(i), $b_j+je \leq b_k +ke \leq b_i + ie$ in (i). By our assumption the left-hand side and the right-hand side equal and thus $b_j+je =b_k +ke = b_i + ie$, proving (i). For (ii), by \Cref{le:Me observation}(ii) and the assumption that $j>0$, $b_j\neq b_{j-1}-e$ and so $b_j < b_{j-1}-e$. In turn, $b_j < b_i-(j-i)e$ by \Cref{le:new bead}(i), which rearranges to the desired inequality.
\end{proof}

The next lemma uses two observations. Firstly, if $b'\leq b$ are beads of a \dbal $\beta$-set $B$ lying on the same runner and there is an empty space $f<b'$ also on the same runner, then $\bd_B(b'+1,b)$ is congruent to $b-b'$ modulo $d$ as it is the difference of $\bd_B(f+1,b)$ and $\bd_B(f+1,b')$ which are congruent to $b-f$ and $b'-f$ modulo $d$, respectively; See \Cref{fig:beadcount} with $b=b_i$ and $b'=b_{i+1}$. If $b=b'$, this agrees with our convention that $\bd_B(v+1,v)=0$ for any integer $v$.

Secondly, for integers $u\leq v$ if \textit{there is no bead of $B$ between $u$ and $v$}, which means that no $b\in B$ satisfies $u\leq b\leq v$ then for any $w\geq v$ we have $\bd_B(u,w) = \bd_B(v+1,w)$, and for any $w\leq u$ we have $\bd_B(w,u-1) = \bd_B(w,v)$. Note that the definition of `there is no bead of $B$ between $u$ and $v$' extends to $u>v$ and trivially holds for such $u$ and $v$. However, with this extended definition we can deduce $\bd_B(u,w) = \bd_B(v+1,w)$ and $\bd_B(w,u-1) = \bd_B(w,v)$ (for appropriate $w$) only if $u\leq v+1$. An example of $u\leq v+1$ such that there is no bead of $B$ between $u$ and $v$ are $u=b_{i+1}+1$ and $v=b_i-e$ for any integer $i\geq 0$.

\begin{lemma}\label{le:bead count}
	Let $B$ be a \dbal $\beta$-set with stable index $z$. If $0\leq i\leq j\leq z$ are integers, then $\bd_B(b_j+1,b_i)$ is congruent to $(j-i)e$ modulo $d$.
\end{lemma}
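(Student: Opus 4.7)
My plan is to telescope and reduce to the case $j=i+1$. Since $\bd_B$ adds over consecutive intervals,
\[
\bd_B(b_j+1,b_i)=\sum_{k=i}^{j-1}\bd_B(b_{k+1}+1,b_k),
\]
so it is enough to establish $\bd_B(b_{k+1}+1,b_k)\equiv e\pmod d$ for every $0\leq k<z$, and the result for general $i\leq j$ follows by summing. The point of telescoping is that consecutive leading beads sit close together on the abacus, so each single-step quantity is directly accessible via $d$-balancedness applied on one runner.

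The second ingredient is the decomposition $0=i_0<i_1<\dots<i_t=z$ of \Cref{le:Me observation}(iv), together with the formula $b_i=b_{i_s}-(i-i_s)e$ for $i_s\leq i<i_{s+1}$ in part (v). This shows that within the block $i_s\leq i<i_{s+1}$ all the leading beads share a common runner $r_s$ and are spaced by exactly $e$, while at the end of the block $b_{i_{s+1}}<b_{i_{s+1}-1}-e$ strictly. In particular, $f_s:=b_{i_{s+1}-1}-e=b_{i_s}-(i_{s+1}-i_s)e$ is an empty space on runner $r_s$ (the strict inequality together with the maximality in the definition of leading beads rules out its being a bead). Having this explicit empty space below every bead of the $s$-th block is what will let me apply the $d$-balanced hypothesis.

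Now fix $k$ with $i_s\leq k<i_{s+1}$ and split into two cases. If $k+1<i_{s+1}$, then $b_k$ and $b_{k+1}=b_k-e$ both lie on runner $r_s$, and I apply $d$-balancedness to the bead/empty-space pairs $(b_k,f_s)$ and $(b_{k+1},f_s)$ to get
\[
\bd_B(f_s+1,b_k)\equiv(i_{s+1}-k)e,\qquad \bd_B(f_s+1,b_{k+1})\equiv(i_{s+1}-k-1)e\pmod d;
\]
subtraction gives $\bd_B(b_{k+1}+1,b_k)\equiv e\pmod d$. If instead $k+1=i_{s+1}$, then $b_{k+1}<b_k-e$ strictly, so by the definition of leading beads there is no bead in $(b_{k+1},b_k-e]$ and $b_k-e$ itself is an empty space. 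Hence $\bd_B(b_{k+1}+1,b_k)=\bd_B(b_k-e+1,b_k)\equiv e\pmod d$ directly from $d$-balancedness applied to the bead $b_k$ and the empty space $b_k-e$.

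The only real subtlety is the first case, where one must transfer $d$-balancedness information between two different beads on the same runner by using a common empty space; the second case is immediate from the definitions. Once both cases are in hand, the telescoping sum yields $\bd_B(b_j+1,b_i)\equiv(j-i)e\pmod d$, completing the argument.
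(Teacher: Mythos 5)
Your proof is correct and takes essentially the same route as the paper's: both reduce to consecutive leading beads and anchor the $d$-balancedness condition at the empty space $b_{i_{s+1}-1}-e$ sitting at the bottom of a block of $e$-spaced leading beads, splitting into the same two cases (the paper locates this empty space by taking the least $k\geq i+1$ with $b_k\neq b_i-(k-i)e$ rather than by explicitly invoking the block decomposition of \Cref{le:Me observation}(iv)--(v), but the argument is the same).
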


\begin{proof}
	The beads and empty spaces used in the argument are lustrated in \Cref{fig:beadcount}. The result is clear if $i=j$. It suffices to assume that $j=i+1$. As $b_z$ does not lie in $\Ms_e(B)$ by \Cref{le:Me observation}(iii), by \Cref{le:auxiliary}(ii), $b_z\neq b_i - (z-i)e$. Hence we can pick the least $k\geq i+1$ such that $b_k\neq b_i - (k-i)e$, which implies that $b_k+1\leq b_i - (k-i)e$ from \Cref{le:new bead}(i). By the minimal choice of $k$ we have $b_{k-1}-e = b_i-(k-i)e$ (even if $k=i+1$), and thus there is no bead between $b_k+1$ and $b_i - (k-i)e$. In particular, $b_i - (k-i)e$ is an empty space of $B$.
	If $k>i+1$, then $b_{i+1}=b_i-e$ and we can use that $B$ is $d$-balanced to compute
	\begin{align*}
	\bd_B(b_{i+1}+1,b_i) &= \bd_B(b_i-(k-i)e+1, b_i) - \bd_B(b_i-(k-i)e+1, b_i - e)\\
	&\equiv (k-i)e - (k-i-1)e\\
	&\equiv e\, (\textnormal{mod } d), 
	\end{align*}
	as required. If $k=i+1$, since there are no beads between $b_{i+1}+1$ and $b_i - e$, we compute $\bd_B(b_{i+1}+1, b_i) = \bd_B(b_i-e+1, b_i)$ which is indeed congruent to $e$ modulo $d$ as $B$ is a \dbal set. 
\end{proof}

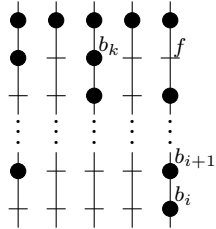
\begin{figure}[h]
	\centering
	\begin{tikzpicture}[x=0.5cm, y=0.5cm]
		\begin{pgfonlayer}{nodelayer}
			\node [style=none] (0) at (1.25, 3.5) {};
			\node [style=none] (1) at (2.25, 3.5) {};
			\node [style=none] (2) at (3.25, 3.5) {};
			\node [style=none] (3) at (4.25, 3.5) {};
			\node [style=none] (4) at (5.25, 3.5) {};
			\node [style=none] (5) at (1.25, 0.5) {};
			\node [style=none] (6) at (2.25, 0.5) {};
			\node [style=none] (7) at (3.25, 0.5) {};
			\node [style=none] (8) at (4.25, 0.5) {};
			\node [style=none] (9) at (5.25, 0.5) {};
			\node [style=Empty node] (10) at (1.25, 3) {};
			\node [style=Empty node] (11) at (2.25, 3) {};
			\node [style=Empty node] (12) at (3.25, 3) {};
			\node [style=Empty node] (13) at (4.25, 3) {};
			\node [style=Empty node] (15) at (1.25, 2) {};
			\node [style=Empty node] (16) at (3.25, 2) {};
			\node [style=Empty node] (18) at (5.25, 3) {};
			\node [style=none] (19) at (2, 2) {};
			\node [style=none] (20) at (2.5, 2) {};
			\node [style=none] (21) at (4, 2) {};
			\node [style=none] (22) at (4.5, 2) {};
			\node [style=none] (23) at (4, 1) {};
			\node [style=none] (24) at (4.5, 1) {};
			\node [style=none] (25) at (5, 2) {};
			\node [style=none] (26) at (5.5, 2) {};
			\node [style=none] (27) at (4.5, -1) {};
			\node [style=none] (28) at (4, -1) {};
			\node [style=none] (29) at (4.5, -2) {};
			\node [style=none] (30) at (4, -2) {};
			\node [style=none] (31) at (3.5, -2) {};
			\node [style=none] (32) at (3, -2) {};
			\node [style=none] (33) at (3.5, -1) {};
			\node [style=none] (34) at (3, -1) {};
			\node [style=none] (35) at (2.5, 1) {};
			\node [style=none] (36) at (2, 1) {};
			\node [style=none] (37) at (1.5, 1) {};
			\node [style=none] (38) at (1, 1) {};
			\node [style=Empty node] (39) at (5.25, 1) {};
			\node [style=Empty node] (40) at (3.25, 1) {};
			\node [style=none] (41) at (1.25, 0.25) {$\vdots$};
			\node [style=none] (42) at (2.25, 0.25) {$\vdots$};
			\node [style=none] (43) at (3.25, 0.25) {$\vdots$};
			\node [style=none] (44) at (4.25, 0.25) {$\vdots$};
			\node [style=none] (45) at (5.25, 0.25) {$\vdots$};
			\node [style=none] (46) at (1.25, -0.5) {};
			\node [style=none] (47) at (1.25, -2.5) {};
			\node [style=none] (48) at (2.25, -2.5) {};
			\node [style=none] (49) at (2.25, -0.5) {};
			\node [style=none] (50) at (3.25, -0.5) {};
			\node [style=none] (51) at (3.25, -2.5) {};
			\node [style=none] (52) at (4.25, -0.5) {};
			\node [style=none] (53) at (4.25, -2.5) {};
			\node [style=none] (54) at (5.25, -0.5) {};
			\node [style=none] (55) at (5.25, -2.5) {};
			\node [style=Empty node] (56) at (5.25, -1) {};
			\node [style=Empty node] (57) at (5.25, -2) {};
			\node [style=Empty node] (58) at (1.25, -1) {};
			\node [style=none] (59) at (2, -1) {};
			\node [style=none] (60) at (2.5, -1) {};
			\node [style=none] (61) at (2, -2) {};
			\node [style=none] (62) at (2.5, -2) {};
			\node [style=none] (63) at (1, -2) {};
			\node [style=none] (64) at (1.5, -2) {};
			\node [style=none] (65) at (5.6, -1.65) {$\scriptstyle b_i$};
			\node [style=none] (66) at (5.925, -0.65) {$\scriptstyle b_{i+1}$};
			\node [style=none] (68) at (3.65, 2.325) {$\scriptstyle b_k$};
			\node [style=none] (69) at (5.5, 2.275) {$\scriptstyle f$};
		\end{pgfonlayer}
		\begin{pgfonlayer}{edgelayer}
			\draw (0.center) to (5.center);
			\draw (1.center) to (6.center);
			\draw (3.center) to (8.center);
			\draw (4.center) to (9.center);
			\draw (19.center) to (20.center);
			\draw (21.center) to (22.center);
			\draw (23.center) to (24.center);
			\draw (25.center) to (26.center);
			\draw (28.center) to (27.center);
			\draw (30.center) to (29.center);
			\draw (32.center) to (31.center);
			\draw (34.center) to (33.center);
			\draw (36.center) to (35.center);
			\draw (38.center) to (37.center);
			\draw (2.center) to (7.center);
			\draw (46.center) to (47.center);
			\draw (49.center) to (48.center);
			\draw (50.center) to (51.center);
			\draw (52.center) to (53.center);
			\draw (54.center) to (55.center);
			\draw (59.center) to (60.center);
			\draw (61.center) to (62.center);
			\draw (63.center) to (64.center);
		\end{pgfonlayer}
	\end{tikzpicture}
	\caption{The proof of \Cref{le:bead count} with $f=b_i - (k-i)e$.}
	\label{fig:beadcount}
\end{figure}

A different way of using that there is no bead of $B$ between $u$ and $v$ with $u\leq v$ is the \textit{sandwich argument} --- we show that for some $w$ we have $u\leq w\leq v$ and thus there is no bead between $u$ and $w$ (and between $w$ and $v$), and particular $w$ is an empty space. One can then take advantage of this information.
The sandwich argument is used in the following lemma. We have already seen that two of the bounds in this lemma hold for any $\beta$-set $B$; the assumption that $B$ is \dbal lets us establish the remaining claims. The statements of parts (ii) and (iii) and the beads and the empty spaces from the proof of (iii) are depicted in \Cref{fig:wallI}.

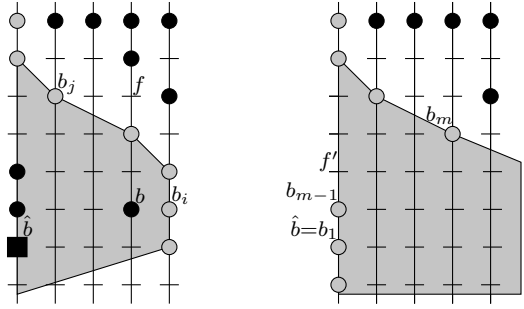
\begin{figure}[h]
	\centering
	\begin{tikzpicture}[x=0.5cm, y=0.5cm]
		\begin{pgfonlayer}{nodelayer}
			\node [style=none] (0) at (1.25, 3.5) {};
			\node [style=none] (4) at (5.25, 3.5) {};
			\node [style=none] (5) at (1.25, -4.5) {};
			\node [style=none] (6) at (2.25, -4.5) {};
			\node [style=none] (7) at (3.25, -4.5) {};
			\node [style=none] (8) at (4.25, -4.5) {};
			\node [style=none] (9) at (5.25, -4.5) {};
			\node [style=Empty node] (11) at (2.25, 3) {};
			\node [style=Empty node] (12) at (3.25, 3) {};
			\node [style=Empty node] (13) at (4.25, 3) {};
			\node [style=Lead] (15) at (1.25, 2) {};
			\node [style=Empty node] (16) at (4.25, 2) {};
			\node [style=Empty node] (18) at (5.25, 3) {};
			\node [style=none] (19) at (2, 2) {};
			\node [style=none] (20) at (2.5, 2) {};
			\node [style=none] (21) at (3, 2) {};
			\node [style=none] (22) at (3.5, 2) {};
			\node [style=none] (23) at (4, 1) {};
			\node [style=none] (24) at (4.5, 1) {};
			\node [style=none] (25) at (5, 2) {};
			\node [style=none] (26) at (5.5, 2) {};
			\node [style=none] (35) at (3.5, 1) {};
			\node [style=none] (38) at (1, 1) {};
			\node [style=Empty node] (39) at (5.25, 1) {};
			\node [style=Lead] (40) at (2.25, 1) {};
			\node [style=Lead] (46) at (4.25, 0) {};
			\node [style=Lead] (47) at (5.25, -1) {};
			\node [style=Lead] (49) at (5.25, -3) {};
			\node [style=Empty node] (51) at (1.25, -1) {};
			\node [style=Empty node] (52) at (1.25, -2) {};
			\node [style=none] (53) at (1.25, -4.25) {};
			\node [style=Empty node] (54) at (4.25, -2) {};
			\node [style=none] (55) at (1, 0) {};
			\node [style=none] (56) at (1.5, 0) {};
			\node [style=none] (57) at (2, 0) {};
			\node [style=none] (58) at (2.5, 0) {};
			\node [style=none] (59) at (3, 0) {};
			\node [style=none] (60) at (3.5, 0) {};
			\node [style=none] (61) at (5, 0) {};
			\node [style=none] (62) at (5.5, 0) {};
			\node [style=none] (63) at (4, -1) {};
			\node [style=none] (64) at (4.5, -1) {};
			\node [style=none] (65) at (3.5, -1) {};
			\node [style=none] (66) at (3, -1) {};
			\node [style=none] (67) at (2.5, -1) {};
			\node [style=none] (68) at (2, -1) {};
			\node [style=none] (69) at (2.5, -2) {};
			\node [style=none] (70) at (2, -2) {};
			\node [style=none] (71) at (3.5, -2) {};
			\node [style=none] (72) at (3, -2) {};
			\node [style=none] (73) at (1, -4) {};
			\node [style=none] (74) at (1.5, -4) {};
			\node [style=none] (75) at (2, -3) {};
			\node [style=none] (76) at (2.5, -3) {};
			\node [style=none] (77) at (3, -3) {};
			\node [style=none] (78) at (3.5, -3) {};
			\node [style=none] (79) at (4, -3) {};
			\node [style=none] (80) at (4.5, -3) {};
			\node [style=none] (81) at (5, -4) {};
			\node [style=none] (82) at (5.5, -4) {};
			\node [style=none] (83) at (4, -4) {};
			\node [style=none] (84) at (4.5, -4) {};
			\node [style=none] (85) at (3.5, -4) {};
			\node [style=none] (86) at (3, -4) {};
			\node [style=none] (87) at (2.5, -4) {};
			\node [style=none] (88) at (2, -4) {};
			\node [style=none] (89) at (2.25, 3.5) {};
			\node [style=none] (90) at (3.25, 3.5) {};
			\node [style=none] (91) at (4.25, 3.5) {};
			\node [style=none] (92) at (1.5, 1) {};
			\node [style=none] (93) at (3, 1) {};
			\node [style=none] (94) at (5.55, -1.625) {$\scriptstyle b_i$};
			\node [style=none] (95) at (4.45, -1.575) {$\scriptstyle b$};
			\node [style=none] (96) at (1.525, -2.425) {$\scriptstyle \hat{b}$};
			\node [style=none] (97) at (4.45, 1.3) {$\scriptstyle f$};
			\node [style=none] (98) at (2.575, 1.325) {$\scriptstyle b_j$};
			\node [style=none] (99) at (9.7, 3.5) {};
			\node [style=none] (100) at (13.7, 3.5) {};
			\node [style=none] (101) at (9.7, -4.5) {};
			\node [style=none] (102) at (10.7, -4.5) {};
			\node [style=none] (103) at (11.7, -4.5) {};
			\node [style=none] (104) at (12.7, -4.5) {};
			\node [style=none] (105) at (13.7, -4.5) {};
			\node [style=Empty node] (107) at (10.7, 3) {};
			\node [style=Empty node] (108) at (11.7, 3) {};
			\node [style=Empty node] (109) at (12.7, 3) {};
			\node [style=Lead] (111) at (9.7, 2) {};
			\node [style=Empty node] (112) at (13.7, 3) {};
			\node [style=none] (113) at (10.45, 2) {};
			\node [style=none] (114) at (10.95, 2) {};
			\node [style=none] (115) at (11.45, 2) {};
			\node [style=none] (116) at (11.95, 2) {};
			\node [style=none] (117) at (12.45, 1) {};
			\node [style=none] (118) at (12.95, 1) {};
			\node [style=none] (119) at (13.45, 2) {};
			\node [style=none] (120) at (13.95, 2) {};
			\node [style=none] (121) at (11.95, 1) {};
			\node [style=none] (122) at (9.45, 1) {};
			\node [style=Empty node] (123) at (13.7, 1) {};
			\node [style=Lead] (127) at (10.7, 1) {};
			\node [style=none] (130) at (9.7, -4.25) {};
			\node [style=Lead] (132) at (12.7, 0) {};
			\node [style=none] (133) at (9.45, 0) {};
			\node [style=none] (134) at (9.95, 0) {};
			\node [style=none] (135) at (10.45, 0) {};
			\node [style=none] (136) at (10.95, 0) {};
			\node [style=none] (137) at (11.45, 0) {};
			\node [style=none] (138) at (11.95, 0) {};
			\node [style=none] (139) at (13.45, 0) {};
			\node [style=none] (140) at (13.95, 0) {};
			\node [style=none] (141) at (12.45, -1) {};
			\node [style=none] (142) at (12.95, -1) {};
			\node [style=none] (143) at (11.95, -1) {};
			\node [style=none] (144) at (11.45, -1) {};
			\node [style=none] (145) at (10.95, -1) {};
			\node [style=none] (146) at (10.45, -1) {};
			\node [style=none] (147) at (10.95, -2) {};
			\node [style=none] (148) at (10.45, -2) {};
			\node [style=none] (149) at (11.95, -2) {};
			\node [style=none] (150) at (11.45, -2) {};
			\node [style=none] (151) at (9.45, -1) {};
			\node [style=none] (152) at (9.95, -1) {};
			\node [style=none] (153) at (10.45, -3) {};
			\node [style=none] (154) at (10.95, -3) {};
			\node [style=none] (155) at (11.45, -3) {};
			\node [style=none] (156) at (11.95, -3) {};
			\node [style=none] (157) at (12.45, -3) {};
			\node [style=none] (158) at (12.95, -3) {};
			\node [style=none] (159) at (13.45, -4) {};
			\node [style=none] (160) at (13.95, -4) {};
			\node [style=none] (161) at (12.45, -4) {};
			\node [style=none] (162) at (12.95, -4) {};
			\node [style=none] (163) at (11.95, -4) {};
			\node [style=none] (164) at (11.45, -4) {};
			\node [style=none] (165) at (10.95, -4) {};
			\node [style=none] (166) at (10.45, -4) {};
			\node [style=none] (167) at (10.7, 3.5) {};
			\node [style=none] (168) at (11.7, 3.5) {};
			\node [style=none] (169) at (12.7, 3.5) {};
			\node [style=none] (170) at (9.95, 1) {};
			\node [style=none] (171) at (11.45, 1) {};
			\node [style=none] (174) at (9.05, -2.475) {$\scriptstyle \hat{b}=b_1$};
			\node [style=none] (178) at (14.5, -4.25) {};
			\node [style=none] (179) at (13.5, -1) {};
			\node [style=none] (180) at (14, -1) {};
			\node [style=none] (181) at (13.5, -2) {};
			\node [style=none] (182) at (14, -2) {};
			\node [style=none] (183) at (13.5, -3) {};
			\node [style=none] (184) at (14, -3) {};
			\node [style=none] (185) at (13, -2) {};
			\node [style=none] (186) at (12.5, -2) {};
			\node [style=none] (187) at (12.5, 2) {};
			\node [style=none] (188) at (13, 2) {};
			\node [style=none] (189) at (14.5, -0.75) {};
			\node [style=none] (191) at (12.325, 0.525) {$\scriptstyle b_m$};
			\node [style=none] (192) at (8.975, -1.55) {$\scriptstyle b_{m-1}$};
			\node [style=square] (193) at (1.25, -3) {};
			\node [style=Lead] (195) at (1.25, 3) {};
			\node [style=Lead] (196) at (9.7, 3) {};
			\node [style=Lead] (197) at (9.7, -3) {};
			\node [style=Lead] (198) at (9.7, -2) {};
			\node [style=Lead] (199) at (9.7, -4) {};
			\node [style=Lead] (200) at (5.25, -2) {};
			\node [style=none] (201) at (9.4, -0.725) {$\scriptstyle f'$};
		\end{pgfonlayer}
		\begin{pgfonlayer}{edgelayer}
			\draw (0.center) to (5.center);
			\draw (4.center) to (9.center);
			\draw (19.center) to (20.center);
			\draw (21.center) to (22.center);
			\draw (23.center) to (24.center);
			\draw (25.center) to (26.center);
			\draw [style=Light grey column] (53.center)
			to (15.center)
			to (40.center)
			to (46.center)
			to (47.center)
			to (49.center)
			to cycle;
			\draw (55.center) to (56.center);
			\draw (57.center) to (58.center);
			\draw (59.center) to (60.center);
			\draw (61.center) to (62.center);
			\draw (63.center) to (64.center);
			\draw (66.center) to (65.center);
			\draw (68.center) to (67.center);
			\draw (70.center) to (69.center);
			\draw (72.center) to (71.center);
			\draw (73.center) to (74.center);
			\draw (75.center) to (76.center);
			\draw (77.center) to (78.center);
			\draw (79.center) to (80.center);
			\draw (88.center) to (87.center);
			\draw (86.center) to (85.center);
			\draw (83.center) to (84.center);
			\draw (81.center) to (82.center);
			\draw (89.center) to (6.center);
			\draw (90.center) to (7.center);
			\draw (91.center) to (8.center);
			\draw (93.center) to (35.center);
			\draw (38.center) to (92.center);
			\draw (99.center) to (101.center);
			\draw (100.center) to (105.center);
			\draw (113.center) to (114.center);
			\draw (115.center) to (116.center);
			\draw (117.center) to (118.center);
			\draw (119.center) to (120.center);
			\draw (133.center) to (134.center);
			\draw (135.center) to (136.center);
			\draw (137.center) to (138.center);
			\draw (139.center) to (140.center);
			\draw (141.center) to (142.center);
			\draw (144.center) to (143.center);
			\draw (146.center) to (145.center);
			\draw (148.center) to (147.center);
			\draw (150.center) to (149.center);
			\draw (151.center) to (152.center);
			\draw (153.center) to (154.center);
			\draw (155.center) to (156.center);
			\draw (157.center) to (158.center);
			\draw (166.center) to (165.center);
			\draw (164.center) to (163.center);
			\draw (161.center) to (162.center);
			\draw (159.center) to (160.center);
			\draw [in=90, out=-90] (167.center) to (102.center);
			\draw (168.center) to (103.center);
			\draw (169.center) to (104.center);
			\draw (171.center) to (121.center);
			\draw (122.center) to (170.center);
			\draw [style=Light grey column] (111.center)
			to (127.center)
			to (132.center)
			to (189.center)
			to (178.center)
			to (130.center)
			to cycle;
			\draw (135.center) to (136.center);
			\draw (122.center) to (170.center);
			\draw (133.center) to (134.center);
			\draw (151.center) to (152.center);
			\draw (146.center) to (145.center);
			\draw (137.center) to (138.center);
			\draw (144.center) to (143.center);
			\draw (141.center) to (142.center);
			\draw (179.center) to (180.center);
			\draw (181.center) to (182.center);
			\draw (183.center) to (184.center);
			\draw (159.center) to (160.center);
			\draw (161.center) to (162.center);
			\draw (164.center) to (163.center);
			\draw (166.center) to (165.center);
			\draw (153.center) to (154.center);
			\draw (148.center) to (147.center);
			\draw (150.center) to (149.center);
			\draw (186.center) to (185.center);
			\draw (157.center) to (158.center);
			\draw (187.center) to (188.center);
			\draw (167.center) to (102.center);
			\draw (103.center) to (168.center);
			\draw (169.center) to (104.center);
			\draw (100.center) to (105.center);
			\draw (155.center) to (156.center);
		\end{pgfonlayer}
	\end{tikzpicture}
	\caption{In the picture the leading beads are drawn grey and $\hat{b}$ is drawn as a box in the left diagram. The labels in the left diagram come from the proof of \Cref{le:wall I}(iii) (with $f = b - (j-i)e$) and the labels in the right diagram come from the proof of \Cref{le:overflow} (with $f' = b_{m-1} - e$). If the bead $\hat{b}$ is placed on the runner $0$, then \Cref{le:wall I}(ii) shows that there is a leading bead in each row (less than or equal to the row of $b_0$) with a possible single exception (as in the second diagram). In the latter case, we are in the setting of \Cref{le:overflow}. \Cref{le:wall I}(iii) then states there cannot be beads inside the highlighted area - the area between runner $0$ and the leading beads (so the bead $b$ in the left diagram is not allowed).}
	\label{fig:wallI}
\end{figure}

\begin{lemma}\label{le:wall I}
	Let $B$ be a \dbal $\beta$-set of a partition. For any non-negative integer $i$ the following holds.
	\begin{enumerate}[label=\textnormal{(\roman*)}]
		\item $(j-i)e\leq b_i - b_j \leq (j-i+1)e$ for any $j\geq i$.
		\item $b_i + (i-1)e \leq \hat{b} \leq b_i + ie$.
		\item There is no bead between $\hat{b} -ie +1$ and $b_i-1$.
	\end{enumerate} 
\end{lemma}

\begin{proof}
	We start with (iii); the setup of its proof is in \Cref{fig:wallI}. Let us suppose that (iii) fails to hold and let $b$ be the greatest bead such that $\hat{b} -ie +1\leq b\leq b_i-1$. Now let $j$ be the least non-negative integer such that $b_j < b - (j-i)e$ --- such $j$ exists (and is at most $z$, the stable index of $B$) as $b_z = \hat{b} -ze < b - (z-i)e$. Moreover, $j>i$ (and in particular $j>0$) since for any $k\leq i$ we have $b\leq b_i-1\leq b_k - (i-k)e - 1$ by \Cref{le:new bead}(i).
	
	By the minimality of $j$ and the fact that it is positive, $b_j < b-(j-i)e \leq b_{j-1} + (j-i-1)e - (j-i)e = b_{j-1}-e$ and, using the sandwich argument, there is no bead between $b_j+1$ and $b-(j-i)e$. We compute using \Cref{le:bead count} and the choice of $b$, which forces that there is no bead of $B$ between $b+1$ and $b_i-1$ (see also \Cref{fig:wallI}) that
	\begin{align*}
		\emp_B(b-(j-i)e + 1,b) &= (j-i)e - \bd_B(b-(j-i)e + 1,b)\\
		&=  (j-i)e - \bd_B(b_j + 1 ,b)\\
		&=  (j-i)e - \bd_B(b_j + 1 ,b_i) + 1\\
		&\equiv 1\, (\textnormal{mod } d),
	\end{align*}
	a contradiction to $B$ being a \dbal set. This proves (iii).
	
	The left inequality in (i) is just \Cref{le:new bead}(i). Suppose that the other inequality fails to hold for $B$ and pick minimal $j \geq i$ such that $b_i - b_j > (j-i+1)e$. Clearly $j>i$. By this choice
	\[
	b_j < b_i - (j-i+1)e \leq b_{j-1} + (j-i)e - (j-i+1)e = b_{j-1} - e.
	\]
	Consequently, $j\leq z$, the stable index of $B$, and by the sandwich argument there is no bead between $b_j+1$ and $b_i - (j-i+1)e$. As $B$ is $d$-balanced, $\bd_B(b_j+1, b_i) = \bd_B(b_i - (j-i+1)e+1, b_i)$ is congruent to $(j-i+1)e$ modulo $d$. We conclude that $d\mid e$ from \Cref{le:bead count}.
	
	On the other hand, by (iii), there is no bead between $\hat{b} -ie +1$ and $b_i-1$, and by \Cref{le:new bead}(ii) and our assumption, $\hat{b} -ie +1\leq b_j + (j-i)e+1\leq b_i-e$; thus $b_i-e$ is an empty space of $B$ and $\emp_B(b_i-e+1, b_i)=e-1$. Since $B$ is $d$-balanced, $d$ also divides $e-1$, a contradiction. 
	
	Finally, the right inequality of (ii) is \Cref{le:new bead}(ii). To show the other inequality, pick an integer $j \geq z$, the stable index of $B$ which is greater than or equal to $i$. By this choice of $j$ and (i), $\hat{b} = b_j + je \geq b_i + (i-1)e$, as required.
\end{proof}

We immediately conclude the following.

\begin{corollary}\label{cor:wall}
	Let $B$ be a \dbal $\beta$-set of a partition.
	\begin{enumerate}[label=\textnormal{(\roman*)}]
		\item For any positive integer $i$ we have $\hat{b} \geq b_i$.
		\item $\hat{b}\geq \max\Ms_e(B)$.
	\end{enumerate}	
\end{corollary}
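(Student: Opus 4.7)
The plan is to deduce both parts directly from \Cref{le:wall I}, without any further bead-counting. The key observations are that \Cref{le:wall I} gives the two-sided sandwich $b_i+(i-1)e\leq \hat{b}\leq b_i+ie$ for every non-negative integer $i$, together with the `moreover' clause controlling where beads can lie near $b_0$.

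For part (i), simply take the index $i\geq 1$ given in the statement and apply the left inequality of \Cref{le:wall I}: since $(i-1)e\geq 0$, we get $\hat{b}\geq b_i+(i-1)e\geq b_i$.

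For part (ii), I will split according to the two types of elements in $\Ms_e(B)=(B\setminus\{b_i:i\geq 0\})\cup\{b_i-e:i\geq 0\}$. First, the shifted beads: for $i\geq 1$ part (i) immediately gives $b_i-e<b_i\leq \hat{b}$, while the case $i=0$ is exactly the inequality $b_0-e=b_0+(0-1)e\leq \hat{b}$ coming from \Cref{le:wall I} with $i=0$. Second, the non-leading beads $b\in B\setminus\{b_i:i\geq 0\}$: since $b_0$ is the greatest bead of $B$, any such $b$ satisfies $b\leq b_0-1$, and the `moreover' part of \Cref{le:wall I} with $i=0$ says there is no bead of $B$ in the interval $[\hat{b}+1,b_0-1]$, so $b\leq \hat{b}$. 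Combining these bounds yields $\max \Ms_e(B)\leq \hat{b}$.

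There is no real obstacle here: both claims are packaging consequences of the preceding lemma. The only point to be careful about is that part (i) is stated for $i\geq 1$ (so it does not cover the shift $b_0-e$), which is precisely why for (ii) we must handle the $i=0$ case separately by invoking \Cref{le:wall I} directly rather than part (i).
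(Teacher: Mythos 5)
Your proof is correct and follows essentially the same route as the paper: both parts are read off from \Cref{le:wall I}, with part (i) coming from the left inequality (you instantiate at the given $i$, the paper at $i=1$ and then uses monotonicity of the leading beads) and part (ii) from the `moreover' clause at $i=0$ together with $b_0-e\leq\hat{b}$. Your explicit case split on the two types of elements of $\Ms_e(B)$ is just a slightly more verbose packaging of the paper's observation that the only bead of $B$ exceeding $\hat{b}$ can be $b_0$.
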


\begin{proof}
	By \Cref{le:wall I}(ii) applied to a non-negative integer $1$, we get $\hat{b} \geq b_1 \geq b_i$, proving (i). By \Cref{le:wall I}(iii) applied with $i=0$, the only bead of $B$ greater than $\hat{b}$ can be $b_0$. Thus the only bead of $\Ms_e(B)$ greater than $\hat{b}$ can be $b_0-e$. But $b_0-e\leq \hat{b}$ by \Cref{le:wall I}(ii) applied with $i=0$, establishing (ii).
\end{proof}

While \Cref{le:wall I}(i) allows the option $b_i - b_j = (j-i+1)e$, the following observation shows that if such equality occurs, we are in a special setting, as in the second diagram of \Cref{fig:wallI} (in particular, there are $e$ consecutive empty spaces of $B$ of the form $b_{m-1}-e, b_{m-1}-e+1,\dots ,b_{m-1}-1$ as shown in the proof of \Cref{le:overflow} which forces $d\mid e-1$).

\begin{lemma}\label{le:overflow}
	Let $B$ be a \dbal $\beta$-set of a partition, $z$ be its stable index and $i<j$ be non-negative integers such that $b_i - b_j = (j-i+1)e$. Then
	\begin{enumerate}[label=\textnormal{(\roman*)}]
		\item $j\geq z$;
		\item for all $k\leq i$, $\hat{b} = b_k + (k-1)e$;
		\item $d\mid e-1$.
	\end{enumerate}
\end{lemma}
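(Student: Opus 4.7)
My plan is to establish (i), (ii), (iii) in that order, exploiting the fact that the overflow $b_i - b_j = (j-i+1)e$ is extremal in the two-sided bound $(j-i)e \leq b_i - b_j \leq (j-i+1)e$ from \Cref{le:new bead}(i) and \Cref{le:leads}. For (i), I would show that this extremality forces $b_k = b_j - (k-j)e$ for every $k \geq j$: \Cref{le:new bead}(i) yields $b_k \leq b_j - (k-j)e$, and \Cref{le:leads} applied to the pair $(i,k)$ together with $b_i = b_j + (j-i+1)e$ yields the matching lower bound. Consequently $b_k = b_{k-1} - e$ for every $k > j$, so \Cref{le:Me observation}(ii) places each such $b_k$ inside $\Ms_e(B)$. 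Since $b_z \notin \Ms_e(B)$ by \Cref{le:Me observation}(iii), this forces $z \leq j$.

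For (ii), since $z \leq j$ and $b_k + ke = \hat{b}$ for every $k \geq z$ (by the definition of the stable index), I obtain $\hat{b} = b_j + je = b_i + (i-1)e$ after substituting $b_j = b_i - (j-i+1)e$. For any $k \leq i$, \Cref{le:new bead}(i) then gives $b_k \geq b_i + (i-k)e = \hat{b} - (k-1)e$, while \Cref{le:wall I} gives the reverse bound $b_k \leq \hat{b} - (k-1)e$. Squeezing yields $\hat{b} = b_k + (k-1)e$.

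For (iii), specialising (ii) to $k = 0$ gives $\hat{b} = b_0 - e$; combined with $\hat{b} = b_z + ze$, this rules out $z = 0$, so $z \geq 1$. Next, the `moreover' part of \Cref{le:wall I} applied with its index $i$ equal to $0$ shows that no bead of $B$ lies in $[\hat{b}+1, b_0 - 1] = [b_0 - e + 1, b_0 - 1]$, while the definition of $b_1$ as the greatest bead $\leq b_0 - e$ shows no bead lies in $(b_1, b_0 - e]$. Thus $\bd_B(b_1 + 1, b_0) = 1$, the only bead in the interval being $b_0$. On the other hand, \Cref{le:bead count} applied to $(0,1)$ (valid since $z \geq 1$) gives $\bd_B(b_1 + 1, b_0) \equiv e \pmod d$. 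Comparing, $1 \equiv e \pmod d$, i.e., $d \mid e - 1$.

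The main conceptual step is the reduction in (iii): one must recognise that the narrow interval $(b_1, b_0]$ simultaneously contains exactly one bead (forced by (ii) and the `moreover' clause of \Cref{le:wall I}) and has bead-count congruent to $e$ modulo $d$ (forced by \Cref{le:bead count}). Once this interplay is identified, parts (i) and (ii) amount to squeezing the leading beads between the structural bounds from \Cref{le:new bead}(i), \Cref{le:leads} and \Cref{le:wall I}.
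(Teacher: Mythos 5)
Your proof is correct. Parts (i) and (ii) are in substance the same squeeze as the paper's: the paper gets (i) by sandwiching $\hat{b}$ between the two bounds of \Cref{le:wall I} applied at $i$ and at $j$ and then invoking \Cref{le:new bead}(ii), whereas you force $b_k=b_j-(k-j)e$ for all $k\geq j$ via \Cref{le:new bead}(i) and \Cref{le:leads} and then use \Cref{le:Me observation}(ii),(iii); both are valid and of the same length, and your (ii) is verbatim the paper's argument. Part (iii) is where you genuinely diverge. The paper introduces the least index $m$ at which the identity $\hat{b}=b_m+(m-1)e$ fails, uses the `moreover' clause of \Cref{le:wall I} to exhibit $e-1$ consecutive empty spaces in $(b_{m-1}-e,\,b_{m-1})$ together with the empty space $b_{m-1}-e$, and then applies the \dbal condition directly to the pair $(b_{m-1},\,b_{m-1}-e)$ to get $d\mid e-1$. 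You instead stay at the top of the abacus: from (ii) at $k=0$ you get $\hat{b}=b_0-e$ (hence $z\geq 1$), observe via the `moreover' clause at $i=0$ and the definition of $b_1$ that $\bd_B(b_1+1,b_0)=1$, and compare with the congruence $\bd_B(b_1+1,b_0)\equiv e\pmod d$ from \Cref{le:bead count}. Your route avoids the auxiliary index $m$ and is uniform in whether $b_1=\hat{b}$ or not, at the cost of outsourcing the $d$-balanced bookkeeping to \Cref{le:bead count} (which is available, being proved earlier, so there is no circularity). Both arguments are equally short; yours is arguably slightly cleaner since it needs no case analysis on where the chain of equalities first breaks.
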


\begin{proof}
	By \Cref{le:wall I}(ii), $\hat{b} \leq b_j + je = b_i +(i-1)e \leq \hat{b}$ which forces equalities throughout. By \Cref{le:new bead}(ii), we get $j\geq z$, proving (i). Parts (i) and (ii) of \Cref{le:wall I} show that for any $k\leq i$ we have $\hat{b}\geq b_k +(k-1)e \geq b_i + (i-1)e = \hat{b}$. Hence we again need equalities throughout, showing (ii).
	
	For (iii) the important beads and empty spaces are drawn in the second diagram of \Cref{fig:wallI}. Let $m$ be the least integer such that $\hat{b} \neq b_m + (m-1)e$ (from earlier $j\geq m > i$). By \Cref{le:wall I}(iii), there is no bead between $\hat{b} - (m-1)e+1=b_{m-1}-e+1$ and $b_{m-1} - 1$. Thus $\emp_B(b_{m-1}-e+1, b_{m-1}) = e-1$. Moreover, $b_{m-1}-e$ is an empty space as $b_m\neq \hat{b}-(m-1)e = b_{m-1}-e$, and hence $d$ divides $\emp_B(b_{m-1}-e+1, b_{m-1}) = e-1$ as $B$ is a \dbal set. 
\end{proof}

The main result of the section follows. In the proof we use our convention that $b_0, b_1, \dots$ denote the leading beads of a $\beta$-set called $B$. We also frequently compare $\emp_B(u,v)$ and $\emp_{\Ms_e(B)}(u,v)$ for some $u+e\leq v$. To do so, we say that a leading bead $b_i$ \textit{moves out of the interval} if $u\leq b_i$ and $b_i-e<u$, that is $u\leq b_i<u+e$. On the other hand, we say that it \textit{moves into the interval} if $v< b_i$ and $b_i-e\leq v$, that is $v<b_i\leq v+e$.

It is clear that $\emp_{\Ms_e(B)}(u,v)$ is obtained from $\emp_B(u,v)$ by adding one for each leading bead which moves out of the interval and subtracting one for each one which moves into the interval. Moreover, from \eqref{eq:leading}, there is at most one leading bead which moves out of the interval and at most one which moves into the interval. 

In the proof we also use the \textit{bounding argument}: if there is no bead of $B$ between $u$ and $v$ and $b\leq v$ is a bead of $B$, then $b\leq u-1$. If moreover $u-1\leq b\leq v$, then $b=u-1$.

\begin{proposition}\label{pr:balanced}
	Let $B$ be a \dbal $\beta$-set of a partition. Then $\Ms_e(B)$ is also a \dbal set.
\end{proposition}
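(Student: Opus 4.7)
The strategy is to fix a bead $b' \in M := \Ms_e(B)$ together with a positive integer $a$ such that $f' := b' - ae$ is an empty space of $M$, and show $d \mid \emp_M(f'+1, b')$. The starting point is \Cref{le:Me observation}(iv): $M = (B \setminus R) \sqcup A$ where $R = \{b_{i_0}, \ldots, b_{i_t}\}$ and $A = \{b_{i_j-1} - e : 1 \le j \le t\}$. Setting $r_k := b_{i_k}$ and $a_k := b_{i_{k+1}-1} - e = r_k - (i_{k+1} - i_k) e$, each pair $(r_k, a_k)$ with $k < t$ lies on a single $e$-runner at positions differing by a positive multiple of $e$, and $R \cup A$ interleaves as $r_0 > a_0 > r_1 > \cdots > a_{t-1} > r_t$ (with $r_t = b_z$ unpaired). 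This gives the identity
\[
\emp_M(f'+1, b') = \emp_B(f'+1, b') + |R \cap \{f'+1, \ldots, b'\}| - |A \cap \{f'+1, \ldots, b'\}|,
\]
so it suffices to control the last two terms modulo $d$.

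I would split into four cases according to whether $b' \in B \setminus R$ or $b' \in A$, and whether $f' \notin B$ or $f' \in R$. In the generic case ($b' \in B \setminus R$, $f' \notin B$), the pair $(b', f')$ is already a bead / empty-space pair of $B$, so $d \mid \emp_B(f'+1, b')$ by hypothesis, and the task reduces to proving that the correction $|R \cap \{f'+1,\ldots,b'\}| - |A \cap \{f'+1,\ldots,b'\}|$ is a multiple of $d$. In each of the other three cases I would translate the offending endpoint to a nearby bead / empty-space of $B$ on the same runner: if $f' = r_j \in R$, slide down to the empty space $a_j$ of $B$ (using that $r_j, r_j - e, \ldots, a_j + e$ are all beads of $B$ in the $j$-th run and contribute no new empties); if $b' = a_j \in A$, slide up to the bead $b_{i_j - 1} = b' + e$ of $B$, using the moreover part of \Cref{le:wall I} to certify that no bead of $B$ lies strictly between $b'$ and $b_{i_j-1}$, so the translation inserts exactly $e-1$ empty spaces and lengthens the interval by $e$. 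This reduces each such case to the generic one on a longer interval, modulo a divisibility check.

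The heart of the argument is the correction claim in the generic case. I would examine each pair $(r_k, a_k)$ individually: if both or neither of $r_k, a_k$ lies in $\{f'+1, \ldots, b'\}$ the contribution is zero. A ``one-sided straddle'' (exactly one of $r_k, a_k$ in the interval) would place $f'$ or $b'$ within $\{a_k, a_k+1, \ldots, r_k - 1\}$, whose structure is pinned down by the moreover part of \Cref{le:wall I} and by the bead-count congruence $\bd_B(b_{j+1}+1, b_j) \equiv e \pmod{d}$ from \Cref{le:bead count}; these force the straddle either to be forbidden outright or to be cancelled by an opposite straddle for another pair on the same runner. The unpaired element $r_t = b_z$ is handled via $\hat b = b_z + ze$: if $b_z \in \{f'+1,\ldots,b'\}$, one is in the overflow setting of \Cref{le:overflow}, so $d \mid e - 1$ and the extra contribution is absorbed by a compensating adjustment of the translated interval length.

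The main obstacle will be the bookkeeping when the interval simultaneously straddles several pairs $(r_k, a_k)$ near the top of the leading-bead sequence, where a long run of leading beads can threaten to create an off-by-one imbalance in the signed straddle count. Here \Cref{le:wall I} and \Cref{le:overflow} must be deployed together: the former confines leading beads to a narrow window just above $\hat b$, forcing multi-pair straddles to have very restricted shapes, and the latter handles the borderline case $d \mid e - 1$ where the congruence $e \equiv 1 \pmod{d}$ automatically absorbs any residual discrepancy into a multiple of $d$.
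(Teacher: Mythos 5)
Your overall architecture is the paper's own: the paper also fixes a bead $b$ of $\Ms_e(B)$ and an empty space $b-ae$, splits into four cases according to whether $b$ arises from the added set $\left\lbrace b_{i_j-1}-e\right\rbrace$ and whether $b-ae$ is a (removed leading) bead of $B$, and controls the discrepancy $\emp_{\Ms_e(B)}-\emp_B$ on the interval by counting leading beads entering and leaving it, using exactly \Cref{le:bead count}, \Cref{le:wall I} and \Cref{le:overflow}. Your signed count $|R\cap I|-|A\cap I|$ is the same quantity as the paper's ``moves into/out of the interval'' bookkeeping, so the generic case and the ``heart of the argument'' paragraph, while vague, point at the right tools.

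However, two of your intermediate claims are concretely false as stated. First, in the case $b'=b_{i_j-1}-e\in A$ you assert that no bead of $B$ lies strictly between $b'$ and $b_{i_j-1}=b'+e$, so that sliding up inserts exactly $e-1$ empty spaces. Take $e=5$, $d=3$ and $B=\left\lbrace 5,2,-1,-4,-5,\dots\right\rbrace$ (the canonical $\beta$-set of $(6,4,2)$, which is $3$-balanced): here $b_0=5$, $b_1=-1$, so $a_0=0\in A$, yet $2$ is a (non-leading) bead of $B$ strictly between $0$ and $5$; the stretch contains $3$ empty spaces, not $4$. The ``moreover'' part of \Cref{le:wall I} does not exclude such beads (with $i=0,1$ it is vacuous here). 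The correct and sufficient statement is that $\emp_B(b_{i}-e+1,b_{i})=e-\bd_B(b_{i+1}+1,b_{i})\equiv 0\pmod d$ by \Cref{le:bead count}, which is what the paper uses. Second, in the case $f'=b_{i_j}\in R$ the run beads indeed contribute no empties, but the interval $\left\lbrace a_j+1,\dots,b_{i_j}\right\rbrace$ has length $(i_{j+1}-i_j)e$ and generally contains other empty spaces; again the saving grace is only that their number is divisible by $d$ (apply the $d$-balanced hypothesis to the bead $b_{i_j}$ and the empty space $a_j$ of $B$), not that it is zero. Both errors are repairable with the lemmas you already cite, so the strategy survives, but as written these two reductions do not go through.
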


\begin{figure}[h]
	\centering
	\begin{tikzpicture}[x=0.5cm, y=0.5cm]
		\begin{pgfonlayer}{nodelayer}
			\node [style=none] (0) at (-7, 3) {};
			\node [style=none] (1) at (6, 3) {};
			\node [style=Empty node] (2) at (5.5, 3) {};
			\node [style=Empty node] (3) at (2.5, 3) {};
			\node [style=Empty node] (4) at (-3.5, 3) {};
			\node [style=none] (5) at (5.5, 2.45) {$\scriptstyle b_i$};
			\node [style=none] (6) at (5.25, 2.75) {};
			\node [style=none] (7) at (3.75, 2.75) {};
			\node [style=none] (8) at (-3.5, 2.45) {$\scriptstyle b_j$};
			\node [style=none] (9) at (3.5, 3.25) {};
			\node [style=none] (10) at (3.5, 2.75) {};
			\node [style=none] (11) at (3.5, 3.55) {$\scriptstyle b$};
			\node [style=none] (12) at (2.725, 2.45) {$\scriptstyle b_{i+1}$};
			\node [style=none] (13) at (-3.75, 2.75) {};
			\node [style=none] (14) at (-5.25, 2.75) {};
			\node [style=none] (15) at (-4.5, 3.25) {};
			\node [style=none] (16) at (-4.5, 2.75) {};
			\node [style=none] (17) at (-4.375, 3.55) {$\scriptstyle b-ae$};
			\node [style=none] (18) at (0.75, 2.75) {};
			\node [style=none] (19) at (2.25, 2.75) {};
			\node [style=none] (20) at (-7, 0.25) {};
			\node [style=none] (21) at (6, 0.25) {};
			\node [style=Empty node] (22) at (5.5, 0.25) {};
			\node [style=Empty node] (23) at (2.5, 0.25) {};
			\node [style=Empty node] (24) at (-4.5, 0.25) {};
			\node [style=none] (25) at (5.5, -0.3) {$\scriptstyle b_i$};
			\node [style=none] (26) at (5.25, 0) {};
			\node [style=none] (27) at (3.75, 0) {};
			\node [style=none] (28) at (-4.5, -0.3) {$\scriptstyle b_j$};
			\node [style=none] (29) at (3.5, 0.5) {};
			\node [style=none] (30) at (3.5, 0) {};
			\node [style=none] (31) at (3.5, 0.8) {$\scriptstyle b$};
			\node [style=none] (32) at (2.725, -0.3) {$\scriptstyle b_{i+1}$};
			\node [style=none] (33) at (-4.75, 0) {};
			\node [style=none] (34) at (-6.25, 0) {};
			\node [style=none] (37) at (-4.375, 0.8) {$\scriptstyle b-ae$};
			\node [style=none] (38) at (0.75, 0) {};
			\node [style=none] (39) at (2.25, 0) {};
			\node [style=none] (40) at (-7, -2.5) {};
			\node [style=none] (41) at (6, -2.5) {};
			\node [style=Empty node] (42) at (4.5, -2.5) {};
			\node [style=Empty node] (44) at (-3.5, -2.5) {};
			\node [style=none] (45) at (4.5, -3.05) {$\scriptstyle b_i$};
			\node [style=none] (46) at (4.25, -2.75) {};
			\node [style=none] (47) at (2.75, -2.75) {};
			\node [style=none] (48) at (-3.5, -3.05) {$\scriptstyle b_j$};
			\node [style=none] (51) at (3.5, -1.95) {$\scriptstyle b$};
			\node [style=none] (53) at (-3.75, -2.75) {};
			\node [style=none] (54) at (-5.25, -2.75) {};
			\node [style=none] (55) at (-4.5, -2.25) {};
			\node [style=none] (56) at (-4.5, -2.75) {};
			\node [style=none] (57) at (-4.5, -1.95) {$\scriptstyle b-ae$};
			\node [style=none] (60) at (-7, -5.25) {};
			\node [style=none] (61) at (6, -5.25) {};
			\node [style=Empty node] (62) at (4.5, -5.25) {};
			\node [style=Empty node] (64) at (-4.5, -5.25) {};
			\node [style=none] (65) at (4.5, -5.8) {$\scriptstyle b_i$};
			\node [style=none] (66) at (4.25, -5.5) {};
			\node [style=none] (67) at (2.75, -5.5) {};
			\node [style=none] (68) at (-4.5, -5.8) {$\scriptstyle b_j$};
			\node [style=none] (71) at (3.5, -4.7) {$\scriptstyle b$};
			\node [style=none] (73) at (-4.75, -5.5) {};
			\node [style=none] (74) at (-6.25, -5.5) {};
			\node [style=none] (77) at (-4.5, -4.7) {$\scriptstyle b-ae$};
			\node [style=Empty node] (78) at (3.5, -2.5) {};
			\node [style=Empty node] (79) at (3.5, -5.25) {};
		\end{pgfonlayer}
		\begin{pgfonlayer}{edgelayer}
			\draw (0.center) to (1.center);
			\draw [style=Move it, bend left, looseness=1.25] (6.center) to (7.center);
			\draw (9.center) to (10.center);
			\draw [style=Move it, bend left, looseness=1.25] (13.center) to (14.center);
			\draw (15.center) to (16.center);
			\draw [style=Move it, bend left, looseness=1.25] (19.center) to (18.center);
			\draw (20.center) to (21.center);
			\draw [style=Move it, bend left, looseness=1.25] (26.center) to (27.center);
			\draw (29.center) to (30.center);
			\draw [style=Move it, bend left, looseness=1.25] (33.center) to (34.center);
			\draw [style=Move it, bend left, looseness=1.25] (39.center) to (38.center);
			\draw (40.center) to (41.center);
			\draw [style=Move it, bend left, looseness=1.25] (46.center) to (47.center);
			\draw [style=Move it, bend left, looseness=1.25] (53.center) to (54.center);
			\draw (55.center) to (56.center);
			\draw (60.center) to (61.center);
			\draw [style=Move it, bend left, looseness=1.25] (66.center) to (67.center);
			\draw [style=Move it, bend left, looseness=1.25] (73.center) to (74.center);
		\end{pgfonlayer}
	\end{tikzpicture}
	\caption{The four cases in the proof of \Cref{pr:balanced}. In all the cases $j=a+i$. In the first two cases $b = b_i-e$ (and it may be the case that $b=b_{i+1}$), while in the last two cases $b_i-e< b < b_i$. In the first and the third case $b_j -e < b-ae < b_j$, while in the remaining two cases $b-ae = b_j$.}
	\label{fig:preservebalance}
\end{figure}
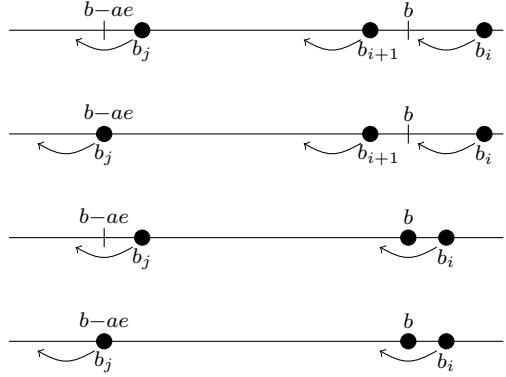

\begin{proof}
	Let $b$ be a bead of $\Ms_e(B)$ and $a$ be a positive integer such that $b-ae$ is an empty space in $\Ms_e(B)$. We show that $d\mid \emp_{\Ms_e(B)}(b-ae+1,b)$. This is done by distinguishing four cases shown in \Cref{fig:preservebalance}. We firstly assume that there is a non-negative integer $i$ such that $b= b_i - e$ and let $j=a+i$, so $a=j-i$. By \Cref{le:wall I}(i), $b- ae = b_i - (j-i+1)e \leq b_j\leq b_i -(j-i)e = b - (a-1)e$.
	
	If $b-ae\notin B$, then this can be strengthen to $b-ae < b_j \leq b - (a-1)e$. In such case, therefore, $\emp_{\Ms_e(B)}(b-ae+1, b) = \emp_B(b-ae+1, b)$ as $b_i$ moves into the interval and $b_j$ moves out of the interval; see the first diagram of \Cref{fig:preservebalance}. Hence, using that $B$ is \dbal
	\begin{align*}
	\emp_{\Ms_e(B)}(b-ae+1, b) &= \emp_B(b-ae+1, b_i) - \emp_B(b_i-e+1, b_i)\\
	&\equiv \bd_B(b_i-e+1, b_i) -e\\
	&\equiv \bd_B(b_{i+1}+1, b_i) - e\, (\textnormal{mod } d),
	\end{align*}
	which is $0$ modulo $d$ by \Cref{le:bead count}, which applies as $i<z$, the stable index of $B$, since $b_i-(a+1)e=b-ae\notin \Ms_e(B)$. 
	
	If $b-ae\in B$, then it must be a leading bead of $B$ and moreover $b-ae +e$ cannot be a leading bead of $B$ as $b-ae\notin \Ms_e(B)$. Hence $b_j = b-ae$ and $j\leq z$. Thus $\emp_{\Ms_e(B)}(b-ae+1, b) = \emp_B(b-ae+1, b) -1 $ (bead $b_i$ moves into the interval; see the second diagram of \Cref{fig:preservebalance}). This equals $ae-\bd_B(b-ae+1, b_i-e) -1 = ae - \bd_B(b_j + 1, b_{i+1}) - 1$, which, by \Cref{le:bead count} and the identity $a=j-i$, is congruent to $e-1$ modulo $d$. Since $b_i-b_j = (b+e) - (b-ae) = (j-i+1)e$, we conclude that $d$ divides $e-1$ by \Cref{le:overflow}(iii), as required.
	
	Now we move to the case that $b$ is not of the form $b_i-e$. Then $b$ is a non-leading bead of $B$ and thus there is the largest non-negative integer $i$ such that $b < b_i$ and in turn $b_i - e < b$. By \Cref{le:wall I}(iii), we get $b\leq \hat{b} -ie$ using the bounding argument. Again, let $j=a+i$. By parts (i) and (ii) of \Cref{le:wall I} we can write $b-ae\leq \hat{b} - je \leq b_j \leq b_i - ae< b- (a-1)e$.
	
	If $b-ae\notin B$, we have $b_j \neq b-ae$, and as in the second paragraph $\emp_{\Ms_e(B)}(b-ae+1, b) = \emp_B(b-ae+1, b) $; see the third diagram of \Cref{fig:preservebalance}. We are done as the letter is divisible by $d$ as $B$ is a \dbal set.
	
	Finally, if $b-ae\in B$, then as in the third paragraph $b_j = b-ae$, $j\leq z$ and $\emp_{\Ms_e(B)}(b-ae+1, b) = \emp_B(b-ae+1, b) -1 $; see the fourth diagram of \Cref{fig:preservebalance}. By \Cref{le:wall I}(ii), $\hat{b} -ie \leq b_j +ae = b\, (\leq b_i-1)$ and thus part (iii) of the same result together with the bounding argument implies that $b=\hat{b} -ie$ and there are no beads between $b+1$ and $b_i-1$. Therefore $\emp_B(b-ae+1, b) -1 = ae - \bd_B(b_j+1, b_i)$ which is divisible by $d$ by \Cref{le:bead count}.
\end{proof}

\section{$d$-combined pairs}\label{se:pairs}

To prove \Cref{th:Mullineuxbalanced} we need to adapt some of the definitions for sets to pairs of sets. Suppose that $(B,C)$ is a pair of subsets of integers. For integers $u\leq v$, we define the \textit{combined emptiness} by $\emp_{(B,C)}(u,v) = \emp_B(u,v) - \bd_C(u,v)$, or equivalently, $\emp_{(B,C)}(u,v) = \emp_C(u,v) - \bd_B(u,v)$. Note that $\emp_{(B,C)}(u,v)$ does \emph{not}, in general, count the empty spaces $f$ of $B\cup C$ such that $u\leq f\leq v$, but it does so if $B$ and $C$ are disjoint. If $B$ is a $\beta$-set of a partition and $C$ is a finite set, then, similarly to \Cref{de:emptyness}(iii), we define $\emp_{(B,C)}(v)$ as the limit of $\emp_{(B,C)}(u,v)$ as $u$ goes to $-\infty$. This is well-defined as the sequences $\emp_B(u,v)$ and $\bd_C(u,v)$ and, consequently, $\emp_{(B,C)}(u,v)$ become constant for fixed $v$ and sufficiently small $u$.

We also need the following technical definition. If $(B,C)$ is a pair of a $\beta$-set of a partition and a subset of integers, an empty space of $B\cup C$ is \textit{admissible} unless it is the \emph{only} empty space of $B$ of the form $b_0 - ke$ with $k\geq 0$ and moreover $b_0\in C$ (where $b_0=\max B$, as usually). From the definition it is clear that there is at most one non-admissible empty space for a fixed pair $(B,C)$. An example of a non-admissible empty space is the empty space $f$ in \Cref{fig:d-pair}.

In practice, instead of the technical definition of the admissible empty spaces we will use the following basic observations.

\begin{lemma}\label{le:admissible}
	Let $(B,C)$ be a pair of a $\beta$-set of a partition and a finite subset of integers. Let $B' = \Ms_e(B)$ and $C'=C \cup \{ \hat{b} \} $.
	\begin{enumerate}[label=\textnormal{(\roman*)}]
		\item If $a$ is a positive integer and $f\leq b_0$ is an empty space of $B\cup C$ such that $f-ae$ is also an empty space in $B\cup C$, then $f$ and $f-ae$ are admissible.
		\item Let $f$ be an empty space of $B\cup C$. If $f+e$ is not a leading bead of $B$, then $f$ is admissible.
		\item Suppose that $B$ is $d$-balanced, the stable index $z$ of $B$ is positive and $\hat{b} -ae$ lies in $B$ for all $0\leq a<z$. If $b_z\notin C$, then $b_z$ is a non-admissible empty space of $B'\cup C'$.
		\item Suppose that $B$ is \dbal and $i$ is a non-negative integer such that $b_i$ is an admissible empty space of $B'\cup C'$. If for some positive integer $a$, $b_i+ae$ is an empty space in $B\cup C$, then it is admissible.
	\end{enumerate} 
\end{lemma}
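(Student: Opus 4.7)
I would prove the four parts in order, treating (i)--(ii) as warm-ups. Unpacking non-admissibility of $f$: $f$ must equal $b_0-ke$ for some $k\geq 0$, be the unique empty space of $B$ of this form, and $b_0$ must lie in $C$. For (i), if $f$ and $f-ae$ are both empty in $B\cup C$, they are both empty in $B$ and lie on a common runner; if that runner is the runner of $b_0$, both equal $b_0-k'e$ for distinct values $k'\geq 0$ (violating uniqueness), and otherwise neither is of the right form at all, so both are admissible in either case. For (ii), if $f$ were non-admissible with $f=b_0-ke$, then $k\geq 1$ (since $b_0\in B$) and the uniqueness condition forces $b_0, b_0-e, \ldots, b_0-(k-1)e\in B$ consecutively, making $b_0, b_1, \ldots, b_{k-1} = f+e$ leading beads in turn, contradicting the hypothesis that $f+e$ is not a leading bead.

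The main step of (iii) is to identify $b'_0 = \max B'$ with $\hat b$. I would first show that $\hat b$ is not a leading bead of $B$: if $\hat b = b_j$ for some $j$, then $j\geq 1$ (since $\hat b < b_0$ by \Cref{le:new bead}(ii) and $z>0$), and \Cref{le:leads} bounds the jump $b_{z-1}-b_z\leq 2e$; analysing the leading beads on the runner of $\hat b$ using the hypothesis $\hat b - ae\in B$ for $0\leq a<z$ forces $j=1$, but then $\hat b - (z-1)e = b_z+e \in B$ makes $b_z$ fail to be the greatest bead $\leq b_{z-1}-e$, a contradiction. Hence $\hat b$ is a non-leading bead of $B$, so $\hat b\in B'$ and $b'_0\geq \hat b$. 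Conversely, \Cref{le:wall I} applied with $i=0$ gives that no bead of $B$ lies in $(\hat b, b_0)$, so every non-leading bead of $B$ is $\leq \hat b$; and \Cref{le:leads} with $i=0,\, j=z$ gives $b_0 \leq \hat b+e$, so every shifted leading bead $b_i-e\leq \hat b$. Together these yield $b'_0 = \hat b$. The three conditions for non-admissibility of $b_z$ in $(B', C')$ then follow: $b_z = b'_0 - ze$ lies on the runner of $\hat b$; the hypothesis combined with the stable-index tail shows that on this runner the only position of the form $\hat b - k'e$ with $k'\geq 0$ missing from $B'$ is $b_z$; $\hat b \in C'$ by construction; and $b_z\notin C'$ since $z>0$ forces $b_z\neq \hat b$ while $b_z\notin C$ by hypothesis.

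For (iv), I would argue by contradiction. Suppose $f = b_i + ae$ is non-admissible in $(B, C)$ and write $f = b_0-ke$. The uniqueness condition determines the leading beads of $B$ as $b_j = b_0-je$ for $j<k$ and $b_j = b_0-(j+1)e$ for $j\geq k$, so the stable index equals $k$ and $\hat b = b_0-e$. Matching $b_i + ae = b_0-ke$ with $b_i$ of this form forces $i\geq k$ and $a = i-k+1$. If $i>k$, then $b_i + e = b_0-ie = b_{i-1}$, so \Cref{le:Me observation}(ii) gives $b_i\in B'$, contradicting that $b_i$ is empty in $B'$. If $i=k$ (so $a=1$ and $b_i = b_z$), then all leading beads of $B$ lie on the runner of $b_0$, and a direct computation shows that $B'$ on this runner equals $\{b_0-je : j\geq 1,\, j\neq k+1\}$; using \Cref{le:wall I} to rule out larger beads on other runners gives $b'_0 = \hat b = b_0-e$, and then $b_z = b_0-(k+1)e = b'_0-ke$ is the unique empty space of $B'$ of the form $b'_0-k'e$ with $k'\geq 0$. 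Combined with $b'_0 = \hat b\in C'$, this makes $b_z$ non-admissible in $(B', C')$, contradicting the hypothesis on $b_i$. The main obstacle in (iii) and (iv) is pinning down $\max B'$, and \Cref{le:leads} together with \Cref{le:wall I} are the $d$-balanced inputs that force it to equal $\hat b$.
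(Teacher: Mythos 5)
Parts (i) and (ii) are correct and essentially the paper's arguments, and the skeleton of (iii) (identify $\max B'$ with $\hat{b}$, then inspect the runner of $\hat{b}$) is the right one. However, your justification that $\hat{b}$ is not a leading bead is shakier than it needs to be: the step ``forces $j=1$'' is unexplained, and the ensuing contradiction requires $b_{z-1}-b_z\geq 2e$, which you have not established (the stable index only gives $b_{z-1}-b_z>e$, and \Cref{le:leads} gives $\leq 2e$). The clean route, and essentially the paper's, is: if $\hat{b}-ae=b_j$ for some $0\leq a<z$, then the hypothesis that $\hat{b}-a'e\in B$ for $a\leq a'<z$ together with $b_z\in B$ forces $b_{j+t}=\hat{b}-(a+t)e$ for all $0\leq t\leq z-a$, whence $j+(z-a)=z$ and $j=a$; but \Cref{le:new bead}(ii) gives $b_j+je>\hat{b}$ for $j<z$, i.e.\ $j>a$, a contradiction. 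Note that you need this statement for \emph{all} $0\leq a<z$, not just $a=0$, to justify your assertion that $b_z$ is the only position of the form $\hat{b}-k'e$ with $k'\geq 0$ missing from $B'$; as written, that assertion is not supplied with a proof.

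The genuine gap is in (iv). Non-admissibility of $f=b_0-ke$ does \emph{not} determine the leading beads as $b_j=b_0-je$ for $j<k$ and $b_j=b_0-(j+1)e$ for $j\geq k$: it only says that every position $b_0-k'e$ with $k'\geq 0$ and $k'\neq k$ carries a bead of $B$. Since $b_k$ is the greatest bead $\leq b_0-ke$ over \emph{all} runners, the leading beads from index $k$ onwards may leave the runner of $b_0$, so neither ``the stable index equals $k$'' nor ``$\hat{b}=b_0-e$'' follows at that point. Indeed, in the scenario you must refute one can show (as the paper does) that $i=z$, $b_i=b_0-(k+a)e$ and $b_0-b_z=(z+1)e$, so that when $a\geq 2$ the on-runner positions $b_0-(k+1)e,\dots,b_0-ze$ are occupied by \emph{non-leading} beads of $B$; your identity $b_i+e=b_0-ie=b_{i-1}$ is then false, your refutation of the case $i>k$ collapses, and that case is left unhandled. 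The paper avoids describing the off-runner leading beads altogether: it obtains $\hat{b}=b_0-e$ from the divisibility $e\mid b_0-b_z$ combined with \Cref{le:leads} and \Cref{le:auxiliary}(ii), and then reads off the structure of $B'$ on the runner of $b_0$ alone to conclude that $b_z$ is non-admissible in $(B',C')$.
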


\begin{proof}
	If (i) fails to hold, then $f$ and $f-ae$ must be of the form $b_0 - je$ with $j\geq 0$. But then there are more than one empty spaces of this form, so they must be admissible. We show the contrapositive of (ii). Suppose that $f$ in (ii) is non-admissible. Then $f=b_0-je$ for some $j>0$ and all $b_0-ke$ with $0\leq k<j$ lie in $B$ and hence must be leading beads of $B$. Taking $k=j-1$, we get that $f+e$ is a leading bead, as required.
	
	Moving to (iii), the runners of $b_z$ in $B$ and $B'$ are displayed in \Cref{fig:admissible} on the left. As $z>0$, $b_z\neq \hat{b}$ and so by \Cref{le:Me observation}(iii) $b_z\notin B'\cup C'$. By the definition of the stable index all the beads $b_z+(z-a)e = \hat{b}-ae$ of $B$ with $0\leq a<z$ are non-leading. Moreover, $b_z+(z-a)e = \hat{b}-ae$ with $a>z$ are leading beads of $B$, and thus $B'$ contains $\hat{b}-ae$ for every non-negative $a$ except from $a=z$, when $\hat{b}-ae=b_z$. By \Cref{cor:wall}(ii), $\hat{b}\geq \max B'$ and so $\hat{b}= \max B'$. Since $\hat{b}\in C'$, we conclude that $b_z$ is a non-admissible empty space of $B'\cup C'$.
	
	The runners of $b_i$ in $B$ and $B'$ in the proof of (iv) are displayed in \Cref{fig:admissible} on the right. Suppose that (iv) fails to hold. Then $i\geq 1$ and $b_i+ae = b_0 -je$ for some $j> 0$ and $b_0-ke$ is a bead of $B$ for all $k\geq 0$ with $k\neq j$. Hence $b_i-ke$ lies in $B$ for all $k\geq 0$ and as $b_i\notin B'$, we conclude that $i=z$, the stable index of $B$. Observe that $b_0-e-ke$ is a bead of $B'$ for all $k\geq 0$ except for when $b_0-e-ke = b_z$ (in which case $k\geq 1$). Indeed if $b_z<b_0-e-ke<b_z+ae$, then $b_0-e-ke$ is a non-leading bead of $B$ and otherwise $b_0-ke$ is a leading bead; see \Cref{fig:admissible}. We get a contradiction by showing that $b_z$ is a non-admissible empty space of $B'\cup C'$. To do so it is sufficient to check that $\hat{b}=b_0-e$ (which lies in $B'$) which will in turn, by \Cref{cor:wall}(ii), be the largest element of $B'$ (also lying in $C'$). Since $e$ divides $b_i-b_0=b_z-b_0$, by the observation that $z=i\geq 1$, \Cref{le:wall I}(i) and \Cref{le:auxiliary}(ii), we have $b_z-b_0=(z+1)e$ which shows that $b_0-e=\hat{b}$, as required.
\end{proof}

\begin{figure}[h]
	\centering
	\begin{tikzpicture}[x=0.5cm, y=0.5cm]
		\begin{pgfonlayer}{nodelayer}
			\node [style=none] (0) at (-1, 5) {};
			\node [style=none] (1) at (-1, -2) {};
			\node [style=none] (7) at (-1.25, 1.5) {};
			\node [style=none] (8) at (-0.75, 1.5) {};
			\node [style=none] (9) at (-0.65, -1.125) {$\scriptstyle b_0$};
			\node [style=none] (10) at (-0.675, 3.9) {$\scriptstyle b_i$};
			\node [style=none] (11) at (1, 1) {};
			\node [style=none] (12) at (2.5, 1) {};
			\node [style=none] (13) at (4.5, 5) {};
			\node [style=none] (14) at (4.5, -2) {};
			\node [style=none] (20) at (4.25, 3.5) {};
			\node [style=none] (21) at (4.75, 3.5) {};
			\node [style=none] (22) at (4.25, -1.5) {};
			\node [style=none] (23) at (4.75, -1.5) {};
			\node [style=none] (24) at (4.725, -0.025) {$\scriptstyle \hat{b}$};
			\node [style=Empty node] (25) at (-1, 4.5) {};
			\node [style=Empty node] (26) at (4.5, 4.5) {};
			\node [style=Empty node] (27) at (-1, 3.5) {};
			\node [style=Empty node] (28) at (-1, 2.5) {};
			\node [style=Empty node] (29) at (-1, 0.5) {};
			\node [style=Empty node] (30) at (-1, -0.5) {};
			\node [style=Empty node] (31) at (-1, -1.5) {};
			\node [style=Empty node] (32) at (4.5, -0.5) {};
			\node [style=Empty node] (33) at (4.5, 0.5) {};
			\node [style=Empty node] (34) at (4.5, 1.5) {};
			\node [style=Empty node] (35) at (4.5, 2.5) {};
			\node [style=none] (36) at (-0.175, 1.75) {$\scriptstyle b_i+ae$};
			\node [style=none] (37) at (4.825, 3.75) {$\scriptstyle b_i$};
			\node [style=none] (38) at (-1, 5.5) {$B$};
			\node [style=none] (39) at (4.5, 5.5) {$B'$};
			\node [style=none] (40) at (-10.925, 5) {};
			\node [style=none] (41) at (-10.925, -2) {};
			\node [style=none] (45) at (-10.575, 3.875) {$\scriptstyle b_z$};
			\node [style=none] (46) at (-8.925, 1) {};
			\node [style=none] (47) at (-7.425, 1) {};
			\node [style=none] (48) at (-5.425, 5) {};
			\node [style=none] (49) at (-5.425, -2) {};
			\node [style=none] (50) at (-5.675, 3.5) {};
			\node [style=none] (51) at (-5.175, 3.5) {};
			\node [style=Empty node] (55) at (-10.925, 4.5) {};
			\node [style=Empty node] (56) at (-5.425, 4.5) {};
			\node [style=Empty node] (57) at (-10.925, 3.5) {};
			\node [style=Empty node] (58) at (-10.925, 2.5) {};
			\node [style=Empty node] (59) at (-10.925, 0.5) {};
			\node [style=Empty node] (60) at (-10.925, -0.5) {};
			\node [style=Empty node] (62) at (-5.425, -0.5) {};
			\node [style=Empty node] (63) at (-5.425, 0.5) {};
			\node [style=Empty node] (64) at (-5.425, 1.5) {};
			\node [style=Empty node] (65) at (-5.425, 2.5) {};
			\node [style=none] (67) at (-5.1, 3.775) {$\scriptstyle b_z$};
			\node [style=none] (68) at (-10.925, 5.5) {$B$};
			\node [style=none] (69) at (-5.425, 5.5) {$B'$};
			\node [style=Empty node] (70) at (-10.925, 1.5) {};
			\node [style=none] (72) at (-5.2, -0.025) {$\scriptstyle \hat{b}$};
			\node [style=none] (73) at (-11.175, -1.5) {};
			\node [style=none] (74) at (-10.675, -1.5) {};
			\node [style=none] (77) at (-5.675, -1.5) {};
			\node [style=none] (78) at (-5.175, -1.5) {};
		\end{pgfonlayer}
		\begin{pgfonlayer}{edgelayer}
			\draw (0.center) to (1.center);
			\draw (7.center) to (8.center);
			\draw [style=Move it] (11.center) to (12.center);
			\draw (13.center) to (14.center);
			\draw (20.center) to (21.center);
			\draw (22.center) to (23.center);
			\draw (40.center) to (41.center);
			\draw [style=Move it] (46.center) to (47.center);
			\draw (48.center) to (49.center);
			\draw (50.center) to (51.center);
			\draw (73.center) to (74.center);
			\draw (77.center) to (78.center);
		\end{pgfonlayer}
	\end{tikzpicture}
	\caption{The left two runners are the runners of $b_z$ in $B$ and $B'$ in \Cref{le:admissible}(iii). Since $z$ is the stable index, the beads below $b_z$ in $B$ are non-leading. The right two runners are the runners of $b_i$ (which is also $b_z$) in $B$ and $B'$ in the proof of \Cref{le:admissible}(iv). Observe that $b_i=b_z$ implies that all the displayed beads of $B$ apart from the one just below $b_i$ are leading. Thus $B'$ looks as on the right.}
	\label{fig:admissible}
\end{figure}
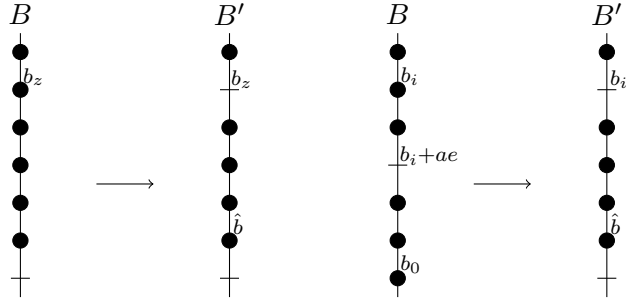

In \Cref{pr:balanced} we showed that the set $S$ in the Abacus Mullineux Algorithm stays \dbal as long as the initial $\beta$-set $S$ is. To prove \Cref{th:Mullineuxbalanced}, we also need to show that the set $T$ keeps some suitable properties throughout the algorithm, which forces it to be a \dsbal set at the end, provided that the initial $\beta$-set $S$ is a \dbal set. The right choices of these properties are parts (i) and (iii) of the following definition. 

\begin{definition}\label{de:balanced pair}
	A pair $(B,C)$ of a $\beta$-set and a subset of integers is called a \textit{$d$-combined pair} if the following hold.
	\begin{enumerate}[label=\textnormal{(\roman*)}]
		\item $\max B \leq \min C$ (or $C = \varnothing$).
		\item $B$ is $d$-balanced.
		\item If $c\in C$ and $c-ae$ (with $a>0$), is an admissible empty space of $B\cup C$, then $d\mid \emp_{(B,C)}(c-ae,c)$.
	\end{enumerate}
\end{definition}

Informally, the conditions (ii) and (iii) (assuming that (i) holds) say that $B\cup C$ behaves as a \dbal set if in \Cref{de:balanced} we use a bead from $B$ and as a \dsbal set if in \Cref{de:balanced} we use a bead from $C$. A helpful consequence of (i) and \Cref{le:new bead}(ii) are the inequalities $\hat{b}\leq b_0\leq \min C$ (if $C\neq \varnothing$). All the pairs $(S,T)$ from \Cref{fig:MA} are $d$-combined pairs. A more involved and detailed example is in \Cref{fig:d-pair}.

\begin{figure}[h]
	\centering
	\begin{tikzpicture}[x=0.5cm, y=0.5cm]
		\begin{pgfonlayer}{nodelayer}
			\node [style=none] (0) at (-3, 4.5) {};
			\node [style=none] (1) at (-3, 0.5) {};
			\node [style=none] (2) at (-2, 0.5) {};
			\node [style=none] (3) at (-1, 0.5) {};
			\node [style=none] (4) at (0, 0.5) {};
			\node [style=none] (5) at (1, 0.5) {};
			\node [style=none] (6) at (-2, 4.5) {};
			\node [style=none] (7) at (-1, 4.5) {};
			\node [style=none] (8) at (0, 4.5) {};
			\node [style=none] (9) at (1, 4.5) {};
			\node [style=Empty node] (10) at (-3, 4) {};
			\node [style=Empty node] (11) at (-2, 4) {};
			\node [style=Empty node] (12) at (-1, 4) {};
			\node [style=Empty node] (13) at (0, 4) {};
			\node [style=Empty node] (14) at (1, 4) {};
			\node [style=Empty node] (15) at (-2, 3) {};
			\node [style=Empty node] (16) at (1, 3) {};
			\node [style=empty square] (18) at (-1, 1) {};
			\node [style=empty square] (19) at (-2, 1) {};
			\node [style=empty square] (20) at (-1, 2) {};
			\node [style=Empty node] (21) at (-1, 2) {};
			\node [style=none] (22) at (-3.25, 3) {};
			\node [style=none] (23) at (-2.75, 3) {};
			\node [style=none] (24) at (-3.25, 2) {};
			\node [style=none] (25) at (-2.75, 2) {};
			\node [style=none] (26) at (-3.25, 1) {};
			\node [style=none] (27) at (-2.75, 1) {};
			\node [style=none] (28) at (-2.25, 2) {};
			\node [style=none] (29) at (-1.75, 2) {};
			\node [style=none] (30) at (-1.25, 3) {};
			\node [style=none] (31) at (-0.75, 3) {};
			\node [style=none] (32) at (-0.25, 3) {};
			\node [style=none] (33) at (0.25, 3) {};
			\node [style=none] (34) at (-0.25, 2) {};
			\node [style=none] (35) at (0.25, 2) {};
			\node [style=none] (36) at (-0.25, 1) {};
			\node [style=none] (37) at (0.25, 1) {};
			\node [style=none] (38) at (0.75, 2) {};
			\node [style=none] (39) at (1.25, 2) {};
			\node [style=none] (40) at (0.75, 1) {};
			\node [style=none] (41) at (1.25, 1) {};
			\node [style=none] (42) at (-1.8, 1.45) {$\scriptstyle c$};
			\node [style=none] (44) at (-1.5, 2) {};
			\node [style=none] (45) at (-0.8, 3.3) {$\scriptstyle f$};
			\node [style=none] (46) at (-0.825, 2.55) {$\scriptstyle b$};
		\end{pgfonlayer}
		\begin{pgfonlayer}{edgelayer}
			\draw (0.center) to (1.center);
			\draw (8.center) to (4.center);
			\draw (9.center) to (5.center);
			\draw (6.center) to (2.center);
			\draw (7.center) to (3.center);
			\draw (22.center) to (23.center);
			\draw (24.center) to (25.center);
			\draw (26.center) to (27.center);
			\draw (28.center) to (29.center);
			\draw (30.center) to (31.center);
			\draw (32.center) to (33.center);
			\draw (34.center) to (35.center);
			\draw (38.center) to (39.center);
			\draw (40.center) to (41.center);
			\draw (36.center) to (37.center);
		\end{pgfonlayer}
	\end{tikzpicture}
	\caption{The diagram displays beads of $B$ as black dots and all three beads of $C$ as squares. All the runners contain beads of $B$ above their displayed parts and no beads below their displayed parts. The pair $(B,C)$ is a $d$-combined pair for $e=5$ and $d=3$, since the condition (i) from \Cref{de:balanced pair} is clearly true, for (ii) we only need to compute that $\emp_B(b-e+1, b) = 3$ and for (iii) we only need to see that $\emp_{(B,C)}(c-e,c) = \emp_B(c-e,c) - \bd_C(c-e,c) = 5-2 = 3$, since the empty space $f$ is non-admissible --- $b$ is the maximal bead of $B$, it lies in $C$ and $f$ is the only empty space of $B$ of the form $b-ke$ with $k\geq 0$.}
	\label{fig:d-pair}
\end{figure}
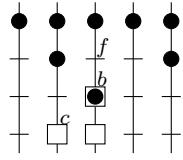

We now prove an analogous result to \Cref{le:wall I}(ii) but we replace the bead $\hat{b}$ by a bead $c$ of $C$. Again it implies that if $c$ is placed on runner $0$, then except for at most one row, each row (less or equal to the row of $b_0$) contains a leading bead.

\begin{lemma}\label{le:wall II}
	Let $(B,C)$ be a $d$-combined pair. Suppose that $c\in C$ and $r$ is the unique positive integer such that $c-re +1 \leq b_0\leq c-(r-1)e$. Then for any non-negative integer $i$ we have $c-(i+r)e\leq b_i \leq c- (i+r-1)e$. 
\end{lemma}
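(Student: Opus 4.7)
The plan is to proceed by induction on $i$, paralleling the structure of the proof of \Cref{le:wall I} while replacing the use of the extremal identity $\hat{b} = b_z + ze$ with an application of condition (iii) of the $d$-combined pair. The upper bound $b_i \leq c-(i+r-1)e$ is immediate: \Cref{le:new bead}(i) gives $b_i \leq b_0 - ie$ and the hypothesis $b_0 \leq c - (r-1)e$ yields the required inequality. For the lower bound $b_i \geq c-(i+r)e$, I will argue by contradiction; let $i$ be minimal with $b_i < c - (i+r)e$. Then $i \geq 1$, the minimality gives $b_{i-1} \geq c-(i+r-1)e$, and \Cref{le:leads} forces $b_{i-1}-b_i=2e$ (the case $b_{i-1}-b_i=e$ is immediately impossible), so $b_{i-1}-e$ is an empty space of $B$. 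Moreover $i \leq z$: if $i>z$, then $b_i = \hat{b}-ie \geq b_z + ze - ie \geq c-(i+r)e$ by the inductive bound on $b_z$, contradicting the assumption.

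The first step is to verify that $c-(i+r)e$ is an admissible empty space of $B \cup C$. It lies in $(b_i, b_{i-1}-e]$, and the non-leading beads of $B$ near $b_{i-1}$ are confined to $(b_{i-1}-e, b_{i-1})$, so $c-(i+r)e$ is an empty space of $B$; it is below $\min C \geq b_0$, so not in $C$; and in the non-admissibility scenario $c-(i+r)e=b_0-(i+1)e$ with $b_0, b_0-e, \ldots, b_0-ie$ all being beads, one concludes $b_i=b_0-ie=c-(i+r-1)e$, contradicting $b_i < c-(i+r)e$. Condition (iii) therefore yields $d \mid \emp_{(B,C)}(c-(i+r)e, c)$. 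The main counting step computes $\bd_B(c-(i+r)e, b_0) \pmod d$: \Cref{le:bead count} gives $\bd_B(b_k+1, b_{k-1}) \equiv e \pmod d$ for $k=1,\ldots,i-1$ (applicable since $i \leq z$), and the $d$-balancedness of $B$ applied to the bead $b_{i-1}$ with empty space $b_{i-1}-e$ yields $\bd_B(b_{i-1}-e+1, b_{i-1}-1) \equiv e-1 \pmod d$. Since the interval $[c-(i+r)e, b_{i-1}-e]$ contains no beads of $B$ (as leading beads $b_0, \ldots, b_{i-2}$ are all $\geq b_{i-1}$ by the inductive lower bound, and non-leading beads cluster in narrow windows just below each leading bead), summing these contributions yields $\bd_B(c-(i+r)e, b_0) \equiv ie \pmod d$, and thus $\emp_{(B,C)}(c-(i+r)e, c) \equiv re+1-\bd_C(b_0, c) \pmod d$, forcing $\bd_C(b_0, c) \equiv re+1 \pmod d$.

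To close the argument I will apply condition (iii) a second time, at $a=r$, provided $c-re$ is an admissible empty space of $B \cup C$. The same style of counting, now using \Cref{le:bead count} on just the step from $b_1$ to $b_0$ together with the $d$-balancedness of $B$ on $b_0$ with empty space $b_0-e$ (when $b_1<b_0-e$), gives $\bd_B(c-re, b_0) \equiv e \pmod d$, and hence $\bd_C(b_0, c) \equiv (r-1)e+1 \pmod d$. Since $\gcd(d,e)=1$, the residues $re+1$ and $(r-1)e+1$ differ modulo $d$, producing the desired contradiction. The main obstacle I expect to encounter is the handling of the degenerate situations in which $c-re$ fails to be an admissible empty space of $B \cup C$: namely, when $b_0=c-(r-1)e$ together with $b_1=b_0-e$ (so $c-re$ is a bead), when $c-re$ is a non-leading bead of $B$ sitting in $(b_0-e, b_0)$, or when $c-re=b_0-e$ is the unique empty space of $B$ on the runner of $b_0$ while $b_0 \in C$. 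In each such degenerate configuration, either the structure of leading beads directly precludes the contradiction hypothesis $b_i < c-(i+r)e$, or one can substitute a different value of $a$ for which the (iii) computation still produces a residue incompatible with $re+1$.
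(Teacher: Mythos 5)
Your overall strategy is the same as the paper's: take a minimal counterexample $i$, observe that $c-(i+r)e$ and $c-re$ are both empty spaces on the runner of $c$, apply \Cref{de:balanced pair}(iii) at both, and use \Cref{le:bead count} to show the two resulting congruences are incompatible. Your first application of (iii), at $c-(i+r)e$, is essentially correct, modulo one slip: \Cref{le:leads} gives $e\leq b_{i-1}-b_i\leq 2e$, not that this difference equals $e$ or $2e$; what you actually need is only $b_{i-1}-b_i>e$, which follows directly from $b_{i-1}\geq c-(i+r-1)e$ and $b_i<c-(i+r)e$ and already gives $b_{i-1}-e\notin B$.

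The genuine gap is in the second application, at $c-re$. First, your count $\bd_B(c-re,b_0)\equiv e\ (\textnormal{mod } d)$ is wrong: since $\hat{b}\leq b_i+ie<c-re\leq b_0-1$, the `moreover' part of \Cref{le:wall I} (with $i=0$) shows there is no bead of $B$ strictly between $\hat{b}$ and $b_0$, hence none in $[c-re,b_0-1]$, so $\bd_B(c-re,b_0)=1$ and the second congruence is $\bd_C(b_0,c)\equiv re$, not $(r-1)e+1$. The contradiction with $re+1$ survives (the true discrepancy is $1$, which is exactly the paper's computation of $D\equiv 1$), but your derivation of the second value rests on a false bead count, and your listed degenerate case in which $c-re$ is a non-leading bead of $B$ in $(b_0-e,b_0)$ cannot in fact occur, for the same reason. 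Second, you leave the admissibility of $c-re$ --- and indeed the fact that it is an empty space of $B\cup C$ at all --- as an ``obstacle'' with only a sketched plan. Both points are settled at once in the paper: the chain $\hat{b}<c-re\leq b_0-1$ together with \Cref{le:wall I} shows $c-re$ is an empty space of $B\cup C$, and then \Cref{le:admissible}(i), applied to the pair of empty spaces $c-re$ and $(c-re)-ie=c-(i+r)e$ (both at most $b_0$), shows both are admissible with no case analysis; this also supersedes your separate non-admissibility discussion for $c-(i+r)e$. Until the correct count of $\bd_B(c-re,b_0)$ and the admissibility of $c-re$ are supplied, the second half of your argument is not a proof.
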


\begin{proof}
	 The beads and empty spaces used are sketched in \Cref{fig:wallII}. Suppose that the statement fails to hold for a $d$-combined pair $(B,C)$ and pick the least $i$ such that $c-(i+r)e\leq b_i \leq c- (i+r-1)e$ is false (clearly $i\geq 1$). By \Cref{le:new bead}(i), $b_i \leq b_0 - ie\leq c - (i+r-1)e$ and hence we conclude that $b_i + 1\leq c - (i+r)e$. By the minimality of $i$, $c-(i+r)e\leq b_{i-1}-e$ and thus $i\leq z$, the stable index of $B$ and by the sandwich argument there is no bead of $B$ between $b_i+1$ and $c-(i+r)e$. There is also no bead of $C$ as $c-(i+r)e< b_0\leq \min C$.
	 
	 By \Cref{le:wall I}(ii) we have $\hat{b} \leq b_i+ie < c-re\leq b_0-1$ and thus \Cref{le:wall I}(iii) and the sandwich argument implies that there is no bead of $B$ between $c-re$ and $b_0-1$. Again, there is no bead of $C$ either. Moreover, by \Cref{le:admissible}(i), the empty spaces $c-(i+r)e$ and $c-re$ are admissible. Hence $d$ divides the difference
	 \begin{align*}
	 	D &= \emp_{(B,C)}(c-(i+r)e, c) - \emp_{(B,C)}(c-re, c)\\
	 	&= \emp_{(B,C)}(c-(i+r)e, c- re-1)\\
	 	&= ie - \bd_B(c-(i+r)e, c- re-1)\\
	 	&= ie - \bd_B(b_i + 1, b_0)+1\\
	 	&\equiv 1\, (\textnormal{mod } d),
	 \end{align*}
	 where the final congruence follows from \Cref{le:bead count} with $0\leq i\leq z$. This is the desired contradiction.
\end{proof}

\begin{figure}[h]
	\centering
	\begin{tikzpicture}[x=0.5cm, y=0.5cm]
		\begin{pgfonlayer}{nodelayer}
			\node [style=none] (0) at (-3, 4.5) {};
			\node [style=none] (1) at (-3, -1.5) {};
			\node [style=none] (2) at (-2, -1.5) {};
			\node [style=none] (3) at (-1, -1.5) {};
			\node [style=none] (4) at (0, -1.5) {};
			\node [style=none] (5) at (1, -1.5) {};
			\node [style=none] (6) at (-2, 4.5) {};
			\node [style=none] (7) at (-1, 4.5) {};
			\node [style=none] (8) at (0, 4.5) {};
			\node [style=none] (9) at (1, 4.5) {};
			\node [style=Empty node] (10) at (-3, 4) {};
			\node [style=Empty node] (11) at (-2, 4) {};
			\node [style=Empty node] (12) at (-1, 4) {};
			\node [style=Empty node] (13) at (0, 4) {};
			\node [style=Empty node] (14) at (1, 4) {};
			\node [style=Empty node] (15) at (-2, 2) {};
			\node [style=Empty node] (16) at (-3, 3) {};
			\node [style=empty square] (18) at (-1, -1) {};
			\node [style=empty square] (19) at (-3, -1) {};
			\node [style=empty square] (20) at (0, -1) {};
			\node [style=Empty node] (21) at (1, 0) {};
			\node [style=none] (22) at (-1.25, 1) {};
			\node [style=none] (23) at (-0.75, 1) {};
			\node [style=none] (24) at (-2.75, 2) {};
			\node [style=none] (25) at (-3.25, 2) {};
			\node [style=none] (26) at (-3.25, 0) {};
			\node [style=none] (27) at (-2.75, 0) {};
			\node [style=none] (28) at (-2.25, 3) {};
			\node [style=none] (29) at (-1.75, 3) {};
			\node [style=none] (30) at (-1.25, 3) {};
			\node [style=none] (31) at (-0.75, 3) {};
			\node [style=none] (32) at (-0.25, 3) {};
			\node [style=none] (33) at (0.25, 3) {};
			\node [style=none] (34) at (-0.25, 2) {};
			\node [style=none] (35) at (0.25, 2) {};
			\node [style=none] (36) at (0.75, -1) {};
			\node [style=none] (37) at (1.25, -1) {};
			\node [style=none] (38) at (0.75, 2) {};
			\node [style=none] (39) at (1.25, 2) {};
			\node [style=none] (40) at (0.75, 1) {};
			\node [style=none] (41) at (1.25, 1) {};
			\node [style=none] (42) at (0.175, -0.525) {$\scriptstyle c$};
			\node [style=Empty node] (43) at (0, 1) {};
			\node [style=none] (44) at (-1.25, 2) {};
			\node [style=none] (45) at (-0.75, 2) {};
			\node [style=none] (46) at (-2.25, 1) {};
			\node [style=none] (47) at (-1.75, 1) {};
			\node [style=none] (50) at (-2.25, 0) {};
			\node [style=none] (51) at (-1.75, 0) {};
			\node [style=none] (52) at (-1.25, 0) {};
			\node [style=none] (53) at (-0.75, 0) {};
			\node [style=none] (54) at (-0.25, 0) {};
			\node [style=none] (55) at (0.25, 0) {};
			\node [style=none] (56) at (-2.25, -1) {};
			\node [style=none] (57) at (-1.75, -1) {};
			\node [style=Empty node] (58) at (-3, 1) {};
			\node [style=Empty node] (59) at (1, 3) {};
			\node [style=none] (60) at (-1.7, 2.35) {$\scriptstyle b_i$};
			\node [style=none] (61) at (-2.8, 0.35) {$\scriptstyle \hat{b}$};
			\node [style=none] (62) at (0.225, 0.3) {$\scriptstyle f$};
			\node [style=none] (63) at (0.225, 2.225) {$\scriptstyle g$};
		\end{pgfonlayer}
		\begin{pgfonlayer}{edgelayer}
			\draw (0.center) to (1.center);
			\draw (8.center) to (4.center);
			\draw (9.center) to (5.center);
			\draw (6.center) to (2.center);
			\draw (7.center) to (3.center);
			\draw (22.center) to (23.center);
			\draw (24.center) to (25.center);
			\draw (26.center) to (27.center);
			\draw (28.center) to (29.center);
			\draw (30.center) to (31.center);
			\draw (32.center) to (33.center);
			\draw (34.center) to (35.center);
			\draw (38.center) to (39.center);
			\draw (40.center) to (41.center);
			\draw (36.center) to (37.center);
			\draw (56.center) to (57.center);
			\draw (50.center) to (51.center);
			\draw (46.center) to (47.center);
			\draw (44.center) to (45.center);
			\draw (52.center) to (53.center);
			\draw (54.center) to (55.center);
		\end{pgfonlayer}
	\end{tikzpicture}
	\caption{The beads and empty spaces from the proof of \Cref{le:wall II}. The beads of $B$ are black dots, the beads of $C$ are squares, $f=c-re$ and $g=c - (r+i)e$. Note that in this example $r=1$.}
	\label{fig:wallII}
\end{figure}

Given the nature of the Abacus Mullineux Algorithm, the proof of \Cref{th:Mullineuxbalanced} will be done inductively. The first ingredient of the inductive step is that the transformation of $(S,T)$ in the Abacus Mullineux Algorithm preserves $d$-combined pairs. The proof is similar to the proof of \Cref{pr:balanced}. We again write $b_0,b_1,\dots$ for the leading beads of $B$. This time we frequently compare $\emp_{(B,C)}(u,v)$ and $\emp_{(B',C')}(u,v)$ where $B' = \Ms_e(B)$ and $C'=C\cup \{ \hat{b} \}$ and $u+e\leq v$.

Recall that there is at most one leading bead $b_i$ of $B$ which moves out of the interval, that is, which satisfy $u\leq b_i < u+e$. Similarly, there is at most one leading bead $b_i$ of $B$ which moves into the interval, that is, which satisfy $v<b_i\leq v+e$. From the definition of $C'$, it is clear that only integer $c$ such that $u\leq c\leq v$ and $c$ lies precisely in one of $C$ and $C'$ is $c=\hat{b}$. If $u\leq \hat{b}\leq v$, we say that \textit{$\hat{b}$ moves into the interval}. With this extended definition, it still remains true that  $\emp_{(B',C')}(u,v)$ is obtained from $\emp_{(B,C)}(u,v)$ by adding one for each leading bead of $B$ which moves out of the interval and subtracting one for each leading bead of $B$ or $\hat{b}\in C'$ which moves into the interval.

Throughout the proof, it is helpful to keep in mind that for a $d$-combined pair $(B,C)$ any integer less than $b_0$ is an empty space of $C$. 

\begin{proposition}\label{pr:shift}
	Let $(B,C)$ be a $d$-combined pair such that $\hat{b}\notin C$. If $B' = \Ms_e(B)$ and $C'=C\cup \{ \hat{b} \}$, then $(B', C') $ is a $d$-combined pair.
\end{proposition}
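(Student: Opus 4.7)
The plan is to verify the three conditions of Definition \ref{de:balanced pair} for $(B', C')$ in turn. Condition (i) follows immediately: Corollary \ref{cor:wall}(ii) gives $\max B' \leq \hat{b}$ (using that $B$ is $d$-balanced), and Lemma \ref{le:new bead}(ii) combined with condition (i) for $(B,C)$ gives $\hat{b} \leq b_0 \leq \min C$, so $\min C' = \hat{b}$ and $\max B' \leq \min C'$. Condition (ii) is precisely Proposition \ref{pr:balanced}.

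The heart of the proof is condition (iii). Fix $c \in C'$ and $a > 0$ such that $u := c - ae$ is an admissible empty space of $B' \cup C'$; we must show $d \mid \emp_{(B', C')}(u, c)$. I would split into the two cases $c \in C$ and $c = \hat{b}$.

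For $c \in C$, the strategy is to compute the difference $\Delta := \emp_{(B', C')}(u, c) - \emp_{(B, C)}(u, c)$ and argue $d \mid \Delta$; when $u$ is admissible in $B \cup C$, condition (iii) for $(B,C)$ then gives the result. The change from $(B, C)$ to $(B', C')$ can be described using Lemma \ref{le:Me observation}(iv): each super-leading bead $b_{i_j}$ with $j < t$ moves down to $b_{i_{j+1} - 1} - e = b_{i_j} - (i_{j+1} - i_j) e$ inside $B$, while $b_{i_t} = b_z$ moves up to $\hat{b} = b_z + z e$, transferring from $B$ into $C$. All moves are by multiples of $e$, and since every old and new position is at most $c$, each contributes at most $\pm 1$ to $\Delta$ depending on whether it crosses the lower boundary $u$. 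The downward-move ranges for different $k$ are pairwise disjoint (by Lemma \ref{le:new bead}(i) and the strict inequality $b_{i_{k+1}} < b_{i_{k+1}-1}-e$ from Lemma \ref{le:Me observation}(ii)), so at most one downward move crosses $u$, contributing $+1$; the upward move contributes $-1$ iff $b_z < u \leq \hat{b}$. The task is then to apply Lemma \ref{le:wall II} with the specific $r$ determined by $c$, placing each $b_i$ in the strip $[c-(i+r)e, c-(i+r-1)e]$, and conclude that these two $\pm 1$ events always coincide, forcing $\Delta = 0$. The auxiliary subcase where $u$ is admissible in $B' \cup C'$ but not in $B \cup C$ (requiring $b_0 \in C$ and $u$ being the unique empty space of $B$ of the form $b_0 - ke$) is handled directly, with Lemma \ref{le:admissible} used for bookkeeping.

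For $c = \hat{b}$, the approach is to use Lemma \ref{le:wall I} to locate the leading beads in the interval $[\hat{b} - ae, \hat{b}]$, namely $b_i \in [\hat{b}-ie, \hat{b}-(i-1)e]$ together with the no-beads-between statement. This pins down the structure of $B$ and $B'$ on this interval, and combined with the $d$-balanced property of $B$ and Lemma \ref{le:bead count}, it yields the desired congruence. The potentially delicate subcase $\hat{b} \in B'$ (where $\max B' = \hat{b}$ and admissibility is non-trivial) is handled using Lemma \ref{le:admissible}, while the degenerate case $z = 0$ is vacuous since then $\hat{b} = b_0$ and every $\hat{b} - ae$ is already a bead of $B$.

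The main obstacle will be Case $c \in C$: the intricate bookkeeping of showing that the single potential $+1$ contribution from a downward super-leading move precisely coincides with the $-1$ contribution from $b_z \mapsto \hat{b}$. The key leverage is Lemma \ref{le:wall II}, which constrains where $b_{i_{k+1}-1}$ and $\hat{b}$ can lie relative to $u$, so that ``$u \in (b_{i_{k+1}-1} - e, b_{i_k}]$ for some $k$'' happens precisely when ``$u \in (b_z, \hat{b}]$''.
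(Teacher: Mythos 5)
Your handling of conditions (i) and (ii) of \Cref{de:balanced pair} and of the case $c=\hat{b}$ follows the paper's proof. The gap is in the case $c\in C$. Your plan there is to show that $\Delta:=\emp_{(B',C')}(c-ae,c)-\emp_{(B,C)}(c-ae,c)$ vanishes (the single possible $+1$ from a descending bead $b_{i_j}$ crossing $c-ae$ always cancelling the $-1$ from $b_z\mapsto\hat{b}$), and then to invoke \Cref{de:balanced pair}(iii) for $(B,C)$ on the interval from $c-ae$ to $c$. Both steps fail precisely when $c-ae$ is itself a leading bead of $B$ removed by $\Ms_e$, which is where the real difficulty of the proposition lies. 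First, in that situation $c-ae$ is a bead of $B\cup C$, not an empty space, so condition (iii) for $(B,C)$ says nothing about $\emp_{(B,C)}(c-ae,c)$; your ``auxiliary subcase'' only covers non-admissible \emph{empty spaces} of $B\cup C$ and does not touch this. Second, the claimed coincidence of the $\pm1$ events is simply false when $c-ae=b_0$: there $b_0$ descends across $c-ae$ (contributing $+1$) while $\hat{b}<b_0=c-ae$, so the ascending move contributes nothing and $\Delta=+1$, which is not divisible by $d$.

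The paper devotes two of its four cases to exactly these configurations ($c-ae=b_0$ and $c-ae=b_i$ with $i>0$). In each it relocates the lower endpoint of the interval to a genuine empty space of $B\cup C$ (namely $b_0-je$ for the least $j\geq 1$ with $b_j\neq b_0-je$, respectively $c-(r+j)e$ for a suitably chosen $j<i$), splits the combined emptiness accordingly, and controls the correction term using \Cref{le:bead count} together with the $d$-balancedness of $B$; it also needs \Cref{le:admissible}(iii) and (iv) to certify admissibility of the shifted endpoint for $(B,C)$ and to derive a contradiction in a degenerate configuration. None of this appears in your outline, so as written the proposal establishes condition (iii) only when $c=\hat{b}$ or when $c-ae\notin B$ (the paper's easy second case).
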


\begin{proof}
	We know that $\max B'\leq \hat{b}\leq b_0\leq \min C$ (where the last inequality is omitted if $C=\varnothing$) by \Cref{cor:wall}(ii) and \Cref{le:new bead}(ii). Hence $\max B' \leq \min C'$. So \Cref{de:balanced pair}(i) holds for $(B',C')$. \Cref{pr:balanced} shows \Cref{de:balanced pair}(ii) and hence it remains to show (iii). Let $c$ be a bead of $C'$ and $c-ae$ (with $a>0$) be an admissible empty space of $B'\cup C'$. We need to show that $d\mid \emp_{(B',C')}(c-ae,c)$. The chain of inequalities in the first sentence shows that the beads of $C$ and $C'$ which are greater than $b_0$ are the same. Thus, if $c-ae > b_0$ (and in turn $c-ae >\max B'$), then
	\[
	\emp_{(B',C')}(c-ae,c) = \emp_{C'}(c-ae,c) = \emp_{C}(c-ae,c)= \emp_{(B,C)}(c-ae,c),
	\]
	which is divisible by $d$ as $(B,C)$ is a $d$-combined pair (and $c>b_0\geq \hat{b}$ lies in $C$ and $c-ae$ is an empty space of $B\cup C$ which is admissible --- it cannot be of the form $b_0-ke$ with $k\geq 0$). So we assume that $c-ae\leq b_0$. We distinguish four cases which are displayed in \Cref{fig:shift}.
	
	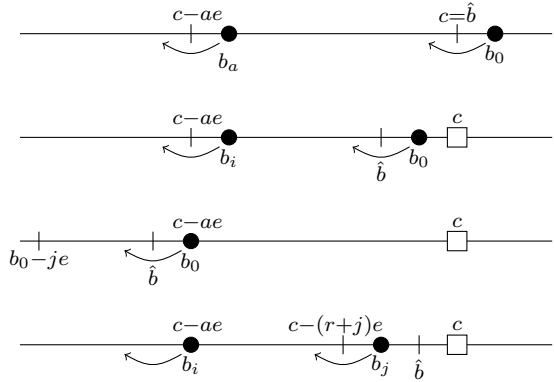
\begin{figure}[h]
		\centering
		\begin{tikzpicture}[x=0.5cm, y=0.5cm]
			\begin{pgfonlayer}{nodelayer}
				\node [style=none] (0) at (-8, 3) {};
				\node [style=none] (1) at (6, 3) {};
				\node [style=Empty node] (2) at (4.5, 3) {};
				\node [style=Empty node] (4) at (-2.5, 3) {};
				\node [style=none] (5) at (4.5, 2.45) {$\scriptstyle b_0$};
				\node [style=none] (6) at (4.25, 2.75) {};
				\node [style=none] (7) at (2.75, 2.75) {};
				\node [style=none] (8) at (-2.5, 2.45) {$\scriptstyle b_a$};
				\node [style=none] (9) at (3.5, 3.25) {};
				\node [style=none] (10) at (3.5, 2.75) {};
				\node [style=none] (11) at (3.5, 3.575) {$\scriptstyle c=\hat{b}$};
				\node [style=none] (13) at (-2.75, 2.75) {};
				\node [style=none] (14) at (-4.25, 2.75) {};
				\node [style=none] (15) at (-3.5, 3.25) {};
				\node [style=none] (16) at (-3.5, 2.75) {};
				\node [style=none] (17) at (-3.325, 3.5) {$\scriptstyle c-ae$};
				\node [style=none] (20) at (-8, 0.25) {};
				\node [style=none] (21) at (6, 0.25) {};
				\node [style=Empty node] (23) at (2.5, 0.25) {};
				\node [style=Empty node] (24) at (-2.5, 0.25) {};
				\node [style=none] (28) at (-2.5, -0.3) {$\scriptstyle b_i$};
				\node [style=none] (31) at (1.5, -0.65) {$\scriptstyle \hat{b}$};
				\node [style=none] (32) at (2.5, -0.3) {$\scriptstyle b_0$};
				\node [style=none] (33) at (-2.75, 0) {};
				\node [style=none] (34) at (-4.25, 0) {};
				\node [style=none] (37) at (-3.325, 0.75) {$\scriptstyle c-ae$};
				\node [style=none] (38) at (0.75, 0) {};
				\node [style=none] (39) at (2.25, 0) {};
				\node [style=none] (40) at (-8, -2.5) {};
				\node [style=none] (41) at (6, -2.5) {};
				\node [style=Empty node] (44) at (-3.5, -2.5) {};
				\node [style=none] (48) at (-3.5, -3.05) {$\scriptstyle b_0$};
				\node [style=none] (51) at (3.5, -2) {$\scriptstyle \scriptstyle c$};
				\node [style=none] (53) at (-3.75, -2.75) {};
				\node [style=none] (54) at (-5.25, -2.75) {};
				\node [style=none] (57) at (-3.325, -2) {$\scriptstyle c-ae$};
				\node [style=none] (60) at (-8, -5.25) {};
				\node [style=none] (61) at (6, -5.25) {};
				\node [style=Empty node] (62) at (1.5, -5.25) {};
				\node [style=Empty node] (64) at (-3.5, -5.25) {};
				\node [style=none] (65) at (1.5, -5.8) {$\scriptstyle b_j$};
				\node [style=none] (66) at (1.25, -5.5) {};
				\node [style=none] (67) at (-0.25, -5.5) {};
				\node [style=none] (68) at (-3.5, -5.8) {$\scriptstyle b_i$};
				\node [style=none] (71) at (3.5, -4.75) {$\scriptstyle c$};
				\node [style=none] (73) at (-3.75, -5.5) {};
				\node [style=none] (74) at (-5.25, -5.5) {};
				\node [style=none] (77) at (-3.325, -4.75) {$\scriptstyle c-ae$};
				\node [style=empty square] (80) at (3.5, 0.25) {};
				\node [style=none] (81) at (3.5, 0.75) {$\scriptstyle c$};
				\node [style=none] (82) at (-3.5, 0.5) {};
				\node [style=none] (83) at (-3.5, 0) {};
				\node [style=none] (84) at (1.5, 0.5) {};
				\node [style=none] (85) at (1.5, 0) {};
				\node [style=empty square] (86) at (3.5, -2.5) {};
				\node [style=none] (87) at (-4.5, -2.25) {};
				\node [style=none] (88) at (-4.5, -2.75) {};
				\node [style=none] (89) at (-4.5, -3.4) {$\scriptstyle \hat{b}$};
				\node [style=none] (92) at (-7.5, -3) {$\scriptstyle b_0 - je$};
				\node [style=none] (93) at (-7.5, -2.25) {};
				\node [style=none] (94) at (-7.5, -2.75) {};
				\node [style=none] (95) at (0.5, -5) {};
				\node [style=none] (96) at (0.5, -5.5) {};
				\node [style=none] (97) at (0.3, -4.75) {$\scriptstyle c-(r+j)e$};
				\node [style=empty square] (98) at (3.5, -5.25) {};
				\node [style=none] (99) at (2.5, -5) {};
				\node [style=none] (100) at (2.5, -5.5) {};
				\node [style=none] (101) at (2.5, -5.9) {$\scriptstyle \hat{b}$};
			\end{pgfonlayer}
			\begin{pgfonlayer}{edgelayer}
				\draw (0.center) to (1.center);
				\draw [style=Move it, bend left, looseness=1.25] (6.center) to (7.center);
				\draw (9.center) to (10.center);
				\draw [style=Move it, bend left, looseness=1.25] (13.center) to (14.center);
				\draw (15.center) to (16.center);
				\draw (20.center) to (21.center);
				\draw [style=Move it, bend left, looseness=1.25] (33.center) to (34.center);
				\draw [style=Move it, bend left, looseness=1.25] (39.center) to (38.center);
				\draw (40.center) to (41.center);
				\draw [style=Move it, bend left, looseness=1.25] (53.center) to (54.center);
				\draw (60.center) to (61.center);
				\draw [style=Move it, bend left, looseness=1.25] (66.center) to (67.center);
				\draw [style=Move it, bend left, looseness=1.25] (73.center) to (74.center);
				\draw (82.center) to (83.center);
				\draw (84.center) to (85.center);
				\draw (87.center) to (88.center);
				\draw (93.center) to (94.center);
				\draw (95.center) to (96.center);
				\draw (99.center) to (100.center);
			\end{pgfonlayer}
		\end{tikzpicture}
		\caption{The four cases from the proof of \Cref{pr:shift} --- the first case is $c=\hat{b}$, while in the remaining three cases $c> \hat{b}$ and $c-ae\notin B$ (the second case) or $c-ae = b_0$ (the third case) or $c-ae = b_i$ with $i>0$ (the fourth case). Note that in the first case $c-ae$ and $b_a$ may coincide.}
		\label{fig:shift}
	\end{figure}
	
	We firstly suppose that $c=\hat{b}$. By \Cref{le:wall I}(ii), $\hat{b}-ae \leq b_a < \hat{b} - (a-1)e$, where we used a sharp inequality on the right-hand side as $\hat{b}-ae$ is an empty space in $B'$. For the same reason, $b_{a-1}\neq \hat{b}-(a-1)e$; thus $a\leq z$, the stable index of $B$ (\Cref{le:new bead}(ii)). Using \Cref{le:wall I}(ii) once more, we conclude that $\hat{b} < b_0\leq \hat{b} + e$, where $\hat{b} = b_0$ is excluded using \Cref{le:new bead}(ii) and $z\geq a >0$. Thus $\emp_{(B',C')}(\hat{b}-ae, \hat{b}) = \emp_{(B,C)}(\hat{b}-ae, \hat{b}) - 1$ as $b_a$ moves out of the interval and $b_0$ and $\hat{b}$ moves into the interval; see the first diagram of \Cref{fig:shift}. Since $\hat{b}< b_0\leq \min C$ (or $C=\varnothing$) we can further write
	\begin{align*}
	\emp_{(B',C')}(\hat{b}-ae, \hat{b}) &= \emp_{(B,C)}(\hat{b}-ae, \hat{b}) - 1\\
	&= \emp_B(\hat{b}-ae, \hat{b}) - 1\\ &= ae - \bd_B(\hat{b}-ae, \hat{b}).
	\end{align*}
	As $\hat{b}-ae\notin B'$, there is not a non-leading bead $\hat{b}-ae$ of $B$. If $\hat{b}-ae$ is a leading bead of $B$, then, from the inequalities $\hat{b}-ae \leq b_a < \hat{b} - (a-1)e$, it is $b_a$, and so we can replace $\hat{b}-ae$ by $b_a$ in the above $\bd_B$. We can do the same even if $\hat{b}-ae$ is an empty space of $B$, using \Cref{le:wall I}(iii). The same lemma allows us to replace $\hat{b}$ by $b_0 - 1$. Thus the desired emptiness is $ae - \bd_B(b_a, b_0-1) = ae - \bd_B(b_a+1, b_0)$, which is divisible by $d$ by \Cref{le:bead count} (as $B$ is a \dbal set and $a\leq z$).
	
	We now suppose that $c\neq \hat{b}$ (and so $c\in C$ and $c\geq b_0\geq \hat{b}$). We let $r$ be as in \Cref{le:wall II}, that is, $r$ is a positive integer such that $c-re +1 \leq b_0\leq c-(r-1)e$, and let $i=a-r$ which is non-negative by the assumption that $c-ae < b_0$. So $a=i+r$. We distinguish two cases. Firstly assume that $c-ae\notin B$. Note that $c-ae$ is an admissible empty space of $B\cup C$ by \Cref{le:admissible}(ii) as $c-ae+e$ is not a leading bead of $B$ since $c-ae\notin B'$. Using \Cref{le:wall II}, we find that $c-ae< b_i < c-(a-1)e$. By \Cref{le:wall II} applied to $z$, the stable index of $B$, it also holds that $c-ae \leq c-re\leq b_z + ze= \hat{b}$. Hence $\emp_{(B',C')}(c-ae,c) = \emp_{(B,C)}(c-ae,c)$ as $\hat{b}$ moves into the interval and $b_i$ moves out of the interval; see the second diagram of \Cref{fig:shift}. The latter combined emptiness is divisible by $d$ as $(B,C)$ is a $d$-combined pair.
	
	We can further assume that $c> \hat{b}$ and $c-ae\in B$. As $c-ae$ is not in $B'$, it must be a leading bead of $B$. By \Cref{le:wall II}, it is either $b_i$ or $b_{i+1}$. We eliminate the latter option. If $ b_{i+1} = c- ae$, then as $b_i\geq b_{i+1} + e = c- (a-1)e$, but also, by \Cref{le:wall II}, $b_i\leq c - (a-1)e$, we conclude that $b_i = c-(a-1)e$. But then $c-ae\in B'$, a contradiction. So $c-ae = b_i$. We need to distinguish the final two cases: $i=0$ and $i>0$.
	
	If $c-ae = b_0$, then $\hat{b} < b_0$, and thus $0$ is not the stable index of $B$ and there is the least $j\geq 1$ such that $b_j\neq b_0-je$. We have
	\begin{align*}
	\emp_{(B',C')}(c-ae,c) &= \emp_{(B,C)}(b_0, c) + 1\\
	&= \emp_{(B,C)}(b_0-je,c) - \emp_{(B,C)}(b_0-je, b_0-1) + 1,
	\end{align*}
	as $b_0$ moves out of the interval; see the third diagram of \Cref{fig:shift}. By the minimality of $j$, we know that $b_0-je$ is an empty space in $B$ (and in $C$). In fact, it is an admissible empty space of $B\cup C$ (since $b_0=c-ae$ is not in $C'$ and thus it is not in $C$). Hence $\emp_{(B,C)}(b_0-je,c)$ is divisible by $d$. Moreover, $\emp_{(B,C)}(b_0-je, b_0-1)-1 = \emp_B(b_0-je, b_0-1)-1 = \emp_B(b_0-je+1, b_0)$ is divisible by $d$ as $B$ is a \dbal set, and thus $d$ divides $\emp_{(B',C')}(c-ae,c)$.
	 
	If $c-ae = b_i$ with $i>0$, we claim that there is $0\leq j < i$ such that $c-(r+j)e < b_j$ and there is no bead of $B$ between $c-(r+j)e$ and $b_j-1$. Note that the first condition holds true for all $0\leq j<i$ since, by \Cref{le:new bead}(i),  $b_j \geq  c - (r+j)e$, and the equality cannot be attained, as otherwise \Cref{le:auxiliary}(i) shows that $b_{i-1} = c-(a-1)e$ and contradicts that $c-ae\notin B'$. To prove the claim we suppose that the second condition fails to hold for all $0\leq j < i$ and we reach a contradiction by showing that $c-ae$ is a non-admissible empty space in $B'\cup C'$.
	 
	Using parts (ii) and (iii) of \Cref{le:wall I}, we have $\hat{b}-je \leq b_i +ie -je = c - (r+j)e\leq b_j-1$ and the only possible bead of $B$ between $\hat{b}-je$ and $b_j-1$ is $\hat{b}-je$. As, by our assumption, there is a bead of $B$ between $c-(r+j)e$ and $b_j-1$, we conclude that $\hat{b}-je = c - (r+j)e$ and this is a bead in $B$ for all $0\leq j < i$. The equality $\hat{b}-je = c - (r+j)e$ says that $\hat{b} = b_i +ie$ and by \Cref{le:new bead}(ii) we have $i\geq z$, and in fact $i=z$ ($b_i\notin B'$). We conclude that $c-ae=b_z$ is a non-admissible empty space in $B'\cup C'$ by \Cref{le:admissible}(iii), a contradiction.
	 
	Pick $j$ such that $0\leq j < i$ and $c-(r+j)e < b_j$ and there is no bead of $B$ between $c-(r+j)e$ and $b_j-1$. Since $b_i\leq \hat{b}$ by \Cref{cor:wall}(i), we have $\emp_{(B',C')}(c-ae,c) = \emp_{(B,C)}(c-ae,c)$ as $\hat{b}$ moves into the interval and $b_i$ moves out of the interval; see the fourth diagram of \Cref{fig:shift}. This can be expanded as
	\begin{align*}
	&\emp_{(B,C)}(b_i,c-(r+j)e-1) + \emp_{(B,C)}(c-(r+j)e, c)\\=\; &(i-j)e - \bd_B(b_i, c-(r+j)e-1) + \emp_{(B,C)}(c-(r+j)e, c)\\=\; &(i-j)e - \bd_B(b_i, b_j-1) + \emp_{(B,C)}(c-(r+j)e, c)\\=\; &(i-j)e - \bd_B(b_i+1, b_j) + \emp_{(B,C)}(c-(r+j)e, c)\\ \equiv\; &\emp_{(B,C)}(c-(r+j)e, c)\, (\textnormal{mod } d),
	\end{align*}
	where the final congruence follows from \Cref{le:bead count} as $i\leq z$ ($b_i$ is not in $B'$). We will be done if we show that $c-(r+j)e$ is an admissible empty space of $B\cup C$. By the choice of $j$, it is an empty space of $B\cup C$ and thus \Cref{le:admissible}(iv) applies to $i$ and `$b_i+ae$' equal to $c-(r+j)e$.
\end{proof}

Let $(B,C)$ be a pair of a $\beta$-set of a partition and a finite set of integers. We define the \textit{combined $d$-runner matrix} $\mathcal{R}_d(B,C)$ as a $(d-1)\times e$ matrix indexed by $1\leq x\leq d-1$ and $0\leq y\leq e-1$ where $\mathcal{R}_d(B,C)_{x,y}$ counts the number of beads of $B$ with $d$-emptiness $x$ on runner $y$ and beads $c$ of $C$ on runner $y$ such that $\emp_{(B,C)}(c)$ is congruent to $x$ modulo $d$.

\begin{example}\label{ex:combined runner}
	Let $e=5$, $d=3$, $B=\left\lbrace 7,4,1,-1,-2,\dots \right\rbrace $ and $C=\left\lbrace 12,11,7 \right\rbrace$ as on \Cref{fig:d-pair}. Then the beads of $B$ with non-zero emptinesses are $7,4$ and $1$ and $\emp_B(7)=5$, $\emp_B(4)=3$ and $\emp_B(1)=1$. As $\emp_{(B,C)}(12)=\emp_{(B,C)}(11)=7$ and $\emp_{(B,C)}(7)=4$, the combined $d$-runner matrix $\mathcal{R}_d(B,C)$ equals $\big(\begin{smallmatrix}
		0 & 2 & 2 & 0 & 0\\ 
		0 & 0 & 1 & 0 & 0
	\end{smallmatrix}\big)$. 
\end{example}

Our final ingredient to prove our first main theorem follows.

\begin{proposition}\label{pr:invariance}
	Let $(B,C)$ be a $d$-combined pair such that $\hat{b}\notin C$ and let $B'= \Ms_e(B)$ and $C' = C\cup \{ \hat{b} \} $. Then $\mathcal{R}_d(B,C) = \mathcal{R}_d(B',C')$.
\end{proposition}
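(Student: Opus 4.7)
The plan is to match the contributions to $\mathcal{R}_d(B,C)$ and $\mathcal{R}_d(B',C')$ bead by bead. First, I will use \Cref{le:Me observation}(iv) and (v) to identify what changes: let $D = \{b_{i_0}, b_{i_1}, \ldots, b_{i_t}\}$ and $A = \{b_{i_j - 1} - e : 1\leq j\leq t\}$, so that $B\setminus B' = D$, $B'\setminus B = A$ and $C'\setminus C = \{\hat{b}\}$. Using $b_{i_j - 1} - e = b_{i_{j-1}} - (i_j - i_{j-1})e$, each element of $A$ corresponding to $j = k+1$ lies on the same runner $r_k$ as $b_{i_k}$. The natural pairing is therefore: the removed bead $b_{i_k}\in D$ with the added bead $b_{i_k} - (i_{k+1} - i_k)e\in A$ for $0\leq k<t$, and $b_{i_t}=b_z\in D$ with $\hat{b}=b_z+ze\in C'$; in every pair both beads lie on the same runner.

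Next, I will show that emptinesses of unchanged beads are preserved, namely $\emp_{B'}(b) = \emp_B(b)$ for $b\in B\cap B'$ and $\emp_{(B',C')}(c) = \emp_{(B,C)}(c)$ for $c\in C$. For any integer $v$ one has $\emp_{B'}(v) - \emp_B(v) = |D\cap (-\infty, v]| - |A\cap (-\infty, v]|$. Because there is no bead of $B$ in $(b_{i_j}, b_{i_j - 1} - e]$ (by the leading-bead definition) and $b_0 = \max B$ lies in $D$, a short case analysis on the intervals bounded by consecutive $b_{i_j}$'s shows that for every $b\in B\cap B'$ this difference vanishes. For $c\in C$, since $c\geq b_0\geq \hat{b}$ and each element of $A$ satisfies $b_{i_{j-1}} - (i_j - i_{j-1})e < b_{i_{j-1}}\leq b_0\leq c$, one has $|D\cap(-\infty,c]| = t+1$ and $|A\cap(-\infty,c]| = t$, so $\emp_{B'}(c) = \emp_B(c)+1$; this is cancelled exactly by $\hat{b}\leq c$ contributing an extra $+1$ to $\#\{c'\in C':c'\leq c\}$.

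It then suffices to verify that the $d$-emptinesses match under the pairing. For $0\leq k<t$, set $a = b_{i_k} - (i_{k+1} - i_k)e = b_{i_{k+1}-1} - e$; a direct count shows both $|D\cap(-\infty,a]|$ and $|A\cap(-\infty,a]|$ equal $t-k$, so $\emp_{B'}(a) = \emp_B(a)$. Since $b_{i_{k+1}}$ is the greatest bead of $B$ at most $a$, no bead of $B$ lies in $(b_{i_{k+1}}, a]$, so $\bd_B(a+1, b_{i_k}) = \bd_B(b_{i_{k+1}}+1, b_{i_k})\equiv (i_{k+1}-i_k)e\pmod d$ by \Cref{le:bead count}; subtracting from the interval length $(i_{k+1}-i_k)e$ yields $\emp_B(b_{i_k})\equiv \emp_B(a) = \emp_{B'}(a) \pmod d$, as required. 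For the last pair, if $z=0$ then $\hat{b}=b_0$ and $\emp_{(B',C')}(b_0) = \emp_B(b_0)$ holds by direct computation; if $z\geq 1$, then the `moreover' part of \Cref{le:wall I} applied at $i=0$ forces $\bd_B(\hat{b}+1,b_0) = 1$, so $\bd_B(b_z+1,\hat{b}) = \bd_B(b_z+1,b_0)-1$, which by \Cref{le:bead count} is congruent to $ze-1\pmod d$. Combined with a counting check giving $\emp_{B'}(\hat{b})=\emp_B(\hat{b})$, and with $\emp_{(B',C')}(\hat{b}) = \emp_{B'}(\hat{b}) - 1$ (since $\hat{b}\notin C$ and $\hat{b}\leq b_0\leq \min C$ force $\hat{b}<\min C$, making $\hat{b}$ the unique element of $C'$ at most $\hat{b}$), this yields $\emp_{(B',C')}(\hat{b})\equiv \emp_B(b_z)\pmod d$.

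The hard part will be this final pair, since tracking the modular change in emptiness from $b_z$ up to $\hat{b}$ requires combining the `moreover' part of \Cref{le:wall I} with \Cref{le:bead count}, and one must also carefully account for the $-1$ arising from $\hat{b}$ itself being a new element of $C'$.
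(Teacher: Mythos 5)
Your proof is correct and follows the same skeleton as the paper's: the identical decomposition of $B'$ via \Cref{le:Me observation}(iv), the identical pairing $b_{i_k}\mapsto b_{i_{k+1}-1}-e$ and $b_z\mapsto\hat{b}$, runner-preservation via \Cref{le:Me observation}(v), and the same treatment of the unchanged beads of $B\cap B'$ and of $C$. The one genuine divergence is the final pair $b_z\mapsto\hat{b}$, which is indeed the delicate step. The paper handles it by invoking \Cref{pr:shift} — the fact that $(B',C')$ is itself a $d$-combined pair — applied to the empty space $b_z=\hat{b}-ze$ of $B'\cup C'$, which forces a case split on whether that empty space is admissible (with a separate argument when it is not). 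You instead compute directly: $\bd_B(\hat{b}+1,b_0)=1$ from the `moreover' part of \Cref{le:wall I} at $i=0$, hence $\bd_B(b_z+1,\hat{b})\equiv ze-1$ by \Cref{le:bead count}, and combining with $\emp_{B'}(\hat{b})=\emp_B(\hat{b})$ and the $-1$ from $\hat{b}\in C'$ gives the congruence. Your route is self-contained, avoids the admissibility technicality entirely, and does not need \Cref{pr:shift} as a prerequisite for this proposition; the paper's route is shorter on the page but imports the full strength of the preceding proposition. Both are valid; your handling of the intermediate pairs via \Cref{le:bead count} rather than the raw $d$-balanced condition is an interchangeable cosmetic difference.
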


\begin{proof}
	Let $z$ be the stable index of $B$ and $0=i_0<i_1<\dots <i_t=z$ be indices of the leading beads $b_i$ of $B$ such that $b_i\notin B'$, as in \Cref{le:Me observation}(iv) (see \Cref{fig:invariance}). Then \Cref{le:Me observation}(iv) tells us that
	\begin{equation}\label{eq:B'}
		B' = \left( B \setminus \left\lbrace b_{i_j} : 0\leq j \leq t \right\rbrace\right)  \cup \left\lbrace b_{i_j-1} - e : 1\leq j \leq t \right\rbrace ,
	\end{equation}
	where all the elements in the \textit{added set} do not lie in $B$ while the elements in the \textit{removed set} all lie in $B$. Let $\phi$ be a bijection between the disjoint unions $B\coprod C$ and $B'\coprod C'$ defined by $\phi(b_{i_j}) = b_{i_{j+1}-1} - e$ for $0\leq j<t$, $\phi(b_z) = \hat{b}$ and $\phi$ acts as the identity on the beads in $B\cap B'$ and $C$. By \Cref{le:Me observation}(v), $b_{i_j}$ and $b_{i_{j+1}-1}$ lie on the same runner for any $0\leq j<t$. It follows that $\phi$ preserves the runners of beads. To complete the proof we need to show that $\phi$ also preserves appropriate (combined) emptinesses of beads modulo $d$; that is, $\emp(u) \equiv \emp(\phi(u))\, (\textnormal{mod } d)$, where, on the left-hand side $\emp = \emp_B$ if $u\in B$ and $\emp = \emp_{(B,C)}$ if $u\in C$, and analogously for the right-hand side.
	
	We start with $b\in B\cap B'$. From the definition of leading beads and \Cref{le:Me observation}(ii) we have a chain of inequalities
	\begin{equation}\label{eq:twochain}
	b_{i_t} < \dots < b_{i_2 - 1}-e < b_{i_1} < b_{i_1 - 1} -e < b_{i_0}
	\end{equation}
	such that there is no bead between $b_{i_j}+1$ and $b_{i_j-1}-e$ (and no bead greater than $b_{i_0}$); thus $b_{i_{j+1}-1}-e<b<b_{i_j}$ for some $0\leq j\leq t$ (where the left inequality is discarded if $j=t$). Using \eqref{eq:B'} and \eqref{eq:twochain}, there are $t-j$ elements less or equal to $b$ both in the added set and the removed set; so we have $\emp_B(b) = \emp_{B'}(b)$. If $c\in C$, then $\hat{b}\leq b_0\leq c$ and so the elements of the added and removed sets are all less or equal to $c$. Taking the addition of $\hat{b}$ to $C$ to obtain $C'$ into consideration, we have $\emp_{(B,C)}(c) = \emp_{(B',C')}(c)$.
	
	Next, we show that for $0\leq j < t$, $\emp_{B'}(b_{i_{j+1}-1} - e)$ and $\emp_B(b_{i_j})$ are congruent modulo $d$. The first term equals $\emp_B(b_{i_{j+1}-1} - e)$ --- there are $t-j$ elements less or equal to $b_{i_{j+1}-1} - e$ in both the added and the removed set in \eqref{eq:B'}. By \Cref{le:Me observation}(v), we have $b_{i_{j+1}-1} - e = b_{i_j} - (i_{j+1} - i_j)e$ and hence, as $B$ is $d$-balanced, $d$ divides $\emp_B(b_{i_j} - (i_{j+1} - i_j)e + 1, b_{i_j}) = \emp_B(b_{i_j}) - \emp_B(b_{i_{j+1}-1} - e)$, as claimed.
	
	It remains to show that $\emp_{(B',C')}(\hat{b})$ is congruent to $\emp_B(b_z)$ modulo $d$. If $z=0$, then $b_z=\hat{b}$ and $\emp_{(B',C')}(\hat{b}) = \emp_B(\hat{b})$ is clear from \eqref{eq:B'}. Otherwise, $b_z < \hat{b} = \min C'$, and thus $b_z$ is an empty space in $B'\cup C'$. If it is admissible, then by \Cref{pr:shift}, $d$ divides $\emp_{(B',C')}(b_z,\hat{b})$. This is true even if it is not admissible as then $\max B'\in C'$, and hence $\max B' = \min C' = \hat{b}$, which shows that $\hat{b}$ lies in $B'$ and, consequently, $d$ divides $\emp_{B'}(b_z+1, \hat{b})$ ($B'$ is a \dbal set), which equals $\emp_{(B',C')}(b_z,\hat{b})$. Therefore
	\[
	\emp_{(B',C')}(\hat{b}) \equiv \emp_{(B',C')}(b_z-1) = \emp_B(b_z-1) = \emp_B(b_z)\, (\textnormal{mod } d),
	\]
	as required. 
\end{proof}

\begin{proof}[Proof of the main part of \Cref{th:Mullineuxbalanced}]
	Let $B$ be the canonical $\beta$-set of $\lambda$. Then $(B,\varnothing)$ is a $d$-combined pair. If we apply the Abacus Mullineux Algorithm with $B$ as the starting $\beta$-set, then, by inductive argument using \Cref{pr:shift} (and \Cref{re:algorithm}(i)), the pair $(S,T)$ remains a $d$-combined pair until the final step when $S$ is a $\beta$-set of the empty partition. Then there are no empty spaces $f\leq \max S$ of $S$ and, in particular, no non-admissible empty spaces of $S\cup T$. By \Cref{de:balanced pair}(iii), $S\cup T$ is a \dsbal set.
	
	A similar inductive argument, using \Cref{pr:invariance}, shows that the combined $d$-runner matrix $\mathcal{R}_d(S,T)$, which initially equals $\mathcal{R}_d(B)$, remains unchanged until the final step of the algorithm. But then $S$ is a $\beta$-set of the empty partition and thus $\mathcal{R}_d(S,T)$ is just the $d$-runner matrix of $S\cup T$ (see \Cref{re:algorithm}(ii)). Hence, by \Cref{pr:Mullineux algorithm}, the canonical $\beta$-set of $m_e(\lambda)'$ is a \dsbal set with the same $d$-runner matrix as $B$, as required.
\end{proof}
	
\section{Skewed partitions}\label{se:unbalances}

We now complete the picture in \Cref{fig:BigPicture} by examining \dunb and \dsunb partitions. Recall from \Cref{de:partitions} that a partition $\lambda$ is \dunb if the arm length of any $e$-divisible hook of $\lambda$ is \emph{not} congruent to $0$ modulo $d$. And it is \dsunb if instead all such arm lengths are \emph{not} congruent to $-1$ modulo $d$. As for the \dbal and \dsbal partitions we can straightforwardly translate these definitions to $\beta$-sets.

\begin{definition}\label{de:unbalanced}
	Let $B$ be a subset of $\Z$. The set $B$ is \textit{\dunb}if $d$ does \emph{not} divide $\emp_B(b-ae+1, b)$ for any bead $b$ and any empty space $b-ae$ of $B$ (with $a>0$). It is \textit{\dsunb}if $d$ does \emph{not} divide $\emp_B(b-ae, b)$ for any bead $b$ and any empty space $b-ae$ of $B$ (with $a>0$). 
\end{definition}

We start by showing the `moreover' part of \Cref{pr:equivalence max}.

\begin{lemma}\label{le:regularity}
	Let $\lambda$ be a \dsunb partition. Then $\lambda$ is $e$-regular if and only if there is $0$ in each row of $\mathcal{R}_d(\lambda)$.
\end{lemma}

\begin{proof}
	Let $B$ be the canonical $\beta$-set of a \dsunb partition $\lambda$. We prove that $\lambda$ is not $e$-regular if and only if there is a row of $\mathcal{R}_d(\lambda)$ with no $0$. Suppose that $\lambda$ is not $e$-regular. Then there is $b\in B$ such that $b,b+1,\dots,b+e-1$ all lie in $B$ and $b>f$, where $f$ is the least empty space of $B$ (see \eqref{eq:recover partition} on page \pageref{eq:recover partition}). Let $0\leq i\leq e-1$ be the remainder of $f-b$ modulo $e$. Since $B$ is $d$-shift skewed, $\emp_B(b+i)=\emp_B(f,b+i)$ is not divisible by $d$. Since $b,b+1,\dots,b+e-1$ have the same emptiness, which, as we just saw, is not divisible by $d$, and lie on all $e$ runners, we obtain the desired row of $\mathcal{R}_d(\lambda)$ with no $0$.
	
	Conversely, suppose that row $x$ of $\mathcal{R}_d(\lambda)$ has no $0$ and let $b$ be the least bead of $B$ with $d$-emptiness $x$. We claim that $b,b+1,\dots, b+e-1$ are all beads of $B$, witnessing that $\lambda$ is not $e$-regular (as $x>0$). If not, there is least $1\leq i\leq e-1$ such that $b+i\notin B$. Let $y$ be the runner of $b+i$. By the minimal choice of $b$ and the assumption that $\mathcal{R}_d(\lambda)_{x,y}>0$, there is bead $b'\geq b$ lying on runner $y$ with $d$-emptiness $x$. Then $b'>b+i$ and $\emp_B(b+i, b') = \emp_B(b') - \emp_B(b+i-1) = \emp_B(b') - \emp_B(b)$, which is divisible by $d$, a contradiction with $B$ being $d$-shift skewed.
\end{proof}	

We now proceed to prove the first direction of the main part of \Cref{pr:equivalence max}. To do so, we introduce the following terminology. Given $B\subseteq \Z$, a \textit{swap of $i$ and $j$ of $B$} changes $B$ to a new set by replacing $i$ and $j$ by $j$ and $i$, respectively. If both or none of $i$ and $j$ lie in $B$, then $B$ does not change. If only one, say $i$, lies in $B$, then $B$ is replaced by $\left( B \setminus\left\lbrace i\right\rbrace\right)  \cup \left\lbrace j\right\rbrace  $. In such case, if $i>j$, we call this an \textit{up-move}, otherwise, we say it is a \textit{down-move}. The words `up' and `down' are chosen to correspond to the moves on the abacus with $|i-j|$ runners.

Observe that if $|i-j|=ae$ for some positive integer $a$ and $B=B_s(\lambda)$ for a partition $\lambda$, then the swap of $i$ and $j$ of $B$ results in a set of the form $B_s(\mu)$ for some partition $\mu$ with the same $e$-core as $\lambda$ (see the discussion after \Cref{ex:empty}). Moreover, in such case the $e$-weight of $\mu$ is the $e$-weight of $\lambda$ \emph{decreased by $a$} if the swap was an up-move, the $e$-weight of $\lambda$ \emph{increased by $a$} if the swap was a down-move, and the $e$-weight of $\lambda$ otherwise.

We will need the following observation about dominance order discussed before (and in) \Cref{ex:dominance}.

\begin{lemma}\label{le:UD}
	Let $B_{\alpha}$ be a $\beta$-set of a partition $\alpha$, $b$ be a bead of $B_{\alpha}$, $a$ be a positive integer and $i$ be a non-zero integer. Let $\gamma$ be a partition whose $\beta$-set $B_{\gamma}$ is obtained from $B_{\alpha}$ by the swap of $b-ae$ and $b$ followed by the swap of $b+i-ae$ and $b+i$. Suppose that the first swap was an up-move and the second was a down-move. If $i>0$, then $\gamma\rhd \alpha$, otherwise, $\gamma\lhd \alpha$.
\end{lemma}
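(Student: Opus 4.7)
The plan is to factor the move from $\alpha$ to $\gamma$ through an intermediate partition $\beta$ obtained from $\alpha$ by only the first swap, and then apply the dominance criterion for rim hook removal recalled just before Example \ref{ex:dominance}.

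Concretely, I will set $B_\beta := (B_\alpha \setminus \{b\}) \cup \{b-ae\}$. The hypotheses that the first swap is an up-move and the second is a down-move on $B_\beta$ force $b+i-ae \in B_\alpha$, $b+i \notin B_\alpha$, and the four integers $b$, $b-ae$, $b+i-ae$, $b+i$ to be pairwise distinct (in particular $i \notin \{0, ae, -ae\}$); consequently $B_\gamma = (B_\alpha \setminus \{b, b+i-ae\}) \cup \{b-ae, b+i\}$ is a disjoint symmetric difference. By the standard correspondence between $ae$-hooks and bead moves, $\beta$ is obtained from $\alpha$ by removing an $ae$-hook $R$ with $\bt(R) = |\{b' \in B_\alpha : b' > b-ae\}|$. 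Reversing the second swap is an up-move of bead $b+i$ on $B_\gamma$, so $\beta$ is likewise obtained from $\gamma$ by removing an $ae$-hook $R'$ with $\bt(R') = |\{b' \in B_\gamma : b' > b+i-ae\}|$, and in particular $|\alpha| = |\gamma|$.

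The dominance criterion then reduces the lemma to showing that $\bt(R) > \bt(R')$ precisely when $i > 0$, with the opposite inequality when $i < 0$; since $B_\alpha \neq B_\gamma$, $\alpha \neq \gamma$ and the inequalities are strict. Using the symmetric-difference description of $B_\gamma$ to rewrite $\bt(R')$ as a bead-count in $B_\alpha$ via Iverson brackets, I obtain
\[
\bt(R) - \bt(R') = \bigl(N_\alpha(b-ae) - N_\alpha(b+i-ae)\bigr) + [i<ae] - [i<0] - 1,
\]
where $N_\alpha(y) := |\{b' \in B_\alpha : b' > y\}|$. For $i > 0$ the parenthesised term counts the beads of $B_\alpha$ in $(b-ae, b+i-ae]$, which always contains $b+i-ae$ and additionally $b$ when $i > ae$; in both subcases the Iverson contribution balances so that $\bt(R) - \bt(R') \geq 1$. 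For $i < 0$ the parenthesised term is non-positive while $[i<ae]-[i<0]-1 = -1$, forcing $\bt(R) - \bt(R') \leq -1$.

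The main technical point is correctly identifying the two rim hooks $R$ and $R'$: one must use that the reverse of a down-move is an up-move to see that $\beta$---defined from $\alpha$---also arises from $\gamma$ by removing a single $ae$-hook, and to extract the right formula for $\bt(R')$. Once that is in place and the symmetric-difference description of $B_\gamma$ is noted, the sign analysis is a short Iverson-bracket computation.
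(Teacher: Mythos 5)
Your proposal is correct and follows essentially the same route as the paper: factor through the intermediate partition obtained by the first swap, observe that it also arises from $\gamma$ by removing an $ae$-hook (reversing the down-move), and compare $\bt(R)$ with $\bt(R')$ via the dominance fact stated before Example \ref{ex:dominance}. The only cosmetic difference is that the paper compares the two bottoms by counting beads $\geq b-ae$ and $\geq b+i-ae$ in the common intermediate $\beta$-set, which makes the sign of $\bt(R)-\bt(R')$ immediate, whereas you push everything back to $B_\alpha$ and do an Iverson-bracket computation.
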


\begin{proof}
	Let $\delta$ be a partition whose $\beta$-set $B_{\delta}$ is obtained from $B_{\alpha}$ by the swap of $b-ae$ and $b$, which is an up-move. By our assumption, it is also obtained from $B_{\gamma}$ by the swap of $b+i-ae$ and $b+i$, which is also an up-move.
	
	Using the correspondence between $ae$-hooks of a partition and beads $b'$ of its $\beta$-set such that $b'-ae$ is an empty space (see the discussion before \Cref{le:proper e-rim}), we see that there is a rim hook $R$ of $\alpha$ and a rim hook $R'$ of $\gamma$ such their removals from the respective partitions result in $\delta$. Since $\bt(R)$ is the number of beads of $B_{\alpha}$ greater than $b-ae$, that is the number of beads of $B_{\delta}$ greater or equal to $b-ae$ and, similarly, $\bt(R')$ is the number of beads of $B_{\delta}$ greater or equal to $b+i-ae$, we have $\bt(R)>\bt(R')$ if $i>0$ and $\bt(R)<\bt(R')$ if $i<0$. Since $\alpha$ and $\gamma$ have equal sizes, we deduce the statement using the simple fact about the dominance order mentioned before \Cref{ex:dominance}. 
\end{proof}  

While, if $i$ and $j$ both lie in $B$, the swap of $i$ and $j$ does nothing to $B$, one should think of this operation as swapping two \textit{mobile beads} at positions $i$ and $j$ on an abacus. Similarly, if $i$ and $j$ do not lie in $B$, the swap of $i$ and $j$ is a swap of two \textit{mobile empty spaces}. In general, the swap of $i$ and $j$ should be thought of as a swap of two \textit{mobile objects}. For instance, if $i\notin B$ and we perform a swap of $i$ and $j$ and then a swap of $j$ and $k$, then the mobile empty space at position $i$ of $B$ ends up at position $k$.

We can now define the first algorithm \textit{A1}. Let $B$ be a $\beta$-set of a partition, $b$ be its bead and $b-ae$ be its empty space (with $a>0$). The algorithm A1 applied to $B$, $b$ and $b-ae$ proceeds as follows. If there is no bead between $b-ae$ and $b-1$, we simply swap $b-ae$ and $b$ and return the new set. Otherwise, in turns, for $i= 0,1,2,\dots$ we swap $b+i-ae$ and $b+i$ of $B$ and then if the total number of performed up-moves is congruent to the total number of performed down-moves modulo $d$, we terminate the process. By discreteness of the process and the fact that the first swap is an up-move, at the end the number of performed up-moves either equals the number of performed down-moves or is greater by $d$. An example of this algorithm is in \Cref{fig:A1}.

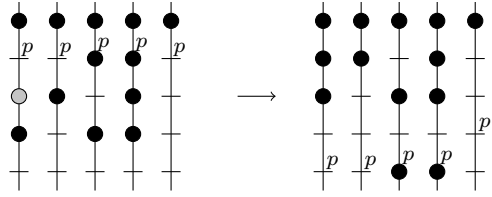
\begin{figure}[h]
	\centering
	\begin{tikzpicture}[x=0.5cm, y=0.5cm]
		\begin{pgfonlayer}{nodelayer}
			\node [style=none] (0) at (1.25, 3.5) {};
			\node [style=none] (1) at (2.25, 3.5) {};
			\node [style=none] (2) at (3.25, 3.5) {};
			\node [style=none] (3) at (4.25, 3.5) {};
			\node [style=none] (4) at (5.25, 3.5) {};
			\node [style=none] (5) at (1.25, -1.5) {};
			\node [style=none] (6) at (2.25, -1.5) {};
			\node [style=none] (7) at (3.25, -1.5) {};
			\node [style=none] (8) at (4.25, -1.5) {};
			\node [style=none] (9) at (5.25, -1.5) {};
			\node [style=Empty node] (11) at (2.25, 3) {};
			\node [style=Empty node] (12) at (3.25, 3) {};
			\node [style=Empty node] (13) at (4.25, 3) {};
			\node [style=Empty node] (15) at (1.25, 3) {};
			\node [style=Empty node] (18) at (3.25, 2) {};
			\node [style=none] (19) at (3, 1) {};
			\node [style=none] (20) at (3.5, 1) {};
			\node [style=none] (21) at (5, 2) {};
			\node [style=none] (22) at (5.5, 2) {};
			\node [style=none] (23) at (5, 1) {};
			\node [style=none] (24) at (5.5, 1) {};
			\node [style=none] (25) at (2, 2) {};
			\node [style=none] (26) at (2.5, 2) {};
			\node [style=none] (27) at (5.5, 0) {};
			\node [style=none] (28) at (5, 0) {};
			\node [style=none] (29) at (4.5, -1) {};
			\node [style=none] (30) at (4, -1) {};
			\node [style=none] (31) at (1.5, -1) {};
			\node [style=none] (32) at (1, -1) {};
			\node [style=none] (33) at (2.5, 0) {};
			\node [style=none] (34) at (2, 0) {};
			\node [style=none] (35) at (2.5, -1) {};
			\node [style=none] (36) at (2, -1) {};
			\node [style=none] (37) at (3.5, -1) {};
			\node [style=none] (38) at (3, -1) {};
			\node [style=Empty node] (42) at (1.25, 0) {};
			\node [style=Empty node] (44) at (2.25, 1) {};
			\node [style=Empty node] (45) at (3.25, 0) {};
			\node [style=Empty node] (46) at (4.25, 2) {};
			\node [style=Empty node] (47) at (5.25, 3) {};
			\node [style=Empty node] (48) at (4.25, 1) {};
			\node [style=Empty node] (49) at (4.25, 0) {};
			\node [style=none] (50) at (1, 2) {};
			\node [style=none] (51) at (1.5, 2) {};
			\node [style=none] (52) at (5, -1) {};
			\node [style=none] (53) at (5.5, -1) {};
			\node [style=Lead] (54) at (1.25, 1) {};
			\node [style=none] (55) at (7, 1) {};
			\node [style=none] (56) at (8, 1) {};
			\node [style=none] (57) at (9.25, 3.5) {};
			\node [style=none] (58) at (10.25, 3.5) {};
			\node [style=none] (59) at (11.25, 3.5) {};
			\node [style=none] (60) at (12.25, 3.5) {};
			\node [style=none] (61) at (13.25, 3.5) {};
			\node [style=none] (62) at (9.25, -1.5) {};
			\node [style=none] (63) at (10.25, -1.5) {};
			\node [style=none] (64) at (11.25, -1.5) {};
			\node [style=none] (65) at (12.25, -1.5) {};
			\node [style=none] (66) at (13.25, -1.5) {};
			\node [style=Empty node] (67) at (10.25, 3) {};
			\node [style=Empty node] (68) at (11.25, 3) {};
			\node [style=Empty node] (69) at (12.25, 3) {};
			\node [style=Empty node] (70) at (9.25, 3) {};
			\node [style=Empty node] (71) at (11.25, 1) {};
			\node [style=none] (72) at (11, 2) {};
			\node [style=none] (73) at (11.5, 2) {};
			\node [style=none] (74) at (13, 2) {};
			\node [style=none] (75) at (13.5, 2) {};
			\node [style=none] (76) at (13, 1) {};
			\node [style=none] (77) at (13.5, 1) {};
			\node [style=none] (78) at (10, 1) {};
			\node [style=none] (79) at (10.5, 1) {};
			\node [style=none] (80) at (13.5, 0) {};
			\node [style=none] (81) at (13, 0) {};
			\node [style=none] (82) at (12.5, 0) {};
			\node [style=none] (83) at (12, 0) {};
			\node [style=none] (84) at (9.5, -1) {};
			\node [style=none] (85) at (9, -1) {};
			\node [style=none] (86) at (10.5, 0) {};
			\node [style=none] (87) at (10, 0) {};
			\node [style=none] (88) at (10.5, -1) {};
			\node [style=none] (89) at (10, -1) {};
			\node [style=none] (90) at (11.5, 0) {};
			\node [style=none] (91) at (11, 0) {};
			\node [style=Empty node] (92) at (9.25, 1) {};
			\node [style=Empty node] (93) at (10.25, 2) {};
			\node [style=Empty node] (94) at (11.25, -1) {};
			\node [style=Empty node] (95) at (12.25, 2) {};
			\node [style=Empty node] (96) at (13.25, 3) {};
			\node [style=Empty node] (97) at (12.25, 1) {};
			\node [style=Empty node] (98) at (12.25, -1) {};
			\node [style=none] (99) at (9, 0) {};
			\node [style=none] (100) at (9.5, 0) {};
			\node [style=none] (101) at (13, -1) {};
			\node [style=none] (102) at (13.5, -1) {};
			\node [style=Empty node] (103) at (9.25, 2) {};
			\node [style=none] (104) at (1.475, 2.25) {$\scriptstyle p$};
			\node [style=none] (105) at (2.475, 2.25) {$\scriptstyle p$};
			\node [style=none] (106) at (3.475, 2.375) {$\scriptstyle p$};
			\node [style=none] (107) at (4.475, 2.375) {$\scriptstyle p$};
			\node [style=none] (108) at (5.475, 2.25) {$\scriptstyle p$};
			\node [style=none] (109) at (9.5, -0.75) {$\scriptstyle p$};
			\node [style=none] (110) at (10.5, -0.75) {$\scriptstyle p$};
			\node [style=none] (111) at (11.5, -0.625) {$\scriptstyle p$};
			\node [style=none] (112) at (12.5, -0.625) {$\scriptstyle p$};
			\node [style=none] (113) at (13.5, 0.25) {$\scriptstyle p$};
		\end{pgfonlayer}
		\begin{pgfonlayer}{edgelayer}
			\draw (0.center) to (5.center);
			\draw (1.center) to (6.center);
			\draw (3.center) to (8.center);
			\draw (4.center) to (9.center);
			\draw (19.center) to (20.center);
			\draw (21.center) to (22.center);
			\draw (23.center) to (24.center);
			\draw (25.center) to (26.center);
			\draw (28.center) to (27.center);
			\draw (30.center) to (29.center);
			\draw (32.center) to (31.center);
			\draw (34.center) to (33.center);
			\draw (36.center) to (35.center);
			\draw (38.center) to (37.center);
			\draw (2.center) to (7.center);
			\draw (50.center) to (51.center);
			\draw (52.center) to (53.center);
			\draw [style=Move it] (55.center) to (56.center);
			\draw (57.center) to (62.center);
			\draw (58.center) to (63.center);
			\draw (60.center) to (65.center);
			\draw (61.center) to (66.center);
			\draw (72.center) to (73.center);
			\draw (74.center) to (75.center);
			\draw (76.center) to (77.center);
			\draw (78.center) to (79.center);
			\draw (81.center) to (80.center);
			\draw (83.center) to (82.center);
			\draw (85.center) to (84.center);
			\draw (87.center) to (86.center);
			\draw (89.center) to (88.center);
			\draw (91.center) to (90.center);
			\draw (59.center) to (64.center);
			\draw (99.center) to (100.center);
			\draw (101.center) to (102.center);
		\end{pgfonlayer}
	\end{tikzpicture}
	\caption{Let $e=5$ and $d=3$. The picture shows the effect of the algorithm A1 applied to the first $\beta$-set with $b$ given by the highlighted bead and $a=1$. There are $14$ swaps. Three of them are up-moves, performed for $i=0,1$ and $5$ and three are down-moves, performed for $i=2,12$ and $13$. One can see that all the swaps can be performed runner-wise. Moreover, on each runner we have a mobile object which propagates downwards during the algorithm performed runner-wise. The propagating mobile objects are denoted by $p$.}
	\label{fig:A1}
\end{figure}

If the algorithm terminates, then, since only finitely many swaps were made, the final set is a $\beta$-set of a partition with the same $e$-core as the underlying partition of the initial set $B$. And it is the canonical $\beta$-set if $B$ is.

It is clear that the swaps performed during the algorithm can be done runner-wise on the James abacus with $ae$ runners (if the algorithm terminates). That is, we can firstly, in the original order, perform all swaps on the runner of $b$, that is, swaps of $b+iae-ae$ and $b+iae$ (with $i\geq 0$), then, in the original order, perform all swaps on the runner of $b+1$, that is, swaps of $b+1+iae-ae$ and $b+1+iae$ (with $i\geq 0$) and so on until we perform all swaps on the runner of $b+ae-1$. Thinking of the swaps performed during the algorithm as swaps of mobile objects, for any $0\leq r < ae$, the mobile object at position $b+r-ae$ of $B$ \textit{propagates} downwards on its runner. For example, the mobile empty space at position $b-ae$ of $B$ moves to position $b$, then to $b+ae$ and so on; see \Cref{fig:A1}. Hence, on each of the $ae$ runners we cannot perform both up- and down-moves. In \Cref{fig:A1}, only up-moves are performed on runners $0$ and $1$ and only down-moves are performed on runners $2$ and $3$.

Now, it is clear, that if the algorithm A1 terminates, then we can perform the up- and down-moves performed in this algorithm in any order with the same result as long as the relative order of up-moves and the relative order of down-moves are preserved; see \Cref{fig:UD}.

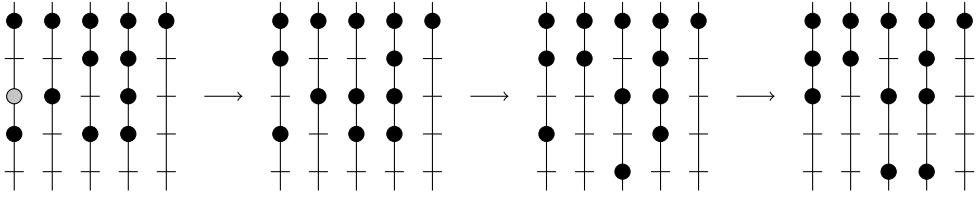
\begin{figure}[h]
	\centering
	\begin{tikzpicture}[x=0.5cm, y=0.5cm]
		\begin{pgfonlayer}{nodelayer}
			\node [style=none] (0) at (2.25, 3.5) {};
			\node [style=none] (1) at (3.25, 3.5) {};
			\node [style=none] (2) at (4.25, 3.5) {};
			\node [style=none] (3) at (5.25, 3.5) {};
			\node [style=none] (4) at (6.25, 3.5) {};
			\node [style=none] (5) at (2.25, -1.5) {};
			\node [style=none] (6) at (3.25, -1.5) {};
			\node [style=none] (7) at (4.25, -1.5) {};
			\node [style=none] (8) at (5.25, -1.5) {};
			\node [style=none] (9) at (6.25, -1.5) {};
			\node [style=Empty node] (11) at (3.25, 3) {};
			\node [style=Empty node] (12) at (4.25, 3) {};
			\node [style=Empty node] (13) at (5.25, 3) {};
			\node [style=Empty node] (15) at (2.25, 3) {};
			\node [style=Empty node] (18) at (4.25, 2) {};
			\node [style=none] (19) at (4, 1) {};
			\node [style=none] (20) at (4.5, 1) {};
			\node [style=none] (21) at (6, 2) {};
			\node [style=none] (22) at (6.5, 2) {};
			\node [style=none] (23) at (6, 1) {};
			\node [style=none] (24) at (6.5, 1) {};
			\node [style=none] (25) at (3, 2) {};
			\node [style=none] (26) at (3.5, 2) {};
			\node [style=none] (27) at (6.5, 0) {};
			\node [style=none] (28) at (6, 0) {};
			\node [style=none] (29) at (5.5, -1) {};
			\node [style=none] (30) at (5, -1) {};
			\node [style=none] (31) at (2.5, -1) {};
			\node [style=none] (32) at (2, -1) {};
			\node [style=none] (33) at (3.5, 0) {};
			\node [style=none] (34) at (3, 0) {};
			\node [style=none] (35) at (3.5, -1) {};
			\node [style=none] (36) at (3, -1) {};
			\node [style=none] (37) at (4.5, -1) {};
			\node [style=none] (38) at (4, -1) {};
			\node [style=Empty node] (42) at (2.25, 0) {};
			\node [style=Empty node] (44) at (3.25, 1) {};
			\node [style=Empty node] (45) at (4.25, 0) {};
			\node [style=Empty node] (46) at (5.25, 2) {};
			\node [style=Empty node] (47) at (6.25, 3) {};
			\node [style=Empty node] (48) at (5.25, 1) {};
			\node [style=Empty node] (49) at (5.25, 0) {};
			\node [style=none] (50) at (2, 2) {};
			\node [style=none] (51) at (2.5, 2) {};
			\node [style=none] (52) at (6, -1) {};
			\node [style=none] (53) at (6.5, -1) {};
			\node [style=Lead] (54) at (2.25, 1) {};
			\node [style=none] (55) at (7.25, 1) {};
			\node [style=none] (56) at (8.25, 1) {};
			\node [style=none] (57) at (9.25, 3.5) {};
			\node [style=none] (58) at (10.25, 3.5) {};
			\node [style=none] (59) at (11.25, 3.5) {};
			\node [style=none] (60) at (12.25, 3.5) {};
			\node [style=none] (61) at (13.25, 3.5) {};
			\node [style=none] (62) at (9.25, -1.5) {};
			\node [style=none] (63) at (10.25, -1.5) {};
			\node [style=none] (64) at (11.25, -1.5) {};
			\node [style=none] (65) at (12.25, -1.5) {};
			\node [style=none] (66) at (13.25, -1.5) {};
			\node [style=Empty node] (67) at (10.25, 3) {};
			\node [style=Empty node] (68) at (11.25, 3) {};
			\node [style=Empty node] (69) at (12.25, 3) {};
			\node [style=Empty node] (70) at (9.25, 3) {};
			\node [style=Empty node] (71) at (11.25, 1) {};
			\node [style=none] (72) at (11, 2) {};
			\node [style=none] (73) at (11.5, 2) {};
			\node [style=none] (74) at (13, 2) {};
			\node [style=none] (75) at (13.5, 2) {};
			\node [style=none] (76) at (13, 1) {};
			\node [style=none] (77) at (13.5, 1) {};
			\node [style=none] (78) at (10, 2) {};
			\node [style=none] (79) at (10.5, 2) {};
			\node [style=none] (80) at (13.5, 0) {};
			\node [style=none] (81) at (13, 0) {};
			\node [style=none] (82) at (12.5, -1) {};
			\node [style=none] (83) at (12, -1) {};
			\node [style=none] (84) at (9.5, -1) {};
			\node [style=none] (85) at (9, -1) {};
			\node [style=none] (86) at (10.5, 0) {};
			\node [style=none] (87) at (10, 0) {};
			\node [style=none] (88) at (10.5, -1) {};
			\node [style=none] (89) at (10, -1) {};
			\node [style=none] (90) at (11.5, -1) {};
			\node [style=none] (91) at (11, -1) {};
			\node [style=Empty node] (92) at (9.25, 0) {};
			\node [style=Empty node] (93) at (10.25, 1) {};
			\node [style=Empty node] (94) at (11.25, 0) {};
			\node [style=Empty node] (95) at (12.25, 2) {};
			\node [style=Empty node] (96) at (13.25, 3) {};
			\node [style=Empty node] (97) at (12.25, 1) {};
			\node [style=Empty node] (98) at (12.25, 0) {};
			\node [style=none] (99) at (9, 1) {};
			\node [style=none] (100) at (9.5, 1) {};
			\node [style=none] (101) at (13, -1) {};
			\node [style=none] (102) at (13.5, -1) {};
			\node [style=Empty node] (103) at (9.25, 2) {};
			\node [style=none] (104) at (14.25, 1) {};
			\node [style=none] (105) at (15.25, 1) {};
			\node [style=none] (106) at (16.25, 3.5) {};
			\node [style=none] (107) at (17.25, 3.5) {};
			\node [style=none] (108) at (18.25, 3.5) {};
			\node [style=none] (109) at (19.25, 3.5) {};
			\node [style=none] (110) at (20.25, 3.5) {};
			\node [style=none] (111) at (16.25, -1.5) {};
			\node [style=none] (112) at (17.25, -1.5) {};
			\node [style=none] (113) at (18.25, -1.5) {};
			\node [style=none] (114) at (19.25, -1.5) {};
			\node [style=none] (115) at (20.25, -1.5) {};
			\node [style=Empty node] (116) at (17.25, 3) {};
			\node [style=Empty node] (117) at (18.25, 3) {};
			\node [style=Empty node] (118) at (19.25, 3) {};
			\node [style=Empty node] (119) at (16.25, 3) {};
			\node [style=Empty node] (120) at (18.25, 1) {};
			\node [style=none] (121) at (18, 2) {};
			\node [style=none] (122) at (18.5, 2) {};
			\node [style=none] (123) at (20, 2) {};
			\node [style=none] (124) at (20.5, 2) {};
			\node [style=none] (125) at (20, 1) {};
			\node [style=none] (126) at (20.5, 1) {};
			\node [style=none] (127) at (17, 1) {};
			\node [style=none] (128) at (17.5, 1) {};
			\node [style=none] (129) at (20.5, 0) {};
			\node [style=none] (130) at (20, 0) {};
			\node [style=none] (131) at (19.5, -1) {};
			\node [style=none] (132) at (19, -1) {};
			\node [style=none] (133) at (16.5, -1) {};
			\node [style=none] (134) at (16, -1) {};
			\node [style=none] (135) at (17.5, 0) {};
			\node [style=none] (136) at (17, 0) {};
			\node [style=none] (137) at (17.5, -1) {};
			\node [style=none] (138) at (17, -1) {};
			\node [style=none] (139) at (18.5, 0) {};
			\node [style=none] (140) at (18, 0) {};
			\node [style=Empty node] (141) at (16.25, 0) {};
			\node [style=Empty node] (142) at (17.25, 2) {};
			\node [style=Empty node] (143) at (18.25, -1) {};
			\node [style=Empty node] (144) at (19.25, 2) {};
			\node [style=Empty node] (145) at (20.25, 3) {};
			\node [style=Empty node] (146) at (19.25, 1) {};
			\node [style=Empty node] (147) at (19.25, 0) {};
			\node [style=none] (148) at (16, 1) {};
			\node [style=none] (149) at (16.5, 1) {};
			\node [style=none] (150) at (20, -1) {};
			\node [style=none] (151) at (20.5, -1) {};
			\node [style=Empty node] (152) at (16.25, 2) {};
			\node [style=none] (153) at (21.25, 1) {};
			\node [style=none] (154) at (22.25, 1) {};
			\node [style=none] (155) at (23.25, 3.5) {};
			\node [style=none] (156) at (24.25, 3.5) {};
			\node [style=none] (157) at (25.25, 3.5) {};
			\node [style=none] (158) at (26.25, 3.5) {};
			\node [style=none] (159) at (27.25, 3.5) {};
			\node [style=none] (160) at (23.25, -1.5) {};
			\node [style=none] (161) at (24.25, -1.5) {};
			\node [style=none] (162) at (25.25, -1.5) {};
			\node [style=none] (163) at (26.25, -1.5) {};
			\node [style=none] (164) at (27.25, -1.5) {};
			\node [style=Empty node] (165) at (24.25, 3) {};
			\node [style=Empty node] (166) at (25.25, 3) {};
			\node [style=Empty node] (167) at (26.25, 3) {};
			\node [style=Empty node] (168) at (23.25, 3) {};
			\node [style=Empty node] (169) at (25.25, 1) {};
			\node [style=none] (170) at (25, 2) {};
			\node [style=none] (171) at (25.5, 2) {};
			\node [style=none] (172) at (27, 2) {};
			\node [style=none] (173) at (27.5, 2) {};
			\node [style=none] (174) at (27, 1) {};
			\node [style=none] (175) at (27.5, 1) {};
			\node [style=none] (176) at (24, 1) {};
			\node [style=none] (177) at (24.5, 1) {};
			\node [style=none] (178) at (27.5, 0) {};
			\node [style=none] (179) at (27, 0) {};
			\node [style=none] (180) at (26.5, 0) {};
			\node [style=none] (181) at (26, 0) {};
			\node [style=none] (182) at (23.5, -1) {};
			\node [style=none] (183) at (23, -1) {};
			\node [style=none] (184) at (24.5, 0) {};
			\node [style=none] (185) at (24, 0) {};
			\node [style=none] (186) at (24.5, -1) {};
			\node [style=none] (187) at (24, -1) {};
			\node [style=none] (188) at (25.5, 0) {};
			\node [style=none] (189) at (25, 0) {};
			\node [style=Empty node] (190) at (23.25, 1) {};
			\node [style=Empty node] (191) at (24.25, 2) {};
			\node [style=Empty node] (192) at (25.25, -1) {};
			\node [style=Empty node] (193) at (26.25, 2) {};
			\node [style=Empty node] (194) at (27.25, 3) {};
			\node [style=Empty node] (195) at (26.25, 1) {};
			\node [style=Empty node] (196) at (26.25, -1) {};
			\node [style=none] (197) at (23, 0) {};
			\node [style=none] (198) at (23.5, 0) {};
			\node [style=none] (199) at (27, -1) {};
			\node [style=none] (200) at (27.5, -1) {};
			\node [style=Empty node] (201) at (23.25, 2) {};
		\end{pgfonlayer}
		\begin{pgfonlayer}{edgelayer}
			\draw (0.center) to (5.center);
			\draw (1.center) to (6.center);
			\draw (3.center) to (8.center);
			\draw (4.center) to (9.center);
			\draw (19.center) to (20.center);
			\draw (21.center) to (22.center);
			\draw (23.center) to (24.center);
			\draw (25.center) to (26.center);
			\draw (28.center) to (27.center);
			\draw (30.center) to (29.center);
			\draw (32.center) to (31.center);
			\draw (34.center) to (33.center);
			\draw (36.center) to (35.center);
			\draw (38.center) to (37.center);
			\draw (2.center) to (7.center);
			\draw (50.center) to (51.center);
			\draw (52.center) to (53.center);
			\draw [style=Move it] (55.center) to (56.center);
			\draw (57.center) to (62.center);
			\draw (58.center) to (63.center);
			\draw (60.center) to (65.center);
			\draw (61.center) to (66.center);
			\draw (72.center) to (73.center);
			\draw (74.center) to (75.center);
			\draw (76.center) to (77.center);
			\draw (78.center) to (79.center);
			\draw (81.center) to (80.center);
			\draw (83.center) to (82.center);
			\draw (85.center) to (84.center);
			\draw (87.center) to (86.center);
			\draw (89.center) to (88.center);
			\draw (91.center) to (90.center);
			\draw (59.center) to (64.center);
			\draw (99.center) to (100.center);
			\draw (101.center) to (102.center);
			\draw [style=Move it] (104.center) to (105.center);
			\draw (106.center) to (111.center);
			\draw (107.center) to (112.center);
			\draw (109.center) to (114.center);
			\draw (110.center) to (115.center);
			\draw (121.center) to (122.center);
			\draw (123.center) to (124.center);
			\draw (125.center) to (126.center);
			\draw (127.center) to (128.center);
			\draw (130.center) to (129.center);
			\draw (132.center) to (131.center);
			\draw (134.center) to (133.center);
			\draw (136.center) to (135.center);
			\draw (138.center) to (137.center);
			\draw (140.center) to (139.center);
			\draw (108.center) to (113.center);
			\draw (148.center) to (149.center);
			\draw (150.center) to (151.center);
			\draw [style=Move it] (153.center) to (154.center);
			\draw (155.center) to (160.center);
			\draw (156.center) to (161.center);
			\draw (158.center) to (163.center);
			\draw (159.center) to (164.center);
			\draw (170.center) to (171.center);
			\draw (172.center) to (173.center);
			\draw (174.center) to (175.center);
			\draw (176.center) to (177.center);
			\draw (179.center) to (178.center);
			\draw (181.center) to (180.center);
			\draw (183.center) to (182.center);
			\draw (185.center) to (184.center);
			\draw (187.center) to (186.center);
			\draw (189.center) to (188.center);
			\draw (157.center) to (162.center);
			\draw (197.center) to (198.center);
			\draw (199.center) to (200.center);
		\end{pgfonlayer}
	\end{tikzpicture}
	\caption{Let $U_1, U_2, U_3$ be the up-moves from \Cref{fig:A1} (performed in this order) and $D_1, D_2, D_3$ be the down-moves. We can perform the up- and down-moves in order $U_1,D_1,U_2,D_2,U_3,D_3$ with the same effect. The intermediate $\beta$-sets after performing the pairs of moves $U_j$ and $D_j$ with $j=1,2,3$ are displayed above. Note that the underlying partitions: $(6^2,5,4,3^2,2^2), (6^2,5,4^3,2), (10, 7, 5,4^2, 1)$ and $(11^2,3^2,2,1)$ increase in the dominance order.}
	\label{fig:UD}
\end{figure}

We can now show the first property of the algorithm A1.

\begin{lemma}\label{le:a1-order}
	Let $B$ be a $\beta$-set of a partition $\lambda$, $b$ be its bead and $b-ae$ be its empty space (with $a>0$). Then the algorithm A1 applied to $B$, $b$ and $b-ae$ terminates and returns a $\beta$-set of a partition which is either less in size than $\lambda$ or which has the same size as $\lambda$ and is greater in the dominance order. 
\end{lemma}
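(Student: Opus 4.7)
My plan is to split the argument by which branch of the algorithm is executed. In the branch where there are no beads between $b-ae$ and $b-1$, the single swap of $b-ae$ and $b$ is an up-move and corresponds to the removal of an $ae$-hook of $\lambda$, so the output is a $\beta$-set of a partition of size $|\lambda|-ae$; this handles one half of the conclusion. For the loop branch, I first establish termination through the $ae$-runner description of the algorithm: the swap at global step $i$ acts on runner $i\bmod ae$ of the $ae$-abacus and propagates a single mobile object upwards along that runner. Because there is a bead strictly between $b-ae$ and $b-1$, at least one runner carries a propagating bead, and since $B$ is bounded above this runner has infinitely many empty spaces above the bead's starting position; letting the loop run indefinitely therefore produces infinitely many down-moves, so $\mathrm{up}-\mathrm{down}\to-\infty$. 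As this quantity starts at $1$ after the $i=0$ swap and changes by $\pm 1$ or $0$ at each step, it must first reach a multiple of $d$ at value $0$ or $d$ (by discreteness it cannot reach $-d$ without first passing through $0$). Termination at $\mathrm{up}-\mathrm{down}=d$ gives size change $-d\,ae<0$, again yielding the smaller-size conclusion.

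The remaining case $\mathrm{up}=\mathrm{down}=k\geq1$ leaves the size unchanged, so I must establish the dominance increase. Labelling the up-moves by order as $u_1<\cdots<u_k$ and the down-moves as $d_1<\cdots<d_k$, I would first prove $u_j<d_j$ for every $j$: $\mathrm{up}-\mathrm{down}\geq 1$ strictly before termination, so for $j<k$ the value $\mathrm{up}(d_j)$ is at least $j+1$, yielding $u_{j+1}\leq d_j$ and hence $u_j<d_j$; the case $j=k$ follows since termination requires $\mathrm{up}(d_k)=k$. The key structural observation is that on each runner of the $ae$-abacus the propagating mobile object has a fixed type, so the runners that carry up-moves and those that carry down-moves are disjoint. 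This causes every up-move to commute with every down-move and, by comparing states, shows that within a single runner the effective moves in propagation order act transparently on $B$: on a runner carrying up-moves, the $t$-th up-move shifts the $t$-th bead of $B$ above the runner's initial position down by $ae$, leaving strictly higher beads on that runner untouched, and a symmetric statement holds on down-move runners. Reordering the algorithm to the alternating sequence $u_1,d_1,u_2,d_2,\dots,u_k,d_k$ is therefore legitimate and yields the same output $\beta$-set.

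Writing $\mu^{(0)}=\lambda$ and $\mu^{(j)}$ for the partition reached after $u_1,d_1,\dots,u_j,d_j$, the structural description shows that in $\mu^{(j-1)}$ the position $b+u_j$ is still a bead and $b+u_j-ae$ is still empty (earlier up-moves on the same runner have moved only strictly lower beads), and similarly after performing $u_j$ the position $b+d_j-ae$ is a bead with $b+d_j$ empty. Applying \Cref{le:UD} with the bead $b+u_j$ and offset $i=d_j-u_j>0$ then yields $\mu^{(j)}\rhd\mu^{(j-1)}$; chaining over $j=1,\dots,k$ gives $\mu^{(k)}\rhd\lambda$, the desired conclusion. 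The main obstacle I expect is this last verification---that the reordered prefix leaves the four relevant positions in exactly the configuration demanded by \Cref{le:UD}---which requires carefully analysing how the propagating mobile object on each runner acts on $B$, separating genuine moves from no-ops and checking that later beads on a runner are undisturbed by earlier effective moves on the same runner.
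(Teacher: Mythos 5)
Your proposal follows essentially the same route as the paper's proof: dispose of the no-intermediate-bead branch directly, get termination from the runner-wise description on the $ae$-runner abacus, note that a surplus of $d$ up-moves drops the size by $dae$, and in the balanced case pair the $j$'th up-move with the $j$'th down-move, reorder them into alternating pairs using the disjointness of up-move runners and down-move runners, and apply \Cref{le:UD} to each pair with $i=d_j-u_j>0$. The one step that is not fully justified as written is the termination argument: ``infinitely many available down-moves'' does not by itself give $\mathrm{up}-\mathrm{down}\to-\infty$ (the difference could in principle oscillate if up-moves were also unbounded); you also need that only finitely many up-moves can ever occur, which follows because the mobile object at position $b+i$ has not moved before step $i$, so an up-move at step $i$ forces $b+i$ to be a bead of the original $B$, and $B$ is bounded above. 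With that one sentence added (and with the directional language ``upwards/above'' read as ``towards larger positions''), the argument matches the paper's.
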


\begin{proof}
	We can only focus on the case when there is a bead $b'$ between $b-ae$ and $b-1$, as the result is clear otherwise. Observe that for any $i\geq 0$ the mobile object at position $b+i$ of $B$ has not moved before the swap of $b+i-ae$ and $b+i$. Thus, there are only finitely many possible up-moves the algorithm can make as there are only finitely many beads of $B$ greater or equal to $b$. On the other hand, the (propagating) mobile bead $b'$ can be swapped with $b'+ ae$, then $b' +2ae$ and so on and from some point all these swaps will be down-moves. So there is no bound on the number of down-moves, and hence the algorithm terminates. Let $B'$ be the terminating set and let $\mu$ be the corresponding partition. If at the end the number of up-moves is the number of down-moves increased by $d$, then the $e$-weight of $\mu$ is the $e$-weight of $\lambda$ decreased by $da$, and hence $|\mu| < |\lambda|$.
	
	Now suppose that the number of the up-moves and the number of the down-moves agree and denote them by $n$. For $1\leq j\leq n$ let $U_j$ and $D_j$ be the $j$'th up-move and down-move, respectively. Since the algorithm did not terminate earlier, throughout the algorithm the number of up-moves was at least the number of down-moves; thus $U_j$ happened before $D_j$.
	
	As observed above, $B'$ can be obtained from $B$ by performing the up- and down-moves $U_1, D_1, \dots, U_n, D_n$ in this order. We will be done once we show that in this order of swaps the pair of swaps $U_j$ and $D_j$ increases the dominance order of the underlying partition; see \Cref{fig:UD}. If $\alpha$ and $\gamma$ are the underlying partitions before $U_j$ and after $D_j$, respectively, then \Cref{le:UD} applies to these partitions. Since $U_j$ happened before $D_j$ in the original order of the swaps, $i>0$ in \Cref{le:UD}, and thus $\gamma$ dominates $\alpha$, as required.
\end{proof} 

In the next result about the algorithm A1 we think of the algorithm as a permutation of three intervals of mobile objects on the James abacus of the starting $\beta$-set; see \Cref{fig:PermuteA1}. 

\begin{figure}[h]
	\centering
	\begin{tikzpicture}[x=0.5cm, y=0.5cm]
		\begin{pgfonlayer}{nodelayer}
			\node [style=none] (0) at (-5.5, 0) {};
			\node [style=none] (1) at (8.5, 0) {};
			\node [style=none] (2) at (-5, 0) {};
			\node [style=none] (3) at (-2, 0) {};
			\node [style=none] (4) at (2, 0) {};
			\node [style=none] (5) at (8, 0) {};
			\node [style=none] (6) at (-4.5, 0.25) {};
			\node [style=none] (7) at (-4.5, -0.25) {};
			\node [style=none] (8) at (-2.5, 0.25) {};
			\node [style=none] (9) at (-2.5, -0.25) {};
			\node [style=none] (10) at (-1.5, 0.25) {};
			\node [style=none] (11) at (-1.5, -0.25) {};
			\node [style=Empty node] (12) at (1.5, 0) {};
			\node [style=Empty node] (13) at (2.5, 0) {};
			\node [style=none] (14) at (7.5, 0.25) {};
			\node [style=none] (15) at (7.5, -0.25) {};
			\node [style=none] (16) at (-4.3, -0.5) {$\scriptstyle b-ae$};
			\node [style=none] (17) at (-3.5, 1.25) {$I_1$};
			\node [style=none] (18) at (0, 1.25) {$I_2$};
			\node [style=none] (19) at (-2.5, -0.5) {$\scriptstyle c$};
			\node [style=none] (20) at (-1.45, -0.5) {$\scriptstyle c+1$};
			\node [style=none] (21) at (1.5, -0.5) {$\scriptstyle b-1$};
			\node [style=none] (22) at (2.5, -0.5) {$\scriptstyle b$};
			\node [style=none] (23) at (7.5, -0.5) {$\scriptstyle b+t$};
			\node [style=none] (24) at (5, 1.25) {$I_3$};
			\node [style=none] (25) at (-5, -1.75) {};
			\node [style=none] (26) at (-5, 1.75) {};
			\node [style=none] (27) at (-4.5, 1.75) {};
			\node [style=none] (28) at (-4.5, -1.75) {};
			\node [style=none] (29) at (-2.5, 1.75) {};
			\node [style=none] (30) at (-1.5, 1.75) {};
			\node [style=none] (31) at (-2, 1.75) {};
			\node [style=none] (32) at (-2.5, -1.75) {};
			\node [style=none] (33) at (-2, -1.75) {};
			\node [style=none] (34) at (-1.5, -1.75) {};
			\node [style=none] (35) at (1.5, -1.75) {};
			\node [style=none] (36) at (2, -1.75) {};
			\node [style=none] (37) at (2.5, -1.75) {};
			\node [style=none] (38) at (1.5, 1.75) {};
			\node [style=none] (39) at (2, 1.75) {};
			\node [style=none] (40) at (2.5, 1.75) {};
			\node [style=none] (41) at (7.5, 1.75) {};
			\node [style=none] (42) at (8, 1.75) {};
			\node [style=none] (43) at (7.5, -1.75) {};
			\node [style=none] (44) at (8, -1.75) {};
			\node [style=none] (45) at (-1.5, 1.75) {};
			\node [style=none] (46) at (1.5, -2.5) {};
			\node [style=none] (47) at (1.5, -3.5) {};
			\node [style=none] (48) at (1.5, -3.5) {};
			\node [style=none] (49) at (-5.5, -6) {};
			\node [style=none] (50) at (8.5, -6) {};
			\node [style=none] (51) at (-5, -6) {};
			\node [style=none] (54) at (8, -6) {};
			\node [style=none] (55) at (5.5, -5.75) {};
			\node [style=none] (56) at (5.5, -6.25) {};
			\node [style=none] (57) at (0.5, -5.75) {};
			\node [style=none] (58) at (0.5, -6.25) {};
			\node [style=none] (59) at (1.5, -5.75) {};
			\node [style=none] (60) at (1.5, -6.25) {};
			\node [style=Empty node] (61) at (4.5, -6) {};
			\node [style=Empty node] (62) at (-4.5, -6) {};
			\node [style=none] (63) at (7.5, -5.75) {};
			\node [style=none] (64) at (7.5, -6.25) {};
			\node [style=none] (66) at (6.5, -4.75) {$I_1$};
			\node [style=none] (67) at (3, -4.75) {$I_2$};
			\node [style=none] (73) at (-2, -4.75) {$I_3$};
			\node [style=none] (74) at (-5, -7.75) {};
			\node [style=none] (75) at (-5, -4.25) {};
			\node [style=none] (76) at (-4.5, -4.25) {};
			\node [style=none] (77) at (-4.5, -7.75) {};
			\node [style=none] (78) at (0.5, -4.25) {};
			\node [style=none] (80) at (1, -4.25) {};
			\node [style=none] (81) at (0.5, -7.75) {};
			\node [style=none] (82) at (1, -7.75) {};
			\node [style=none] (83) at (1.5, -7.75) {};
			\node [style=none] (84) at (4.5, -7.75) {};
			\node [style=none] (85) at (5, -7.75) {};
			\node [style=none] (86) at (5.5, -7.75) {};
			\node [style=none] (87) at (4.5, -4.25) {};
			\node [style=none] (88) at (5, -4.25) {};
			\node [style=none] (89) at (5.5, -4.25) {};
			\node [style=none] (90) at (7.5, -4.25) {};
			\node [style=none] (91) at (8, -4.25) {};
			\node [style=none] (92) at (7.5, -7.75) {};
			\node [style=none] (93) at (8, -7.75) {};
			\node [style=none] (94) at (1.5, -4.25) {};
			\node [style=none] (95) at (-6.5, 0) {$B$};
			\node [style=none] (96) at (-6.5, -6) {$B'$};
			\node [style=none] (97) at (-5, 2.5) {};
			\node [style=none] (98) at (-2, 2.5) {};
			\node [style=none] (99) at (2, 2.5) {};
			\node [style=none] (100) at (8, 2.5) {};
			\node [style=none] (101) at (5, 3) {$t+1$};
			\node [style=none] (102) at (0, 3) {$ae-r_t-1$};
			\node [style=none] (103) at (-3.5, 3) {$r_t+1$};
			\node [style=none] (104) at (1.2, -3) {$\scriptstyle \psi$};
		\end{pgfonlayer}
		\begin{pgfonlayer}{edgelayer}
			\draw (5.center) to (1.center);
			\draw (2.center) to (0.center);
			\draw (14.center) to (15.center);
			\draw (10.center) to (11.center);
			\draw (8.center) to (9.center);
			\draw (6.center) to (7.center);
			\draw (2.center) to (5.center);
			\draw (27.center) to (26.center);
			\draw (26.center) to (25.center);
			\draw (25.center) to (28.center);
			\draw (29.center) to (45.center);
			\draw (32.center) to (34.center);
			\draw (31.center) to (33.center);
			\draw (38.center) to (40.center);
			\draw (35.center) to (37.center);
			\draw (39.center) to (36.center);
			\draw (41.center) to (42.center);
			\draw (44.center) to (43.center);
			\draw (42.center) to (44.center);
			\draw [style=Move it] (46.center) to (48.center);
			\draw (54.center) to (50.center);
			\draw (51.center) to (49.center);
			\draw (63.center) to (64.center);
			\draw (59.center) to (60.center);
			\draw (57.center) to (58.center);
			\draw (55.center) to (56.center);
			\draw (51.center) to (54.center);
			\draw (76.center) to (75.center);
			\draw (75.center) to (74.center);
			\draw (74.center) to (77.center);
			\draw (78.center) to (94.center);
			\draw (81.center) to (83.center);
			\draw (80.center) to (82.center);
			\draw (87.center) to (89.center);
			\draw (84.center) to (86.center);
			\draw (88.center) to (85.center);
			\draw (90.center) to (91.center);
			\draw (93.center) to (92.center);
			\draw (91.center) to (93.center);
			\draw [style=measuredots] (97.center) to (98.center);
			\draw [style=measuredots] (98.center) to (99.center);
			\draw [style=measuredots] (99.center) to (100.center);
		\end{pgfonlayer}
	\end{tikzpicture}
	\caption{In the picture, $b$ is a bead of $B$, $b-ae$ is an empty space of $B$ and the other integers can be either beads or empty spaces of $B$. Suppose that the last swap in the algorithm A1 applied to $B$, $b$ and $b-ae$ is the swap of $b+t-ae$ and $b+t$. If $0\leq r_t\leq ae-1$ is the remainder of $t$ modulo $ae$, then the propagating moving object at position $c=b+r_t-ae$ of $B$ moves to $b+t$ during A1. Hence the propagating moving objects lying in interval $I_1$ increase their positions by $ae+t-r_t$, while those in interval $I_2$ increase their positions only by $t-r_t$. For instance, in \Cref{fig:A1} where $e=5$, $a=1$ and $t=13$, the propagating moving objects on the first four runners increase their position by $15$ while the propagating moving object on the final runner increases its position only by $10$. Finally, the moving objects in interval $I_3$ decrease their positions by $ae$ and the remaining moving objects are not affected by A1. Thus the outcome of A1, set $B'$, can be obtained by applying permutation $\psi$ of the three intervals $I_1, I_2$ and $I_3$ as in the picture.}
	\label{fig:PermuteA1}
\end{figure}
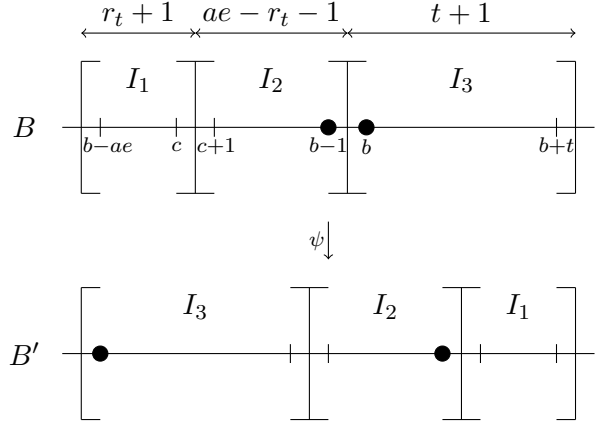

\begin{lemma}\label{le:a1-runner}
 	Let $B$ be a $\beta$-set of a partition, $b$ be its bead and $b-ae$ be its empty space (with $a>0$) such that $d\mid \emp_B(b-ae, b)$. The algorithm A1 applied to $B$, $b$ and $b-ae$ preserves the $d$-runner matrix.
\end{lemma}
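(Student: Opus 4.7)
The plan is to describe A1 as a single permutation $\pi$ of $[b-ae, b+t]$ and then track the change in emptiness for each bead modulo $d$. As noted before Figure \ref{fig:PermuteA1}, A1 rigidly translates the blocks $I_1 = [b-ae, b+r_t-ae]$, $I_2 = [b+r_t-ae+1, b-1]$ and $I_3 = [b, b+t]$ by $ae+t-r_t$, $t-r_t$ and $-ae$ respectively, where $t$ is the index of the last swap and $r_t$ is the remainder of $t$ modulo $ae$. Each shift is a multiple of $ae$ (and hence of $e$), so bead runners are preserved. The edge case in which $[b-ae+1, b-1]$ contains no bead can be handled directly: only the bead at $b$ moves (to $b-ae$), and its emptiness changes by $-ae = -\emp_B(b-ae, b)$, which is divisible by $d$ by hypothesis.

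For the main case, write $N_j = \emp_B(I_j)$. For a bead at position $p \in I_j$ with image $p' = \pi(p)$, a direct computation---splitting $\emp_{B'}(p')$ into contributions from the rigidly translated images of $I_3$, $I_2$ and $I_1$ lying weakly below $p'$---yields the emptiness change
\[
\emp_{B'}(p') - \emp_B(p) \;=\; -(N_1+N_2), \quad N_3 - N_1, \quad \text{or} \quad N_3 + N_2
\]
according as $p \in I_3,\, I_2,$ or $I_1$. Beads outside $[b-ae, b+t]$ do not change emptiness, since $\pi$ preserves the total empty count in $[b-ae, b+t]$. The $I_3$-case is immediate from the hypothesis $d \mid \emp_B(b-ae, b) = N_1 + N_2$, so it will suffice to establish $d \mid N_3 - N_1$; the $I_1$-case then follows from $N_3 + N_2 = (N_3-N_1) + (N_1+N_2)$.

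To obtain $d \mid N_3 - N_1$, I will exploit A1's termination condition. On the $ae$-runner abacus, the mobile object at $b+r-ae$ propagates on runner $r$ without ever changing type, so every actual swap on runner $r$ is a down-move if $b+r-ae \in B$ and an up-move if $b+r-ae \notin B$. Writing $t = qae + r_t$, runners with $r \leq r_t$ involve $q+1$ positions of $I_3$ while those with $r > r_t$ involve $q$; summing the contributions over all runners will give
\[
U - D \;=\; (q+1)N_1 + q N_2 - N_3,
\]
where $U$ and $D$ are the total up- and down-move counts. The termination condition $U \equiv D \pmod d$, combined with $N_2 \equiv -N_1 \pmod d$ from the hypothesis, collapses this to $N_1 \equiv N_3 \pmod d$, as required. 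The main obstacle is the runner-by-runner bookkeeping in this last formula: one must note that the mobile object's type is invariant along each runner and separately handle the cases $r \leq r_t$ and $r > r_t$. Once this is in place, the rest is a short congruence manipulation.
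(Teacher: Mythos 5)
Your proposal is correct, and its overall skeleton coincides with the paper's proof: both handle the no-intermediate-bead case directly, both describe the terminating set as the image of $B$ under a rigid permutation of the three intervals $I_1,I_2,I_3$, both record the same emptiness-change formula in terms of the interval empty-counts ($E_i$ in the paper, your $N_i$), and both reduce the problem to the two divisibilities $d\mid N_1+N_2$ (immediate from the hypothesis, since $\emp_B(b-ae,b)=\emp_B(b-ae,b-1)$) and $d\mid N_2+N_3$. Where you genuinely diverge is in how the second divisibility is obtained. The paper argues dynamically: it tracks the $d$-emptiness of the single propagating mobile empty space starting at $b-ae$ through every step of A1, checking that it changes by $-1$ modulo $d$ at each up-move and by $+1$ at each down-move (the hypothesis enters at the steps with $ae\mid i$), and then invokes the termination condition $U\equiv D\pmod d$ to conclude that the initial and final emptinesses of that mobile object agree modulo $d$. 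You instead argue statically: since the propagating object on each of the $ae$ runners never changes type, all nontrivial swaps on a given runner are of one kind, and summing over runners gives the exact identity $U-D=(q+1)N_1+qN_2-N_3$, after which the termination condition and $N_2\equiv -N_1$ collapse to $N_1\equiv N_3\pmod d$. I verified your identity: with $m_r$ swaps on runner $r$ and $e_r$ empty spaces of $B$ among the fresh positions met on runner $r$, one has $U-D=\sum_{r:\,b+r-ae\notin B}m_r-\sum_r e_r$, which is exactly your formula. Your closed-form count is arguably cleaner and avoids the step-by-step case analysis of the paper's tracking argument; the paper's version has the advantage that the identical tracking idea is reused essentially verbatim in the proof of the companion statement for the algorithm A2.
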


\begin{proof}
	Denote by $B'$ the set returned by the algorithm A1 applied to $B$, $b$ and $b-ae$. We need to show that the $d$-runner matrices of $B$ and $B'$ agree.
	
	The statement is clear if there is no bead between $b-ae$ and $b-1$ since then all emptinesses of beads in $B'$ apart from the emptiness of $b-ae$ remain as in $B$, and the emptiness of $b-ae$ in $B'$ and the emptiness of $b$ in $B$ agree modulo $d$ as $d\mid \emp_B(b-ae,b)$. So suppose that there is a bead between $b-ae$ and $b-1$.
	
	Suppose that A1 terminates with the swap of $b+t-ae$ and $b+t$ (with $t>0$) and for $0\leq i\leq t$ let $0\leq r_i<ae$ be the remainder of $i$ modulo $ae$. We define three intervals of integers
	\begin{align*}
		I_1 &=\left\lbrace v\in \Z: b-ae\leq v\leq b+r_t-ae \right\rbrace,\\
		I_2 &=\left\lbrace v\in \Z: b+r_t-ae< v< b \right\rbrace,\\
		I_3 &=\left\lbrace v\in \Z: b\leq v\leq b+t \right\rbrace,
	\end{align*}
	as in the first diagram in \Cref{fig:PermuteA1}, and write $E_i$ for the number of empty spaces of $B$ in $I_i$ for $i=1,2,3$. For example, $E_2 = \emp_B(b+r_t-ae+1, b-1)$. Then the terminating set, call it $B'$, can be obtained from $B$ by simultaneously replacing each $b'\in B$ by $\psi(b')$, where $\psi:\Z \to \Z$ is a bijection given by
	\begin{align}\label{al:intervals}
		\psi(v) =
		\begin{cases*}
			v + ae + t -r_t, & \text{if $v\in I_1$;}\\
			v + t -r_t, & \text{if $v\in I_2$;}\\
			v - ae, & \text{if $v\in I_3$;}\\
			v, & \text{otherwise,}
		\end{cases*}
	\end{align}
	for any $v\in \Z$; see \Cref{fig:PermuteA1}. Clearly $\psi$ preserves runners (of the James abacus with $e$-runners). To finish the proof we show that it also preserves $d$-emptinesses. From \eqref{al:intervals} (and \Cref{fig:PermuteA1}) we obtain
	\begin{align*}
		\emp_{B'}(\phi(v)) - \emp_{B}(v)=
		\begin{cases*}
			E_2 + E_3, & \text{if $v\in I_1$;}\\
			E_3 - E_1, & \text{if $v\in I_2$;}\\
			- (E_1 + E_2), & \text{if $v\in I_3$;}\\
			0, & \text{otherwise,}
		\end{cases*}
	\end{align*}
	for any $v\in \Z$. Thus it suffices to show that $d\mid E_1+E_2$ and $d\mid E_2 + E_3$. We know that $E_1+E_2 = \emp_B(b-ae, b-1) = \emp_B(b-ae, b)$ which is indeed divisible by $d$. We can also write
	\begin{align*}
		E_2 + E_3 &= \emp_{B'}(b-ae, b+t-r_t -1)\\
		&= \emp_{B'}(b+t-r_t -1) - \emp_{B'}(b-ae-1)\\
		&= \emp_{B'}(b+t-r_t -1) - \emp_B(b-ae-1),
	\end{align*}
	which remains valid even if $I_2$ is an empty set, that is, when $r_t=ae-1$. 
	
	For $0\leq i\leq t$, let $B_i$ be the $\beta$-set before step $i$ of A1 and let $B_{t+1} = B'$. So for any $0\leq i\leq t$, $B_{i+1}$ is obtained from $B_i$ by the swap of $b+i-ae$ and $b+i$. We examine how the $d$-emptiness of the propagating mobile empty space at position $b-ae$ of $B$ changes throughout the process. Let $p_i = b+i-1-r_{i-1}$ which is, by a simple inductive argument, the position of this mobile empty space in $B_i$. If $ae\mid i$, then the mobile objects at positions $b+i-ae+1, b+i-ae+2,\dots, b+i-1$ of $B_i$ are the propagating mobile objects at positions $b-ae+1, b-ae+2,\dots, b-1$ of $B$, respectively. Hence
	\begin{align*}
		\emp_{B_{i+1}}(p_{i+1})-\emp_{B_i}(p_i) &= \emp_{B_{i+1}}(b+i)-\emp_{B_i}(b+i-ae)\\
		&= \emp_{B_i}(b+i)-\emp_{B_i}(b+i-ae)\\
		&= \emp_{B_i}(b+i-ae+1,b+i-1) + \emp_{B_i}(b+i,b+i)\\
		&= \emp_{B}(b-ae+1,b-1) + \emp_{B_i}(b+i,b+i)\\
		&= \emp_{B}(b-ae,b) -1 + \emp_{B_i}(b+i,b+i)\\
		&\equiv \emp_{B_i}(b+i,b+i) -1 (\textnormal{mod } d)
	\end{align*}
	Since the $i$'th swap swaps our mobile empty space at position $p_i$ in $B_i$ and the mobile object at position $b+i$ of $B_i$, we conclude that $\emp_{B_{i+1}}(p_{i+1})-\emp_{B_i}(p_i)$ is $-1$ modulo $d$ if the $i$'th swap is an up-move and $0$, otherwise (when it is neither an up-move nor a down-move).
	
	If $ae\nmid i$, then our mobile empty space at position $p_i$ of $B_i$ does not move during the $i$'th swap (that is $p_{i+1}=p_i$) and it is clear that $\emp_{B_{i+1}}(p_{i+1})-\emp_{B_i}(p_{i})$ is $1$ if the $i$'th step is a down-move, $-1$ if the $i$'th step is an up-move and $0$ otherwise. Since at the end the numbers of the up- and the down-moves agree modulo $d$, we conclude that $\emp_{B_{t+1}}(p_{t+1})=\emp_{B'}(b+t-r_t) $ is congruent to $\emp_{B_0}(p_0)=\emp_B(b-ae)$ modulo $d$. As $p_i$ is an empty space of $B_i$ for all $0\leq i \leq t+1$, also $\emp_{B'}(b+t-r_t-1)$ and $\emp_B(b-ae-1)$ agree modulo $d$, and hence $d\mid E_2 + E_3$, as desired.
\end{proof}

We can immediately deduce the first direction in \Cref{pr:equivalence max}. Recall that if $\gamma$ is an $e$-core partition and $\mathcal{R}$ is a $\gamma$-realisable matrix, then $\mathcal{E}_{\mathcal{R}}(\gamma)$ is the set of all partitions with $e$-core $\gamma$ and $d$-runner matrix $\mathcal{R}$ of minimal $e$-weight.

\begin{corollary}\label{cor:max implies dsunb}
	Let $\gamma$ be an $e$-core partition and $\mathcal{R}$ be a $\gamma$-realisable $(d-1)\times e$ matrix of non-negative integers. Then a maximal partition in the dominance order in $\mathcal{E}_{\mathcal{R}}(\gamma)$ is $d$-shift skewed.
\end{corollary}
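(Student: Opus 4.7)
The plan is to argue by contradiction, using the two key properties of the algorithm A1 established in \Cref{le:a1-order} and \Cref{le:a1-runner}. Suppose $\lambda$ is a maximal element of $E_{\mathcal{R}}(\gamma)$ in the dominance order, but $\lambda$ is not \dsunb. Then its canonical $\beta$-set $B$ contains a bead $b$ and an empty space $b-ae$ (with $a>0$) such that $d \mid \emp_B(b-ae,b)$. I would apply the algorithm A1 to $B$, $b$ and $b-ae$ and denote the resulting set by $B'$, which, by \Cref{le:a1-order}, is the canonical $\beta$-set of some partition $\mu$.

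Next I would check that $\mu$ still lies in the scope of $E_{\mathcal{R}}(\gamma)$, except possibly for the $e$-weight. Every swap performed during A1 is of the form ``swap $v$ and $v+ae$'' for some $v$; since $e \mid ae$, such swaps happen within a single runner of the James abacus with $e$ runners, and in particular they do not change the number of beads on each runner. By the characterisation recalled after \Cref{ex:empty}, this means $\mu$ and $\lambda$ have the same $e$-core $\gamma$. Moreover, by \Cref{le:a1-runner} applied with the hypothesis $d \mid \emp_B(b-ae,b)$, the $d$-runner matrix of $B'$ equals that of $B$, so $\mathcal{R}_d(\mu)=\mathcal{R}_d(\lambda)=\mathcal{R}$.

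Now I would apply \Cref{le:a1-order} to split into two cases. In the first case, $|\mu| < |\lambda|$, and hence $\mu$ has strictly smaller $e$-weight than $\lambda$ (since their $e$-cores agree); but then $\mu$ is a partition with $e$-core $\gamma$ and $d$-runner matrix $\mathcal{R}$ of $e$-weight strictly smaller than $w_{\mathcal{R}}(\gamma)$, contradicting the definition of $w_{\mathcal{R}}(\gamma)$. In the second case $|\mu|=|\lambda|$ and $\mu \rhd \lambda$ in the dominance order; then $\mu$ has the same $e$-weight as $\lambda$, so $\mu \in E_{\mathcal{R}}(\gamma)$, contradicting the maximality of $\lambda$ in this set. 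Either way we reach a contradiction, so $\lambda$ must be \dsunb.

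The main work has already been done in \Cref{le:a1-order} and \Cref{le:a1-runner}; this corollary is essentially the packaging of those two lemmas with the observation that A1 preserves the $e$-core. The only conceptual point requiring care is the verification that $\mu$ remains in $E_{\mathcal{R}}(\gamma)$, which I would address by the runner-preservation observation above before invoking the two lemmas to derive the contradiction.
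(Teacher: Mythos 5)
Your proposal is correct and follows essentially the same route as the paper's own proof: the paper argues the contrapositive, applying A1 to a witness $b$, $b-ae$ with $d\mid \emp_B(b-ae,b)$ and invoking \Cref{le:a1-runner} and \Cref{le:a1-order} to conclude that $\lambda$ is either not of minimal $e$-weight or not dominance-maximal. Your extra verification that the $e$-core is preserved is already noted in the paper immediately after the definition of A1, so nothing is missing and nothing differs in substance.
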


\begin{proof}
	We show the contrapositive. Let $\lambda$ be a partition with $e$-core $\gamma$ and $d$-runner matrix $\mathcal{R}$ and let $B$ be its canonical $\beta$-set. If it is not $d$-shift skewed, then there is $b\in B$ such that $b-ae\notin B$ and $d\mid\emp_B(b-ae,b)$. The algorithm A1 applied to $B$, $b$ and $b-ae$ returns the canonical $\beta$-set of a partition with the same $e$-core, the same $d$-runner matrix (by \Cref{le:a1-runner}) and either less in size than $\lambda$ or greater in the dominance order than $\lambda$ (by \Cref{le:a1-order}). This shows that $\lambda$ is not a maximal partition in the dominance order in $\mathcal{E}_{\mathcal{R}}(\gamma)$, which proves the result.
\end{proof}

We now repeat the same steps to show an analogous statement for \dunb partitions.

We define the second algorithm \textit{A2}. Let $B$ be a $\beta$-set of a partition, $b$ be its bead and $b-ae$ be its empty space (with $a>0$). The algorithm A2 applied to $B$, $b$ and $b-ae$ proceeds as follows. In turns, for $i= 0,1,2,\dots$ we swap $b-i-ae$ and $b-i$ of $B$ and then if the total number of performed up-moves is congruent to the total number of performed down-moves modulo $d$, we terminate the process. As for A1, at the end the number of performed up-moves either equals the number of performed down-moves or is greater by $d$. An example of this algorithm is in \Cref{fig:A2}.

Again, if the algorithm terminates, the final set is a $\beta$-set of a partition with the same $e$-core as the underlying partition of the initial set $B$. And it is the canonical $\beta$-set if $B$ is. As for A1, we can also perform the swaps runner-wise --- on each runner we propagate a mobile object of the form $b+r-ae$ (for suitable $0<r\leq ae$) of $B$ \emph{upwards}; see \Cref{fig:A2}. As before, we cannot perform both up- and down-moves on any chosen runner, and hence, we can reorder the performed up- and down-moves in any way as long as the relative order of up-moves and the relative order of down-moves remain unchanged.

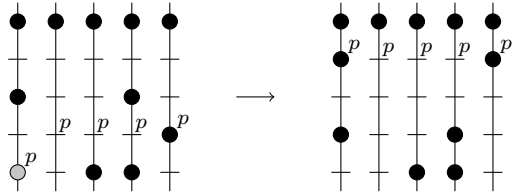
\begin{figure}[h]
	\centering
	\begin{tikzpicture}[x=0.5cm, y=0.5cm]
		\begin{pgfonlayer}{nodelayer}
			\node [style=none] (0) at (1.25, 4.5) {};
			\node [style=none] (1) at (2.25, 4.5) {};
			\node [style=none] (2) at (3.25, 4.5) {};
			\node [style=none] (3) at (4.25, 4.5) {};
			\node [style=none] (4) at (5.25, 4.5) {};
			\node [style=none] (5) at (1.25, -0.5) {};
			\node [style=none] (6) at (2.25, -0.5) {};
			\node [style=none] (7) at (3.25, -0.5) {};
			\node [style=none] (8) at (4.25, -0.5) {};
			\node [style=none] (9) at (5.25, -0.5) {};
			\node [style=Empty node] (11) at (2.25, 4) {};
			\node [style=Empty node] (12) at (3.25, 4) {};
			\node [style=Empty node] (13) at (4.25, 4) {};
			\node [style=Empty node] (15) at (1.25, 4) {};
			\node [style=Empty node] (18) at (3.25, 0) {};
			\node [style=none] (19) at (3, 1) {};
			\node [style=none] (20) at (3.5, 1) {};
			\node [style=none] (23) at (4, 1) {};
			\node [style=none] (24) at (4.5, 1) {};
			\node [style=none] (25) at (3, 2) {};
			\node [style=none] (26) at (3.5, 2) {};
			\node [style=none] (27) at (5.5, 0) {};
			\node [style=none] (28) at (5, 0) {};
			\node [style=none] (33) at (2.5, 0) {};
			\node [style=none] (34) at (2, 0) {};
			\node [style=Empty node] (42) at (5.25, 4) {};
			\node [style=Empty node] (45) at (1.25, 2) {};
			\node [style=Empty node] (46) at (4.25, 2) {};
			\node [style=Empty node] (48) at (5.25, 1) {};
			\node [style=Empty node] (49) at (4.25, 0) {};
			\node [style=none] (50) at (1, 1) {};
			\node [style=none] (51) at (1.5, 1) {};
			\node [style=none] (52) at (5, 2) {};
			\node [style=none] (53) at (5.5, 2) {};
			\node [style=Lead] (54) at (1.25, 0) {};
			\node [style=none] (55) at (7, 2) {};
			\node [style=none] (56) at (8, 2) {};
			\node [style=none] (104) at (2, 1) {};
			\node [style=none] (105) at (2.5, 1) {};
			\node [style=none] (106) at (5, 3) {};
			\node [style=none] (107) at (5.5, 3) {};
			\node [style=none] (108) at (2, 2) {};
			\node [style=none] (109) at (2.5, 2) {};
			\node [style=none] (110) at (1, 3) {};
			\node [style=none] (111) at (1.5, 3) {};
			\node [style=none] (112) at (2, 3) {};
			\node [style=none] (113) at (2.5, 3) {};
			\node [style=none] (114) at (3, 3) {};
			\node [style=none] (115) at (3.5, 3) {};
			\node [style=none] (116) at (4, 3) {};
			\node [style=none] (117) at (4.5, 3) {};
			\node [style=none] (118) at (9.75, 4.5) {};
			\node [style=none] (119) at (10.75, 4.5) {};
			\node [style=none] (120) at (11.75, 4.5) {};
			\node [style=none] (121) at (12.75, 4.5) {};
			\node [style=none] (122) at (13.75, 4.5) {};
			\node [style=none] (123) at (9.75, -0.5) {};
			\node [style=none] (124) at (10.75, -0.5) {};
			\node [style=none] (125) at (11.75, -0.5) {};
			\node [style=none] (126) at (12.75, -0.5) {};
			\node [style=none] (127) at (13.75, -0.5) {};
			\node [style=Empty node] (128) at (10.75, 4) {};
			\node [style=Empty node] (129) at (11.75, 4) {};
			\node [style=Empty node] (130) at (12.75, 4) {};
			\node [style=Empty node] (131) at (9.75, 4) {};
			\node [style=Empty node] (132) at (11.75, 0) {};
			\node [style=none] (133) at (11.5, 1) {};
			\node [style=none] (134) at (12, 1) {};
			\node [style=none] (135) at (12.5, 2) {};
			\node [style=none] (136) at (13, 2) {};
			\node [style=none] (137) at (11.5, 2) {};
			\node [style=none] (138) at (12, 2) {};
			\node [style=none] (139) at (14, 0) {};
			\node [style=none] (140) at (13.5, 0) {};
			\node [style=none] (141) at (11, 0) {};
			\node [style=none] (142) at (10.5, 0) {};
			\node [style=Empty node] (143) at (13.75, 4) {};
			\node [style=Empty node] (144) at (9.75, 3) {};
			\node [style=Empty node] (145) at (12.75, 1) {};
			\node [style=Empty node] (146) at (13.75, 3) {};
			\node [style=Empty node] (147) at (12.75, 0) {};
			\node [style=none] (148) at (9.5, 0) {};
			\node [style=none] (149) at (10, 0) {};
			\node [style=none] (150) at (13.5, 2) {};
			\node [style=none] (151) at (14, 2) {};
			\node [style=none] (153) at (10.5, 1) {};
			\node [style=none] (154) at (11, 1) {};
			\node [style=none] (155) at (13.5, 1) {};
			\node [style=none] (156) at (14, 1) {};
			\node [style=none] (157) at (10.5, 2) {};
			\node [style=none] (158) at (11, 2) {};
			\node [style=none] (159) at (9.5, 2) {};
			\node [style=none] (160) at (10, 2) {};
			\node [style=none] (161) at (10.5, 3) {};
			\node [style=none] (162) at (11, 3) {};
			\node [style=none] (163) at (11.5, 3) {};
			\node [style=none] (164) at (12, 3) {};
			\node [style=none] (165) at (12.5, 3) {};
			\node [style=none] (166) at (13, 3) {};
			\node [style=Empty node] (167) at (9.75, 1) {};
			\node [style=none] (168) at (1.6, 0.35) {$\scriptstyle p$};
			\node [style=none] (169) at (5.6, 1.35) {$\scriptstyle p$};
			\node [style=none] (170) at (4.5, 1.25) {$\scriptstyle p$};
			\node [style=none] (171) at (3.5, 1.25) {$\scriptstyle p$};
			\node [style=none] (172) at (2.5, 1.25) {$\scriptstyle p$};
			\node [style=none] (173) at (10.1, 3.35) {$\scriptstyle p$};
			\node [style=none] (174) at (11, 3.25) {$\scriptstyle p$};
			\node [style=none] (175) at (12, 3.25) {$\scriptstyle p$};
			\node [style=none] (176) at (13, 3.25) {$\scriptstyle p$};
			\node [style=none] (177) at (14.1, 3.35) {$\scriptstyle p$};
		\end{pgfonlayer}
		\begin{pgfonlayer}{edgelayer}
			\draw (0.center) to (5.center);
			\draw (1.center) to (6.center);
			\draw (3.center) to (8.center);
			\draw (4.center) to (9.center);
			\draw (19.center) to (20.center);
			\draw (23.center) to (24.center);
			\draw (25.center) to (26.center);
			\draw (28.center) to (27.center);
			\draw (34.center) to (33.center);
			\draw (2.center) to (7.center);
			\draw (50.center) to (51.center);
			\draw (52.center) to (53.center);
			\draw [style=Move it] (55.center) to (56.center);
			\draw (104.center) to (105.center);
			\draw (106.center) to (107.center);
			\draw (108.center) to (109.center);
			\draw (110.center) to (111.center);
			\draw (112.center) to (113.center);
			\draw (114.center) to (115.center);
			\draw (116.center) to (117.center);
			\draw (118.center) to (123.center);
			\draw (119.center) to (124.center);
			\draw (121.center) to (126.center);
			\draw (122.center) to (127.center);
			\draw (133.center) to (134.center);
			\draw (135.center) to (136.center);
			\draw (137.center) to (138.center);
			\draw (140.center) to (139.center);
			\draw (142.center) to (141.center);
			\draw (120.center) to (125.center);
			\draw (148.center) to (149.center);
			\draw (150.center) to (151.center);
			\draw (153.center) to (154.center);
			\draw (155.center) to (156.center);
			\draw (157.center) to (158.center);
			\draw (159.center) to (160.center);
			\draw (161.center) to (162.center);
			\draw (163.center) to (164.center);
			\draw (165.center) to (166.center);
		\end{pgfonlayer}
	\end{tikzpicture}
	\caption{Let $e=5$ and $d=3$. The picture shows the effect of the algorithm A2 applied to the first $\beta$-set with $b$ given by the highlighted bead and $a=1$. There are $11$ swaps. Four of them are up-moves, performed for $i=0,1,6$ and $10$ and one down-move, performed for $i=2$. One can see that all the swaps can be performed runner-wise. Moreover, on each runner we have a mobile object which propagates upwards during the algorithm performed runner-wise. The propagating mobile objects are denoted by $p$.}
	\label{fig:A2}
\end{figure}

\begin{lemma}\label{le:a2-order}
	Let $B$ be a $\beta$-set of a partition $\lambda$, $b$ be its bead and $b-ae$ be its empty space (with $a>0$). Suppose that each row of the $d$-runner matrix of $B$ contains $0$. Then the algorithm A2 applied to $B$, $b$ and $b-ae$ terminates and returns a $\beta$-set of a partition which is either less in size than $\lambda$ or which has the same size as $\lambda$ and is less in the dominance order. 
\end{lemma}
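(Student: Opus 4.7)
My plan is to mirror the proof of Lemma~\ref{le:a1-order} but in reverse: propagation now moves upwards on the James abacus with $ae$ runners, and the hypothesis on the $d$-runner matrix is invoked to guarantee termination. As in A1, on each runner of the $ae$-abacus a single mobile object starting at position $b-r$ (for $0\leq r\leq ae-1$) propagates upwards, so every non-trivial swap on that runner is of the same type: an up-move if $b-r\in B$ and a down-move otherwise. In particular, the up- and down-moves performed by A2 can be reordered arbitrarily provided that the relative order of the ups and the relative order of the downs are preserved.

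\textbf{Termination.} If $b-r\notin B$ for some $1\leq r\leq ae-1$, then on that runner the mobile empty space propagates upwards through an eventual sea of beads, producing infinitely many down-moves, so $U-D$ decreases without bound and in particular crosses $0$ modulo $d$. Otherwise $b-ae+1,\dots,b\in B$, hence $\emp_B(b-j)=\emp_B(b-ae)$ for every $0\leq j<ae$; write $\epsilon$ for this common value. If $\epsilon\not\equiv 0\pmod d$, then the $ae$ consecutive beads $b,b-1,\dots,b-ae+1$ all have $d$-emptiness $\epsilon\bmod d$ and cover every runner of the $e$-abacus, so the row of $\mathcal{R}_d(B)$ indexed by $\epsilon\bmod d$ contains no zero, contradicting the hypothesis. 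Hence $\epsilon\equiv 0\pmod d$, and $\epsilon\geq d$ because $b-ae$ is empty. In this situation only up-moves occur, totalling $\epsilon$, so $U-D$ rises monotonically from $0$ and passes through $d$, causing termination.

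\textbf{Size and dominance.} Since A2 begins with an up-move and terminates at the first step where $U\equiv D\pmod d$, we have $1\leq U-D\leq d-1$ strictly between the first swap and termination, and at termination either $U=D$ or $U=D+d$. In the latter case the net effect is the removal of $d$ rim hooks of size $ae$, so the returned partition has size $|\lambda|-dae$. Suppose instead $U=D=n$, and let $U_1,\dots,U_n$ and $D_1,\dots,D_n$ be the up- and down-moves listed in the A2 order. Because ups and downs arise on disjoint runners, the reordering $U_1,D_1,U_2,D_2,\dots,U_n,D_n$ preserves the relative orders of ups and of downs, and thus yields the same final $\beta$-set. Since $U>D$ strictly before termination, the $j$'th up-move precedes the $j$'th down-move in A2, so $U_j$ is performed at position $b-i_j^U$ and $D_j$ at position $b-i_j^D$ with $i_j^U<i_j^D$. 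Applying Lemma~\ref{le:UD} to each pair $(U_j,D_j)$ with $\tilde{i}=i_j^U-i_j^D<0$ shows that the underlying partition strictly decreases in the dominance order after each pair, and composing yields the claim.

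\textbf{Main obstacle.} The central subtlety is Case~2 of termination: without the $d$-runner-matrix hypothesis one could have $\emp_B(b-ae)<d$ and only finitely many non-trivial swaps, all of them up-moves totalling fewer than $d$, so that $U\equiv D\pmod d$ would never be achieved; establishing that the hypothesis forces $\epsilon\equiv 0\pmod d$ through the argument above is the crux of the proof.
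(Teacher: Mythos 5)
Your proof is correct and follows essentially the same route as the paper's: termination is split into the same two cases (a mobile empty space strictly between $b-ae$ and $b$ forcing unboundedly many down-moves, versus $ae$ consecutive beads whose common $d$-emptiness is forced to be $0$ by the runner-matrix hypothesis, yielding at least $d$ available up-moves), and the dominance claim is obtained by the same reordering into pairs $(U_j,D_j)$ and an application of \Cref{le:UD} with $i<0$. The only point you leave implicit in the first termination case is that the total number of up-moves is finite (each upward-propagating mobile bead can only pass through the finitely many original empty spaces at positions $\leq b-ae$), which is needed for $U-D$ to actually reach $0$ rather than merely for $D$ to grow; the paper records exactly this as its opening observation.
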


\begin{proof}
	Observe that for any $i\geq 0$ the mobile object at position $b-ae-i$ of $B$ has not moved before the swap of $b-i-ae$ and $b-i$. Thus, there are only finitely many possible up-moves the algorithm can make as there are only finitely many empty spaces of $B$ less or equal to $b-ae$.
	
	If there is an empty space $b-ae<f<b$ of $B$, it will be as a mobile empty space swapped with $f-ae$, then with $f-2ae$ and so on, and from some point all of these swaps will be down-moves. Hence, in this case, there are infinitely many possible down-moves and A2 terminates.
	
	If not, then $b, b-1, \dots, b-ae+1$ are beads in $B$. They all have the same $d$-emptiness and span all runners of the James abacus with $e$ runners, and hence, by the condition imposed on the $d$-runner matrix, all these beads' $d$-emptinesses are $0$. Thus there are at least $d$ empty spaces $f\leq b-ae$. Since the propagating moving objects at positions $b, b-1, \dots, b-ae+1$ of $B$ are all beads, there will be no down-moves and at least $d$ up-moves, so A2 terminates.
	
	The rest of the proof follows the steps of the proof of \Cref{le:a1-order}. Note that the final sentence has to be changed to: since $U_j$ happened before $D_j$ in the original order of the swaps, $i<0$ in \Cref{le:UD}, and thus $\alpha$ dominates $\gamma$, as required. 
\end{proof} 

Again, the algorithm A2 preserves the $d$-runner matrix, provided that it terminates. This is not difficult to show if we think of the algorithm as a permutation of three intervals of the James abacus of the starting $\beta$-set; see \Cref{fig:PermuteA2}.

\begin{figure}[h]
	\centering
	\begin{tikzpicture}[x=0.5cm, y=0.5cm]
		\begin{pgfonlayer}{nodelayer}
			\node [style=none] (48) at (1.5, -8.5) {};
			\node [style=none] (49) at (-5.5, -6) {};
			\node [style=none] (50) at (8.5, -6) {};
			\node [style=none] (51) at (-5, -6) {};
			\node [style=none] (54) at (8, -6) {};
			\node [style=none] (55) at (5.5, -5.75) {};
			\node [style=none] (56) at (5.5, -6.25) {};
			\node [style=none] (57) at (0.5, -5.75) {};
			\node [style=none] (58) at (0.5, -6.25) {};
			\node [style=none] (59) at (1.5, -5.75) {};
			\node [style=none] (60) at (1.5, -6.25) {};
			\node [style=Empty node] (61) at (7.5, -6) {};
			\node [style=Empty node] (62) at (-4.5, -6) {};
			\node [style=none] (63) at (4.5, -5.75) {};
			\node [style=none] (64) at (4.5, -6.25) {};
			\node [style=none] (65) at (5.55, -6.5) {$\scriptstyle c$};
			\node [style=none] (66) at (6.5, -4.75) {$I_1$};
			\node [style=none] (67) at (3, -4.75) {$I_2$};
			\node [style=none] (68) at (7.5, -6.5) {$\scriptstyle b$};
			\node [style=none] (69) at (2.075, -6.5) {$\scriptstyle b-ae+1$};
			\node [style=none] (70) at (4.5, -6.5) {$\scriptstyle c-1$};
			\node [style=none] (71) at (-3.975, -6.5) {$\scriptstyle b-t-ae$};
			\node [style=none] (72) at (0.25, -6.5) {$\scriptstyle b-ae$};
			\node [style=none] (73) at (-2, -4.75) {$I_3$};
			\node [style=none] (74) at (-5, -7.75) {};
			\node [style=none] (75) at (-5, -4.25) {};
			\node [style=none] (76) at (-4.5, -4.25) {};
			\node [style=none] (77) at (-4.5, -7.75) {};
			\node [style=none] (78) at (0.5, -4.25) {};
			\node [style=none] (80) at (1, -4.25) {};
			\node [style=none] (81) at (0.5, -7.75) {};
			\node [style=none] (82) at (1, -7.75) {};
			\node [style=none] (83) at (1.5, -7.75) {};
			\node [style=none] (84) at (4.5, -7.75) {};
			\node [style=none] (85) at (5, -7.75) {};
			\node [style=none] (86) at (5.5, -7.75) {};
			\node [style=none] (87) at (4.5, -4.25) {};
			\node [style=none] (88) at (5, -4.25) {};
			\node [style=none] (89) at (5.5, -4.25) {};
			\node [style=none] (90) at (7.5, -4.25) {};
			\node [style=none] (91) at (8, -4.25) {};
			\node [style=none] (92) at (7.5, -7.75) {};
			\node [style=none] (93) at (8, -7.75) {};
			\node [style=none] (94) at (1.5, -4.25) {};
			\node [style=none] (95) at (-5.5, -11.75) {};
			\node [style=none] (96) at (8.5, -11.75) {};
			\node [style=none] (97) at (-5, -11.75) {};
			\node [style=none] (98) at (-2, -11.75) {};
			\node [style=none] (99) at (2, -11.75) {};
			\node [style=none] (100) at (8, -11.75) {};
			\node [style=none] (101) at (-4.5, -11.5) {};
			\node [style=none] (102) at (-4.5, -12) {};
			\node [style=none] (103) at (1.5, -11.5) {};
			\node [style=none] (104) at (1.5, -12) {};
			\node [style=none] (105) at (-1.5, -11.5) {};
			\node [style=none] (106) at (-1.5, -12) {};
			\node [style=Empty node] (107) at (-2.5, -11.75) {};
			\node [style=Empty node] (108) at (2.5, -11.75) {};
			\node [style=none] (109) at (7.5, -11.5) {};
			\node [style=none] (110) at (7.5, -12) {};
			\node [style=none] (112) at (-3.5, -10.5) {$I_1$};
			\node [style=none] (113) at (0, -10.5) {$I_2$};
			\node [style=none] (119) at (5, -10.5) {$I_3$};
			\node [style=none] (120) at (-5, -13.5) {};
			\node [style=none] (121) at (-5, -10) {};
			\node [style=none] (122) at (-4.5, -10) {};
			\node [style=none] (123) at (-4.5, -13.5) {};
			\node [style=none] (124) at (-2.5, -10) {};
			\node [style=none] (125) at (-1.5, -10) {};
			\node [style=none] (126) at (-2, -10) {};
			\node [style=none] (127) at (-2.5, -13.5) {};
			\node [style=none] (128) at (-2, -13.5) {};
			\node [style=none] (129) at (-1.5, -13.5) {};
			\node [style=none] (130) at (1.5, -13.5) {};
			\node [style=none] (131) at (2, -13.5) {};
			\node [style=none] (132) at (2.5, -13.5) {};
			\node [style=none] (133) at (1.5, -10) {};
			\node [style=none] (134) at (2, -10) {};
			\node [style=none] (135) at (2.5, -10) {};
			\node [style=none] (136) at (7.5, -10) {};
			\node [style=none] (137) at (8, -10) {};
			\node [style=none] (138) at (7.5, -13.5) {};
			\node [style=none] (139) at (8, -13.5) {};
			\node [style=none] (140) at (-1.5, -10) {};
			\node [style=none] (141) at (1.5, -9.5) {};
			\node [style=none] (142) at (-6.5, -6) {$B$};
			\node [style=none] (143) at (-6.5, -11.75) {$B'$};
			\node [style=none] (144) at (1.2, -9) {$\scriptstyle \psi$};
			\node [style=none] (145) at (-5, -3.5) {};
			\node [style=none] (146) at (1, -3.5) {};
			\node [style=none] (147) at (5, -3.5) {};
			\node [style=none] (148) at (8, -3.5) {};
			\node [style=none] (149) at (6.5, -3) {$r_t+1$};
			\node [style=none] (150) at (3, -3) {$ae-r_t-1$};
			\node [style=none] (151) at (-2, -3) {$t+1$};
		\end{pgfonlayer}
		\begin{pgfonlayer}{edgelayer}
			\draw (54.center) to (50.center);
			\draw (51.center) to (49.center);
			\draw (63.center) to (64.center);
			\draw (59.center) to (60.center);
			\draw (57.center) to (58.center);
			\draw (55.center) to (56.center);
			\draw (51.center) to (54.center);
			\draw (76.center) to (75.center);
			\draw (75.center) to (74.center);
			\draw (74.center) to (77.center);
			\draw (78.center) to (94.center);
			\draw (81.center) to (83.center);
			\draw (80.center) to (82.center);
			\draw (87.center) to (89.center);
			\draw (84.center) to (86.center);
			\draw (88.center) to (85.center);
			\draw (90.center) to (91.center);
			\draw (93.center) to (92.center);
			\draw (91.center) to (93.center);
			\draw (100.center) to (96.center);
			\draw (97.center) to (95.center);
			\draw (109.center) to (110.center);
			\draw (105.center) to (106.center);
			\draw (103.center) to (104.center);
			\draw (101.center) to (102.center);
			\draw (97.center) to (100.center);
			\draw (122.center) to (121.center);
			\draw (121.center) to (120.center);
			\draw (120.center) to (123.center);
			\draw (124.center) to (140.center);
			\draw (127.center) to (129.center);
			\draw (126.center) to (128.center);
			\draw (133.center) to (135.center);
			\draw (130.center) to (132.center);
			\draw (134.center) to (131.center);
			\draw (136.center) to (137.center);
			\draw (139.center) to (138.center);
			\draw (137.center) to (139.center);
			\draw [style=Move it] (48.center) to (141.center);
			\draw [style=measuredots] (145.center) to (146.center);
			\draw [style=measuredots] (146.center) to (147.center);
			\draw [style=measuredots] (147.center) to (148.center);
		\end{pgfonlayer}
	\end{tikzpicture}
	\caption{In the picture, $b$ is a bead of $B$, $b-ae$ is an empty space of $B$ and the other integers can be either beads or empty spaces of $B$. Suppose that the last swap in the algorithm A2 applied to $B$, $b$ and $b-ae$ is the swap of $b-t-ae$ and $b-t$. If $0\leq r_t\leq ae-1$ is the remainder of $t$ modulo $ae$, then the propagating moving object at position $c=b-r_t$ of $B$ moves to $b-t-ae$ during A2. Hence the propagating moving objects lying in interval $I_1$ decrease their positions by $ae+t-r_t$, while those in interval $I_2$ decrease their positions only by $t-r_t$. For instance, in \Cref{fig:A2} where $e=5$, $a=1$ and $t=10$, the propagating moving object on the first runner decreases its position by $15$ while the propagating moving objects on the final four runners decrease their positions only by $10$. Finally, the moving objects in interval $I_3$ increase their positions by $ae$ and the remaining moving objects are not affected by A2. Thus the outcome of A2, set $B'$, can be obtained by applying permutation $\psi$ of the three intervals $I_1, I_2$ and $I_3$ as in the picture.}
	\label{fig:PermuteA2}
\end{figure}
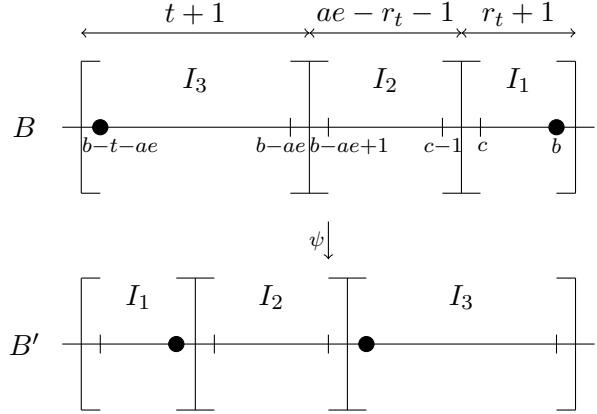

\begin{lemma}\label{le:a2-runner}
	Let $B$ be a $\beta$-set of a partition, $b$ be its bead and $b-ae$ be its empty space of (with $a>0$) such that $d\mid \emp_B(b-ae+1, b)$. Suppose that the algorithm A2 applied to $B$, $b$ and $b-ae$ terminates. Then it preserves the $d$-runner matrix.
\end{lemma}

\begin{proof}
	Denote by $B'$ the set returned by the algorithm A2 applied to $B$, $b$ and $b-ae$. We need to show that the $d$-runner matrices of $B$ and $B'$ agree.
	
	Suppose that A2 terminates with the swap of $b-t-ae$ and $b-t$ (with $t>0$) and, again, for $0\leq i\leq t$ let $0\leq r_i<ae$ be the remainder of $i$ modulo $ae$. We define three intervals of integers
	\begin{align*}
		I_1 &=\left\lbrace v\in \Z: b-r_t\leq v\leq b \right\rbrace,\\
		I_2 &=\left\lbrace v\in \Z: b-ae< v< b -r_t \right\rbrace,\\
		I_3 &=\left\lbrace v\in \Z: b-t-ae\leq v\leq b-ae \right\rbrace,
	\end{align*}
	as in the first diagram in \Cref{fig:PermuteA2}, and write $E_i$ for the number of empty spaces of $B$ in $I_i$ for $i=1,2,3$. The terminating set, call it $B'$, can be obtained from $B$ by simultaneously replacing each $b'\in B$ by $\psi(b')$, where $\psi:\Z \to \Z$ is a bijection given by
	\begin{align}\label{al:intervals2}
		\psi(v) =
		\begin{cases*}
			v - t -ae + r_t, & \text{if $v\in I_1$;}\\
			v - t + r_t, & \text{if $v\in I_2$;}\\
			v + ae, & \text{if $v\in I_3$;}\\
			v, & \text{otherwise,}
		\end{cases*}
	\end{align}
	for any $v\in \Z$; see \Cref{fig:PermuteA2}. Clearly $\psi$ preserves runners (of the James abacus with $e$ runners). To finish the proof we show that it also preserves $d$-emptinesses. From \eqref{al:intervals2} (and \Cref{fig:PermuteA2}) we obtain
	\begin{align*}
		\emp_{B'}(\phi(v)) - \emp_{B}(v)=
		\begin{cases*}
			- (E_2 + E_3), & \text{if $v\in I_1$;}\\
			E_1 - E_3, & \text{if $v\in I_2$;}\\
			E_1 + E_2, & \text{if $v\in I_3$;}\\
			0, & \text{otherwise,}
		\end{cases*}
	\end{align*}
	for any $v\in \Z$. As in the proof of \Cref{le:a1-runner} it suffices to show that $d\mid E_1+E_2$ and $d\mid E_2 + E_3$. The rest of the proof is analogous to the proof of \Cref{le:a1-runner}. In particular, $d\mid E_1 + E_2$ follows from $d\mid \emp_B(b-ae+1,b)$ and $d\mid E_2 + E_3$ follows after verifying that, modulo $d$, the $d$-emptiness of the mobile bead at position $b$ of $B$ increases by $1$ after each down-move, decreases by $1$ after each up-move and does not change otherwise.
\end{proof}

To conclude the first implication of \Cref{pr:equivalence min} we need to do slightly more work than for \Cref{pr:equivalence max} due to the `$l$-restricted' part of the statement.

\begin{corollary}\label{cor:min implies dunb}
	Let $\gamma$ be an $e$-core partition and $\mathcal{R}$ be a $\gamma$-realisable $(d-1)\times e$ matrix of non-negative integers with at least one $0$ in each row, and write $l$ for the least common multiple of $d$ and $e$. Then a minimal partition in the dominance order in $\mathcal{E}_{\mathcal{R}}(\gamma)$ is $l$-restricted and $d$-skewed.
\end{corollary}

\begin{proof}
		We show the contrapositive. Let $\lambda$ be a partition with $e$-core $\gamma$ and $d$-runner matrix $\mathcal{R}$ as in the statement and let $B$ be its canonical $\beta$-set. If it is not $d$-skewed, then there is $b\in B$ such that $b-ae\notin B$ and $d\mid\emp_B(b-ae+1,b)$. The algorithm A2 applied to $B$, $b$ and $b-ae$ returns the canonical $\beta$-set of a partition with the same $e$-core, the same $d$-runner matrix (by \Cref{le:a2-runner}) and either less in size than $\lambda$ or less in the dominance order than $\lambda$ (by \Cref{le:a2-order}). This shows that $\lambda$ is not a minimal partition in the dominance order in $\mathcal{E}_{\mathcal{R}}(\gamma)$, as required.
		
		On the other hand, if $\lambda$ is not $l$-restricted, then there is $b\in B$ such that there is no bead between $b-l$ and $b-1$. But then $d$ divides $\emp_B(b-l, b)=l$, and algorithm A1 applied to $B$, $b$ and $b-l$ returns the canonical $\beta$ set of a partition $\mu$ with the same $e$-core and the same $d$-runner matrix as $\lambda$ (by \Cref{le:a1-runner}). By definition of A1, the algorithm only swaps $b$ and $b-l$ and then terminates, so $\mu$ is less in size than $\lambda$. This shows that $\lambda \notin \mathcal{E}_{\mathcal{R}}(\gamma)$, which finishes the proof.
\end{proof}

\begin{remark}\label{re:preserves}
	The second paragraph of the proof shows that all partitions in $\mathcal{E}_{\mathcal{R}}(\gamma)$ are $l$-restricted. One can also show that if $\lambda$ and $\mu$ are as in the second paragraph of the proof and $\lambda$ is $d$-skewed, then so is $\mu$. 
\end{remark}

To prove the other directions in \Cref{pr:equivalence max} and \Cref{pr:equivalence min} we show that for each $e$-core $\gamma$ and each $d$-runner matrix $\mathcal{R}$ there is at most one partition $\lambda$ which is \dsunb (\emph{or} $l$-restricted and $d$-skewed), has $e$-core $\gamma$ and has $d$-runner matrix $\mathcal{R}$. We use the notation $b\tensor[_B]{\sim}{_{B'}}b'$ to mean that the runner and the $d$-emptiness of bead $b$ of $B$ agree with the runner and the $d$-emptiness of bead $b'$ of $B'$, respectively. If $B$ and $B'$ are clear from the context, we just write $b\sim b'$.

\begin{lemma}\label{le:dsunb unique}
	Let $\gamma$ be an $e$-core partition and $\mathcal{R}$ be a $(d-1)\times e$ matrix of non-negative integers. There is at most one partition $\lambda$ which is $d$-shift skewed, has $e$-core $\gamma$ and has $d$-runner matrix $\mathcal{R}$.
\end{lemma}

\begin{proof}
	Suppose that $\lambda\neq \mu$ are \dsunb partitions with $e$-core $\gamma$ and $d$-runner matrix $\mathcal{R}$. Pick $\beta$-sets $B=B_s(\lambda)$ and $B'=B_s(\mu)$ of $\lambda$ and $\mu$, respectively, such that both sets contain all non-positive integers. Note that both $\beta$-sets have the same number of positive beads on a given runner of a given $d$-emptiness. Pick the least $b$ (necessary positive) such that $b$ lies only in one set $B$ or $B'$, and without loss of generality, assume that $b\in B\setminus B'$.
	
	Then there is $b'\in B'$ such that $b'>b$ and $b\sim b'$. Since $B'$ is $d$-shift balanced, $d$ does not divide $\emp_{B'}(b, b')$. But this equals $\emp_{B'}(b') - \emp_{B'}(b-1) = \emp_{B'}(b') - \emp_B(b-1) = \emp_{B'}(b') - \emp_B(b)$, which is divisible by $d$ by the choice of $b'$, a contradiction.
\end{proof}

Unlike the proof of \Cref{cor:max implies dsunb} (and the proof of the preceding lemmas), the proof of \Cref{le:dsunb unique} does not seem to easily adapt for \dunb partitions. For a bead $b$ of a set $B$, provided that $b$ is not the greatest bead of $B$, we define its \textit{successor} $s(b)$ to be the least bead of $B$ which is greater than $b$. If $b$ is not the least bead of $B$, we define its \textit{predecessor} $p(b)$ to be the greatest bead of $B$ which is less than $b$.

For partitions $\lambda\neq\mu$ with equal $e$-cores and $d$-runner matrices, let $B=B_s(\lambda)$ and $B'=B_s(\mu)$ be their $\beta$-sets such that both of them contain all non-positive integers. We write $B_{\geq 0}$ and $B'_{\geq 0}$ for the sets of non-negative beads of $B$ and $B'$, respectively. We define sequences $(l_i)$ and $(r_i)$ of non-negative beads of $B$ and $(l'_i)$ and $(r'_i)$ of non-negative beads of $B'$ recurrently as follows:

\begin{enumerate}[label=\textnormal{(\roman*)}]
	\item $l_1 = b_0$,
	\item $l'_i=\max\left( \{ b'\in B'_{\geq 0} | l_i\sim b'\} \setminus \{ l'_j | j<i \}\right)   $,
	\item $r'_i = s(l'_i)$ if defined, otherwise, terminate the process here,
	\item $r_i = \max \left( \{ b\in B_{\geq 0} |b\sim r'_i \} \setminus \{ r_j | j<i \}\right)  $,
	\item $l_{i+1} = p(r_i)$.
\end{enumerate}

From (ii), for any $i$ such that $l_i$ and $l'_i$ are both defined, we have $l_i\sim l'_i$. As $\lambda$ and $\mu$ have the same $e$-core and $d$-runner matrix, $B$ and $B'$ have the same number of non-negative beads on a given runner of a given $d$-emptiness; hence $l'_i$ in (ii) is well-defined. Similarly for (iv). Finally, we cannot have $r_i=0$ for any $i$: otherwise, $r'_j=0$ for some $j\leq i$ from (iv) since $0$ is the least non-negative bead of $B$ on runner $0$ with $d$-emptiness $0$, but $r'_j=0$ is not a successor of a non-negative bead of $B'$. Hence (v) yields a non-negative bead. Thus all steps are well-defined. From (ii) and (iv) it is clear that there are no repetitions in $(l'_i)$ and $(r_i)$, respectively. Consequently, using (iii), there are no repetitions in $(r'_i)$ and since $b_0$ is not a predecessor of any bead of $B$, using (i) and (v), there are no repetitions in $(l_i)$ either. In particular, the procedure eventually terminates, say at the $(N+1)$'st iteration, in (iii) when $l'_{N+1}=b'_0$ is the greatest bead of $B'$.

We now specialise to the case when $\lambda$ and $\mu$ are \dunb partitions. An illustration of the above-defined sequences and some more notation introduced in the proof of \Cref{le:same runner} is in \Cref{fig:Unique}.

 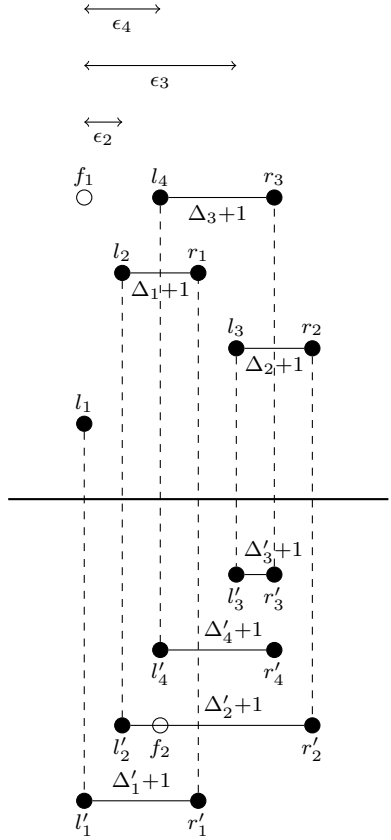
\begin{figure}[h]
 	\centering
 	\begin{tikzpicture}[x=0.5cm, y=0.5cm]
 		\begin{pgfonlayer}{nodelayer}
 			\node [style=Empty node] (0) at (-2, 2) {};
 			\node [style=Empty node] (1) at (-2, -8) {};
 			\node [style=Empty node] (2) at (1, -8) {};
 			\node [style=Empty node] (3) at (1, 6) {};
 			\node [style=Empty node] (4) at (-1, 6) {};
 			\node [style=Empty node] (5) at (-1, -6) {};
 			\node [style=Empty node] (6) at (4, -6) {};
 			\node [style=Empty node] (7) at (4, 4) {};
 			\node [style=Empty node] (8) at (2, 4) {};
 			\node [style=Empty node] (9) at (2, -2) {};
 			\node [style=Empty node] (10) at (3, -2) {};
 			\node [style=Empty node] (11) at (3, 8) {};
 			\node [style=Empty node] (12) at (0, 8) {};
 			\node [style=Empty node] (13) at (0, -4) {};
 			\node [style=Empty node] (14) at (3, -4) {};
 			\node [style=none] (15) at (-4, 0) {};
 			\node [style=none] (16) at (6, 0) {};
 			\node [style=none] (17) at (-0.5, -7.6) {$\scriptstyle \Delta'_1+1$};
 			\node [style=none] (18) at (1.925, -5.6) {$\scriptstyle \Delta'_2+1$};
 			\node [style=none] (19) at (1.925, -3.6) {$\scriptstyle \Delta'_4+1$};
 			\node [style=none] (20) at (2.975, -1.4) {$\scriptstyle \Delta'_3+1$};
 			\node [style=none] (21) at (0, 5.55) {$\scriptstyle \Delta_1+1$};
 			\node [style=none] (22) at (1.5, 7.55) {$\scriptstyle \Delta_3+1$};
 			\node [style=none] (23) at (3, 3.55) {$\scriptstyle \Delta_2 + 1$};
 			\node [style=none] (24) at (-2, 2.55) {$\scriptstyle l_1$};
 			\node [style=none] (25) at (-2, 10) {};
 			\node [style=none] (26) at (-1, 10) {};
 			\node [style=none] (27) at (-2, 11.5) {};
 			\node [style=none] (28) at (2, 11.5) {};
 			\node [style=none] (29) at (-2, 13) {};
 			\node [style=none] (30) at (0, 13) {};
 			\node [style=none] (31) at (-1.5, 9.55) {$\scriptstyle \epsilon_2$};
 			\node [style=none] (32) at (0, 11.05) {$\scriptstyle \epsilon_3$};
 			\node [style=none] (33) at (-1, 12.55) {$\scriptstyle \epsilon_4$};
 			\node [style=none] (35) at (-2, 8.55) {$\scriptstyle f_1$};
 			\node [style=none] (38) at (0, -6.625) {$\scriptstyle f_2$};
 			\node [style=midsize] (41) at (0, -6) {};
 			\node [style=midsize] (42) at (-2, 8) {};
 			\node [style=none] (43) at (-2, -8.625) {$\scriptstyle l'_1$};
 			\node [style=none] (44) at (-1, -6.625) {$\scriptstyle l'_2$};
 			\node [style=none] (45) at (2, -2.625) {$\scriptstyle l'_3$};
 			\node [style=none] (46) at (0, -4.625) {$\scriptstyle l'_4$};
 			\node [style=none] (47) at (1, -8.625) {$\scriptstyle r'_1$};
 			\node [style=none] (48) at (4, -6.625) {$\scriptstyle r'_2$};
 			\node [style=none] (49) at (3, -2.625) {$\scriptstyle r'_3$};
 			\node [style=none] (50) at (3, -4.625) {$\scriptstyle r'_4$};
 			\node [style=none] (51) at (-1, 6.55) {$\scriptstyle l_2$};
 			\node [style=none] (52) at (0, 8.55) {$\scriptstyle l_4$};
 			\node [style=none] (53) at (2, 4.55) {$\scriptstyle l_3$};
 			\node [style=none] (54) at (4, 4.5) {$\scriptstyle r_2$};
 			\node [style=none] (55) at (3, 8.5) {$\scriptstyle r_3$};
 			\node [style=none] (56) at (1, 6.475) {$\scriptstyle r_1$};
 		\end{pgfonlayer}
 		\begin{pgfonlayer}{edgelayer}
 			\draw (1) to (2);
 			\draw (4) to (3);
 			\draw (5) to (6);
 			\draw [style=Extra box] (0) to (1);
 			\draw [style=Extra box] (2) to (3);
 			\draw [style=Extra box] (4) to (5);
 			\draw [style=Extra box] (7) to (6);
 			\draw [style=Extra box] (8) to (9);
 			\draw [style=Extra box] (10) to (11);
 			\draw (9) to (10);
 			\draw (11) to (12);
 			\draw [style=Extra box] (12) to (13);
 			\draw (13) to (14);
 			\draw [style=Border edge] (15.center) to (16.center);
 			\draw (7) to (8);
 			\draw [style=measuredots] (25.center) to (26.center);
 			\draw [style=measuredots] (27.center) to (28.center);
 			\draw [style=measuredots] (29.center) to (30.center);
 		\end{pgfonlayer}
 	\end{tikzpicture}
 	\caption{The diagram illustrates the construction of the first few beads of the sequences of beads introduced before \Cref{le:same runner}. The beads above the bold line belong to a $\beta$-set $B$ and those below belong to a $\beta$-set $B'$. Starting from $l_1=b_0$ one can follow the line to find beads $l_1, l'_1, r'_1, r_1, l_2, \dots, r'_4$. The dashed lines connect beads $b\in B$ and $b'\in B'$ such that $b\sim b'$. The non-dashed lines do not contain any beads (except at the endpoints). The picture does not fit the James abacus with $e$ runners, but for both $B$ and $B'$ we follow the rules `the lower, the greater' and for beads and empty spaces in the same row `the further right, the greater'. The picture also contains members of the sequences $(\Delta_i), (\Delta'_i)$ and $(\epsilon_i)$ from the proof of \Cref{le:same runner} and their interpretation as lengths of lines or distances. The empty spaces $f_1$ and $f_2$ introduced when showing that $\epsilon_i\geq 0$ and $l'_i < l'_j$ are drawn for $i=4$ and $i=4$ and $j=2$, respectively. They would give the desired contradictions if the (open) interval of length $\Delta_3+1$ extended to the runner of $l_1=b_0$ and if $l'_4$ lied below the interval of length $\Delta'_2 + 1$, respectively.}
 	\label{fig:Unique}
 \end{figure}

\begin{lemma}\label{le:same runner}
	Suppose that $\lambda\neq \mu$ are \dunb partitions with the same $e$-core and $d$-runner matrix. If $B$ and $B'$ are as above and the greatest bead $b_0$ of $B$ does not lie in $B'$, then the greatest bead $b'_0$ of $B'$ satisfies $b_0\sim b'_0$.
\end{lemma}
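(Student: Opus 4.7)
I would argue by contradiction: assume $b_0 \not\sim b'_0$, and build the sequences $(l_i), (l'_i), (r_i), (r'_i)$ as described before the lemma. Since $l_1 = b_0$ and the termination condition gives $l'_{N+1} = b'_0$ with $l_{N+1} \sim l'_{N+1}$, the hypothesis $b_0 \not\sim b'_0$ forces $l_{N+1} \ne l_1$, hence $N \ge 1$. For each $1 \le i \le N$, put $\Delta_i = r_i - l_{i+1} - 1$, the number of empty positions of $B$ strictly between the consecutive beads $l_{i+1}$ and $r_i$ (so $\Delta_i = \emp_B(l_{i+1}+1, r_i - 1)$), and $\Delta'_i = r'_i - l'_i - 1$, the analogous count for $B'$ between $l'_i$ and $r'_i$. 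Introduce also displacements $\epsilon_i$ for $2 \le i \le N+1$ measuring the positional offsets between consecutive levels of the two chains, as displayed in Figure \ref{fig:Unique}.

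The crux is a simultaneous induction on $i$ proving two claims: (a) the $l'_i$'s (and dually the $r_i$'s) form a strictly monotone sequence, so in particular $l'_j < l'_i$ whenever $j < i$; and (b) $\epsilon_i \ge 0$. For (a), suppose $l'_i < l'_j$ for some $j < i$. One then produces an empty space $f_2$ of $B'$ on the runner of $l'_i$ lying inside the interval of length $\Delta'_j+1$ between the consecutive $B'$-beads $l'_j$ and $r'_j$. Because $l'_i$ itself is a bead of $B'$ on that runner and $f_2 - l'_i$ is a positive multiple of $e$, the $d$-skewed property of $B'$ forces $d \nmid \emp_{B'}(f_2+1, l'_i)$; tracking this against the $\sim$-matches $l_i \sim l'_i$ and $l_j \sim l'_j$ yields the contradiction. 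Statement (b) is proved in parallel: if $\epsilon_i < 0$, one locates an empty space $f_1$ of $B$ on the runner of $l_1 = b_0$ inside an open interval of length $\Delta_{i-1}+1$ between $l_i$ and $r_{i-1}$, and applying the $d$-skewed property of $B$ to the bead $b_0$ and empty space $f_1$ contradicts the already-established congruences on the chain.

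With (a) and (b) established, the $\sim$-relations yield $\emp_B(l_i) \equiv \emp_{B'}(l'_i)$ and $\emp_B(r_i) \equiv \emp_{B'}(r'_i)$ modulo $d$ for each $i$. Combining these with $\Delta_i = \emp_B(l_{i+1}+1, r_i - 1)$ and $\Delta'_i = \emp_{B'}(l'_i+1, r'_i - 1)$ and summing over $1 \le i \le N$, the terms telescope into a single congruence relating $\emp_B(b_0)$ to $\emp_{B'}(b'_0)$ modulo $d$, with the non-negative $\epsilon_i$'s ensuring the contributions align with a single sign. The runners of $b_0$ and $b'_0$ are controlled simultaneously by the same telescoping (the $e$-residue bookkeeping runs in parallel, since $d$ and $e$ are coprime and the moves in the sequence are tied to runners via the $\sim$-relation). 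The resulting endpoint identity forces $b_0 \sim b'_0$, contradicting the standing assumption.

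The main obstacle is the induction (a)--(b): one has to produce the witness empty spaces $f_1$ and $f_2$ on the correct runners, inside the correct intervals, and then leverage the maximal choices in clauses (ii) and (iv) of the construction (together with the shared $d$-runner matrix and $e$-core of $B$ and $B'$) to reach the contradiction with $d$-skewness. The telescoping of Step 2 is essentially bookkeeping once the geometric picture of Figure \ref{fig:Unique} is secured; it is the positional argument around $f_1$ and $f_2$ that carries the real content of the lemma.
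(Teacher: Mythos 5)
Your overall architecture matches the paper's: the same four sequences, the quantities $\Delta_i$, $\Delta'_i$, $\epsilon_i$, and the two key claims established by exhibiting witness empty spaces $f_1$, $f_2$ and invoking $d$-skewedness. Your treatment of claim (b) (non-negativity of the $\epsilon_i$, via an empty space $f_1$ on the runner of $b_0$ between $l_i$ and $r_{i-1}$) is exactly right. However, claim (a) as you state it is false, and it is stated in a direction that could not yield the conclusion even if it were true. The sequence $(l'_i)$ is not strictly monotone in either direction (in \Cref{fig:Unique} one has $l'_1>l'_2$ but $l'_3<l'_4$); what one actually proves is the weaker statement $l'_i<l'_j$ for \emph{one specific} earlier index $j$, namely the $j<i$ with $\epsilon_i\in[\epsilon_j,\epsilon_j+\Delta'_j]$, whose existence follows from the fact (needing its own small induction, using $\epsilon_i\geq 0$) that the intervals $[\epsilon_j,\epsilon_j+\Delta'_j]$ cover an initial segment $[0,m_i]$. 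This gives $l'_i<l'_1$ for all $i\geq 2$. Your contradiction hypothesis is also inverted: you suppose $l'_i<l'_j$ and then claim $f_2-l'_i$ is a positive multiple of $e$, but then $f_2$ lies \emph{above} the bead $l'_i$ and $\emp_{B'}(f_2+1,l'_i)$ is an empty interval, so the $d$-skewed property (which requires the empty space to sit \emph{below} the bead by a positive multiple of $e$) does not apply. The correct case to refute is $l'_i>l'_j$, which forces $l'_i\geq r'_j>f_2$ and puts $f_2=l'_i-ae$ below the bead $l'_i$ as required.

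The closing step is the second genuine gap. You propose to telescope the congruences into an ``endpoint identity'' relating $\emp_B(b_0)$ and $\emp_{B'}(b'_0)$ and conclude $b_0\sim b'_0$. But the telescoped congruence is simply $\emp_{B'}(b'_0)\equiv\emp_B(b_0)+\epsilon_{N+1}\pmod d$ (and runner of $b'_0$ congruent to $\epsilon_{N+1}$ mod $e$), which holds unconditionally and cannot force $\epsilon_{N+1}\equiv 0$; non-negativity of the $\epsilon_i$ gives no such vanishing, and asserting it would be circular since $\epsilon_{N+1}\equiv 0$ modulo both $d$ and $e$ is essentially the conclusion $b_0\sim b'_0$. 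The argument must instead close positionally: if $N\geq 1$ then the corrected claim (a) gives $b'_0=l'_{N+1}<l'_1\leq\max B'=b'_0$, which is absurd; hence $N=0$ and $b_0=l_1\sim l'_1=b'_0$ by construction. As written, your plan does not reach a contradiction from $N\geq 1$.
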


\begin{proof}
	By changing $B$ and $B'$, without loss of generality we can assume that $b_0$ lies on runner $0$. We consider the above four sequences and let $N\geq 0$ be such that $l'_{N+1}=b'_0$ in the above procedure. We define sequences $(\Delta_i)_{1\leq i\leq N}$ by $\Delta_i = \emp_B(l_{i+1}+1,r_i) = r_i - l_{i+1}-1$ and $(\Delta'_i)_{1\leq i\leq N}$ by $\Delta'_i=\emp_{B'}(l'_i+1, r'_i) = r'_i - l'_i-1$. We also let $\epsilon_i = \sum_{j=1}^{i-1} (\Delta'_j - \Delta_j)$ for $1\leq i\leq N+1$.
	
	By simple induction on $i$ and the definitions (i)--(v), we see that for $1\leq i\leq N+1$, the runner of $l'_i$ (and $l_i$) is congruent to $\epsilon_i$ modulo $e$, and for $1\leq i\leq N$, the runner of $r'_i$ (and $r_i$) is congruent to $\epsilon_i + \Delta'_i + 1$ modulo $e$. Similarly, for $i$ in the corresponding range, one shows that
	\begin{align}
	\emp_{B}(l_i)\equiv \emp_{B'}(l'_i) &\equiv \emp_B(b_0) + \epsilon_i\, (\textnormal{mod } d), \textnormal{ and}\label{al:emp1}\\
	\emp_{B}(r_i)\equiv \emp_{B'}(r'_i) &\equiv \emp_B(b_0) + \epsilon_i +\Delta'_i\, (\textnormal{mod } d).\label{al:emp2}
	\end{align}
	We can rewrite two of these congruences as
	\begin{align}
		\emp_B(l_i+1, b_0) &\equiv -\epsilon_i\, (\textnormal{mod } d), \textnormal{ and}\label{al:emp3}\\
		\emp_B(r_i+1, b_0) &\equiv -(\epsilon_i + \Delta'_i)\, (\textnormal{mod } d)\label{al:emp4}.
	\end{align}
	
	We now claim that all $\epsilon_i$ are non-negative. If not, then pick the least $i$ such that $\epsilon_i<0$ (clearly $i>1$). From the definition $\epsilon_i = \epsilon_{i-1} + \Delta'_{i-1} - \Delta_{i-1}$. By the minimal choice of $i$ we get $0< \epsilon_{i-1} + \Delta'_{i-1} +1 \leq \Delta_{i-1}$. Thus $f_1=r_{i-1} - (\epsilon_{i-1} + \Delta'_{i-1} + 1)$ (sketched in \Cref{fig:Unique}) lies between $l_i+1$ and $r_{i-1}-1$ and thus it is an empty space of $B$. It also lies on runner $0$ by the earlier observation that $r_{i-1}$ lies on the runner congruent to $\epsilon_{i-1} + \Delta'_{i-1} + 1$ modulo $e$, and hence $d\nmid \emp_B(f_1+1,b_0)$. But this equals $\emp_B(f_1+1, r_{i-1}) + \emp_B(r_{i-1}+1, b_0) = \epsilon_{i-1} + \Delta'_{i-1} + \emp_B(r_{i-1}+1, b_0)$ which is divisible by $d$ by \eqref{al:emp4}, a contradiction which proves the claim.
	
	Clearly, $\epsilon_i\leq \epsilon_{i-1} + \Delta'_{i-1}$ for all $i\leq N+1$, and thus using the previous claim, a simple induction on $i\leq N$ shows that the union of the intervals $[\epsilon_j, \epsilon_j +\Delta'_j]$ with $j\leq i$ is an interval of the form $[0, m_i]$. Using this, we prove by induction on $i\leq N+1$ that $l'_i\leq l'_1$. If $i=1$, this is clear. For $2\leq i\leq N+1$, we have $0\leq \epsilon_i\leq \epsilon_{i-1} + \Delta'_{i-1}$ and thus the above statement about intervals (for $i-1$) guarantee an index $j<i$ such that $\epsilon_i\in [\epsilon_j, \epsilon_j + \Delta'_j]$. We claim that $l'_i<l'_j$, which in turn yields $l'_i<l'_1$.
	
	If not, then $l'_i>l'_j$ and in turn $l'_i\geq r'_j$. Notice that $\epsilon_i\neq \epsilon_j$ as otherwise, by the earlier observation about the runners of $l'_i$ and $l'_j$ and \eqref{al:emp1}, $l'_i$ and $l'_j$ would lie on the same runner, have the same $d$-emptiness and have $l'_i>l'_j$, which contradicts (ii) in the definition of our four sequences. Thus $f_2=r'_j - (\epsilon_j + \Delta'_j + 1)+ \epsilon_i$ (sketched in \Cref{fig:Unique}) satisfies $l'_j + 1= r'_j-\Delta'_j\leq f_2\leq r'_j-1$ and hence it is an empty space of $B'$. Since $r'_j$ lies on runner congruent to $\epsilon_j + \Delta'_j + 1$ modulo $e$, the empty space $f_2$ of $B'$ lies on the runner congruent to $\epsilon_i$ modulo $e$, which is the runner of $l'_i$. Therefore $d$ does not divide $\emp_{B'}(f_2+1, l'_i)$. This emptiness can be expanded as
	\begin{align*}
	&\emp_{B'}(f_2+1,r'_j) + \emp_{B'}(r'_j+1, l'_i)\\ =\; &\epsilon_j + \Delta'_j - \epsilon_i + \emp_{B'}(l'_i) - \emp_{B'}(r'_j)\\ =\; &(\emp_{B'}(l'_i) - \epsilon_i) - (\emp_{B'}(r'_j) - \epsilon_j - \Delta'_j ).
	\end{align*}
	Both of the brackets are congruent to $\emp_B(b_0)$ modulo $d$ by \eqref{al:emp1} and \eqref{al:emp2}, respectively, and thus we deduce that $d$ divides $\emp_{B'}(f_2+1, l'_i)$, a contradiction proving that $l'_i<l'_j$.
	
	In particular, $b'_0 =l'_{N+1}\leq l'_1$, which forces equality and $N=0$. From (ii) in the definition of our four sequences, $b_0\sim b'_0$, as required.
\end{proof}

We are now ready to deduce the uniqueness result for \dunb partitions, which, unlike its \dsunb analogue, requires an extra $l$-restricted assumption.

\begin{lemma}\label{le:dunb unique}
	Let $\gamma$ be an $e$-core partition and $\mathcal{R}$ be a $(d-1)\times e$ matrix of non-negative integers, and write $l$ for the least common multiple of $d$ and $e$. There is at most one $l$-restricted partition $\lambda$ which is $d$-skewed, has $e$-core $\gamma$ and has $d$-runner matrix $\mathcal{R}$.
\end{lemma}

\begin{proof}
	Suppose this fails and take the lexicographically least $l$-restricted \dunb partition $\lambda$, such that there is a lexicographically smaller $l$-restricted \dunb partition $\mu$ with the same $e$-core and $d$-runner matrix. Then take their $\beta$-sets $B=B_s(\lambda)$ and $B'=B_s(\mu)$, respectively, such that both contain all non-positive integers. As $\lambda$ is lexicographically greater than $\mu$, we have the following inequality between the largest beads of $B$ and $B'$: $b_0\geq b'_0$. If they equal, then $\emp_B(b_0) = \emp_{B'}(b'_0)$, and thus $b_0$ and $b'_0$ can be removed from $B$ and $B'$, respectively, to get $\beta$-sets of smaller $l$-restricted partitions $\widetilde{\lambda}$ and $\widetilde{\mu}$ which still have the same $e$-core, $d$-runner matrix, they are $d$-skewed, and $\widetilde{\lambda}$ is lexicographically larger than $\widetilde{\mu}$, which contradicts the initial choice of $\lambda$.
	
	Thus $b_0>b'_0$, which by \Cref{le:same runner} implies that $b_0\sim b'_0$ and, in particular, $b'_0 = b_0-ae$ for some $a>0$. We can again remove the greatest beads of $B$ and $B'$ to get partitions $\widetilde{\lambda}$ and $\widetilde{\mu}$, both less than $\lambda$ in the lexicographical order. As they are $l$-restricted, $d$-skewed, and their $e$-cores and $d$-runner matrices agree (since $b_0\sim b'_0$), to avoid contradiction with the minimality of $\lambda$, we need $\widetilde{\lambda}=\widetilde{\mu}$.
	
	Consequently, there is no bead of $B$ between $b'_0$ and $b_0-1$. Thus $\emp_B(b'_0+1, b_0) = ae-1$. Using $\widetilde{\lambda}=\widetilde{\mu}$, this can be rewritten as $\emp_B(b_0) - \emp_B(b'_0) = \emp_B(b_0) - \emp_{B'}(b'_0)-1$, which is congruent to $-1$ modulo $d$ as $b_0 \sim b'_0$. Hence $d\mid ae$, and consequently $ae\geq l$, the least common multiple of $d$ and $e$. Since there is no bead between $b'_0=b_0-ae$ and $b_0-1$, we conclude that $\lambda_1-\lambda_2\geq ae\geq l$ (where $\lambda_2=0$ if $\lambda=(\lambda_1)$), a contradiction with $\lambda$ being $l$-restricted.
\end{proof}

We can finally deduce that with respect to the dominance order the sets $\mathcal{E}_{\mathcal{R}}(\gamma)$ always have a unique maximal element and, under the usual mild assumption, also a unique minimal element. In turn, we obtain the propositions from \Cref{se:intro}.

\begin{proposition}\label{le:dominance}
	Let $\gamma$ be an $e$-core partition and $\mathcal{R}$ be a $\gamma$-realisable $(d-1)\times e$ matrix of non-negative integers. In $\mathcal{E}_{\mathcal{R}}(\gamma)$, there is a unique maximal element in the dominance order. If there is $0$ in each row of $\mathcal{R}$, then there is also a unique minimal element in the dominance order.
\end{proposition}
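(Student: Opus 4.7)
My plan is to deduce this statement directly from the results already established in this section, combined with the existence result of \Cref{le:existence}. The key observation is that $E_{\mathcal{R}}(\gamma)$ consists of partitions all having the same $e$-core and the same $e$-weight (the minimal $e$-weight $w_{\mathcal{R}}(\gamma)$), hence the same size, so $E_{\mathcal{R}}(\gamma)$ is a finite set. By \Cref{le:existence} we know $E_{\mathcal{R}}(\gamma)$ is non-empty, so it admits at least one maximal element and at least one minimal element in the dominance order.

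For the uniqueness of the maximal element, I would argue as follows. Let $\lambda$ be any maximal element of $E_{\mathcal{R}}(\gamma)$ in the dominance order. Then $\lambda$ has $e$-core $\gamma$, $d$-runner matrix $\mathcal{R}$, and by \Cref{cor:max implies dsunb} it is \dsunb. Now \Cref{le:dsunb unique} asserts that there is at most one \dsunb partition with prescribed $e$-core and $d$-runner matrix; combining these two facts shows that every maximal element of $E_{\mathcal{R}}(\gamma)$ is equal to this unique partition, giving the uniqueness.

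For the uniqueness of the minimal element under the extra assumption that each row of $\mathcal{R}$ contains a zero, the argument is entirely parallel: any minimal element of $E_{\mathcal{R}}(\gamma)$ is \dunb by \Cref{cor:min implies dunb} (whose hypothesis is exactly the row condition on $\mathcal{R}$), and \Cref{le:dunb unique} provides at most one \dunb partition with the prescribed $e$-core and $d$-runner matrix.

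There is no substantial obstacle here, since all the heavy lifting has already been done in \Cref{cor:max implies dsunb}, \Cref{cor:min implies dunb}, \Cref{le:dsunb unique}, and \Cref{le:dunb unique}. The only thing to check carefully is that the extra hypothesis in the minimal-element case is passed correctly between \Cref{cor:min implies dunb} and \Cref{le:dunb unique}: the former requires the row condition on $\mathcal{R}$, the latter does not, so this step goes through cleanly. Thus the proposition follows by a short assembly of previous results.
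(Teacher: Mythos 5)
Your proposal is correct and follows exactly the paper's own argument: existence of extremal elements from finiteness and nonemptiness of $E_{\mathcal{R}}(\gamma)$, then uniqueness of the maximal element from \Cref{cor:max implies dsunb} together with \Cref{le:dsunb unique}, and of the minimal element from \Cref{cor:min implies dunb} together with \Cref{le:dunb unique}. Nothing further is needed.
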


\begin{proof}
	The first part follows from \Cref{cor:max implies dsunb} and \Cref{le:dsunb unique}. The second part follows from \Cref{cor:min implies dunb} and \Cref{le:dunb unique}.
\end{proof}

\begin{proof}[Proof of \Cref{pr:equivalence max}]
	This follows from \Cref{cor:max implies dsunb}, \Cref{le:dsunb unique} and \Cref{le:regularity}.
\end{proof}

\begin{proof}[Proof of \Cref{pr:equivalence min}]
	This follows from \Cref{cor:min implies dunb} and \Cref{le:dunb unique}.
\end{proof}

Let us remark that our proofs provide a way to find the maximal (and the minimal, under the usual assumption) element of $\mathcal{E}_{\mathcal{R}}(\gamma)$ starting from any partition $\lambda$ with $e$-core $\gamma$ and $d$-runner matrix $\mathcal{R}$. Indeed, we can start with the canonical $\beta$-set $B$ of $\lambda$, and then, whenever there is a bead $b$ of $B$ and an empty space $b-ae$ of $B$ (with $a>0$) such that $d\mid \emp_B(b-ae,b)$, we apply A1 with $B$, $b$ and $b-ae$ and update $B$. By \Cref{le:a1-order}, A1 can be performed only finitely many times and by \Cref{pr:equivalence max}, at the end we get the canonical $\beta$-set of the maximal element of $\mathcal{E}_{\mathcal{R}}(\gamma)$. The minimal element is found analogously: one applies A2 instead of A1 with a small caveat; it may happen then during the procedure we reach the canonical $\beta$-set $B$ of a \dunb partition that is not $l$-restricted (with $l$ being the least common multiple of $d$ and $e$). Then one can apply A1 to $B$, $b$ and $b-l$, where $b\in B$ and there is no bead between $b-l$ and $b-1$ (as in the second paragraph of the proof of \Cref{cor:min implies dunb}). Note that for the maximal element we can also use the algorithm from \Cref{le:realisable suf}; see \Cref{re:greedy}. 

We end this section by addressing the assumptions in \Cref{pr:equivalence min}. While there is always a minimal element of $\mathcal{E}_{\mathcal{R}}(\gamma)$ with respect to the dominance order (for $\mathcal{R}$ $\gamma$-realisable), by the next lemma this is not the case for \dunb partitions with given $e$-core $\gamma$ and $\gamma$-realisable $d$-runner matrix. The lemma shows that \Cref{pr:equivalence min} fails if we omit the constraint on the $d$-runner matrix.

\begin{lemma}\label{le:nonexistence}
	Let $\mathcal{R}$ be a $(d-1)\times e$ matrix of non-negative integers with a row of positive integers. Then there is no \dunb partition with $d$-runner matrix $\mathcal{R}$.
\end{lemma}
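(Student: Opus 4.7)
The plan is to argue by contradiction: I will assume that there is a \dunb partition $\lambda$ with $d$-runner matrix $\mathcal{R}$ such that some row $x\in\{1,\dots,d-1\}$ consists entirely of positive integers, and derive a contradiction by pinpointing one runner on which no bead can possibly have $d$-emptiness $x$.

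The first move is to work with the canonical $\beta$-set $B=B_0(\lambda)$, so that \Cref{le:runner matrix eq} gives $\mathcal{R}_d(B)=\mathcal{R}_d(\lambda)$; in particular, the hypothesis becomes $\mathcal{R}_d(B)_{x,y}>0$ for every runner $y$. Since some bead of $B$ has $d$-emptiness $x>0$, the emptiness of that bead is at least $x$, so $B$ has at least $x$ empty spaces. I will let $f$ be the $x$-th smallest empty space of $B$; by construction $\emp_B(f)=x$. Then I set $y_0$ to be the runner of $f$ and aim to show $\mathcal{R}_d(B)_{x,y_0}=0$, contradicting the all-positivity of row $x$.

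To verify this, I take an arbitrary bead $b$ of $B$ on runner $y_0$ and split according to whether $b>f$ or $b<f$. If $b>f$ then $b-f$ is a positive multiple of $e$, so $f$ is an empty space of $B$ lying strictly below the bead $b$ on the same runner; the \dunb assumption then forces $\emp_B(b)-\emp_B(f)=\emp_B(b)-x$ to be nonzero modulo $d$, so $\emp_B(b)\not\equiv x\pmod d$. If $b<f$ then $\emp_B(b)\leq x-1$, and since $0<x<d$ there is no non-negative integer at most $x-1$ congruent to $x$ modulo $d$, so again $\emp_B(b)\not\equiv x\pmod d$. Either way $b$ fails to contribute to $\mathcal{R}_d(B)_{x,y_0}$, so this entry is zero, which is the desired contradiction.

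I do not expect a genuine obstacle here: once the empty space $f=f_x$ is selected, both cases are immediate, one from \Cref{de:unbalanced} and one from the elementary bound $\emp_B(b)\leq x-1$ for $b<f$. The only mildly delicate point, and the reason the range restriction on $x$ is essential, is that in the second case we need to exclude the residue $x$ from the emptiness range $\{0,1,\dots,x-1\}$, which uses precisely $x\in\{1,\dots,d-1\}$.
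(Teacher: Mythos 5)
Your proof is correct and takes essentially the same approach as the paper: both select the $x$-th least empty space $f$ of the canonical $\beta$-set and exploit the bead of $d$-emptiness $x$ on the runner of $f$, with your explicit case $b<f$ being exactly the step the paper compresses into ``which implicitly must be greater than $f$''. The only difference is cosmetic — you phrase it as showing $\mathcal{R}_d(B)_{x,y_0}=0$ under the \dunb{} hypothesis, while the paper directly exhibits a violation of \Cref{de:unbalanced}.
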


\begin{proof}
	Suppose that row $x$ of $\mathcal{R}$ (with $1\leq x\leq d-1$) has only positive entries. Let $B$ be the canonical $\beta$-set of a partition with $d$-runner matrix $\mathcal{R}$. Take the $x$'th least empty space $f$ of $B$. Since each runner of $B$ has a bead with $d$-emptiness $x$, which implicitly must be greater than $f$, there is $a>0$ such that $f+ae$ is a bead of $B$ with $d$-emptiness $x$. Then $\emp_B(f+1, f+ae) = \emp_B(f+ae) - x$ and this is divisible by $d$. Therefore, $B$ is not a \dunb set.
\end{proof}

Similarly, provided that $d$ and $e$ are not coprime, \Cref{pr:equivalence min} fails if we omit `$l$-restricted' from its statement --- partition $(l)$ is $d$-skewed (the arm lengths of $e$-divisible hooks of $(l)$ are all congruent to $-1$ modulo $e$, hence are not divisible by $d$) with $e$-core $\gamma$ given by the empty partition and $d$-runner matrix $\mathcal{R}$ given by the zero matrix, but is not the minimal element of $\mathcal{E}_{\mathcal{R}}(\gamma) = \left\lbrace \gamma\right\rbrace$. However, if $d$ and $e$ are coprime then `$l$-restricted' is not needed in \Cref{pr:equivalence min} since then any $d$-skewed partition is automatically $l$-restricted, as shown in the next lemma. In turn, `$e$-restricted' can be omitted from \Cref{th:Mullineuxbalanced}.

\begin{lemma}\label{le:e-restricted}
	Suppose that $d,e>1$ are coprime integers and $\lambda$ is a $d$-skewed partition and let $l=de$. Then $\lambda$ is $l$-restricted.
\end{lemma}

\begin{proof}
	Suppose for contradiction that $\lambda$ is not $l$-restricted. Then its $\beta$-set $B$ has a bead $b$ such that there is no bead between $b-de$ and $b-1$. Let $0<a<d$ be an inverse of $e$ modulo $d$. Then $d$ divides $\emp_B(b-ae+1,b) = ae-1$, a contradiction with $\lambda$ being $d$-skewed.
\end{proof}

For general integers $d,e> 1$, by the final sentence of \Cref{re:preserves}, all $d$-skewed partitions with given $e$-core $\gamma$ and $\gamma$-realisable $d$-runner matrix $\mathcal{R}$ must have the form $(\mu_1 +a_1l, \mu_2+a_2l, \dots, \mu_k +a_kl)$, where $\mu=(\mu_1,\mu_2,\dots, \mu_t)$ is the minimal element of $\mathcal{E}_{\mathcal{R}}(\gamma)$, $a_1, a_2,\dots, a_k$ are non-negative integers (where $\mu_i = 0$ for $i>t$) and $l$ is the least common multiple of $d$ and $e$. We omit the proof here but, in fact, a partition of this form is $d$-skewed if and only if $a_i=0$ for $i\geq g$, the greatest common divisor of $d$ and $e$, which reproves that `$l$-restricted' is not needed in \Cref{pr:equivalence min} when $d$ and $e$ are coprime.

We thus obtain a full classification of $d$-skewed partitions: partition $\lambda$ with $e$-core $\gamma$ and $d$-runner matrix $\mathcal{R}$ is \dunb if and only if $\mathcal{R}$ contains $0$ in each row and $\lambda = (\mu_1 +a_1l, \mu_2+a_2l, \dots, \mu_k +a_kl)$, where $\mu$ is the minimal element of $\mathcal{E}_{\mathcal{R}}(\gamma)$ and $a_1, a_2,\dots, a_k$ are non-negative integers such that $a_i=0$ whenever $i\geq g$.

\subsection*{Acknowledgements}
The author would like to thank Mark Wildon for careful reading of two version of the manuscript, his insightful comments and for suggesting the initial project which eventually led to this paper, David Hemmer for ideas which helped significantly generalise the results, \'{A}lvaro Guti\'{e}rrez for helpful discussions about \Cref{le:same runner}, Steffen K\"{o}nig and Stacey Law for suggesting valuable ideas for improvements to the introduction, Bim Gustavsson for pointing out a couple of typos in the manuscript, and everyone else whose interesting comments and questions led to improvements and generalisations of the current version of the manuscript.  

%
%

\bibliographystyle{abbrv}
\bibliography{MSNrefsMul}
\end{document}